\definecolor{dblue}{cmyk}{1,0, 0,.6}
\definecolor{darkblue}{cmyk}{1,0, 0,.7}
\definecolor{blue(pigment)}{rgb}{0.2, 0.2, 0.6}
\let\oldmarginpar\marginpar
\renewcommand\marginpar[1]{\-\oldmarginpar[\raggedleft\footnotesize #1]%
{\raggedright\footnotesize #1}}
\newtheorem{lemma}{Lemma}[section]
\newtheorem{proposition}[lemma]{Proposition}
\newtheorem{theorem}[lemma]{Theorem}
\newtheorem{claim}[lemma]{Claim}
\newtheorem{notation}[lemma]{Notation}
\newtheorem{question}[lemma]{Question}
\newtheorem{remark}[lemma]{Remark}
\newtheorem{corollary}[lemma]{Corollary}
\newtheorem{definition}[lemma]{Definition}
\newcommand{\Proof}{{\noindent \it Proof. }}
\newcommand{\proofof}[1]{{\noindent \it Proof of #1. }}
\newcommand{\pf}[1]{{\noindent \it Proof of \ref{#1}. }}
\newcommand{\cmm}[1]{\marginpar{ {\it #1}}}   
\renewcommand{\cmm}[1]{}    
\newcommand{\Qed}[1]{\nopagebreak[4]{\tiny \hfill\fbox{\ref{#1}} \linebreak }\pagebreak[2]}
\renewcommand{\Im}{\operatorname{Im}}
\newcommand{\Hol}{\operatorname{Hol}}
\newcommand{\dist}{\operatorname{dist}}
\newcommand{\Conv}{\operatorname{Conv}}
\newcommand{\Core}{\operatorname{Core}}
\newcommand{\Gr}{\operatorname{Gr}}
\newcommand{\mc}{\mathcal}
\newcommand{\td}{\tilde}
\newcommand{\on}{\operatorname}
\newcommand{\ul}{\underline}
\newcommand{\pt}{\partial}
\newcommand{\bd}{\partial}
\newcommand{\bdr}{\partial}
\newcommand{\cc}{\circ}
\newcommand{\iv}{^{-1}}
\newcommand{\h}{\mathbb{H}}
\renewcommand{\H}{\mathbb{H}}
\newcommand{\s}{\mathbb{S}}
\newcommand{\C}{\mathbb{C}}
\newcommand{\R}{\mathbb{R}}
\newcommand{\N}{\mathbb{N}}
\newcommand{\n}{\mathbb{N}}
\newcommand{\Z}{\mathbb{Z}}
\newcommand{\minus}{\setminus}
\newcommand{\sm}{\setminus}
\newcommand{\st}{\subset}
\newcommand{\sub}{\subset}
\newcommand{\dt}{\ldots}
\newcommand{\cn}{\colon}
\newcommand{\If}{\infty}
\newcommand{\ify}{\infty}
\newcommand{\In}{\infty}
\newcommand{\po}{\pi_1}
\newcommand{\PSL}{\operatorname{PSL(2, \mathbb{C})}}
\newcommand{\SL}{\operatorname{SL(2, \mathbb{C})}}
\newcommand{\psl}{\operatorname{PSL(2, \mathbb{C})}}
\newcommand{\rs}{\hat{\mathbb{C}}}
\newcommand{\PML}{\mathscr{PML}}
\newcommand{\pml}{\mathscr{PML}}
\newcommand{\ml}{\mathscr{ML}}
\newcommand{\ML}{\mathscr{ML}}
\newcommand{\gl}{\mathscr{GL}}
\newcommand{\GL}{\mathscr{GL}}
\newcommand{\Tr}{\operatorname{Tr}}
\newcommand{\TM}{\mathscr{TM}}
\renewcommand{\AA}{\mathcal{A}}
\newcommand{\NN}{\mathcal{N}}
\newcommand{\CC}{\mathcal{C}}
\newcommand{\UU}{\mathcal{U}}
\newcommand{\mP}{\mathcal{P}}
\newcommand{\mL}{\mathcal{L}}
\newcommand{\PP}{\mathscr{P}}
\newcommand{\TT}{\mathcal{T}}
\newcommand{\RR}{\mathcal{R}}
\newcommand{\LL}{\mathcal{L}}
\newcommand{\VV}{\mathcal{V}}
\newcommand{\KK}{\mathcal{K}}
\newcommand{\WW}{\mathcal{W}}
\newcommand{\mCt}{\tilde{\mathcal{C}}}
\newcommand{\dl}{\delta}
\newcommand{\del}{\delta}
\newcommand{\kap}{\kappa}
\newcommand{\kp}{\kappa}
\newcommand{\gm}{\gamma}
\newcommand{\gam}{\gamma}
\newcommand{\ld}{\lambda} 
\newcommand{\lam}{\lambda}
\newcommand{\ap}{\alpha}
\newcommand{\ep}{\epsilon}
\newcommand{\Ct}{\tilde{C}}
\newcommand{\St}{\tilde{S}}
\newcommand{\psit}{\tilde{\psi}}
\newcommand{\kpt}{\tilde{\kappa}}
\newcommand{\Ch}{\hat{C}}
 \newcommand{\nuh}{\hat{\nu}}
\newcommand{\Cc}{C_\flat}
\newcommand{\Cs}{C_\sharp}
\newcommand{\Cf}{C_\flat}
\newcommand{\tcr}{\textcolor{red}}
\newcommand{\tcb}{\textcolor{blue}}
\date{\today}
\newcommand\note[1]{\mbox{}\marginpar{\scriptsize\raggedright\hspace{0pt}\color{darkblue}#1}}
\renewcommand\note[1]{\mbox{}\marginpar{\scriptsize\raggedright\hspace{0pt}\color{darkblue}}}
\newcommand{\Label}[1]{\label{#1}\note{#1}}
\renewcommand{\Label}[1]{\label{#1}}
\newcommand{\infi}{\infty}
\newcommand{\bdry}{\partial}
\newcommand{\til}{\tilde}
\newcommand{\col}{\colon}
\newcommand{\pr}{\operatorname{pr}}
\newcommand{\PG}{\mathcal{PG}}
\definecolor{orange}{rgb}{1,0.5,0}
\definecolor{darkgreen}{cmyk}{1,0,1,.2}
\newcommand{\com}[1]{}
\newcommand{\TTT}{\mathsf{T}}
\newcommand{\RRR}{\mathsf{R}}
\newcommand{\mr}{\mathring}
\newcommand{\length}{{\rm lenth}}
 \title[\today]{ 2$\pi$-Grafting and complex projective structures with generic holonomy}
\author{Shinpei Baba}
\address{Universit\"at Heidelberg 
}
\email{shinpei@mathi.uni-heidelberg.de}
\date{\today}
\begin{document}

\maketitle

\begin{abstract}
Let  $S$ be an oriented closed surface of genus at least two.
We show that, given a generic representation $\rho\cn \pi_1(S) \to \psl$ in the character variety, ($2\pi$-)grafting produces all  projective structures on $S$ with holonomy $\rho$.\end{abstract}

\setcounter{tocdepth}{1}  
\tableofcontents
\section{Introduction}\Label{S:intro}
Let $S$ be a closed oriented surface of genus at least two throughout this paper.
A (complex) projective structure on $S$ is a $(\rs, \PSL)$-structure (see \S \ref{projectivestructure}). 
It induces a (holonomy) representation $\rho\cn \pi_1(S) \to \psl$ unique up to conjugation by an element of $\psl$.

 Let  $\PP_\rho$ be the set of all marked projective structures on $S$ with fixed holonomy $\rho\col \pi_1(S) \to \psl$.
 It is a basic question to understand $\PP_\rho$, in order to understand geometry behind the homomorphism $\rho$, which is not necessarily discrete.
 This question goes back to a foundational paper of Heijal  \cite[p2, (B)]{Hejhal-75}, and it also appeared in various articles (\cite[p 274]{Hubbard-81}, \cite[\S7.1]{Kapovich-95}, \cite[\S12.1]{Gallo-Kapovich-Marden}, \cite[\S 1]{Dumas-08}; see also \cite[\S 1.10]{Goldman-thesis}).

 A {\it ($2\pi$-)graft} is a surgery operation that transforms a  projective structure in $\PP_\rho$ to another  in $\PP_\rho$ (\S \ref{grafting}).
  In the preceding paper (\cite{Baba-15gt}), the author showed that projective structures in $\PP_\rho$ are related by grafting if they are ``close'' in the space of geodesic laminations $\GL$ in Thurston coordinates. 
In this paper,  we aim to relate, without the ``closeness'' assumption, all projective structures in $\PP_\rho$ by  grafting. 
      
Let $\PP$ be the set of all marked projective structures on $S$. 
Then $\PP$ is diffeomorphic to $\C^{6g-6}$. 
Let $\chi$ be the $\psl$-character variety of $S$, that is, the set of all representations $\rho\cn \pi_1(S) \to \psl$, roughly, up to conjugation (\S \ref{purelyloxo}).
Then $\chi$ is a complex affine algebraic variety, and it consists of exactly two connected components (\cite{Goldman-88t}). 
Let $\chi_0$ be the \textit{canonical component} of $\chi$ consisting of representations that lift to $\po(S) \to \operatorname{SL}(2, \C)$.
Let  $$\rm{Hol}\cn \PP \to \chi$$ be  the \textit{holonomy map}, which  takes each projective structure to its holonomy representation. 
Then the image of $\Hol$ is contained in  $\chi_0$, and moreover $\Hol$ is almost onto $\chi_0$ (\cite{Gallo-Kapovich-Marden}).  
For example, there are many holonomy representations whose images are dense in $\psl$.

Noting $\PP_\rho = \Hol^{-1}(\rho)$,  we are interested in understanding fibers of $\Hol$. 
The holonomy map  $\Hol$ is a local homeomorphism \cite{Hejhal-75} (moreover a local biholomorphism \cite{Hubbard-81, Earle-81});
 however  is not a covering map onto its image.
Thus each fiber $\Hol^{-1}(\rho)$ is a discrete subset of $\PP$, but $\PP_\rho$ may possibly be quite different depending on $\rho \in \chi_0$. 

A graft of a projective surface inserts a projective cylinder along an appropriate loop, called an admissible loop, on the surface. 
Then  an ungraft is the opposite of a graft, which removes such a projective cylinder; thus it also preserves holonomy.  
Then 
\begin{question}[\cite{Gallo-Kapovich-Marden}; Grafting Conjecture]\Label{GKM}
Given two projective structures with holonomy $\rho\cn \pi_1(S) \to \psl$, is there a composition of grafts and ungrafts that transforms one to the other?
\end{question}

A basic known case is when $\rho$ is a discrete and faithful representation onto a quasifuchsian group. 
Then $\PP_\rho$ contains a unique \textit{uniformizable} projective structure (i.e. its developing map is an embedding into $\rs$);
Then every projective structure in $\PP_\rho$ is moreover obtained by  grafting the uniformizable structure along a {\it multiloop}, a union of disjoint essential simple closed curves  \cite{Goldman-87}. 
On the other hand, if $\rho \in \chi_0$ is a generic representation  outside the quasifuchsian space, then $\rho$ has a dense image in $\psl$; in particular,
there is no uniformizable structure with holonomy $\rho$.
Thus, for general holonomy, Question \ref{GKM} seems an appropriate analogy of the quasifuchsian case.

In this paper we answer Question \ref{GKM} in the affirmative for generic representations in $\chi_0$, namely,  of the following type:
An element $\ap \in \psl$ is \textit{loxodromic} if its trace $\Tr(\ap) \in \C$,  which is well-defined up to a sign, is not contained in $[-2,2] \st \R$.
A representation $\rho\cn \pi_1(S) \to \psl$ is called \textit{purely loxodromic} if $\rho(\gam)$ is loxodromic for all $\gam \in \pi_1(S)$. 
Then almost all elements of $\chi_0$ are purely loxodromic (Proposition \ref{040213}).
\begin{theorem}\Label{main}
Let $\rho\cn \po(S) \to \psl$ be a purely loxodromic representation in $\chi_0$. 
Then, given any $C_\sharp, \Cc$ in $\PP_\rho$, there is a composition of grafts and ungrafts that transforms $C_\sharp$ to $C_\flat$.
 Namely there is a finite composition of grafts $\Gr_{\ell_i}$ along loops $\ell_i$ starting from $\Cs$, 
 \begin{eqnarray}
\Cs = C_0  \xrightarrow{Gr_{\ell_1}} C_1  \xrightarrow{Gr_{\ell_2}} C_2 \to \ldots  \xrightarrow{Gr_{\ell_n}} C_n, \label{030713n2}
\end{eqnarray}
such that the last projective structure $C_n$ is a graft of $\Cc$ along a multiloop $M$,
\begin{eqnarray}
\Cc  \xrightarrow{Gr_M} C_n. \label{030713n3}
\end{eqnarray}
\end{theorem}
Here a ``graft along a multiloop $M$'' means simultaneous grafts along all loops of $M$.

In the case where $\rho$ is a quasifuchsian representation, \cite[Theorem 3]{Ito07} implies Theorem \ref{main} even  in a stronger form:
 Namely the sequence (\ref{030713n2})  can be replace by a single graft along a multiloop. 
(See also \cite{Calsamiglia4DeroinFrancaviglia14, Baba-15gt}.)

Although Theorem \ref{main} answers Question \ref{GKM} in a generic setting,  the question in full generality remains open. 
Nonetheless many techniques in this paper, including Theorem \ref{ThmB} and Theorem \ref{thmC} below,   applies to arbitrary representations in $\Im \Hol$.

In our proof of Theorem \ref{main}, 
we utilize Thurston coordinates on $\PP$, which are given by (not necessarily $2\pi$-)grafting of hyperbolic surfaces.  
Namely there is a natural homeomorphism 
\begin{displaymath}\label{030713n1}
\PP \cong \mathscr{T} \times \ML,
\end{displaymath}
where $\TT$ is the space of marked hyperbolic structures on $S$ and $\ML$ the space of measured laminations on $S$ (see \S \ref{thurston}).
Note that $\TT$ is diffeomorphic to $\R^{6g-6}$ and $\ML$ is PL diffeomorphic to $\R^{6g-6}$.
We denote Thurston coordinates of a projective structure $C$ using $``\cong"$ as $C \cong (\tau, L) \in \mathscr{T} \times \ML$. 

Let $\GL$ be the space of geodesic laminations on $S$. 
Then we obtain an obvious projection $\ML \to \GL$, forgetting transversal measures. 
 In the preceding paper \cite{Baba-15gt}, the author shows that any $C \in \PP_\rho$ is related, by grafting, to all projective structures in $\PP_\rho$ that are, in  Thurston coordinates,  ``close'' to $C$  in $\GL$ by the projection map (see Theorem \ref{8-14-12no1}).
This local relation yields the graft  (\ref{030713n3}).
Thus our main work in this paper is to construct the sequence (\ref{030713n2}) so that $C_n$ is ``close'' to $\Cf$ in $\GL$.

In order to have a control on geodesic laminations of projective structures, we observe an asymptotic, in Thurston coordinates,  of projective structures given by the iteration of grafts along a fixed loop. In particular

\begin{theorem}\Label{ThmB}
Let $C \cong(\tau, L)$ be a projective structure on $S$ in  Thurston coordinates, where $(\tau, L) \in  \mathscr{T} \times \ML$.
Let $\ell$ be an admissible loop on $C$. 
 For $i \in \Z_{> 0}$, let $C_i \cong (\tau_i, L_i)$ be the projective structure obtained by $i$-times grafting  $C$ along $\ell$ (i.e. $2\pi i$-graft). 
Then $\tau_i$ converges in $\mathscr{T}$, and $L_i$ converges to a (heavy) measured lamination $L_\infty$ as $i \to \In$ such that $\ell$ is  a unique leaf of $L_\In$ of  weight infinity.
 (See  Theorem \ref{3-10no2}.)
\end{theorem}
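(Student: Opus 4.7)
The plan is to analyze the pleated surfaces associated to the iterated grafts $C_i$ via their universal covers, establish compactness of $(\tau_i, L_i)$, and identify the limit.

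First, I would set up the developing/collapsing picture: the developing map $\tilde{C} \to \rs$ together with the nearest-point collapsing onto the pleated plane realizing $(\tau, L)$ encodes Thurston coordinates. The admissible loop $\ell$ lifts to a $\rho(\ell)$-invariant arc $\tilde{\ell} \subset \tilde{C}$ whose development has endpoints at the two fixed points $p_+, p_- \in \rs$ of the loxodromic element $\rho(\ell)$. Performing an $i$-fold $2\pi$-graft along $\ell$ corresponds, in the universal cover, to replacing each preimage of $\ell$ by a projective strip that winds $i$ times around $\rs \setminus \{p_+, p_-\}$. Collapsing this grafted surface yields a pleated surface which agrees with the original off a neighborhood of $\ell$ and, near $\ell$, gains additional bending of total transverse mass $2\pi i$ concentrated along $\ell$.

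To show $\tau_i$ converges in $\TT$, I would first establish precompactness. Since $\rho(\ell)$ is loxodromic with complex translation length independent of $i$, the relation between this translation length and the Thurston data forces the hyperbolic length of $\ell$ in $\tau_i$ to stay bounded above and away from zero. Away from a fixed neighborhood of $\ell$, the pleated surface associated to $C_i$ is uniformly comparable to that of $C$, which bounds lengths of all simple closed curves disjoint from $\ell$. An additional argument, using the controlled geometry of the crown region that develops near $\ell$, bounds lengths of curves crossing $\ell$. Genuine convergence (rather than merely subsequential) would follow from a monotonicity principle: each successive graft adds bending along $\ell$ in the same direction, so the hyperbolic data on the complementary pieces stabilizes. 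The limit $\tau_\infty$ has $\ell$ as a leaf of its bending lamination.

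For $L_i \to L_\infty$, away from $\ell$ the laminations $L_i$ stabilize to a finite measured lamination $L'_\infty$, while the weight concentrated along $\ell$ grows linearly in $i$. Weak convergence in the space of heavy measured laminations then yields $L_\infty = L'_\infty + \infty \cdot \ell$, and $\ell$ is the unique leaf of infinite weight because the construction only concentrates unbounded bending along $\ell$. The main obstacle is controlling $\tau_i$ uniformly: while the holonomy constraint on $\ell$ readily controls its length, curves transverse to $\ell$ require a careful pleated-surface analysis showing that the inserted crown thickens the surface only along $\ell$ without distorting the complementary hyperbolic geometry. Ruling out degeneration of $\tau_i$ to the Thurston boundary of $\TT$, by ensuring that no simple closed curve has $\tau_i$-length tending to $0$ or $\infty$, is the technical heart of the proof.
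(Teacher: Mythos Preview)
Your proposal has a genuine gap at its core. The assertion that grafting ``gains additional bending of total transverse mass $2\pi i$ concentrated along $\ell$'' is only literally true when $C$ is a hyperbolic surface; for a general projective structure $C$, the admissible loop $\ell$ need not be a leaf of the canonical lamination $\LL$, and the effect of $2\pi$-grafting on Thurston coordinates is not simply additive along $\ell$. This is precisely what makes the theorem nontrivial. Consequently your claim that ``away from a fixed neighborhood of $\ell$, the pleated surface associated to $C_i$ is uniformly comparable to that of $C$'' is exactly the content to be established, not a step one can invoke. Your ``monotonicity principle'' is likewise unsubstantiated: there is no evident monotone quantity controlling $\tau_i$, and the paper does not argue this way.

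The paper's approach supplies the missing idea. Rather than bounding lengths directly, it constructs a limiting projective structure $\CC_\infty$ on the open surface $S\setminus\ell$, obtained by attaching half-infinite grafting cylinders to $C\setminus\ell$ (equivalently, $\CC_\infty=\lim_i (C_i\setminus\ell_i)$). Since the restricted holonomy is non-elementary, $\CC_\infty$ has Thurston coordinates $(\sigma_\infty,N_\infty)$, which are analyzed explicitly according to whether $\ell$ is roughly circular or spirals (Proposition~\ref{4-27no2}); gluing the boundary geodesics of $\sigma_\infty$ produces the candidate $(\tau_\infty,L_\infty)$ with $\ell$ the unique heavy leaf. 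Convergence $(\tau_i,L_i)\to(\tau_\infty,L_\infty)$ is then proved by showing that the \emph{canonical neighborhoods} of points in $\tilde C_i$ converge in $\rs$ to those in $\tilde\CC_\infty$ (Proposition~\ref{4-23no2}), and applying a general stability result (Theorem~\ref{4-23no4}) saying that when simply connected domains in $\rs$ converge, so do their Thurston data (maximal balls, collapsing maps, bending measures). Your outline lacks both the limiting object $\CC_\infty$ and this maximal-ball/canonical-neighborhood mechanism, which is what actually replaces your hoped-for compactness and monotonicity arguments.
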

In this paper, some closed leaves of  laminations may have weight infinity if stated, as in Theorem \ref{ThmB}  (see \S \ref{ML}).

In the special case that $C$ is a hyperbolic surface (i.e.  the developing map of $C$ is an embedding onto a round disk),  Theorem \ref{ThmB} is clear.
Namely, $\tau_i = \tau$ for all $i$, and $L$ is equal to $\ell$ with weight $2\pi i$ (\cite{Goldman-87}).
Thus Theorem \ref{ThmB} asserts that, asymptotically,  $C_i$  behaves similarly to the iteration of grafts of a hyperbolic surface. 
(In contrast,  the conformal structure of $C_i$ diverges but converges to a point in the Thurston boundary of  $\mathscr{T}$ along every grafting ray starting from a hyperbolic structure \cite{ChoiDumasRafi12, Diaz-Kim-12, Hensel11, Gupta14}.)

By projectivizing transversal measures, $\ML$ minus the empty lamination projects onto the space of projective measured laminations, $\pml\, (\cong \s^{6g -7})$. 
In the appendix, we prove 
\begin{theorem}\Label{thmC}
Given arbitrary $\rho \in \Im \Hol$, in Thurston coordinates, $\PP_\rho$ projects onto a dense subset of $\PML$, unless $\PP_\rho$ is empty.
(see  Theorem \ref{8-15-12no1}.) 	
\end{theorem}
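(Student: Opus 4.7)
The plan is to combine Theorem \ref{ThmB} (asymptotics of iterated grafting) with Thurston's classical density of weighted simple closed curves in $\PML$. Assume $\PP_\rho \neq \emptyset$ and write $\pr\colon \PP_\rho \to \PML$ for the projection obtained by taking the measured-lamination coordinate in Thurston coordinates and projectivising.

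\emph{Step 1 (admissible loops produce limit points of $\pr(\PP_\rho)$).} For every admissible loop $\ell$ on every $C \in \PP_\rho$, I would first show $[\ell] \in \overline{\pr(\PP_\rho)}$. This is immediate from Theorem \ref{ThmB}: the iterated $2\pi i$-grafts $C_i$ of $C$ along $\ell$ lie in $\PP_\rho$ and have Thurston coordinates $(\tau_i, L_i)$ with $\tau_i \to \tau_\infty$ and $L_i \to L_\infty$, where $\ell$ is a leaf of $L_\infty$ with weight $+\infty$. The transverse weight of $L_i$ on $\ell$ grows like $2\pi i$, while the restriction of $L_i$ to $S \sm \ell$ converges to a finite measured lamination and is therefore bounded. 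Normalising total mass, $[L_i] \to [\ell]$ in $\PML$.

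\emph{Step 2 (density of such admissible loops in $\PML$).} It remains to exhibit a dense subset of $\PML$ consisting of projective classes $[\ell]$ of simple closed curves that occur as admissible loops on \emph{some} element of $\PP_\rho$. Here I would invoke a Gallo--Kapovich--Marden style realisation statement: for any pants decomposition, or more generally any suitable multiloop, $M$ of $S$ such that $\rho$ is loxodromic on each component of $M$ and the relevant admissibility conditions hold, there exists a projective structure in $\PP_\rho$ whose Thurston lamination is supported on $M$, on which every component of $M$ is an admissible loop. Since every simple closed curve $\gamma$ can be completed to a pants decomposition, and since the simple closed curves with $\rho(\gamma)$ non-loxodromic form at most a countable set, this produces a dense family of simple closed curves realised as admissible loops on elements of $\PP_\rho$. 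Thurston's density of weighted simple closed curves then promotes this to density in $\PML$.

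Combining the two steps gives $\overline{\pr(\PP_\rho)} \supseteq \PML$, proving the theorem. The chief obstacle is Step 2: producing, for a dense family of simple closed curves $\gamma$, a projective structure in $\PP_\rho$ on which $\gamma$ is admissible. Thurston coordinates alone do not furnish such structures, so one must appeal to a direct construction of the developing pair with the fixed holonomy $\rho$, either via a Gallo--Kapovich--Marden multiloop realisation or via an equivariant pants-decomposition argument adapted to $\rho$. Exceptional curves with $\rho(\gamma)$ non-loxodromic have to be avoided throughout, but since they form a countable subset of simple closed curves, they cannot obstruct density in $\PML$.
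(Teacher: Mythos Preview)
Your Step 1 is correct and matches the paper exactly: iterated grafting along an admissible loop $\ell$ on some $C \in \PP_\rho$ forces $[\ell]$ into $\overline{\pr(\PP_\rho)}$ by Theorem \ref{ThmB}.

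The gap is in Step 2. The Gallo--Kapovich--Marden construction does \emph{not} provide, for an arbitrary simple closed curve $\gamma$ with $\rho(\gamma)$ loxodromic, a projective structure in $\PP_\rho$ on which $\gamma$ is admissible. Their procedure takes an arbitrary element $a \in \pi_1(S)$ and \emph{modifies} it---through several steps involving handle adjustments, powers, and commutators---into a different loop $d'$ which then lies in a Schottky decomposition. The Schottky condition (that $\rho$ restricted to each complementary pair of pants be a rank-two Schottky group) is far stronger than loxodromicity of the boundary curves, and there is no reason the original $\gamma$ survives these modifications. Your countability argument for non-loxodromic curves is therefore beside the point: the obstruction is not a countable bad set but the fact that GKM outputs a loop you do not control.

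What the paper does instead is Proposition \ref{8-17-12no1}: given a \emph{uniquely ergodic} $L$, start with a sequence $a_i \in \pi_1(S)$ of simple loops with $[a_i] \to [L]$, and verify, step by step through the GKM construction, that each modification (replacing $a_i$ by $a_i b_i^q$, forming $d_i = y_i b_i a_i^{k_i}$, passing to $d_i^{n_i} x_i$, etc.) preserves the convergence $[\,\cdot\,] \to [L]$ in $\PML$. This works because each modification either has bounded intersection number with the previous loop or is a high twist along it, and $L$ is uniquely ergodic. One then grafts along the resulting admissible loops $\ell_i$ on the GKM structures $C_i$ as in your Step 1. Density of uniquely ergodic laminations in $\PML$ finishes the argument. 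The missing idea in your sketch is precisely this tracking of convergence through the GKM modifications.
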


With Theorem  \ref{thmC}, it seems quite natural to use Thurston coordinates in order to answer Question \ref{GKM}. 
A similar density is well-known for  geodesic  laminations realized by homotopic pleated surfaces in a fixed hyperbolic three-manifold (see \cite{CanaryEpsteinGreen84}). 

Theorem \ref{thmC} is  obtained by carefully observing the construction of a projective structure with given holonomy 
in \cite{Gallo-Kapovich-Marden} and applying Theorem \ref{ThmB}.

\subsection{Outline of the proof of Theorem \ref{main} }
{\it Part 1.}
Each projective structure on the surface $S$ with holonomy $\rho$ corresponds to a $\rho$-equivariant pleated surface $\h^2 \to \h^3$ (\S\ref{thurston}).
Thus, given two projective structures with the same purely loxodromic holonomy, we first consider their $\rho$-equivariant pleated surfaces $\beta_\sharp$ and $\beta_{\flat}$. 
In \S \ref{5-14},  we construct a ordered family of  $\rho$-equivariant pleated surfaces, so that $\beta_\sharp$ is transformed to  $\beta_\flat$  through a sequence  pleated surfaces in this family, by composition of certain type of simple changes (up to very small perturbations).

In \S \ref{admissible}, given a $\rho$-equivariant pleated surface $\beta$ in the family and a projective structure $C$ with holonomy $\rho$, if the pleating lamination of the pleated surface of $C$ is sufficiently close to the pleating lamination of $\beta$, then every loop $\ell$ close  to the pleating lamination of the next pleated surface is admissible (Proposition \ref{3-2no1}). 
Note we can graft $C$ along the admissible loop $\ell$ as many as we want.
In Section \ref{sTravelingInGL}, using such graftings, we prove Theorem \ref{main}, modulo the result in Part 2 regarding the limit of iterated grafting.

{\it  Part 2.}
Let  $\ell$ be an admissible loop on a projective structure $C$ on $S$ with (arbitrary) holonomy $\rho$. 
We consider the  $n$-times grafting of $C$ along an admissible loop $\ell$ and characterize its limit, as $n \to \infi$, in Thurston coordinates (\S \ref{thurston}). 

The as a projective structure
$\Gr^n_\ell(C)$ converges, in a certain sense, to a projective structure $\CC_\infi$ on $S \minus \ell$. 
In \S \ref{6-8no1},  we  show that the Thurston coordinates of   $\CC_\infi$ are a hyperbolic structure $\sigma_\infi$ on $S \minus \ell$ and a measured lamination $N_\infi$ on it.  

In \S \ref{identification}, we identify  the boundary components of $\sigma_\infi$ naturally so that it corresponds to a $\rho$-equivariant pleated surface.  
Then we have a hyperbolic structure $\tau_\infi$ on $S$ and a measured lamination $L_\infi$ on $\tau_\infi$, which is the expected limit of Thurston coordinates $(\tau_i, L_i)$  of $C_i$ as $i \to \infi$. 
Here the measured lamination $L_\infi$ is a bit more generalized than its usual notion:  $\ell$ is a leaf of $L_\infi$ with weight infinity.

Given a point on a projective surface $C$,
A {\it canonical neighborhood} (\S \ref{5-4no1}) of $p$ is a nice neighborhood homeomorphic to an open disk:  which is embedded in $\rs$ and yet  large enough to capture  the Thurston coordinates of $C$ near the point.
By embedding $\ell$ isomorphically to each $C_i$ appropriately, so that we have the inclusions $C_1 \minus \ell \sub C_2 \minus \ell \sub C_3 \minus \ell \dots $ and $C_i \minus \ell$ converges to $\CC_\infi$.
In \S \ref{sNeighborhoodsConverging}, given a point $p \in \CC_\infi$,   we show the convergence of the canonical neighborhoods of $p$ in $C_i$ when $i \to \infi$. 

In \S \ref{6-30no1}, given a converging sequence of  projective structures on a open disk which are embedded in $\rs$, we  show the convergence of the sequence in Thurston coordinates. 

In \S \ref{Proof} we prove the convergence of $(\tau_i, L_i) \to (\tau_\infi, L_\infi)$, combining the results above.

\subsection{Acknowledgements:}
I would like thank  Ken Bromberg,  Bill Goldman, Ursula Hamenst\"adt, Sebastian Hensel, Albert Marden,  Yair Minsky, and Saul Schleimer. 
  Special thanks to Misha Kapovich. 

I acknowledge support from the GEAR Network (U.S. National Science Foundation grants DMS 1107452, 1107263, 1107367), the European Research Council (ERC-Consolidator grant no. 614733),  and the German Research Foundation (BA 5805/1-1).  
I thank the referees for reading this paper carefully  and giving me valuable comments.

\section{Preliminaries}
  \cite{Kapovich-01} is a general background reference.  
For hyperbolic geometry in particular, see \cite{CanaryEpsteinGreen84, Epstein-Marden-87}.
See also the preceding paper   \cite{Baba-15gt}.

\subsection{Projective structures}\Label{projectivestructure} (c.f. \cite{Thurston-97}.) 
Let $F$ be an oriented connected surface, and let $\td{F}$ be the universal cover of $F$.
Let $\rs$ denote the Riemann sphere $\C \cup \{ \infi \}$.
A {\it projective structure} $C$ on $F$ is a $(\rs, \psl)$-structure, i.e. it is an atlas modeled on $\rs$ with transition maps in $\psl$. 
(In particular $C$ is a refinement of a complex structure.)
In this paper all projective structures $C$ are marked by a homeomorphism $F \to C$. 
Then, equivalently, a projective structure on $F$ is a pair $(f, \rho)$,
where $f\cn \td{F} \to \rs$ is an immersion and $\rho\cn \pi_1(F) \to \psl$ is a homomorphism such that $f$ is $\rho$-equivariant. 
The immersion $f$ is called the \textit{developing map}, which we denote by $dev(C)$,  and $\rho$ the \textit{holonomy representation} of $C$. 
The equivalence of  projective structures on $F$ is given by the isotopies of $F$ and   $(f, \rho) \sim (\gm \cc f, \gm \rho \gm^{-1})$ for all $\gam \in \PSL$.

\subsection{Grafting} (\Label{grafting}\cite{Goldman-87}.)
Let $C = (f, \rho)$ be a  projective structure on $F$. 
A loop $\ell$ on $C$ is \textit{admissible} if $\rho(\ell)$ is loxodromic and  $f$ embeds $\td{\ell}$ into $\rs$, where $\td{\ell}$ is   a lift of $\ell$  to $\td{F}$.
Then $\rho(\ell)$ fixes exactly two points on $\rs$, and $\rho(\ell)$ generates an infinite cyclic group in $\PSL$.
Its domain of discontinuity  is  $\rs$ minus the two points, and its quotient by the cyclic group  is a two-dimensional torus $T_\ell$ has a projective structure.
Then $\ell$ is naturally embedded in $T_\ell$. 
Therefore we can naturally combine two projective surfaces $C$ and $T_\ell$ by cutting and pasting along $\ell$, so that it results a new projective structure $\Gr_\ell(C)$ on $F$. 
(Namely we identify boundary components $C \minus \ell$ and $T_\ell \minus \ell$ by the identification of $\ell$ on $C$ and on $T_\ell$ in an alternating manner.)
 Then it turns out that $\rho$ is also the holonomy of $\Gr_\ell(C)$.

\subsection{Measured laminations}\Label{ML}
Let $F$ be a surface (possibly with boundary), and let $\tau$ be a hyperbolic surface homeomorphic to $F$  (with geodesic boundary).
A \textit{geodesic lamination} $\lam$ is a set of disjoint geodesics whose union is a closed subset of $\tau$\,;  we denote this closed subset by $|\lam|$. 
Those geodesics are call {\it leaves} of the lamination.
A geodesic lamination $\lam$ is \textit{maximal} if its complement is a union of disjoint ideal triangles. 
A \textit{stratum} of $\lam$ is either a leaf of $\lam$ or the closure of a complementary region of $\lam$.

Let $\AA(\lam)$ be the set of all smooth simple arcs $\ap$ on $\tau$,  containing their endpoints, such that $\ap$ is transversal to $\lam$ at  its interior points and  not tangent to $\lam$ at its endpoints.
Let $\mathring{\AA}(\lam)$ be the set of all smooth simple arcs $\ap$ on $\tau$ such that $\ap$ is transversal to $\lam$ and the end points of $\ap$ are not in $|\lam|$. 
Then $\mathring{\AA}(\lam)$ is a dense subset of $\AA(\lam)$ in the Hausdorff topology.

A {\it transversal measure} on $\lam$ is a function $\mu\cn \mathring{\AA}(\lam) \to \R_{\geq 0}$  such that 
\begin{itemize}
\item $\mu(\ap) > 0$ if and only if $\ap \in \mathring{\AA}(\lam)$ intersects $\lam$,
\item if $\ap \in \mr{\AA}(\lam)$  is a composition of two arcs $\ap_1, \ap_2 \in \mr{\AA}(\lam)$, then $\mu(\ap) = \mu(\ap_1) + \mu(\ap_2)$, and
\item
 $\mu(\ap)$ is invariant under any isotopy of $\ap$ through arcs in $\mr{\AA}(\lam)$. 
\end{itemize}

For $p,q \in \tau$, if there is a unique shortest geodesic segment connecting $p$ to $q$, then let  $\mu(p, q)$ denote the transversal measure of the segment. 
The {\it measured lamination} $L$ is a pair $(\lam, \mu)$ of a geodesic lamination $\lam$ and the transversal measure  $\mu$ supported on $\lam$. 
Let $\TM(\lam)$ denote the set of all transversal measures supported on $\lam$.
Let $\ML(F)$ be the set of all measured laminations on $(F, \tau)$.
Note that we do not need to specify $\tau$, since, for different hyperbolic structures on $F$, the corresponding  spaces $\ML(F)$ are naturally isomorphic (see \cite[\S 1]{Bonahon97}).  \note{better reference?}
Suppose that $\lam$ contains a closed leaf $\ell$.
Then $\ell$ carries an atomic measure  (\textit{weight}), which is a positive real number. 

\begin{notation}
 $[a, b]$ denotes the geodesic segment connecting $a$ and $b$.
\end{notation}

Let  $L = (\lam, \mu)$ be a measured lamination on $\tau$.  
Then, for $\ap \in \AA(\lam) \minus \mathring{\AA}(\lam)$,   we can naturally define its transversal measure $\mu(\ap)$ to be a closed interval in $\R_{\geq 0}$ as follows. 
Let $(a_i) \st \mathring{\AA}(\lam)$ be a sequence converging to $\ap$ with $a_i \st \ap$. 
Similarly let  $(b_i) \sub \mathring{\AA}(\lam)$ be a sequence converging to $\ap$ with $\ap \st b_i$.   
Then, the transversal measure $\mu(\ap)$ is the closed interval
$$[\lim_{i \to \In} \mu(a_i), \lim_{i \to \In} \mu(b_i)].$$
Note that the width of the interval is  the sum of the atomic measures on  leaves through the endpoints of $\ap$\,:

In this paper, if stated, we allow closed leaves $\ell$ of $\lam$ to have weight infinity (\textit{heavy leaves}), i.e. if $\ap \in \mathring{\AA}(\lam)$ transversally intersects $\ell$, then $\mu(\ap) = \In$. 
A measured lamination is $\textit{heavy}$, if it has a leaf with weight infinity.

\subsection{Pleated surfaces}

A continuous map $\beta\cn \h^2 \to \h^3$ is a pleated plane if there exists a geodesic lamination $\lam$ on $\h^2$ such that 
\begin{itemize}
\item for  each stratum $P$ of $(\h^2, \lam)$, the map $\beta$ isometrically embeds $P$ into a (totally geodesic) copy of $\h^2$ in $\h^3$, and
\item $\beta$ preserves the length of (rectifiable) paths.
\end{itemize}
Then  we say that the geodesic lamination $\lam$ is \textit{realized} by the pleated surface $\beta$.
In this paper, we in addition assume that the realizing lamination is minimal, i.e. there is no proper sublimation of $\lam$ satisfying the two conditions above.

\begin{definition}[Total lift]
 Let $Y \to X$ be a covering map, and let $Z$ be a subset of $X$. Then the {total lift} of $Z$ to $Y$ is the inverse image of $Z$ by the map. 
\end{definition}

Then, suppose, in addition, that $\lam$ is the total lift of a geodesic lamination $\nu$ on a complete hyperbolic surface $\tau$. 
Let $F$ be the underlying topological surface of $\tau$.   
Then the $\pi_1(F)$-action on $\h^2$ preserves  $\lam$.
Let $\rho\cn \pi_1(F) \to \psl$ be a homomorphism.
Then the pleated surface $\beta\cn \h^2 \to \h^3$ is \textit{$\rho$-equivariant} if $\beta \cc \gam = \rho(\gam) \cc \beta$ for all $\gam \in \pi_1(F)$.
Then we say that the pair $(\tau, \nu)$ is \textit{realized} by the $\rho$-equivariant pleated surface $\beta$.

\begin{definition} 
Let $\psi\colon X \to Y$ be a map between metric spaces $(X, d_X)$ and $(Y, d_Y)$.
Then, for $\ep > 0$,  the map $\psi$ is an \ul{$\ep$-rough isometric} \ul{embedding} if   
$$d_Y(\psi(a), \psi(b)) -  \ep < d_X(a, b) <  d_Y(\psi(a), \psi(b)) + \ep,$$
for all $a, b \in X$.
Then $\psi$ is an $\ep$-rough isometry if, in addition, $Y$ is the $\ep$-neighborhood of the image of $\psi$.
\end{definition}

Two geodesic laminations are, in a sense,  ``close'' if they possibly intersect at angles very close to zero (see \S\ref{sAngle}).
Then, in the preceding paper, we proved that a please surfaces change a little when realizing laminations change a little. Namely 

 \begin{theorem}[\cite{Baba-15gt}, Theorem C]\Label{12-20}
 Suppose that there are a representation $\rho\cn \po(S) \to \psl$ and  
 a $\rho$-equivariant pleated surface $\beta_0\cn \h^2 \to \h^3$ realizing $(\sigma_0, \nu_0) \in \mathscr{T} \times \GL$.
 Then,
for every $\ep > 0$, there exists $\dl > 0$, such that if  there is another $\rho$-equivariant pleated surface $\beta\cn \h^2 \to \h^3$ realizing $(\sigma, \nu) \in \mathscr{T} \times \GL$ with $\angle_{\sigma_0}(\nu_0, \nu) < \dl$, then
 $\beta_0$ and $\beta$ are $\ep$-close: 
Namely there is a marking-preserving $\ep$-rough isometry $\psi\cn\sigma_0 \to \sigma$ such that, letting $\td{\psi} \cn \h^2 \to \h^2$ is the lift of $\psi$,  $~\beta_0$ and $\beta \cc \td{\psi}\cn \h^2 \to \h^3$ are $\ep$-close in the $C^0$-topology on $\h^2$ and,  moreover, in the $C^\In$-topology in the complement of the total lift to the  $\ep$-neighborhood of $|\nu| \cup |\nu_0|$ in $\sigma$. 
\end{theorem}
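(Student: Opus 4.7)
The plan is to argue by compactness and uniqueness. Assume the conclusion fails: there exist $\ep_0 > 0$ and a sequence of $\rho$-equivariant pleated surfaces $\beta_n\cn \h^2 \to \h^3$ realizing $(\sigma_n, \nu_n) \in \mathscr{T} \times \GL$ with $\angle_{\sigma_0}(\nu_0, \nu_n) \to 0$, such that no marking-preserving rough isometry $\psi\cn \sigma_0 \to \sigma_n$ brings $\beta_0$ and $\beta_n \cc \tilde\psi$ within $\ep_0$ in the stated sense.

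The first step is to extract a subsequential limit of $\beta_n$. Fix a basepoint in $\h^2$ and normalize; each $\beta_n$ is stratumwise isometric and globally $1$-Lipschitz, so together with $\rho$-equivariance and the cocompactness of the $\pi_1(S)$-action, the family $\{\beta_n\}$ is equicontinuous. Arzel\`a--Ascoli then produces a subsequential $C^0$-limit $\beta_\infty\cn \h^2 \to \h^3$ that is $1$-Lipschitz, $\rho$-equivariant, and length-preserving on every rectifiable path. The hypothesis $\angle_{\sigma_0}(\nu_0, \nu_n) \to 0$ forces that on any compact subset of a complementary region of $\nu_0$, the supports $|\nu_n|$ are eventually disjoint from that subset, so each $\beta_n$ restricts there to a totally geodesic isometric embedding into some copy of $\h^2$ inside $\h^3$. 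Limits of such embeddings remain totally geodesic, so $\beta_\infty$ is a $\rho$-equivariant pleated surface realizing $(\sigma_0, \nu_0)$.

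The decisive step is uniqueness of the $\rho$-equivariant pleated surface realizing the minimal lamination $\nu_0$. Each complementary ideal triangle of $\nu_0$ must map to an ideal triangle in $\h^3$ whose three ideal vertices in $\bdry \h^3$ are determined $\rho$-equivariantly by the endpoints of the bounding leaves, and the totally geodesic extensions then propagate uniquely across the lamination. Hence $\beta_\infty = \beta_0$. Pulling back the induced hyperbolic metrics from $\beta_n$ and $\beta_0$ and comparing, one reads off, for large $n$, a marking-preserving rough isometry $\psi_n\cn \sigma_0 \to \sigma_n$ whose lift $\tilde\psi_n$ makes $\beta_n \cc \tilde\psi_n$ arbitrarily $C^0$-close to $\beta_0$. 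This contradicts the standing assumption.

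Finally, $C^\infty$-closeness away from an $\ep$-neighborhood of $|\nu_0| \cup |\nu|$ is automatic once $C^0$-closeness is established: on any such region both $\beta_0$ and $\beta_n \cc \tilde\psi_n$ are smooth totally geodesic isometric embeddings of pieces of $\h^2$ into $\h^3$, and such an embedding is determined by its $1$-jet at a single point, so $C^0$-closeness upgrades to $C^\infty$-closeness. The main obstacle I expect is the uniqueness step and the explicit construction of $\psi_n$ with quantitative $\ep$--$\dl$ control: the endpoints on $\bdry \h^2$ of leaves of $\nu_n$ must be shown to vary continuously with $\nu_n$, which is precisely what the angle-convergence hypothesis (rather than mere Hausdorff convergence) provides, and gluing the stratumwise isometries into a single global rough isometry $\psi_n$ requires careful control across leaves of very small transversal weight, where the bending can accumulate in subtle ways.
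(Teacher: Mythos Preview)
This theorem is not proved in the present paper; it is quoted as Theorem~C of the preceding paper \cite{Baba_10-1} and used here as a black box. So there is no proof in this paper to compare your sketch against.

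On the substance: the compactness-plus-uniqueness outline is the natural strategy, but there are genuine gaps beyond those you already flag.

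First, the Arzel\`a--Ascoli step is not set up correctly. The maps $\beta_n$ have as domain $\h^2$ identified with the universal covers of \emph{varying} hyperbolic surfaces $\sigma_n$, carrying different $\pi_1(S)$-actions. To extract a limit you must first show that the $\sigma_n$ stay in a compact region of Teichm\"uller space; the $1$-Lipschitz property of pleated maps only gives $\ell_{\sigma_n}(\gamma) \geq \text{(translation length of } \rho(\gamma)\text{)}$, which is a lower bound on lengths, not a precompactness statement. An upper bound requires a separate bounded-diameter type argument for $\rho$-equivariant pleated surfaces, and this is where much of the real work sits.

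Second, your uniqueness argument and your claim that ``$|\nu_n|$ are eventually disjoint from'' compacta in complementary regions of $\nu_0$ both tacitly assume that $\nu_0$ is \emph{maximal}, so that every complementary region is an ideal triangle. The theorem as stated allows arbitrary $\nu_0 \in \GL$. If a complementary region of $\nu_0$ has nontrivial topology (e.g.\ $\nu_0$ is a single closed geodesic), then a lamination $\nu_n$ lying entirely in that region is disjoint from $\nu_0$, hence has $\angle_{\sigma_0}(\nu_0, \nu_n) = 0$, yet the corresponding pleated surface need not be close to $\beta_0$. So either the statement carries an implicit maximality hypothesis (which is indeed how it is applied in this paper, to the maximal laminations $\nu_{i,j}$), or the angle condition must be supplemented by Hausdorff closeness; in either case your argument must handle non-triangular complementary pieces, where the rigidity comes from the $\rho$-image of the stabilizer subgroup preserving a unique totally geodesic plane, not from ideal vertices alone.
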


\subsection{Traintracks}\label{traintracks} (See \cite{Kapovich-01}. Also  \cite{Penner-Harer-92})
Given a rectangle $R$,  pick a pair of opposite edges as \textit{horizontal edges} and the other pair \textit{vertical edges}. 
A \textit{(fat) traintrack} $T$ is a collection  $ \{R_i\}_i$ of  rectangles, called \textit{branches},  embedded in a surface $F$ so that $R_i$ are disjoint except overlaps of their vertical edges in a particular manner:
Each vertical edge $e$ may contain at most finitely many points that are, on $F$, identified with some vertices of the rectangles $R_i$, and  those points divide $e$ into finitely many (sub)edges;  
After applying this decomposition to all vertical edges, all vertical edges are uniquely divided into pairs that are homeomorphically identified on $F$. 
The points dividing (original) vertical edges are called \textit{branch points} of $T$.
Let $|T| \st F$ denote the union of the branches $R_i$ over all $i$. 
Then the boundary of $|T|$ is the union of the horizontal edges of $R_i$, and it contains the branch points of $T$.
 In this paper, we assume that  traintracks are at most trivalent, i.e. for all $i$,  each vertical edge of  $R_i$ is a union of, at most, two other vertical edges. 

A lamination $\lam$ on $F$ is \textit{carried} by a traintrack $T$ if 
\begin{itemize}
\item $|\lam|$ is in the interior of $|T|$, 
\item leaves of  $\lam$ are transversal to the vertical edges of the branches of $T$, and  
\item if $R$ is a branch of $T$, then $R \cap \lam$ is a lamination on $R$ consisting of arcs property embedded in $R$ connecting the vertical edges of $R$\,;
\end{itemize} 
then we say that $T$ is a \textit{traintrack neighborhood} of $\lam$. 

In addition, suppose that the surface $F$ is a  hyperbolic surface and that the branches $R_i$ are smooth rectangles. 
Then the boundary of $|T|$ is the disjoint union of  piecewise-smooth curves, and its non-smooth points are endpoints of vertical edges. 
In particular,  the branch points are non-smooth points.

 For $\ep > 0$, a (smooth) traintrack $T = \{R_i\}$ on  $F$ is \textit{$\ep$-nearly straight}, if each rectangle $R_i$ is smoothly $(1 + \ep)$-bilipschitz to a Euclidean rectangle and at each  branch point, the angle of  the boundary curve of $|T|$ is  $\ep$-close to $0$. 
For $K > 0$,  a traintrack $T = \{R_i\}$ is \textit{$(\ep, K)$-nearly straight}, if in addition, a horizontal edge of each branch has length at least $K$. 
Note that for fixed $K > 0$, if $\ep > 0$ is sufficiently small, every  $(\ep, K)$-nearly straight traintrack is hausdorff close to a geodesic lamination.

(See also \cite{Baba-15gt}.) 
A {\it round circle}  is, by identifying $\rs$ with the unite sphere in $\R^3$, a circle which is the intersection of  $\rs$ with a hyperplane in $\R^3$.
A  \textit{round cylinder}  in the Riemann sphere $\rs$ is a cylinder  bounded by disjoint round circles. 
The \textit{axis} of a round cylinder $A$ in $\rs$ is the geodesic in $\h^3$ orthogonal to both hyperbolic planes bounded by the boundary circles of $A$. 
Then $A$ admits a canonical \textit{circular foliation} by one parameter family of round circles bounding disjoint hyperbolic planes  orthogonal to the axis of $A$. 

Let $C$ be a projective structure on a surface $F$. 
Suppose that $T = \{R_i\}$ is a smooth traintrack on $C$.
Then $C$ induces a projective structure on each rectangle $R_i$.
 Then $dev(R_i)$ is an immersion of $R_i$ to $\rs$, which is defined up to a postcomposition with an element of $\PSL$.
Then a branch $R_i$ of $T$ is a \textit{supported} on a round cylinder $A$ on $\rs$ if
\begin{itemize}
\item  $dev(R_i)$ maps into $A$,
\item  different vertical edges of $R_i$ immerse into different boundary circles of $A$, and
\item   the horizontal edges of $R_i$  immerses transversally to the circular foliation of $A$.
\end{itemize}
 
 A curve on a projective surface is {\it circular} if its lift (to the universal cover)  immerses into a round circle in $\rs$ by the developing map.
Then, the circular foliation of $A$ induces a foliation on $R_i$ by {\it circular arcs} connecting the horizontal edges.
Suppose that each branch $R_i$ of $T$ is supposed on a round cylinder. 
Then the circular foliations on the branches $R_i$ yield a foliation of $|T|$ by circular arcs.
Note that, if a loop $\ell$ is carried by $T$, then we can isotope $\ell$ through loops carried by $T$ so that $\ell$ is transversal to the circular foliation on $T$. 
Then 
\begin{lemma}\Label{123112}
Let $T$ be a traintrack on a projective surface $C$ such that the branches on $T$ are supported on round cylinders on $\rs$. 
Then, if a loop $\ell$ is carried by $T$ and $\ell$ is transversal to the circular foliation of $T$, then $\ell$ is admissible.   
(Lemma 7.2 in \cite{Baba-15gt}.)  
\end{lemma}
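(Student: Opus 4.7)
The plan is to lift to the universal cover and analyze how the developing map $f$ acts on a lift $\tilde\ell$ of $\ell$, branch by branch. Lifting $T$ produces a traintrack $\tilde T$ carrying $\tilde\ell$, and $\tilde\ell$ traverses a bi-infinite sequence of lifted branches $\tilde R_n$ ($n \in \Z$) that is invariant under the deck transformation corresponding to $\ell$. By hypothesis, $f$ sends each $\tilde R_n$ into a supporting round cylinder $A_n \subset \rs$, carrying the two vertical edges of $\tilde R_n$ into the two distinct boundary circles of $A_n$ and pulling back the circular foliation of $A_n$ to that of $\tilde R_n$. Because $\tilde\ell$ is transversal to the circular foliation, each arc $f(\tilde\ell \cap \tilde R_n)$ crosses $A_n$ monotonically from one boundary circle to the other; at the junction between $\tilde R_n$ and $\tilde R_{n+1}$, $f$ sends the shared vertical sub-edge to an arc of a common boundary circle of $A_n$ and $A_{n+1}$ (the same circle, since they share a non-degenerate arc), so the concatenation $f(\tilde\ell)$ is a continuous path in $\rs$.

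I would then pass to $\h^3$ for a cleaner picture. Each boundary circle of $A_n$ bounds a totally geodesic hyperbolic plane, the two such planes are disjoint, and they bound a convex slab $S_n \subset \h^3$. Adjacent cylinders $A_n$ and $A_{n+1}$ share a boundary circle which bounds a single plane $\Pi_n$ that is the shared wall of $S_n$ and $S_{n+1}$, with the two slabs on opposite sides. Orthogonal projection of $f(\tilde\ell \cap \tilde R_n)$ onto the axis $\gamma_n$ of $A_n$ yields a monotone geodesic subsegment $\tilde\sigma_n \subset \gamma_n$ that traverses $S_n$ from $\Pi_{n-1}$ to $\Pi_n$. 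Concatenating produces a $\rho(\ell)$-invariant piecewise-geodesic $\tilde\beta = \bigcup_n \tilde\sigma_n$ in $\h^3$ that passes through the slabs in order and crosses the walls $\ldots, \Pi_{n-1}, \Pi_n, \Pi_{n+1}, \ldots$ consecutively.

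The main obstacle is to show that the family $\{\Pi_n\}_{n \in \Z}$ is globally pairwise disjoint and nested, so that $\tilde\beta$ cannot fold back on itself. I would establish this by induction. Adjacent walls $\Pi_n, \Pi_{n+1}$ are the two disjoint boundary planes of $S_{n+1}$, hence disjoint. For the inductive step, $\Pi_{n+1}$ lies in the open half-space bounded by $\Pi_n$ opposite to $S_n$, whereas the inductive hypothesis places $\Pi_{n-1}, \Pi_{n-2}, \ldots$ in the opposite half-space (on the $S_n$ side of $\Pi_n$); hence $\Pi_{n+1}$ is disjoint from every earlier $\Pi_k$. It follows that $\{\Pi_n\}$ is a nested sequence of mutually disjoint totally geodesic planes, so $\tilde\beta$ crosses them monotonically and limits to two distinct points $p_\pm \in \rs$. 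These points are fixed by $\rho(\ell)$, forcing $\rho(\ell)$ to be loxodromic with axis $\overline{p_- p_+}$. Finally, because $f(\tilde\ell)$ crosses the nested family of boundary circles $\partial_\infty \Pi_n$ in monotone order on $\rs$, it has no self-intersections, completing the verification that $\ell$ is admissible.
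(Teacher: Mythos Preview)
The paper does not supply its own proof of this lemma; it is quoted verbatim from the preceding paper \cite{Baba_10-1} (Lemma~7.2), so there is nothing here to compare your argument against directly. Your overall strategy---lifting to $\td{C}$, showing that the successive supporting cylinders $A_n$ share boundary circles and lie on opposite sides of them, passing to the nested family of hyperbolic planes $\Pi_n$, and reading off both loxodromicity of $\rho(\ell)$ and injectivity of $f|\td\ell$ from this nesting---is exactly the natural one, and all of the local steps are correct. In particular, transversality to the circular foliation makes the ``radial'' coordinate of $f(\td\ell\cap\td R_n)$ strictly monotone in $A_n$, so that arc is embedded; and the interiors of the $A_n$ are pairwise disjoint once the $\Pi_n$ are nested, so the global injectivity follows as you say.

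There is one genuine logical gap. You derive loxodromicity of $\rho(\ell)$ from the assertion that the piecewise geodesic $\td\beta$ ``limits to two distinct points $p_\pm\in\rs$'', but nesting of the $\Pi_n$ alone does not force $\bigcap_n \overline{H_n^+}$ to meet $\rs$ in a single point, and you have not yet established any quasigeodesic estimate on $\td\beta$. The clean fix is to use the periodicity directly on~$\rs$ rather than going through $\td\beta$: the nested boundary circles $\partial_\infty\Pi_n$ bound nested closed round disks $D_n$ with $\rho(\gamma_\ell)\,\overline{D_0}=\overline{D_p}\subset\operatorname{int}D_0$ (the inclusion is strict because consecutive boundary circles of a round cylinder are disjoint, not tangent). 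A M\"obius transformation carrying a closed round disk strictly into its interior cannot be elliptic (elliptics act isometrically for the spherical metric) or parabolic (the unique fixed point would lie on every $\partial D_{kp}$, which is impossible), hence is loxodromic. With that in hand, the nested disks shrink to the attracting fixed point, the nested complementary disks to the repelling one, and your remaining conclusions go through unchanged. You may then discard $\td\beta$ entirely, as it is not needed for either conclusion.
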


\subsection{Pants Graph}\Label{8-20-12no1}
(\cite{HatcherThurston80, Brock03}) 
Recall that $S$ is a closed oriented surface of genus $g \geq 2$. 
Then a \textit{ maximal multiloop} $M$ on $S$ is a multiloop such that  $S \minus M$ is a union of disjoint pairs of pants. 
Then $M$ consists of exactly $3 (g -1)$ non-parallel loops.

An \textit{elementary move} transforms a maximal multiloop $M$ to a different maximal multiloop by removing a loop $\ell$ of $M$ and adding another loop $m$ disjoint from the multiloop $M \minus \ell$ such that $m$ intersects $\ell$ minimally. 
Namely $m$ intersects $\ell$ in either one or two points. 
Then there is a unique connected component $F$ of $S$ minus $M \minus \ell$ such that $F$ contains $\ell$. 
Then either
\begin{itemize}
\item  $F$ is a one-holed torus, and $m$ intersects $\ell$ in a single point, or
\item  $F$ is a four-holed sphere, and $m$  intersects $\ell$ in two points. 
\end{itemize}
The \textit{pants graph} $\PG$ of $S$ is a one-dimensional simplicial complex whose vertices bijectively correspond to (the isotopy classes of) the maximal multiloops on $S$ and the edges to the elementary moves connecting different maximal multiloops.
Then it turns out that $\PG$ is connected (\cite{HatcherThurston80}).

\subsection{Purely loxodromic representations}\Label{purelyloxo} 

\begin{lemma}\Label{7-31-12no1}
Let $\rho\cn \pi_1(S) \to \psl$ be a purely loxodromic representation (see \S\ref{S:intro}). 
Then, if $\gam, \eta \in \pi_1(S)$ are non-commuting elements, the axes of the loxodromics $\rho(\gam), \rho(\eta)$ share no endpoint. 
\end{lemma}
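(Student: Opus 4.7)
The plan is to argue by contradiction: assume $\rho(\gm)$ and $\rho(\eta)$ are loxodromic elements sharing at least one fixed point on $\partial \h^3 = \rs$, and show that then either $\gm,\eta$ commute in $\pi_1(S)$, or some nontrivial element of $\pi_1(S)$ is sent by $\rho$ to a parabolic (or identity), violating pure loxodromicity. I would split into two cases according to whether one or both fixed points coincide.

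\textbf{Case 1 (both fixed points shared).} The stabilizer in $\psl$ of an unordered pair of distinct points in $\rs$ is abelian (isomorphic to $\C^\ast$ or a $\Z/2$-extension of it), so $\rho(\gm)$ and $\rho(\eta)$ commute. Hence $\rho([\gm,\eta]) = 1$. Since $\gm,\eta$ do not commute in $\pi_1(S)$, the commutator $[\gm,\eta]$ is a nontrivial element of $\pi_1(S)$ whose image under $\rho$ is the identity, which is not loxodromic --- contradicting the hypothesis on $\rho$.

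\textbf{Case 2 (exactly one fixed point shared).} After conjugating in $\psl$, I would place the common fixed point at $\infty$ and the other fixed point of $\rho(\gm)$ at $0$, so that
\[
\rho(\gm)(z) = \alpha z, \qquad \rho(\eta)(z) = \beta z + b,
\]
with $|\alpha|\ne 1$, $|\beta|\ne 1$, and $b\ne 0$ (otherwise $\rho(\eta)$ would fix $0$ as well, reducing to Case 1). A direct calculation gives
\[
\rho([\gm,\eta])(z) = \rho(\gm)\rho(\eta)\rho(\gm)\iv\rho(\eta)\iv(z) = z + (\alpha-1)b,
\]
which is a nontrivial parabolic since $\alpha\ne 1$ and $b\ne 0$. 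Once again $[\gm,\eta]\ne 1$ in $\pi_1(S)$, so this parabolic image contradicts pure loxodromicity.

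Both cases combined yield the lemma. The argument is essentially soft: the only real computation is the commutator in Case 2, and the main conceptual point is the elementary observation that the $\psl$-stabilizer of a two-point subset of $\rs$ is abelian. No topological input beyond $[\gm,\eta]\ne 1$ whenever $\gm,\eta$ fail to commute is needed.
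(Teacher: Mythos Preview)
Your proof is correct and follows essentially the same strategy as the paper: assume a shared endpoint and derive that $\rho([\gm,\eta])$ is either the identity or a parabolic, contradicting pure loxodromicity since $[\gm,\eta]\neq 1$. Your two-case split and explicit commutator computation make the argument cleaner than the paper's more compressed version; one small quibble is that the setwise stabilizer of a two-point set in $\rs$ is not abelian (the $\Z/2$-extension of $\C^\ast$ is dihedral-like), but since loxodromics fix each of their two fixed points individually they lie in the abelian $\C^\ast$, so your conclusion in Case~1 stands.
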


\begin{proof}
Since $\gam$ and  $\eta$ do not commute, $\gam, \eta \neq id$ and $[\gam, \eta] \neq id$.
Suppose that, to the contrary,  the axes of $\rho(\gam)$ and $\rho(\eta)$ share an endpoint.  
Then we can show that their commuter  $[\rho(\gam), \rho(\eta)]$ is parabolic element, by computing its trace.
This is a contradiction since $\rho$ is purely loxodromic.  
\end{proof}

(See \cite[\S 4.3]{Kapovich-01} for example.)
Recall that $S$ is a closed oriented surface of genus at least two. 
Let  $\{\gm_1, \cdots, \gm_m\}$ be a generating set  of $\pi_1(S)$.
Then, by the adjoint representation, $\PSL$ embeds into $\on{GL}(3, \C)$ as a complex affine group. 
Since $\on{GL}(3, \C) \st \C^9$,  representations $\rho \cn\po(S) \to \on{GL}(3, \C)$ injectively correspond to tuples  $\{\rho(\gam_1), \dots, \rho(\gam_m)\}$ in $\PSL^m \st \C^{9m}$.
Thus  we can regard the space $\mathscr{R}$ of representations $\rho \cn\po(S) \to \on{GL}(3, \C)$  as an affine algebraic variety. 
This variety is called  the \textit{$\PSL$ representation variety} of $S$.

    \fontsize{12pt}{12pt}\selectfont
Then $\psl$ acts on $\mathscr{R}$ by conjugation, and its orbits give the equivalent classes of representations. 
By quotienting out $\mathscr{R}$ by a slightly stronger equivalent relation,  we obtain the $\PSL$-\textit{character variety} $\chi$ of $S$ (see \cite{BoyerZhang98, HeusenerPorti04} about the $\PSL$-character varieties and the quotient). 
It turns out that two representations $\rho_1, \rho_2\col \pi_1(S) \to \PSL$ are equivalent if and only if $tr^2(\rho_1 (\gamma)) = tr^2(\rho_2 (\gamma))$ for all $\gamma \in \pi_1(S)$ (see Theorem \cite[Theorem 1.3]{ HeusenerPorti04}).

Then $\chi$ has exactly two connected components (\cite{Goldman-88t}).
Let $\chi_0$ be the component consisting of representations $\pi_1(S) \to \PSL$ that lift to $\pi_1(S) \to \SL$.
Then $\chi_0$ contains the quasifuchsian space. 

For every $\gam \in \po(S)$, let $\Tr^2_\gam\cn \chi \to \C$ denote the square trace function of $\gam$ given by $\rho \to  \Tr^2(\rho(\gam))$.
Then  $\Tr^2_\gam$  is a regular function (see (\cite{BoyerZhang98}).
Note that the singular part of the $\chi$ has complex codimension at least one.
In addition the image  of the holonomy map $\Hol\cn \PP \to \chi$ is contained in the smooth part of $
\chi_0$. 
\begin{lemma}\Label{040213}
Almost all elements of $\chi_0$ are purely loxodromic. 
\end{lemma}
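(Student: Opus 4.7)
The plan is to exhibit the non-purely-loxodromic locus in $\chi_0$ as a countable union of proper real-analytic subsets, hence as a set of measure zero (and meager) for any reasonable notion of ``almost all'' on the complex variety $\chi_0$.

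First I would fix the characterization: an element $\alpha \in \PSL$ fails to be loxodromic exactly when $\Tr(\alpha) \in [-2,2]$, equivalently when $\Tr^2(\alpha) \in [0,4] \subset \R \subset \C$. Thus, for each nontrivial $\gamma \in \pi_1(S)$, the set
\[
N_\gamma \defeq \{\rho \in \chi_0 : \rho(\gamma) \text{ is not loxodromic}\} = (\Tr^2_\gamma)^{-1}\bigl([0,4]\bigr),
\]
and the set of non-purely-loxodromic representations in $\chi_0$ equals the union of $N_\gamma$ over the (countable) set of nontrivial conjugacy classes in $\pi_1(S)$.

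Second, I would show that for every nontrivial $\gamma$, the regular function $\Tr^2_\gamma\cn \chi_0 \to \C$ is nonconstant. The key point is that $\chi_0$ contains the Fuchsian locus (indeed, the entire quasifuchsian space). On the Fuchsian locus, the restriction of $\Tr^2_\gamma$ equals $4\cosh^2(\ell_\gamma/2)$, where $\ell_\gamma$ is the hyperbolic translation length of $\gamma$. Since $\gamma \neq 1$, this length function is nonconstant across Teichm\"uller space, so $\Tr^2_\gamma$ is nonconstant on the Fuchsian locus, and hence nonconstant on any irreducible component of $\chi_0$ that meets it. Because $\chi_0$ is connected (in fact its components all contain the Fuchsian locus up to the standard structural results cited from \cite{Goldman-88t}), $\Tr^2_\gamma$ is nonconstant on all of $\chi_0$.

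Third, a nonconstant holomorphic (regular) function on an irreducible complex algebraic variety has nonconstant restriction to each (irreducible) component, so the preimage $(\Tr^2_\gamma)^{-1}([0,4])$ of a compact real interval is a proper real-analytic subset of real codimension at least one in $\chi_0$; in particular it has Lebesgue measure zero and empty interior in the smooth part of $\chi_0$. Finally, the set of conjugacy classes of elements in $\pi_1(S)$ is countable, so
\[
\bigcup_{[\gamma] \neq 1} N_\gamma
\]
is a countable union of measure-zero, nowhere-dense real-analytic subsets, hence itself of measure zero and meager in $\chi_0$. Its complement, the purely loxodromic locus, is therefore of full measure and dense.

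The main technical obstacle is the step verifying that $\Tr^2_\gamma$ is nonconstant on every irreducible component of $\chi_0$ simultaneously; once one knows this, everything else is bookkeeping. I would handle this by appealing to the structural description of $\chi_0$ as the component containing all $\SL$-liftable representations, noting that Fuchsian representations lie in this component and that the trace functions separate points even on the Fuchsian locus.
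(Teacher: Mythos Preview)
Your proof is correct and follows essentially the same approach as the paper: both argue that each $\Tr^2_\gamma$ is a nonconstant regular function (using the Fuchsian/quasifuchsian locus), so the preimage of $[0,4]\subset\C$ has measure zero, and then take a countable union over nontrivial $\gamma\in\pi_1(S)$. Your version is somewhat more explicit about the real-analytic codimension and the irreducibility issue, but the underlying argument is the same.
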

\begin{proof}
Since $\chi_0$ contains the quasifuchsian space,  if $\gam \in \pi_1(S) \minus \{id\}$, then $\Tr^2_\gm$ is nonconstant on $\chi_0$. 
If $\Tr^2_\gam(\rho) = 4$ if $\rho(\gam)$ is parabolic and $\Tr^2_\gam(\rho) \in [0, 4)$ if $\rho(\gam)$ is elliptic. 
Since $[0,4] \sub \R$ has measure zero in $\C$ and $\Tr^2_\gam$ is regular,  almost every element of $\chi_0$ takes $\gam$ to a loxodromic element. 
Since $\pi_1(S)$ contains only countably many elements, if $\rho$ is a generic representation in $\chi_0$, then $\rho(\gam)$ is loxodromic for all $\gam \in \pi_1(S)$. 
\end{proof}

\subsection{Local characterization of projective structures in $\GL(S)$.}\Label{sAngle}
Let $\tau$ be a hyperbolic surface homeomorphic to $S$.
If two geodesics $\ell$ and $m$ on $\tau$ intersect at a point $p$, then
let $\angle_p(\ell, m)$ denote the angle between $\ell$ and $m$ at $p$ that takes a value in $[0, \pi/2]$.
Let $\lam$ and $\nu$ are geodesic laminations on $\tau$. 
Then the \textit{angle} between $\lam$ and $\nu$ is 
 \begin{displaymath}
 \sup \angle_p(\ell_p, m_p),
\end{displaymath} 
where the supremum runs over all points $p \in |\lam| \cap |\nu|$ and $\ell_p$ and $m_p$ are the leaves of $\lam$ and $\nu$, respectively, intersecting at $p$. 
If $\lam$ and $\nu$ are  laminations on the topological surface $S$ or  different hyperbolic surfaces homeomorphic to $S$, then  $\angle_\tau(\lam, \nu)$ is given by taking their geodesic representatives on $\tau$.

Let $\nu$ be a geodesic lamination on $\tau$, and let $(\lam_i)$ be a sequence of geodesic laminations on $\tau$. 
Suppose that $\angle_\tau(\lam_i, \nu) \to 0$ as $i \to \In$. 
Note that this convergence is independent on the choice of  the hyperbolic structure $\tau \in \mathscr{T}$.
Since, in this paper, we typically require such an angle to be sufficiently small,  we may denote $\angle_\tau(\lambda_i, \nu)$ simply by $\angle(\lambda_i, \nu)$ without specifying $\tau$. 
If $\sigma$ is a subsurface of $\tau$, then  let $\angle_\sigma(\lambda, \nu)$ be 
$\sup \angle_p(\ell, m)$ over all leaves $\ell \in \lam$ and $m \in \nu$ intersecting at points $p$ contained in $\sigma$.

Given measured geodesic laminations  $M$ and $L$ on $(S, \tau)$, their angle $\angle_\tau(M, L)$ is the angle of the under lying geodesic laminations $|M|$ and $|L|$.

\begin{theorem}[\cite{Baba-15gt}, Theorem B]\Label{8-14-12no1}
Let $C \cong(\tau, L)$ be a projective structure on $S$ with holonomy $\rho \col \pi_1(S) \to \PSL$.
Then there is $\del > 0$ such that, if  another projective structure $C' \cong (\tau', L')$ with holonomy $\rho$ satisfies $\angle_\tau(L, L') < \del$, then we can graft $C$ and $C'$ along multiloops to a common projective structure. 
That is, there are admissible multiloops $M$ on $C$ and $M'$ on $C'$ such that 
\begin{displaymath}
\Gr_M(C) \cong \Gr_{M'}(C').
\end{displaymath}
\end{theorem}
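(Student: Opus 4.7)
The plan is to combine the pleated-surface closeness Theorem~\ref{12-20} with a common traintrack supporting both bending laminations, and to exhibit grafts of $C$ and $C'$ along multiloops carried by this traintrack that produce a single common projective structure.

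First, I would apply Theorem~\ref{12-20}. The Thurston coordinates $(\tau,L)$ and $(\tau',L')$ give rise to $\rho$-equivariant pleated planes $\beta\cn\h^2\to\h^3$ and $\beta'\cn\h^2\to\h^3$ realizing $(\tau,|L|)$ and $(\tau',|L'|)$; these come from the collapsing maps of $C$ and $C'$. The small-angle hypothesis $\angle_\tau(L,L')<\del$ together with the shared holonomy $\rho$ puts us in the hypothesis of Theorem~\ref{12-20}, producing a marking-preserving $\ep$-rough isometry $\psi\cn\tau\to\tau'$ such that $\beta$ and $\beta'\cc\td\psi$ are $\ep$-close in $C^0$ on $\h^2$ and $C^\infty$-close off an $\ep$-neighborhood of $|L|\cup\psi\iv(|L'|)$.

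Second, since $|L|$ and $\psi\iv(|L'|)$ are Hausdorff close on $\tau$, I would fix an $(\ep,K)$-nearly straight traintrack $T$ on $\tau$ with $K$ large, carrying both geodesic laminations simultaneously. Pulled back to $C$ (and pushed to $C'$ via $\psi$), on each branch of $T$ the developing maps of $C$ and $C'$ are supported on the same round cylinder in $\rs$ — its axis being the common fixed axis of the loxodromic monodromy $\rho(\ell)$ for any loop $\ell$ carried by the branch, and its width controlled by the $C^\infty$-closeness of the pleated surfaces off the bending loci. By Lemma~\ref{123112}, any loop carried by $T$ and transversal to the circular foliation is admissible on both projective structures.

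Third, inside each branch's cylinder region, the two projective structures $C$ and $C'$ can differ only by an integer difference in the number of full wraps around the cylinder, since on that region both are quotients of a subset of $\rs$ by the same cyclic group $\langle\rho(\ell)\rangle$ generated by the loxodromic holonomy of a core loop. I would then choose admissible multiloops $M\subset T$ on $C$ and $M'\subset T$ on $C'$ whose weights at each loop record the deficit of wraps that $C$ and respectively $C'$ has relative to the other (so only one of $M,M'$ is non-empty along any given loop). Inserting those cylinders via $\Gr_M$ and $\Gr_{M'}$ equalizes the wrap counts on every branch, so $\Gr_M(C)$ and $\Gr_{M'}(C')$ have identical developing maps on the whole of $T$. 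On the complement of $T$ the developing maps already agreed via $\td\psi$, and combining these pieces yields the isomorphism $\Gr_M(C)\cong\Gr_{M'}(C')$.

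The main obstacle is the global coherence of this local picture: one must ensure that the branch-by-branch wrap discrepancies assemble into honest isotopy classes of multiloops on $S$, that these multiloops agree topologically on both sides of each branch point where branches meet, and that the round-cylinder identifications match across branch points so that the two grafted structures really coincide globally rather than merely on each branch. Taking $\del$ (hence $\ep$) sufficiently small, so that $\psi$ barely moves $T$ and the pleated surfaces nearly agree off the bending, is what forces the required compatibility and integrality.
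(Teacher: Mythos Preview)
This theorem is not proved in the present paper; it is quoted as Theorem~B of the preceding paper \cite{Baba_10-1} and used here as a black box. So there is no proof in this paper against which to compare your attempt.

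That said, your sketch is in the spirit of the techniques used elsewhere in this paper (e.g.\ the proof of Theorem~\ref{admissibleII}), but as written it has real gaps. The most serious is the third step. You assert that on each branch the two structures ``differ only by an integer difference in the number of full wraps,'' and that outside $T$ ``the developing maps already agreed via $\td\psi$.'' Neither is justified: $\psi$ is an $\ep$-rough isometry $\tau\to\tau'$ between the \emph{hyperbolic} structures, not an isomorphism of the projective structures $C$ and $C'$, so there is no reason the developing maps agree off $T$. And on a branch, $C$ and $C'$ are not quotients of a region of $\rs$ by $\langle\rho(\ell)\rangle$; the branch is a simply-connected rectangle, not a cylinder, and the transverse measures of $L$ and $L'$ there need not differ by an integer multiple of $2\pi$. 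The actual argument in \cite{Baba_10-1} must compare the full Thurston data $(\tau,L)$ and $(\tau',L')$ and produce multiloops whose $2\pi$-grafts reconcile both the lamination supports and the measures; the ``wrap count'' heuristic you give does not do this. Your final paragraph correctly identifies the coherence problem, but the proposal does not contain the mechanism that resolves it.
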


\section{Thurston's grafting coordinates  on $\PP$}\Label{thurston}
(\cite{Kamishima-Tan-92, Kullkani-Pinkall-94} are general references of this section.)
The space $\PP$ of all (marked) projective structures on  $S$ is naturally homeomorphic to the product of   the Teichm\"uller space $\mathscr{T}$ of $S$ and  the space of measured laminations $\ML$ on $S$:
\begin{eqnarray} 
\PP \cong \mathscr{T} \times \ML \label{040813}
\end{eqnarray}

Let  $C  = (f, \rho) \in \PP$, and  let $(\tau, L) \in  \mathscr{T} \times \ML$ be its Thurston coordinates, i.e. the corresponding pair via (\ref{040813}).
We briefly describe the correspondence between $\PP$ and $\mathscr{T} \times \ML$.
First there is a measured lamination $\LL = (\nu, \omega)$ on $C$, called the \textit{(canonical) circular lamination}, and a marking-preserving continuous map 
\begin{displaymath}
\kap \cn C \to \tau,
\end{displaymath}
 called  \textit{collapsing map}, such that $\kap$ descends $\LL$  to $L$. 
  The leaves of $\LL$ are circular and they have no atomic measure, in comparison to $L$. 
 The pair $(\tau, L)$ corresponds to a pleated surface $\beta\col \h^2 \to \h^3$ equivariant with respect to $\rho$, constructed as follows.  
Let $\til{L}$ be the total lift of $L$ under the covering map $\h^2 \to \tau$. 
Then $\til{L}$ is the $\pi_1(S)$-invariant measured lamination on $\H^2$.
Thus, intuitively speaking by bending $\h^2$ along $\til{L}$ by the angle given by the transversal measure of $L$, $(\tau, L)$ yields a pleated surface $\beta\col \h^2 \to \h^3$. 
(To be precise, there is  a sequences $(L_i)$ of measured laminations with finitely many leaves that converges to $\til{L}$ uniformly on compacts in $\h^2$. Then the pleated surface $\beta$ is given by the limit of pleated surfaces $\beta_i\col \h^2 \to \h^3$ corresponding to $L_i$; see \cite[3.11.6]{Epstein-Marden-87}).

Note that $f\col \til{C} \to \rs$ and $\beta\col \h^2 \to \h^3$ are both $\rho$-equivariant and $\rs$ is the ideal boundary of $\H^3$.
In fact, for every $x \in \til{C}$, its image under  $f\col \til{C} \to \rs$ maps to its image under $\beta \cc \kap \col \til{C} \to \h^3$ by a certain nearest point projection given by the {\it maximal ball} in $\til{C}$ associated with $x$ (see \S \ref{sMaximalBalls} for the precise correspondence).

The collapsing map $\kap$ takes each stratum of $(C, \LL)$ diffeomorphically onto a stratum of $(\tau, L)$.
If $\ell$ is a closed leaves of $L$, which  carries positive atomic measure, then $\kap^{-1}(\ell)$ is a cylinder $\AA_\ell$ foliated by closed leaves of $\LL$.
Conversely $\kap$ takes each closed leaf of $\LL$ on $\AA_\ell$ onto $\ell$  diffeomorphically. 
Let $\AA$ be the union of the disjoint cylinders $\AA_\ell$ over all closed leaves $\ell$ of $L$.
Then, on the other hand, the strata of $(C, \LL)$ not in $\AA$ bijectively correspond,  via $\kap$,  to the strata of  $(\tau, L)$ that are not the closed leaves of $L$.
For different closed leaves $\ell$ of $L$, their corresponding $\AA_\ell$ are disjoint on $C$. 


Recall that $\kap \col C \to \tau$ preserves its marking (and thus  $\kappa$ is homotopic to a homeomorphism).
Then, given a cover of $C$, its Thurston coordinates are given by the corresponding cover of $(\tau, L)$. 
In particular, the universal cover of $C$ is a projective structure on an open disk, and its Thurston coordinates are $\h^2$  and the total lift of $L$ to $\h^2$.

More generally,
we say that  a  projective structure $C = (f, \rho)$ on a connected orientable  surface $F$ {\it has Thurston coordinates} $(X, L)$, where the universal cover $\til{X}$ of $X$ is a convex subset of $\h^2$ bounded by geodesics and a measured lamination $L$ on $X$, 
 if the maximal balls in the universal cover $\til{C}$ yields a $\pi_1(C)$-invariant a stratification of $C$ and it descends, by the construction in \S \ref{sMaximalBalls},  to a $\rho$-equivariant pleated surface from $\til{X} \to \H^3$ given by $(X, L)$.

Indeed, a  projective structure on an open disk has Thurston coordinates unless it is isomorphic to $\C$ as a projective surface (see \S \ref{BendingDisk}).
However its first coordinate is not necessarily the entire hyperbolic space. 
Let $X$ be a convex subset of $\h^2$ bounded by disjoint geodesics.
Note that $X$ can be  the entire hyperbolic  plane or a single (biinfinite) geodesic.
In addition, we suppose that  each boundary geodesic of $X$ is either a subset of $X$ or its complement $\h^2  \minus X$. 
Let $L = (\lam, \mu)$ be a measured lamination on $X$.
Then $L$ induces a pleated surface $\beta\cn X \to \h^3$ by bending $X$, inside $\h^3$, which is unique up to a post-composition with an element of $\psl$. 

Let $(X, L)$ be the Thurston coordinates of  a projective structure $C$  of a open disk. 
Then, if $X$ has a boundary geodesic $\ell$, the transversal measure of $L$ must be infinite near $\ell$.
More precisely,
\begin{itemize}
\item if $\ell$ is a subset of $X$, then $\ell$ has weight infinity, and 
\item if $\ell$ is a subset of $\h^2 \minus X$, then the transversal measure of $L$ is infinite ``near $\ell$", i.e.  if an arc $\ap$ on $X$ is  transversal to  $L$ and it has an open end point at  $\ell$, then  $\mu(\ap)$ is infinite.  
\end{itemize}


\subsection{Maximal balls}\Label{sMaximalBalls}
Let $C$ be a projective structure on an open disk. 
Let $f\cn C \to \rs$ be  $dev(C)$.
Conformally identifying $\rs$ with $\s^2$, we fix a spherical metric on $\rs$, which is unique up to an element of $\psl$. 
Pullback this metric to $C$ by $f$ and obtain an incomplete spherical metric on $C$.
The metric completion of $C$ minus $C$ is called the \textit{ideal boundary} of $C$ and denoted by $\pt_\In C$.
Note that this completion $C \cup \bd_\infi C$  is (topologically) independent of the choice of the spherical metric on $\rs$.

A \textit{maximal ball} in $C$ is a topological open ball $B$ such that 
\begin{itemize}
\item $B$ is round, i.e.  $f$ embeds $B$ onto a round open ball in $\rs$, and
\item $B$ is maximal, i.e. there is no round open ball in $C$ strictly containing $B$.
\end{itemize}

The \textit{ideal boundary} $\pt_\infi B$ of a maximal ball $B$ in $C$ is the intersection of $\pt_\In C$ and $\pt B$ in $C \cup \pt_\In C$.
Then, by identifying $B$ with $\h^2$ conformally, the ideal boundary of $B$ is a subset of the ideal boundary of $\h^2$ (which is $\s^1$). 
The \textit{core} $\Core(B)$ of a maximal ball $B$ in $C$ is  the convex hull of the ideal boundary of $B$ in $\h^2$. 
It turns out that $\Core(B)$ is a stratum of $(C, \LL)$.  
In particular, for different maximal balls $B$ in $C$, their corresponding cores are disjoint \cite[Proposition 4.3]{Kullkani-Pinkall-94}. 
Moreover, taking the cores of  all maximal balls $B$, we obtain the stratification of $(C, \LL)$. 
In other words, for every point $p \in C$, there is a unique maximal ball $B$ in $C$ such that $p \in \Core(B)$ \cite[Proposition 4.4]{Kullkani-Pinkall-94}.
Then we say that $B$ is the maximal ball \textit{centered} at $p$.

Let $H$ be the hyperbolic plane in $\H^3$ bounded by the boundary circle of $B$. 
Then the nearest point projection from $\H^3$ to $H$ extend to $p$. 
 Let $\beta\col X \to \H^3$ be the pleated surface for $C$, and $\kap \col C \to X$. 
Then $\beta \cc \til{\kap} (p)$ is the projection of $p$ to $H$, where $\til{\kap}\col \til{C} \to \til{X}$ is the lift (\S 8). \tcb{\cite{Kullkani-Pinkall-94}}

\subsubsection{Thurston metric} (\cite{Kullkani-Pinkall-94, Tanigawa-97})
Every projective structure $C$ on a surface, unless its universal cover is $\C$, admits a canonical $C^1$-smooth Riemannian metric, called {\it Thurston metric}. 
It is given by a $\pi_1(S)$-invariant Riemannian metric on its universal  $\til{C}$ defined as follows.
Let  $x \in \til{C}$. 
For every maximal ball $B$ in $\til{C}$ containing $x$, by conformally identifying $B$ with $\h^2$, it defines a Riemannian metric tensor at $x$.
Taking the infimum of the metric tensors over all maximal ball $B$ containing $x$, we obtain Thurston metric at $x$. 

Let $C \cong (\tau, L)$ be the Thurston coordinates.
The Thurston metric is isometric to the hyperbolic metric on $\tau$ by $\kap$ on each stratum of $(C, \LL)$.
 For each closed leaf $\ell$ of $L$,  the Thurston metric on the cylinder $\AA_\ell$ is Euclidean, so that each leaf of $\LL$ in $\AA_\ell$ is a closed geodesic whose length is  $\length_\tau(\ell)$ and the height of the cylinder is twice as much as the weight of $\ell$ given by $L$. 
We call $\AA$ the {\it Euclidean region} of $C$.
The Thurston metric changes continuously in the deformation space $\PP$ of projective structures on $S$.

\subsection{Existence of Thurston coordinates on disks}\Label{BendingDisk}
\cite[Theorem 11.6]{Kullkani-Pinkall-94} implies
\begin{theorem}\Label{6-3no1}
Let $C$ be a projective structure on a simply connected surface not isomorphic to $C \neq \C, \rs$ as a projective surface. 
Then $C$ admits unique Thurston coordinates $(X, L)$ such that 
\begin{itemize}
\item  $X$ is a closed convex subset of $\h^2$ bounded by geodesics and  each boundary geodesic  of $X$ is either contained in $X$ ({\rm closed boundary}) or in $\h^2 \minus X$ ({\rm open boundary}), and
\item  $L$ is a measured lamination on $X$, and if a geodesic boundary of $X$ is contained in $X$, then it is a leaf of $L$ with weight $\infi$ (and no leaves in the interior of $X$ have weight infinity).
\end{itemize}
\end{theorem}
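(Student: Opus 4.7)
The plan is to derive the theorem from the maximal-ball decomposition of \cite{Kullkani-Pinkall-94} and convert it into Thurston's picture via an auxiliary pleated plane in $\h^3$. Since $C$ is not isomorphic to $\C$ or $\rs$, Kulkarni-Pinkall yields, for every point $p \in C$, a unique maximal round ball $B_p \subset C$ whose core $\Core(B_p)$ contains $p$, and these cores stratify $C$. The developing map $f$ embeds each maximal ball $B$ onto a round open disk $f(B) \subset \rs$, which bounds a totally geodesic hyperbolic plane $H_B \subset \h^3$.

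Next, I would assemble the planes $\{H_B\}$ into a single pleated plane $\beta$ while simultaneously constructing $(X, L)$ and the collapsing map $\kappa$. Fix a base maximal ball $B_0$ and a conformal identification of $B_0$ with a round open disk $D_0 \subset \h^2$. For every maximal ball $B$ sharing a one-dimensional stratum with $B_0$, there is a unique isometric extension of this identification across the shared geodesic stratum, which embeds $B$ as another round disk in $\h^2$. Propagating along the adjacency graph of maximal balls yields a subset $X \subset \h^2$, together with a map $\beta : X \to \h^3$ that is isometric on each piece coming from a maximal ball and is bent along the one-dimensional strata between adjacent pieces. The subset $X$ is convex because it is a union of closed convex pieces glued along geodesics. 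The measured lamination $L$ is then defined so that its support equals the bending locus of $\beta$ and its weight on any transversal equals the exterior bending angle; the collapsing map $\kappa \colon C \to X$ sends each $\Core(B)$ diffeomorphically onto the corresponding piece of $X$.

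It remains to analyze $\partial X$. Each boundary component must be a complete geodesic in $\h^2$, since it is the boundary of the base of a pleated plane. A boundary geodesic $\ell$ falls into one of two cases. Either $\ell$ is the geodesic boundary of the core $\Core(B)$ of some single maximal ball $B$ whose ideal boundary already lies entirely on one side of $\partial f(B)$, in which case $f(B)$ "wraps past" $\ell$; then $\ell \subset X$ must be a leaf of $L$ of weight $\infty$. Or else $\ell$ is only an accumulation locus of cores whose bending angles diverge to $\infty$, in which case $\ell \subset \h^2 \minus X$ and the transversal measure of $L$ blows up near $\ell$. No interior leaf of $L$ can carry infinite weight, for if it did then the two planes $H_B$ and $H_{B'}$ associated to the two adjacent maximal balls would coincide in $\h^3$, contradicting the maximality of each of $B$ and $B'$.

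Uniqueness follows because any pair $(X', L')$ satisfying the stated conditions determines, via the associated pleated plane $\beta' \colon X' \to \h^3$, the same decomposition of $C$ into cores of maximal balls; hence $\kappa$ and therefore $(X, L)$ is canonical. The main obstacle is the boundary analysis in the third paragraph: the bookkeeping needed to track sequences of maximal balls whose cores escape toward $\partial X$ and to separate the two boundary types, together with the verification that the weight-infinity condition is realized exactly in the closed-boundary case, requires a careful limiting argument in the spirit of the one appearing in \cite{Kullkani-Pinkall-94}.
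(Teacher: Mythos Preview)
The paper does not give a proof of this theorem; it simply records that \cite[Theorem 11.6]{Kullkani-Pinkall-94} implies the statement. Your proposal is therefore not in conflict with the paper's approach---you are attempting to spell out precisely the derivation that the paper leaves to the citation, and your starting point (the Kulkarni--Pinkall maximal-ball stratification of $C$) is the correct one.

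That said, several steps in your sketch are not right as written. First, the pieces you glue into $X \subset \h^2$ are the cores $\Core(B)$, not the maximal balls $B$ themselves; the balls overlap heavily in $C$, while the cores are disjoint and form the stratification. Your sentence ``embeds $B$ as another round disk in $\h^2$'' conflates the two. Second, the assertion ``$X$ is convex because it is a union of closed convex pieces glued along geodesics'' is false in general: gluing convex regions of $\h^2$ along geodesics does not produce a convex region. The convexity of $X$ comes instead from the fact that the cores, as one moves across $C$, are nested in a monotone way dictated by the dual tree $\Psi\colon C \to T$ (cf.\ \S\ref{5-4no1} of the paper), and this is what needs to be argued. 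Third, your explanation of why no interior leaf carries infinite weight is backwards: infinite bending angle does not force the adjacent support planes $H_B$ and $H_{B'}$ to coincide. Rather, the transversal measure on an isolated interior leaf is by construction the exterior dihedral angle between $H_B$ and $H_{B'}$, which is automatically a finite number; there is nothing to rule out. Finally, your description of the closed-boundary case (``$f(B)$ wraps past $\ell$'') is vague; what actually happens is that a closed boundary leaf of weight $\infty$ corresponds to a half-infinite Euclidean strip in $C$ foliated by parallel circular leaves of $\mL$, all collapsing to $\ell$ under $\kappa$, and this is the characterization you should aim for.
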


\begin{remark}
The boundary leaf with weight infinity corresponds to a complex affine half infinite cylinder in the projective surface, and this cylinder  is foliated by round circles which descend to the boundary leaf.
\end{remark}

\begin{corollary}\Label{6-3no2}
Let $F$ be a connected surface (possibly with open boundary).
Let $C$ be a projective structure on $F$ with holonomy $\rho\cn \po(F)  \to \psl$. 
Suppose that  $\Im \rho$ is non-elementary. 
Then  $C$ has  Thurston coordinates 
$$C \cong (\tau, L),$$
where $\tau$ is a convex hyperbolic surface possibly with geodesic boundary such that the interior of $\tau$ is homeomorphic to $F$ and $L$ is a measured geodesic lamination on $\tau$. 
In addition each boundary component of $\tau$ is either open or closed;  therefore the closed boundary components of $\tau$ are the only leaves of $L$  with weight infinity. 
\end{corollary}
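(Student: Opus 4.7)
The plan is to pass to the universal cover and apply Theorem \ref{6-3no1}, then push the resulting Thurston coordinates down through the deck group action. Let $\td{F}$ denote the universal cover of $F$ and $\td{C}$ the pullback projective structure on $\td{F}$; since $F$ is connected with (possibly open) boundary, $\td{F}$ is homeomorphic to an open disk. First I would rule out the exceptional cases: if $\td{C}$ were projectively equivalent to $\rs$, then $dev(\td{C})$ would be a covering onto $\rs$, forcing $\rho$ to be trivial; and if $\td{C}$ were equivalent to $\C$, then the image of $dev(\td{C})$ would omit a common point of $\rs$ that is fixed by $\rho$. Either possibility contradicts the non-elementary hypothesis, so Theorem \ref{6-3no1} applies and yields unique Thurston coordinates $\td{C} \cong (\td{\tau}, \td{L})$, where $\td{\tau} \subset \h^2$ is closed and convex with geodesic boundary components of the two prescribed types (open, or closed leaves of $\td{L}$ of weight $\infty$).

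Next, because $(\td{\tau}, \td{L})$, the canonical lamination $\td{\mL}$ on $\td{C}$, and the collapsing map $\kap\cn \td{C} \to \td{\tau}$ are all projectively canonical, they are preserved by the deck action of $\pi_1(F)$ on $\td{C}$. Thus the $\pi_1(F)$-action descends via $\kap$ to an action on $\td{\tau}$ preserving $\td{L}$, and the $\rho$-equivariance of the associated pleated surface $\beta\cn \td{\tau} \to \h^3$ realizes this as an action by $\pslr$-isometries projecting onto the elements of $\rho(\pi_1(F))$ in $\psl$. I would then verify that this induced action is free and properly discontinuous: for freeness, if $\gamma \neq 1$ fixed a point $x \in \td{\tau}$, it would preserve the stratum $S_x$ of $(\td{\tau}, \td{L})$ through $x$, hence (by equivariance of $\kap$) the corresponding stratum $\kap^{-1}(S_x)$ in $\td{C}$; unpacking the structure of $\kap$ (a diffeomorphism on non-Euclidean strata, a collapse of horizontal segments on Euclidean strips, and a collapse of the boundary half-planes to weight-$\infty$ leaves) and using that $\pi_1(F)$ acts freely on $\td{C}$ forces a contradiction. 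Proper discontinuity is analogous, via the equivariance of $\kap$ and the properness of the deck action on $\td{C}$.

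Once the induced action on $\td{\tau}$ is free and properly discontinuous, I would take the quotient to produce the convex hyperbolic surface $\tau = \td{\tau}/\pi_1(F)$ equipped with the measured geodesic lamination $L = \td{L}/\pi_1(F)$. The interior of $\tau$ is homeomorphic to $F$ since the action on the interior of $\td{\tau}$ quotients to $F$ via the interior of $\td{F}$, and the classification of boundary components of $\tau$ as either open or closed (the closed ones being precisely the leaves of $L$ of weight $\infty$) is inherited from the corresponding classification on $\td{\tau}$ given by Theorem \ref{6-3no1}. The equivariant identification $\td{C} \cong (\td{\tau}, \td{L})$ then descends to $C \cong (\tau, L)$. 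I expect the main obstacle to be verifying that the induced action on $\td{\tau}$ is free and properly discontinuous: $\kap$ has non-injective fibers along Euclidean strips and along boundary leaves of weight $\infty$, so the argument must carefully track how deck transformations act on these distinguished strata of $\td{C}$ in order to transfer freeness and properness of the deck action down to the base $\td{\tau}$.
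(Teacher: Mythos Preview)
Your approach is essentially the paper's: lift to the universal cover, apply Theorem \ref{6-3no1}, and quotient by the deck action. The paper is considerably terser; in particular it does not verify freeness or proper discontinuity of the induced action on $\td{\tau}$ at all, simply asserting that the invariant pair $(\td{\tau}, \td{L})$ descends. So your worry about the collapsing map having noncompact fibers along weight-$\infty$ strata is a point on which you are being more careful than the paper, not less.

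There is, however, one genuine step you omit. You invoke non-elementariness only to rule out $\td{C} \cong \C, \rs$, but that is not enough: even when Theorem \ref{6-3no1} applies, the convex set $\td{\tau}$ it produces may degenerate to a single bi-infinite geodesic (a weight-$\infty$ leaf) with empty interior, and then your conclusion ``the interior of $\tau$ is homeomorphic to $F$'' fails outright. This degenerate case actually occurs when $\Im\rho$ is elementary with two-point limit set; see the Remark immediately following the corollary. The paper uses non-elementariness a second time precisely to exclude this: it observes that $\td{\tau}$ is the convex hull, in $\h^2$, of the limit set of the $\pi_1(F)$-action, and a non-elementary image forces that limit set to have more than two points, so $\td{\tau}$ has nonempty interior whose closure is $\td{\tau}$. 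You should insert this step before claiming $\operatorname{int}\tau \cong F$; your justification for that homeomorphism (``the action on the interior of $\td{\tau}$ quotients to $F$ via the interior of $\td{F}$'') presupposes it implicitly.
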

\begin{remark}
If $\Im \rho$ is elementary but the limit set of $\Im \rho$  has cardinality two, then $C$ still has  Thurston coordinates $(\tau, L)$.
However $\tau$ may be a single close geodesic with weight infinity, and the interior of $\tau$ is not homeomorphic to $F$.
Nonetheless  the regular neighborhood of $\tau$ is still homeomorphic to $F$.
\end{remark}

\proof[Proof of Corollary \ref{6-3no2}]
Since the limit set of $\Im(\rho)$ has cardinality more than one,  the universal cover of $C$ can not be isomorphic to  $\C$ or $\rs$ (as a projective structure).
Thus applying Theorem \ref{6-3no1}, we obtain the Thurston coordinates $(\td{\tau}, \td{L})$ of  the universal cover of $C$ so that, if exists, the boundary geodesics of $\td{\tau}$ are the only leaves of $L$ with weight infinity.
Then $\pi_1(F)$ acts on $\td{\tau}$.
Then $\td{\tau}$ is the convex hull of the limit set of $\rho(\pi_1(F))$. 
Thus, since $\Im(\rho)$ is non-elementary,  $\td{\tau}$ has interior whose closure is $\td{\tau}$.   
Since $\po(F)$ preserves $(\td{\tau}, \td{L})$, it descends to the Thurston coordinates $(\tau, L)$ of $C$.
Then the interior of $\tau$ is homeomorphic to $F$.
\Qed{6-3no2}

\subsection{Canonical neighborhoods}\Label{5-4no1}

Suppose that $C$ is a projective structure on an open disk with $C \neq \C$.
Then let $(X, L)$ denote its Thurston coordinates, where $X$ is a convex subset of $\h^2$ bounded by geodesics and $L$ is a (possibly heavy) measured lamination on $X$ (Proposition \ref{6-3no1}).
Let $\beta\cn X \to \h^3$ be the corresponding pleated surface and  $\kp\cn C \to X$ be the collapsing map. 
Let $\mL$ be the measured lamination on $C$ that descends to $L$ by $\kp$.

Let $p$ be a point on $C$, and
let $B(p)$ be the maximal ball in $C$ centered at $p$.
Let $U(p)$ denote the union of all maximal balls in $C$ containing $p$.  
Then $U(p)$ is an open neighborhood of $p$, and it is called the \textit{canonical neighborhood} of $p$ in $C$.
It turns out that $U(p)$ is homeomorphic to an open disk and $dev(C)$ embeds $U(p)$  into $\rs$ (see \cite{Kullkani-Pinkall-94} \cite{Kamishima-Tan-92}).
Note, since $C \neq \C$, thus $U(p) \neq \C$.  
The ideal boundary $\bd_\infi C$ intersects the closure of $U(p)$ in the completion $C \cup \partial_\infty C$, and points in the intersection are called 
 {\it ideal points} of $U(p)$.

Recall that $C$ decomposes into strata by $\LL$, which are leaves of $\LL$ and closures of the complementary regions of $C \minus |\LL|$.
Then we can take a quotient $T$ of $C$ by collapsing each stratum to a point. 
Let  $\Psi\cn C \to T$ be the quotient map.
Then, for two strata $P, Q$ of $(C, \LL)$,  the distance between  $\Psi(P)$ and $\Psi(Q)$ on $T$ is  the infimum of the  measures, given by $\LL$, over all transversal arcs connecting $P$ and $Q$ in $C$. 
 Then, as $C$ is a disk,  it is well-known that the quotient $T$ is a metric $\R$-tree (see, for example, \cite[\S11.12]{Kapovich-01}).

\begingroup

 Let $W(p)$ be the union of $\Core(B(x))$ for all $x \in C$ with $p \in  B(x)$.
Then $W(p)$ is the open neighborhood of $p$ bounded by  the leaves $\ell$ of $\LL$ such that  their corresponding maximal balls $B_\ell$ satisfy $\pt B_\ell \ni p$ (Figure \ref{f:4-23no3}). 
 If $B_1$ and $B_2$ are different maximal balls  in $C$, then $B_1$ intersects exactly one connected component of $C \minus B_2$\,; 
	this implies $W(p)$ is connected.

\begin{figure}[h]
\begin{overpic}[scale=.3
]{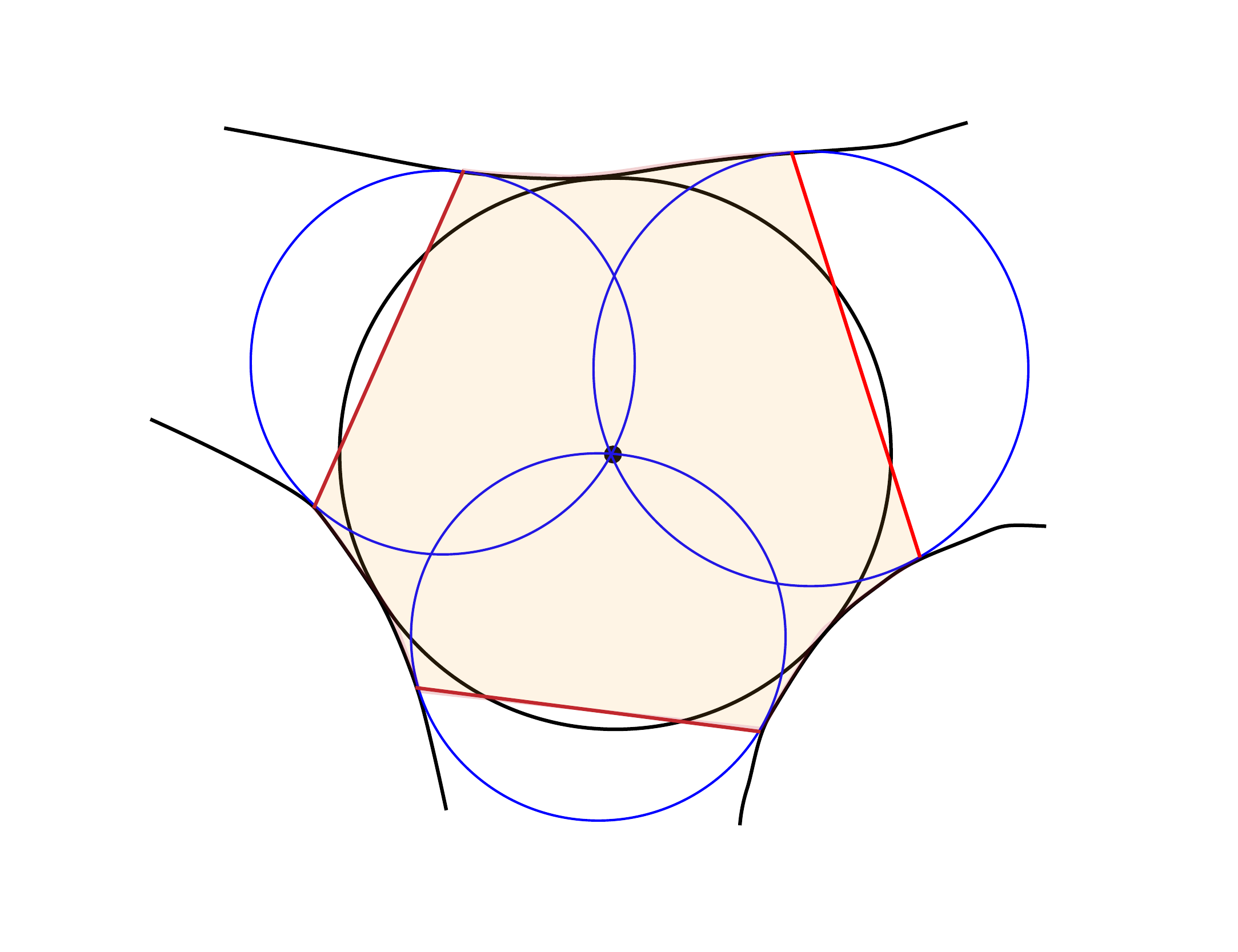}
\put(48,22){$\textcolor{BurntOrange}{W(p)}$}
\put(52,41){$p$}
      \end{overpic}
\caption{}\label{f:4-23no3}
\end{figure}

 $U(P) \minus W(p)$ are disjoint half  disks, which are cores of the Thurston coordinates of $U(P)$.
 However, typically those half disks are not strata of Thurston coordinates of $C$ via the inclusion $U(P) \subset C$. 
   \endgroup

\begin{lemma}\Label{5-31no1}
For  $p \in C$, if a neighborhood $V_p$ of $\Psi(p)$ in $T$ is contained in $\Psi(W(p))$,
then the ideal boundary $\pt_\If B(p)$ is  the boundary circle  $\pt B(p)$ minus the union of maximal balls of $C$ whose  cores map into  $V_p$ by $\Psi$. 
\end{lemma}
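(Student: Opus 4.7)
The plan is to work inside $\rs$ via the embedding $dev\colon U(p) \hookrightarrow \rs$, identifying $U(p)$ with its image, $B(p)$ with a round open disk $D$, and $\partial B(p)$ with the round circle $\partial D$. Under this identification, a point of $\partial D$ lies in $\partial_\infty B(p)$ exactly when it is not represented by a point of $C$. Thus the lemma amounts to showing that $\partial B(p) \cap C$ coincides with the union, over maximal balls $B(q)$ with $\Psi(\Core(B(q))) \in V_p$, of $B(q) \cap \partial B(p)$.

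One inclusion is immediate: if $B(q)$ is a maximal ball with $\Psi(\Core(B(q))) \in V_p \subset \Psi(U(p))$, then since the stratum $\Core(B(q))$ is a single tree-point lying in $\Psi(U(p))$, there is some $u \in \Core(B(q)) \cap U(p)$. Thus $B(q)$ is anchored in $U(p)$; because $dev$ embeds $U(p)$, the disks $dev(B(q))$ and $D$ overlap coherently in $\rs$, and any point of $B(q) \cap \partial B(p)$ lies in $C$, hence not in $\partial_\infty B(p)$.

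For the reverse inclusion, let $x \in \partial B(p) \cap C$. Since $x \in C$, the developing map extends locally beyond $\partial D$ near $dev(x)$ within $dev(C)$. Using this extension together with the embedding of $U(p)$, the idea is to produce a continuous family of round disks in $\rs$ through both $dev(p)$ and $dev(x)$, lifting to a family of maximal balls $B_\epsilon$ in $C$, each containing $p$ and $x$, with $B_\epsilon$ converging to $B(p)$ as $\epsilon \to 0$. Their cores $\Core(B_\epsilon)$ then converge to $\Core(B(p))$ inside $C$, so by continuity of the collapsing-to-tree map $\Psi$ the points $\Psi(\Core(B_\epsilon))$ converge to $\Psi(p)$ in $T$. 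Since $V_p$ is a neighborhood of $\Psi(p)$, for sufficiently small $\epsilon$ we obtain $\Psi(\Core(B_\epsilon)) \in V_p$, and $B_\epsilon$ is the desired maximal ball containing $x$.

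The main obstacle is making the family $B_\epsilon$ precise and verifying the tree-convergence $\Psi(\Core(B_\epsilon)) \to \Psi(p)$: one must check that round disks through $dev(p)$ and $dev(x)$ slightly perturbing $D$ genuinely lift to maximal balls in $C$ (using that the lift remains embedded by $dev$ because $U(p)$ does and the extension of $dev$ at $x$ is local), and that nearby cores in $C$ have nearby $\Psi$-images in $T$. The hypothesis $V_p \subset \Psi(U(p))$ is what guarantees that such approximating maximal balls are indexed by strata inside a neighborhood of $\Psi(p)$ within $\Psi(U(p))$, so that membership in $V_p$ can be achieved by continuity.
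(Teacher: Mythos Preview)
Your first inclusion is fine: if $\Psi(\Core(B(q)))\in V_p\subset\Psi(U(p))$, then the stratum $\Core(B(q))$ meets $U(p)$, so $B(q)\subset C$ meets $\partial B(p)$ only at interior points of $C$, which are therefore not in $\partial_\infty B(p)$.

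The reverse inclusion, however, is where your sketch has a genuine gap. You propose to perturb the round disk $D=dev(B(p))$ in $\rs$ to round disks through $dev(p)$ and $dev(x)$ and then \emph{lift these to maximal balls in $C$}. But maximal balls in $C$ are not parameterized by round disks in $\rs$: they are parameterized by strata of $(C,\mathcal L)$, i.e.\ by points of the tree $T$. An arbitrary round disk close to $D$ need not be the developing image of any maximal ball. So the step ``check that round disks \ldots\ genuinely lift to maximal balls in $C$'' is not a technicality to be verified; it is generally false, and you yourself flag it as the main obstacle without resolving it.

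The paper avoids this by moving the \emph{center} rather than the disk. A point $x\in\partial B(p)\setminus\partial_\infty B(p)$ lies on a circular arc $s$ of $\partial B(p)$ whose endpoints are joined by a boundary leaf $\ell$ of $\Core(B(p))$. One then takes points $x_i\in C$ in the component of $C\setminus\Core(B(p))$ bounded by $\ell$, converging to an interior point of $\ell$. The maximal balls $B(x_i)$ are automatic (no lifting to check), and by continuity of $x\mapsto B(x)$ one gets $B(x_i)\to B(p)$, hence $B(x_i)\cap s\to s$. Since $x_i$ approaches a point of $\overline{\Core(B(p))}$, the tree images $\Psi(x_i)\to\Psi(p)$, so $\Psi(x_i)\in V_p$ for large $i$, and every point of $s$ lies in some $B(x_i)$ with $\Psi(\Core B(x_i))\in V_p$. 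This is the missing mechanism in your argument: parameterize nearby maximal balls by nearby points of $C$ on the correct side of $\ell$, not by nearby round disks in $\rs$.
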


\begin{proof}
If $x \in W(p)$, then $B(x)$ is contained in $U(p)$. 
In particular,   if the neighborhood $V_p$ of $p$ is contained in $ \Psi(W_p)$, 
for all  $x \in C$ with $\Psi(x) \in V_p$,   $B(x)$ is contained in $U(p)$.
 By the definition of $U(p)$,  the maximal ball $B(x)$ contains $p$. 

The ideal boundary $\pt_\In B(p)$ is naturally embedded  in the boundary of $U(p)$ in $\rs$.
Therefore,  since   $B(x)$ is in the interior of $U(p)$, the ideal boundary $\pt_\If B(p)$ is contained in $\pt B(p) \sm \cup_x B(x)$ over all $x \in C$ with $\Psi(x) \in V_p$. 

To show the opposite inclusion, 
let $s$ be the connected component of $\pt B(p) \sm \pt_\If B(p)$;  see Figure \ref{6-1no1}.
Then $s$ is a circular arc on $\rs$ with open ends. 
Then there is  a unique leaf $\ell$ of $\LL$ connecting the endpoints of $s$.
 Namely $\ell$ is a boundary leaf  of $\Core B(p)$.

Consider the connected  component of  $C \minus \Core B(x)$ bounded by $\ell$.
Pick  a sequence  of points $x_i$ in the component limiting to an interior point of $\ell$.
Then $B(x_i)$ converges to $B(p)$ as $i \to \In$, and $B(x_i) \cap s$ converges to $s$. 
Since $\Psi$ takes $\Core B(x_i)$ to a point in $V_p$ for sufficiently large $i$,
	we have $s \st \cup_x B(x)$ over $x \in C$ with $\Psi(x) \in V_p$.
\begin{figure}[h]
            \begin{overpic}[scale=.3
]{}
\put(61,42){$\tcr{\ell}$}
\put(78,50){$s$}
\put(31,27){\small $\Core B(x)$}
\put(42, 14){$B(x)$}
\put(10, 50){$C$}
      \end{overpic}
\caption{}\label{6-1no1}
\end{figure}
\end{proof}
Let $(\h^2, L_p)$ be the Thurston coordinates of $U(p)$.
As $U(p)$ is embedded in $\rs$, its Thurston coordinates correspond to the boundary of the convex hull of $\rs \minus U(p)$.
In particular the first coordinate is the entire hyperbolic plane since $U(p)$ is embedded in $\rs$.

Let $\beta_p\cn \h^2 \to \h^3$ be the pleated surface corresponding to  $(\h^2, L_p)$.  
Let $\mL_p$ be the circular measured lamination on $U(p)$ that descends to $L_p$ by the collapsing map $\kp_p\cn U(p) \to \h^2$.

Note that  $W(p) \st U(p) \st C$. 
Then we next show that
the Thurston coordinates of $U(p)$ on $W(0)$ coincide with that of $C$ (which typically fails on $W(p) \minus U(p)$).  Namely
\begin{proposition}\Label{4-23no3}
\begin{itemize}
\item In $W(p)$,   $\mL_p$ is isomorphic to  $ \mL$, and thus the Thurston metric on $W(p)$ is isometric to that on $C$.
\item There exists a natural isometry $\psi\cn \kp_p(W) \to \kp(W)$  such that $\psi \cc \kp_p = \kp$ on $W$ and $\beta \cc \psi = \beta_p$ on $\kp_p(W)$. 
\[
\xymatrix{ 
{{    (U_p, \mL_p)  }} \supset W\, \ar[d]^{\kp_p} \ar@{^{(}->}[r]^{} &  (C,L)  \ar[d]^{{\kp}} \\
  (\h^2, L_p) \ar[r]^{{\psi  }} \ar[rd]^{\beta_p} \supset \kp_p(W)	&   \kp(W) \subset  {{(X, L) }} \ar[d]^{\beta}  \\  
& \h^3}
   \]
\end{itemize}
\end{proposition}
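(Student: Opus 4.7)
The plan is to propagate agreement from the level of maximal balls up to laminations, then to metrics, and finally to the pleated surfaces. The whole argument hinges on the identification of maximal balls in $C$ and in $U(p)$ at points of $W(p)$: for any $x \in W(p)$, the defining condition $p \in B(x)$ implies $B(x) \subset U(p)$; since $U(p)$ embeds into $\rs$ under $dev(C)$ and any round ball in $U(p)$ is also a round ball in $C$, maximality of $B(x)$ in $C$ forces $B(x)$ to be the maximal ball of $U(p)$ centered at $x$ as well. Hence the stratum $\Core(B(x))$ is the same whether computed in $(C, \mL)$ or $(U(p), \mL_p)$.

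Since $W(p)$ is by definition the union of cores $\Core(B(x))$ taken over $x$ with $p \in B(x)$, the paragraph above shows that the canonical laminations $\mL$ and $\mL_p$ agree on $W(p)$: the leaves are the same circular arcs, and the atomic weights on closed leaves are read off from the identical Euclidean cylinders (whose widths are determined by the maximal ball data). The Thurston metric is assembled from the canonical lamination together with the pulled-back hyperbolic metric on the complement of the Euclidean region; since every ingredient is determined by the maximal balls centered at points of $W(p)$, the two Thurston metrics coincide on $W(p)$. This proves the first bullet.

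For the second bullet, I would define $\psi$ by $\psi(\kp_p(x)) := \kp(x)$ for $x \in W(p)$. Well-definedness and injectivity follow because the fibers of $\kp_p$ and of $\kp$ over $W(p)$ are the same subsets of $W(p)$, namely either single strata or Euclidean leaves of the common canonical lamination. The identity $\psi \cc \kp_p = \kp$ holds by construction; and since $\kp$, $\kp_p$ restrict to marking-preserving $C^1$-diffeomorphisms off the Euclidean regions (and isometrically on the hyperbolic part, with matching widths on Euclidean cylinders), $\psi$ is an isometry between the subsets $\kp_p(W) \subset \h^2$ and $\kp(W) \subset X \subset \h^2$. Finally, to check $\beta \cc \psi = \beta_p$ on $\kp_p(W)$: each maximal ball $B(x)$ determines a hyperbolic plane in $\h^3$ (the one bounded in $\rs$ by $\partial B(x)$), and both $\beta(\kp(x))$ and $\beta_p(\kp_p(x))$ are the point of $\h^3$ dual to $B(x)$ under this correspondence; since $B(x) \subset \rs$ is the same round ball viewed in $C$ or in $U(p)$, the two pleated surfaces agree on corresponding points.

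I expect the main subtlety to lie in the Euclidean regions: one must verify that if $x$ lies on a closed leaf $\ell$ of $\mL$ with positive or infinite weight, then the Euclidean cylinder neighborhood of $\ell$ in $W$ transfers coherently to a Euclidean strip in $U(p)$ of the same width and circular-foliation structure. This is controlled by the matching of maximal balls established in the first step, but the bookkeeping (especially for heavy leaves, where widths are infinite and $\kp_p(W)$ may contain boundary-at-infinity behaviour of $X$) deserves close attention. Connectedness of $W(p)$, already recorded in the paper, is what upgrades the locally-defined $\psi$ to a globally well-defined map on $\kp_p(W)$.
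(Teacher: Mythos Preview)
Your first step correctly shows that for $x\in W(p)$ the maximal ball $B(x)$ in $C$ is also the maximal ball in $U(p)$ centered at $x$. However, the sentence ``Hence the stratum $\Core(B(x))$ is the same whether computed in $(C,\mL)$ or $(U(p),\mL_p)$'' does not follow from that. The core is not determined by the round ball alone: it is the convex hull, inside $B(x)\cong\h^2$, of the \emph{ideal boundary} $\partial_\infty B(x)$, and this ideal boundary depends on the ambient projective surface. Since $U(p)\subsetneq C$, the ideal boundary of $U(p)$ is a priori larger than that of $C$, so one could have $\partial_\infty^{U(p)}B(x)\supsetneq\partial_\infty^{C}B(x)$, and hence $\Core_{U(p)}(B(x))\supsetneq\Core_C(B(x))$. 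In that case the two canonical laminations would genuinely differ on $W(p)$ (for instance a one-dimensional stratum of $\mL$ could sit in the interior of a two-dimensional stratum of $\mL_p$).

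The paper closes exactly this gap via Lemma~\ref{5-31no1}. Because $W(p)$ is open, connected, and saturated for the stratification of $C$, its image $\Psi(W(p))$ is an open neighborhood in the dual tree $T$; the lemma then identifies, for each $q\in W(p)$,
\[
\partial_\infty^{C}B(q)\;=\;\partial B(q)\setminus\!\!\bigcup_{x\in W(p)}\!\!B(x)\;=\;\partial B(q)\setminus U(p),
\]
and since $U(p)$ is embedded in $\rs$ the right-hand side is precisely $\partial_\infty^{U(p)}B(q)$. With the ideal boundaries shown to coincide, the cores coincide, and from that point on your argument (defining $\psi$ by $\psi\circ\kp_p=\kp$ and reading off the pleated-surface compatibility from the common maximal balls) is fine and matches the paper's. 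Your closing worry about heavy leaves is unnecessary here: $U(p)$ embeds in $\rs$, so $L_p$ has no leaves of infinite weight.
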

\proof
Recall that $U(p) = \cup_x B(x)$ where $x$ runs over all points  $C$ with  $p \in B(x)$.
Since $U(p) \sup C$,  such a maximal ball $B(x)$ in $C$ is also maximal in $U(p)$.
Since $W(p)$ is connected and it contains no boundary leaves  (Figure \ref{f:4-23no3}), $\Psi(W(p))$ is an open  connected subset of $T$. 
Therefore, by Lemma \ref{5-31no1}, if $q \in W(p)$, the ideal boundary of the maximal ball $B(q)$ in $C$ is  equal to that  in $U(p)$.
Since the maximal balls and their ideal boundary determine the circular laminations,  $\mL$ is isomorphic to $\mL_p$ on $W(p)$ by the inclusion $U_p \st C$.
The second assertion similarly holds. 

\Qed{4-23no3}


\part{Grafting Conjecture for purely loxodromic holonomy}

\section{Sequence of pleated surfaces}\Label{5-14}
Fix an arbitrary representation $\rho\cn \po(S) \to \psl$ that is  purely loxodromic.
 Let $C_\sharp  \cong (\tau_\sharp , L_\sharp)$ and $\Cc \cong (\tau_\flat, L_\flat)$ be projective structures with holonomy $\rho$.
Then they correspond to $\rho$-equivariant pleated surfaces realizing  $(\tau_\sharp , L_\sharp)$ and $(\tau_\flat, L_\flat)$. 
In this section, we construct an infinite family of pleated surfaces, in a coarse sense, ``connecting'' those pleated surfaces corresponding to $C_\sharp$ and $C_\flat$. 

Pleated surfaces are invented by William Thurston for the study of three-dimensional hyperbolic manifolds, and in particular he used a sequence of homotopy equivalent pleated surfaces in order to understand geometry of the convex hull of the manifolds (see \cite[Chapter 9]{Thurston-78}), and it was been widely used (for example see \cite{Brock03} \cite{Minsky99}).

The family of pleated surfaces  in this paper is motivated by the study hyperbolic 3-manifolds, but 
 on the other hand, the homomorphisms $\rho\col \pi_1(S) \to \PSL$ of our interest here are  not necessarily discrete or faithful. 
In order to adapt the theory of pleated surfaces for hyperbolic three-manifolds,  we carefully use the assumption of $\rho$ being purely loxodromic.

\begin{lemma}
Let $L$ be a measured (geodesic) lamination on a hyperbolic surface $\tau$. 
For every $\ep > 0$, there is a neighborhood $U$ of $[L]$ in $\PML(S)$, such that if $L' \in U$ then $\angle_\tau (L, L') < \ep$.

\end{lemma}   
\begin{proof}
Suppose, to the contrary, that there is a sequence of measured laminations $L_i$ converging $L$ as $i \to \infi$, but $\limsup_{i \to \infi} \angle_\tau (L, L_i) > 0$. 
The space of geodesic laminations on $\tau$ is compact. 
Thus, up to a subsequence,  the underlying geodesic laminations $|L_i|$ converge, as $i \to \infi$, to a geodesic lamination $\lambda_\infi$ which contains a leaf transversally intersecting a leaf of $L$. 
This contradicts to the assumption $L_i \to L$.
\end{proof}

 \begin{theorem}[\cite{FLP79}]
In the space of measured laminations $\ML(S)$, a weighted loop is dense.  
\end{theorem}   

By this theorem,
    we can pick maximal multiloops $M_\sharp$ and $M_\flat$ on $C_\sharp $ and $\Cc$, respectively, so that
\begin{itemize}
\item  $\angle_{\tau_\sharp}(M_\sharp,L_\sharp)$ and $\angle_{\tau_\flat}(M_\flat, L_\flat)$ are sufficiently small, and  
\item  sufficiently small neighborhoods of $M_\sharp$ and $M_\flat$ contain $L_\sharp$  on $\tau_\sharp$  and $L_\flat$ and $\tau_\flat$, respectively.  
\end{itemize}
Since the pants graph of $S$ is connected  (\S \ref{8-20-12no1}), there is  a simplicial path in the graph connecting  $M_\sharp$ and $M_\flat$. 
Let $(M_i)_{i =0}^n$ be the corresponding sequence of maximal multiloops on $S$ with $M_0 = M_\sharp$ and $M_n = M_\flat$, so that $M_i$ and $M_{i +1}$ are adjacent vertices of the pants graph for all $i = 0, \dots, n-1$. 

Each connected component $P$ of $S \sm M_i$ is a pair of pants. 
Pick a maximal geodesic lamination on $P$. 
Then it consists of three isolated geodesics,  and each geodesic ray in the lamination (\textit{half-leaf}) is asymptomatic to a boundary component of $P$, spiraling towards it. 
We can, in addition, assume that such half-leaves spiral towards left (with respect to the orientation of $S$) when they approach towards boundary components and that, on each leaf of the lamination, the rays in the opposite directions are asymptotic to different boundary components of $P$. 
For each $i$, let $\nu_i$ be the maximal lamination of $S$ that is the union of the maximal multiloop $M_i$ and the above maximal laminations on all connected components $P$ of $S \sm M_i$. 

The lamination $\nu_i$ is obtained as the Hausdorff limit of the iteration of the left Dehn twist along $M_i$ of some multiloop $N_i$ on $S$ (Figure \ref{f_nu_i}).
Indeed we can take the multiloop $N_i$ so  that the restriction of $N_i$ to each connected component  $P$ 
of $S \sm M_i$ is a union of three  non-parallel arcs connecting all pairs of boundary components of $P$. 
Furthermore, for every $k \in \Z_{> 0}$, by taking $k$ parallel copies of the arcs on all $P$, we can also take $N_i$ such that the number of the arcs of  $N_i | P$ is  3$k$ for all connected components $P$ of $S \sm M_i$.

 \begingroup
 \color{blue}
    \fontsize{12pt}{12pt}\selectfont
\begin{figure}
\begin{overpic}[scale=.5
] {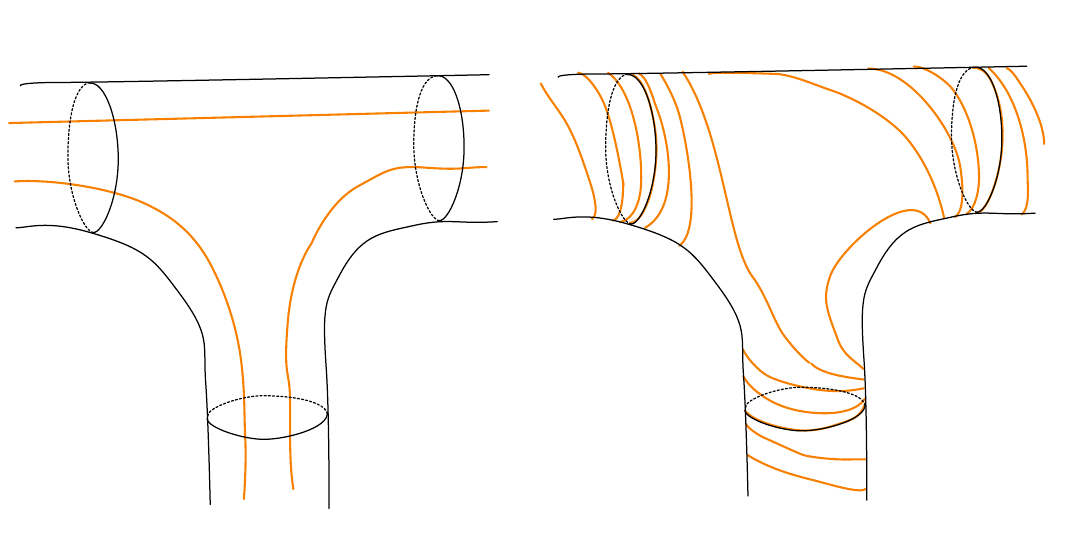} 
 \put(40 , 45){\textcolor{Black}{$M_i$}}  
 \put(20 , 28){\textcolor{Orange}{$N_i$}}  
 \put(70 , 28){\textcolor{Orange}{$\nu_i$}}  
      \end{overpic}
\caption{The maximal lamination $\nu_i$ is obtained by twisting $N_i$ along $M_i$ ``infinitely many'' times. }\label{f_nu_i}
\end{figure}

   \endgroup
 
 The following lemma guarantees that  $\nu_i$ is realized by a unique $\rho$-equivariant pleated surface.
\begin{lemma}\Label{12-18no1} 
Suppose that $\rho \cn \po(S) \to \psl$ is purely loxodromic. 
Let $\nu$ be a geodesic lamination on $S$ such that 
\begin{itemize}
\item $\nu$ is maximal, and 
\item every half-leaf of $\nu$ accumulates to a closed leaf of $\nu$.
\end{itemize}
Then there is a unique $\rho$-equivariant pleated surface realizing $\nu$. 
\end{lemma}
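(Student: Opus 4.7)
The plan is to build the pleated surface stratum by stratum, using the fact that every half-leaf of $\nu$ spirals into one of the closed leaves of the maximal multiloop, and then to check continuity, isometry on strata, and length preservation.

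First I would set up the combinatorial picture. Let $M \subset \nu$ denote the sublamination consisting of closed leaves of $\nu$; by hypothesis $M$ is a maximal multiloop and every component of $S \setminus M$ is a pair of pants carrying three isolated leaves of $\nu$, each spiraling toward the boundary. Lifting to $\mathbb{H}^2$, the complement $\mathbb{H}^2 \setminus \widetilde{\nu}$ is a disjoint union of ideal triangles, and the ideal vertex of any such triangle lies at an endpoint of some lift $\widetilde{\ell}$ of a closed leaf $\ell \in M$.

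Next I would define the map $\beta \colon \mathbb{H}^2 \to \mathbb{H}^3$ on the 1-skeleton. For each closed leaf $\ell \subset M$ and each lift $\widetilde{\ell}$, the element $\rho(\ell) \in \psl$ is loxodromic, so it has a well-defined axis $A_{\widetilde{\ell}} \subset \mathbb{H}^3$, and I send $\widetilde{\ell}$ isometrically onto $A_{\widetilde{\ell}}$, making a $\rho$-equivariant choice. Each ideal vertex $v$ of a complementary triangle lies at an endpoint of some $\widetilde{\ell}$; the spiraling direction determines whether $v$ is the attracting or the repelling fixed point of $\rho(\ell)$ on $\partial\mathbb{H}^2$, and I declare $\beta(v)$ to be the corresponding (attracting or repelling) fixed point of $\rho(\ell)$ on $\partial\mathbb{H}^3$. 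Lemma \ref{7-31-12no1} is crucial here: whenever two ideal vertices of the same complementary triangle are endpoints of lifts of distinct closed leaves of $M$, those leaves represent non-commuting elements of $\pi_1(S)$ (non-parallel simple closed curves on $S$ are non-commuting), so by \ref{7-31-12no1} their axes in $\mathbb{H}^3$ share no endpoint. The remaining configurations — two vertices on the same lift $\widetilde{\ell}$ — automatically give the two distinct fixed points of $\rho(\ell)$. Hence each ideal triangle has three genuinely distinct ideal vertices in $\partial\mathbb{H}^3$, and I extend $\beta$ on each complementary triangle to the unique totally geodesic ideal triangle in $\mathbb{H}^3$ with these vertices.

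Then I would check the three defining properties of a pleated surface. Isometry on strata is immediate, since $\beta$ is an isometry on each $\widetilde{\ell}$ and on each complementary ideal triangle. Length preservation reduces to continuity of $\beta$ across bending lines: given a leaf $\widetilde{\ell}$, the ideal triangles on either side each have an ideal vertex at each of the two endpoints of $\widetilde{\ell}$, and by construction these common vertices are sent to the same pair of points in $\partial\mathbb{H}^3$. Equivariance $\beta \circ \gamma = \rho(\gamma) \circ \beta$ holds by construction, since both the choice of axes and the choice of fixed points were made equivariantly. Minimality of the realized lamination follows because $\nu$ is maximal. Uniqueness follows similarly: any $\rho$-equivariant pleated surface realizing $\nu$ must send each lift of a closed leaf onto the axis of its $\rho$-image (the only $\rho(\ell)$-invariant geodesic), and then each ideal vertex of a complementary triangle is forced onto the specified fixed point by equivariance together with the spiraling data, which in turn determines $\beta$ on each ideal triangle.

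The main obstacle is the consistency at the ideal vertices: one must verify that when several ideal triangles share an ideal vertex $v \in \partial\mathbb{H}^2$ at an endpoint of some $\widetilde{\ell}$, all of them assign to $v$ the same point of $\partial\mathbb{H}^3$. This is where the left-spiraling convention and the purely loxodromic hypothesis enter together — the former forces a uniform attracting/repelling pattern on both sides of $\widetilde{\ell}$, while the latter (via \ref{7-31-12no1}) guarantees that no accidental coincidence of fixed points from neighboring leaves can spoil the triangles from being non-degenerate.
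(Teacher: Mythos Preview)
Your approach is essentially the same as the paper's: map each complementary ideal triangle to the ideal triangle in $\h^3$ spanned by the relevant loxodromic fixed points, using Lemma~\ref{7-31-12no1} to ensure non-degeneracy, and then assemble.

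There is, however, a real gap in your continuity argument. You say ``given a leaf $\widetilde{\ell}$, the ideal triangles on either side each have an ideal vertex at each of the two endpoints of $\widetilde{\ell}$,'' but this is only true when $\widetilde{\ell}$ covers a \emph{non-closed} leaf of $\nu$. When $\widetilde{\ell}$ is a lift of a closed leaf $\ell \in M$, no ideal triangle is adjacent to $\widetilde{\ell}$ along an edge; instead infinitely many triangles spiral toward $\widetilde{\ell}$ from each side. Your argument says nothing about why the map you have built on those triangles matches up continuously with the isometry you chose from $\widetilde{\ell}$ onto $A_{\widetilde{\ell}}$. (Relatedly, ``send $\widetilde{\ell}$ isometrically onto $A_{\widetilde{\ell}}$'' is underdetermined by a real parameter; the correct parametrization is forced precisely by this continuity.)

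The paper fills this gap as follows: pick a sequence $\Delta_i$ of complementary triangles converging to $\widetilde{\ell}$ uniformly on compacts. For large $i$ one vertex of $\Delta_i$ actually coincides with an endpoint of $\widetilde{\ell}$ (by the spiraling hypothesis). Since there are only finitely many triangles up to the $\pi_1(S)$-action, the $\Delta_i$ are eventually all in the orbit of a fixed triangle under powers of the deck transformation corresponding to $\ell$; by $\rho$-equivariance their images $\beta(\Delta_i)$ are then translates of a fixed ideal triangle by powers of the loxodromic $\rho(\ell)$, and hence converge to its axis. This is the missing step you need to add.
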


\begin{proof} 
Let $\Delta$ be a connected component of $S \minus |\nu|$, which is an ideal triangle.
Let $\td{S}$ be the universal cover of $S$, and
	let $\td{\nu}$ be the total lift of $\nu$ to $\td{S}$.
Let $\td{\Delta}$ be a lift of $\Delta$ to $\td{S}$.
Then $\td{\Delta}$ is an ideal triangle property embedded in $\td{S}$, and its vertices are at distinct points  on the circle at infinity $\pt_\infi S$. 
 
By the second assumption, each (ideal) vertex of $\Delta$ corresponds to a closed leaf of $\nu$. 
This loop lifts to a unique leaf of $\td{\nu}$ whose endpoint is the corresponding vertex of $\td{\Delta}$ in the boundary circle of $\td{S}$ at infinity. 
Different vertices of $\tilde{\Delta}$ correspond to different leaves of $\td{\nu}$ that cover closed leaves of $\nu$.    
Then the vertices are naturally fixed points of different elements of $\po(S) \minus \{id\}$. 
Since $\rho$ is purely loxodromic,  by Lemma \ref{7-31-12no1}, for different elements of $\po(S) \minus \{id\}$, their $\rho$-images are loxodromics sharing no fixed points. 
Then the vertices of $\til{\Delta}$ correspond to different points on $\rs$ fixed by different loxodromics, and they spans a unique ideal triangle in $\h^3$.

This correspondence defines a $\rho$-equivariant map $\beta$ from  $\td{S} \minus |\td{\nu}|$ to $\h^3$.
Note that every geodesic lamination is uniquely decomposed into isolated biinfinite leaves, closed leaves, and minimal irrational laminations (see \cite[I.4.2]{CanaryEpsteinGreen84}). 
Then,  if a leaf of $\til{\nu}$ is an isolated leaf,  then either it separates adjacent complementary ideal triangles or it covers a closed leaf of $\nu$ by the second assumption and the decomposition theorem.  
Each leaf $\ell$ of $\td{\nu}$  either separates adjacent ideal triangles or  descends to a closed leaf of $\nu$ on $S$.
Clearly the $\rho$-equivariant map continuously extends to the leaves of $\td{\nu}$ of the first type.  
If a leaf $\ell$ cover a closed leaf of $\nu$, then  there is a sequence $\{\Delta_i\}$ of ideal triangles of $\td{S} \minus |\td{\nu}|$ that converges to $\ell$ uniformly on compacts (in the Hausdorff topology).
Then, by the second assumption, if  $i \in \N$ is sufficiently large, a vertex of $\Delta_i$ must coincide with an endpoint of $\ell$.
Noting that  $\td{S} \minus |\td{\nu}|$ has only finitely many components up to $\pi_1(S)$, since $\beta$ is $\rho$-equivariant, $\beta(\Delta_i)$  must converge to the geodesic axis of the loxodromic corresponding to $\ell$.
Therefore we can continuously extend $\beta$ to the leaves of $\til{\nu}$ covering closed leaves of $\nu$ and obtain a desired a $\rho$-equivariant pleated surface $\h^2 \to \h^3$ realizing $\lam$.
\end{proof}

\subsection{Bi-infinite Sequence of geodesic laminations connecting $\nu_i$ to $\nu_{i+1}$} 
Recall that  for each $i \in \{ 0, \cdots, n -1\}$, $M_i$ and $M_{i + 1}$ are maximal multiloops on $S$ that are adjacent vertices on the pants graph of $S$.
Then let $m_i$ and $m_{i +1}$ be the loops of $M_i$ and $M_{i + 1}$, respectively,  such that $M_i \minus m_i = M_{i + 1} \minus m_{i +1}$. 
Then let $F_i$ denote the minimal subsurface of $S$ containing both  $m_i$ and $m_{i +1}$, which is either a one-holed torus or a four-holed sphere (\S \ref{8-20-12no1}). 
  
{\it Case One.} First suppose that $F_i$ is a once-holed torus.
Let $\hat{F}_i$ be the once-punctured torus obtained by pinching the boundary component of $F_i$ to a point. 
Then, every geodesic lamination on $F_i$ descends a unique geodesic  lamination on $\hat{F}_i$.
In particular, the geodesic laminations  $\nu_i$ and $\nu_{i+1}$ on $S$ restrict to geodesic laminations on $F$, then further to   
unique laminations $\hat{\nu}_i$ and $\hat{\nu}_{i+1}$, respectively, on  $\hat{F}_i$.
Since $\nu_i$ and $\nu_{i + 1}$ are maximal,    $\hat{\nu}_i$ and $\hat{\nu}_{i+1}$ are maximal on $\hat{F}_i$. 
 ( By pinching a boundary component to a point,  we can eliminate the twisting direction $v_i$ towards boundary component, as the direction is not the focus of  the following argument.)   

Let $T$ be the trivalent tree dual to the Farey tessellation (see for example \cite{Bonahon09}).
Then the vertices of $T$ bijectively correspond to the ideal triangulations of $\hat{F}_i$ and the edges to {\it diagonal exchanges} of the  ideal triangulations --- a {\it diagonal exchange} removes a diagonal of an (ideal) quadrangle and add the other diagonal of the quadrangle.

Pick a maximal lamination $\hat{\nu}_{i, 0}$ of $\hat{F}_i$
such that 
\begin{itemize}
\item for each leaf of $\hat{\nu}_{i, 0}$, its endpoints  are at the puncture of $\hat{F}_i$, and
\item  every leaf of $\hat{\nu}_{i, 0}$ intersects each of $m_i$ and $m_{i +1}$ at most in a single point (Figure \ref{3-30no1}).
\end{itemize}
Then $\hat{\nu}_i$ is obtained by the infinite iteration of the Dehn twist of $\hat{\nu}_{i, 0}$ along $m_i$ and similarly 
$\hat{\nu}_{i+1}$ by the infinite iteration of the Dehn twist along $m_{i+1}$. 
The Dehn twists along $m_i$ and $m_{i +1}$ each correspond to a composition of two diagonal exchanges of $\hat{\nu}_{i, 0}$. 
Then  we obtain a bi-infinite path in $T$ connecting $\hat{\nu}_i$ to $\hat{\nu}_{i+1}$. 
Then, there is  a corresponding bi-infinite sequence  $(\hat{\nu}_{i,j})_j$ of maximal laminations on $\hat{F}_i$ that converges to $\hat{\nu_i}$ as $j \to - \ify$ and to $\hat{\nu}_{i+1}$ as $j \to \ify$.
\begin{figure}[h]
\begin{overpic}[trim = 0mm 0mm 0mm 0mm, clip, width = 2in
]{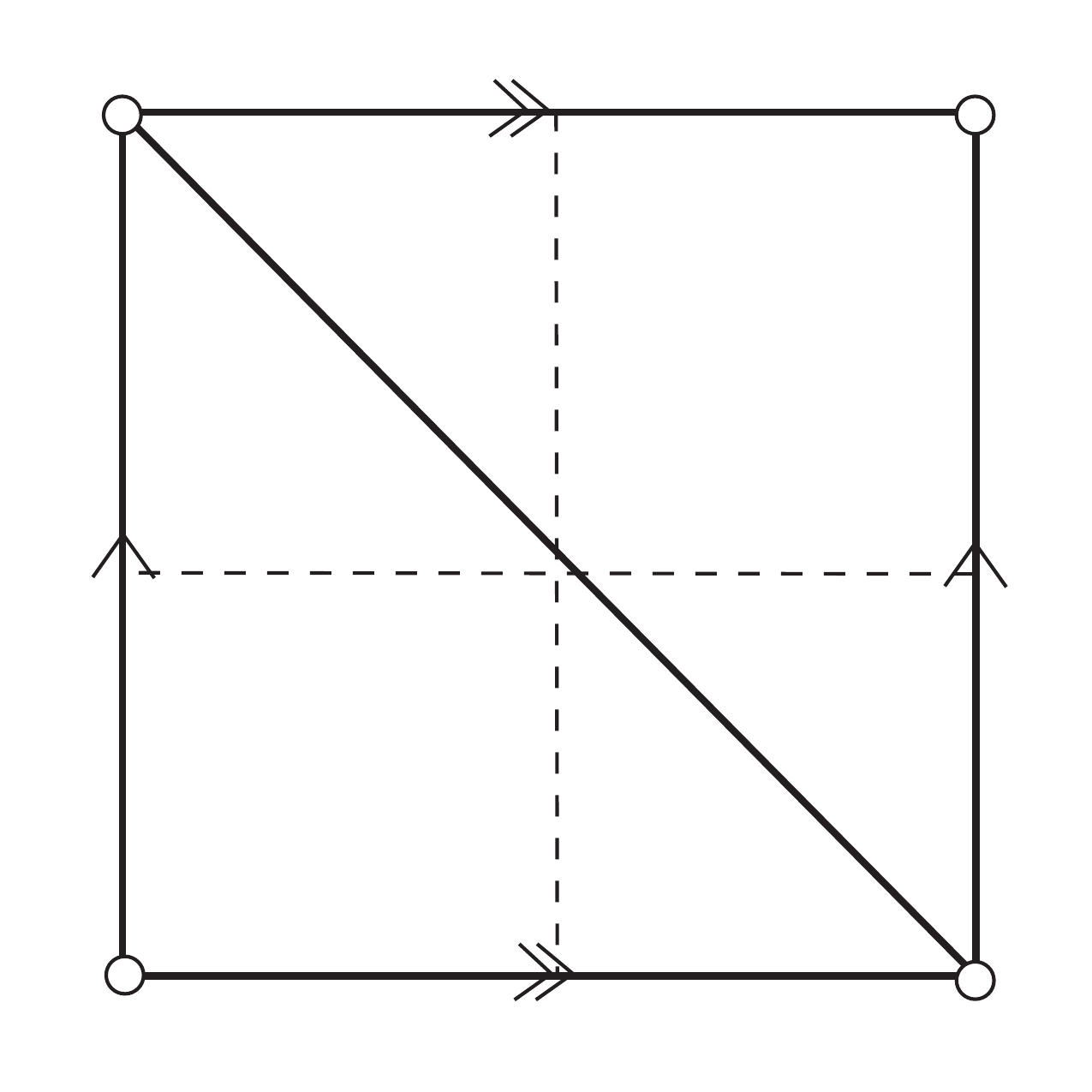}
\put(53,67){\tiny $m_{i + 1}$}
\put(20,40){\tiny $m_i $}
      \end{overpic}
\caption{An example of $\hat{\nu}_{i, 0}$, an ideal triangulation  of the once-punctured torus $\hat{F}_i$.   The arrows indicate the identification of edges of the square, so that the vertices correspond the puncture of $\hat{F}_i$.}\label{3-30no1}
\end{figure}
Let  $\nu_{i,j} \,(j \in \Z)$ be the corresponding maximal laminations on $S$ such that 
\begin{itemize}
\item[(i)] the restriction of $\nu_{i,j}$ to $F_i$  descends to $\nuh_{i,j}$ on $\hat{F}_i$, and
\item[(ii)]$\nu_{i,j}$ is isomorphic to $\nu_i$ (and $\nu_{i+1}$)  in some neighborhood, in $S$,  of the closure of $S \minus F_i$.
\end{itemize}
Note that, in (ii), we need to take a neighborhood so that $\nu_{i,j}$ spirals to the left towards $\pt F_i$ (as $\nu_i$ and $\nu_{i + 1}$ do).
Then $\nu_{i,j}$ is a bi-infinite sequence of maximal laminations of $S$ that converges to $\nu_i$ as $i \to -\ify$ and to  $\nu_{i+1}$ as $i \to \ify$.

\begin{figure}
\begin{overpic}[scale=.5,
]{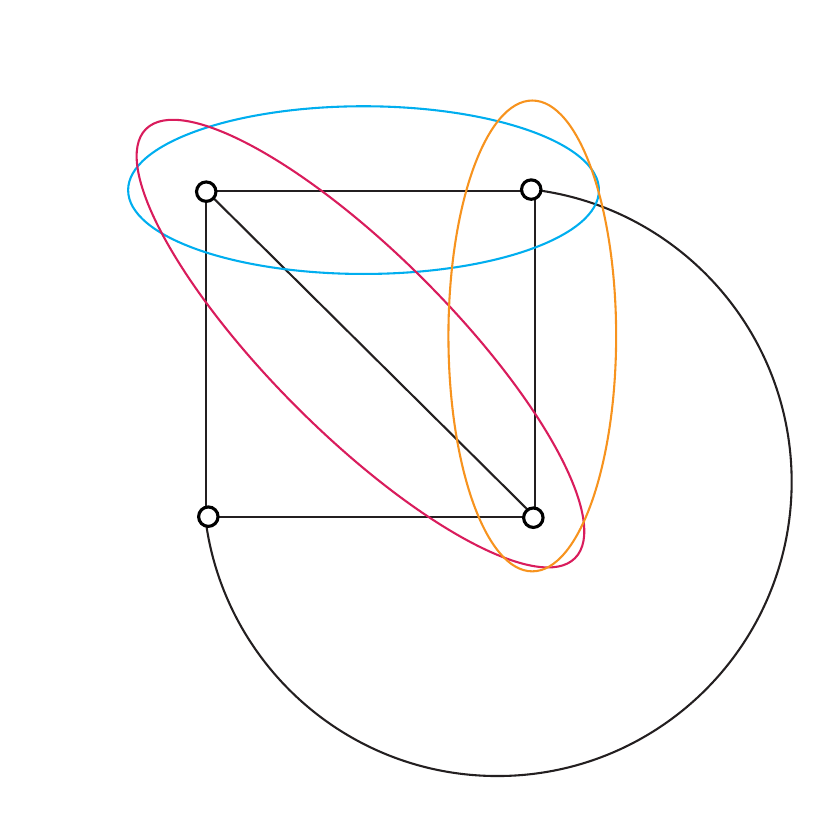}
       \end{overpic}
\caption{An ideal triangulation of a four-punctured sphere give  the edges of a tetrahedron (black).
The colored loops separate opposite edges of the tetrahedron. 
 }\label{2-20no1}
\end{figure}

{\it Case Two.}
Suppose that $F_i$ is a four-holed sphere.o
Similarly let $\hat{F}_i$ be the four-punctured sphere obtained by pinching the boundary components of $F_i$. 
Then, consider the  graph $T$ associated with $\hat{F}_i$ defined by: 
\begin{itemize}
\item The vertices of $T$ bijectively correspond to the ideal triangulations of $\hat{F}_i$ isomorphic to the triangulation of  the boundary of a tetrahedron (Figure \ref{2-20no1}). 
\item There is a (unique) edge between two vertices of $T$ if and only if the triangulation corresponding to one vertex is obtained form the other by simultaneous  diagonal exchanges of  opposite edges of the tetrahedron.  
\end{itemize}
 
 Similarly to Case One, we have
\begin{lemma}
This graph $T$ is dual to the Farey tessellation.
\end{lemma}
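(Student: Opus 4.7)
The plan is to reduce this to the once-punctured torus case treated in Case One by exploiting a double cover. Specifically, I would use the fact that the four-punctured sphere $\hat{F}_i$ is doubly covered by a once-punctured torus $\hat{E}$, via the hyperelliptic involution on the closed torus whose four fixed points are precisely the preimages of the four punctures of $\hat{F}_i$.

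First, I would set up a bijection between the vertex set of $T$ and the set of ideal triangulations of $\hat{E}$ by lifting. A tetrahedral ideal triangulation of $\hat{F}_i$ has six edges grouped into three pairs of opposite edges; each individual edge connects two distinct punctures, and the two opposite edges in a pair lift together to a single isotopy class of ideal arc on $\hat{E}$ with both endpoints at the unique puncture. The three resulting arcs partition $\hat{E}$ into two ideal triangles, giving an ideal triangulation of the once-punctured torus. Conversely, any ideal triangulation of $\hat{E}$ is invariant under the hyperelliptic involution (since the involution permutes the two triangles and fixes the set of edges) and descends to a tetrahedral triangulation of $\hat{F}_i$.

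Next, I would verify that this bijection carries edges of $T$ to edges of the corresponding graph for $\hat{E}$. A simultaneous exchange along a pair of opposite edges of the tetrahedron on $\hat{F}_i$ lifts to a single Whitehead move on $\hat{E}$: the two opposite edges downstairs correspond to one arc upstairs, and the remaining four edges lift to two arcs forming the sides of a punctured square whose only diagonal is the given arc; swapping that diagonal is exactly the diagonal exchange on $\hat{E}$. Thus $T$ is isomorphic, as a simplicial complex, to the tree built in Case One for the once-punctured torus.

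Finally, I would invoke Case One (and the classical fact cited there, e.g.\ \cite{Bonahon09}) to conclude that this latter tree is dual to the Farey tessellation, whence so is $T$. The main thing to be careful about is verifying that the three lifted arcs on $\hat{E}$ are pairwise non-isotopic and genuinely triangulate $\hat{E}$ into two ideal triangles; this is a direct check on one example, after which equivariance under the hyperelliptic involution and the transitivity of Whitehead moves propagate it to all vertices of $T$.
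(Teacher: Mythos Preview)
Your covering claim is incorrect: the once-punctured torus cannot be a double cover of the four-punctured sphere. For an unbranched double cover one would need $\chi(\hat{E}) = 2\chi(\hat{F}_i)$, but $\chi(\hat{E}) = -1$ while $2\chi(\hat{F}_i) = -4$; a Riemann--Hurwitz count rules out a degree-two branched cover in this direction as well. The hyperelliptic quotient you describe is a branched double cover from the \emph{closed} torus to the sphere, and after deleting the branch locus it becomes an unbranched double cover from the \emph{four}-punctured torus to $\hat{F}_i$. Under that cover each tetrahedral edge lifts to a simple closed curve through two Weierstrass points, and a pair of opposite edges lifts to two parallel curves of a common slope --- not to a single ideal arc based at one puncture. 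The bijection you are after between tetrahedral triangulations of $\hat{F}_i$ and ideal triangulations of a once-punctured torus does exist, because both are parametrized by unordered triples of slopes spanning a Farey triangle, but it is not induced by a covering $\hat{E}\to\hat{F}_i$; your ``lift'' step must be replaced either by this slope matching or by working on the closed (or four-punctured) torus and then quoting the identification of slopes with arcs on the once-punctured torus.

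For comparison, the paper's argument stays on $\hat{F}_i$ throughout: to each pair of opposite tetrahedral edges it assigns the unique essential loop on $\hat{F}_i$ disjoint from that pair, producing three loops that are pairwise adjacent in the curve graph of $\hat{F}_i$; since that curve graph is the Farey graph, each tetrahedral triangulation corresponds to a Farey triangle, and a simultaneous exchange of one pair of opposite edges replaces exactly one of the three loops, i.e.\ crosses one Farey edge. This is shorter and avoids any auxiliary surface.
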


\begin{proof}
Each vertex of $T$ corresponds, uniquely,  to a ideal triangulation of $\hat{F}_i$. 
Then, for each pair of opposite edges of the triangulation, there is  an unique essential loop on $\hat{F}_i$ disjoint from the edges.
There are exactly three pairs of opposite edges and their corresponding loops are maximal mutually adjacent vertices of the curve graph of $\hat{F}_i$ (Figure \ref{2-20no1}).
The curve graph of the four-punctures sphere is the Farey graph (for example, see \cite[\S 5]{Schleimer-06}).
Thus, each ideal triangulation of $\hat{F}_i$ naturally corresponds to a unique triangle of the Farey tessellation, and therefore $T$ is isomorphic to the graph dual to the Farey tessellation.
\end{proof}

The maximal laminations $\hat{\nu}_i$ and $\hat{\nu}_{i+1}$ are distinct  endpoints  of the graph $T$ at infinity.
Then, similarly, there is a unique bi-infinite sequence $(\hat{\nu}_{i, j})_{j \in \Z} $ of  adjacent vertices of $T$ connecting $\hat{\nu}_i$ to $\hat{\nu}_{i+1}$. 
Indeed, there is a vertex of $T$ such that its corresponding triangulation of $\hat{F}_i$ contains two edges disjoint from $m_i$ and two edges  disjoint from $m_{i+1}$;
then $\hat{\nu}_{i}$ is obtained by the (infinite) iteration of the left Dehn twist along $m_i$, and $\hat{\nu}_{i+1}$ by the iteration of the left Dehn twists along $m_{i+1}$. 
Then, similarly, let $\nu_{i,j}$ be the lamination on $S$ satisfying (i) and (ii) in {\it Case One}.

In either Case One or Two, we have constructed  the bi-infinite sequence $\{ \nu_{i, j}\}_{j \in \Z}$ of maximal geodesic laminations on $S$ connecting $\nu_i$ to $\nu_{i +1}$,  that is,  $\{\nu_{i, j}\}$ converges to $\nu_i$ as $j \to - \In$ and  $\nu_{i +1}$ as $j \to \In$.
Then, by Lemma \ref{12-18no1}, for every  $i \in \{0,\dots, n-1\}$ and $j \in \Z$, there is a unique $\rho$-equivariant pleated surface $\beta_{i,j}\cn \tilde{S} \to \h^3$ realizing $\nu_{i,j}$. 
Then,  by Theorem \ref{12-20}, the convergence of $\nu_{i, j}$ immediately implies 
\begin{proposition} 
For each $i = 0,1, \dots, n-1$,
the pleated surface $\beta_{i,j}$ converges to $\beta_{i+1}$ as $j \to \infty$ and to $\beta_i$ as $j \to -\infty$ (in term of the closeness defined in Theorem \ref{12-20}). 
\end{proposition}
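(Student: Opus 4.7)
The plan is to reduce the statement to an application of Theorem~\ref{12-20}. Given $\ep > 0$, Theorem~\ref{12-20} applied to $\beta_0 := \beta_{i+1}$ yields $\dl > 0$ such that any $\rho$-equivariant pleated surface $\beta$ realizing $(\sigma, \nu) \in \mT \times \GL$ with $\angle_{\sigma_{i+1}}(\nu_{i+1}, \nu) < \dl$ is automatically $\ep$-close to $\beta_{i+1}$. It therefore suffices to prove that
\[
\angle_{\sigma_{i+1}}(\nu_{i+1}, \nu_{i,j}) \longrightarrow 0 \text{ as } j \to +\infty,
\]
where $\sigma_{i+1}$ denotes the hyperbolic structure on $S$ realized by $\beta_{i+1}$; the case $j \to -\infty$ will follow from the symmetric estimate $\angle_{\sigma_i}(\nu_i, \nu_{i,j}) \to 0$ by the same argument with $\beta_i$, $\sigma_i$, $m_i$ replacing $\beta_{i+1}$, $\sigma_{i+1}$, $m_{i+1}$.

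The key input is the Hausdorff convergence $\nu_{i,j} \to \nu_{i+1}$ on the fixed hyperbolic surface $\sigma_{i+1}$. By condition (ii) of the construction, $\nu_{i,j}$ and $\nu_{i+1}$ coincide in a neighborhood of $\ol{S \setminus F_i}$, so all transverse intersections of their geodesic realizations are confined to the compact subsurface $F_i \subset \sigma_{i+1}$. Passing to the cover $\hat F_i$, the sequence $\hat\nu_{i,j}$ was built by following the bi-infinite path in the Farey-dual tree $T$ that, for large positive $j$, has performed arbitrarily many left Dehn twists along $m_{i+1}$ upon a fixed initial triangulation $\hat\nu_{i,0}$. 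Consequently, the geodesic representatives of the leaves of $\hat\nu_{i,j}$ wind more and more times around the geodesic representative of $m_{i+1}$, producing the Hausdorff convergence $\hat\nu_{i,j} \to \hat\nu_{i+1}$, hence $\nu_{i,j} \to \nu_{i+1}$, on $\sigma_{i+1}$.

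The final step converts Hausdorff convergence into angle convergence. Fix an arbitrarily thin tubular neighborhood $U$ of the closed geodesic $m_{i+1}$ in $\sigma_{i+1}$; by hyperbolic trigonometry, any geodesic segment entering sufficiently deeply into $U$ must run nearly parallel to $m_{i+1}$, so every leaf of $\nu_{i+1}$ meeting $U$ and every leaf of $\nu_{i,j}$ that has begun spiraling around $m_{i+1}$ make only a small angle with $m_{i+1}$ in $U$, hence with each other. Outside $U$, the restrictions of the leaves of $\nu_{i,j}$ to $F_i \setminus U$ (a compact set) are geodesic arcs with endpoints converging to the corresponding endpoints of the leaves of $\nu_{i+1}$ on $\bd U$; by standard continuity of the geodesic representative with respect to endpoint data on a hyperbolic surface, these arcs converge in $C^1$ to the arcs of $\nu_{i+1}$ in $F_i \setminus U$, so angles at intersections in $F_i \setminus U$ tend to zero uniformly. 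The main obstacle is precisely achieving this uniformity across the infinitely long spiraling regions, which is overcome by the tubular-neighborhood dichotomy above combined with compactness of $F_i$. Together these give $\angle_{\sigma_{i+1}}(\nu_{i+1}, \nu_{i,j}) < \dl$ for all sufficiently large $j$, whence Theorem~\ref{12-20} yields $\ep$-closeness of $\beta_{i,j}$ to $\beta_{i+1}$.
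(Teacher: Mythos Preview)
Your approach is correct and is essentially the same as the paper's, which simply says that the convergence $\nu_{i,j} \to \nu_{i+1}$ (already recorded in the construction) combined with Theorem~\ref{12-20} immediately yields the proposition. You have supplied the details the paper leaves implicit --- in particular, the argument that Hausdorff convergence of the laminations forces $\angle_{\sigma_{i+1}}(\nu_{i+1},\nu_{i,j}) \to 0$ --- which is a standard compactness argument but worth spelling out.
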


Recall that $\nu_i$ is obtained by the infinite iteration of the left Dehn twist of a multiloop $N_i$ along $M_i$. 
Recalling that  $M_i \cap M_{i + 1}$ is the set of common loops of $M_i$ and $M_{1 + i}$, we similarly have
\begin{lemma}\Label{12-31no1}
For all $i \in \{1,\dots, n-1\}$ and  $j \in \Z$,
the lamination $\nu_{i, j}$ is  obtained by the infinite iteration of the left Dean twist of some multiloop on $S$ along  $M_i \cap M_{i +1}$.
\end{lemma}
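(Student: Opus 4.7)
The plan is to construct $N_{i,j}$ by gluing two pieces along $\partial F_i$. The crucial combinatorial fact is that, since $m_i \subset F_i$, we have $M_i \cap M_{i+1} = M_i \setminus \{m_i\}$, so outside $F_i$ the multiloops $M_i$ and $M_i \cap M_{i+1}$ coincide; in particular, iterating a left Dehn twist along $M_i \cap M_{i+1}$ has the same effect outside $F_i$ as iterating along $M_i$. Moreover $\partial F_i \subset M_i \cap M_{i+1}$, so every boundary component of $F_i$ is one of the twisting loops.

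For the outside piece, I take $N_{i,j} \cap (S \setminus F_i)$ to be a suitable number of parallel copies of $N_i \cap (S \setminus F_i)$. Since $\nu_{i,j}$ agrees with $\nu_i$ in a neighborhood of the closure of $S \setminus F_i$ by property (ii), and since $\nu_i$ is the Hausdorff limit of iterated left Dehn twists of $N_i$ along $M_i$, the outside piece contributes $\nu_{i,j} \cap (S \setminus F_i)$ to the limit under iterated twists along $M_i \cap M_{i+1}$. For the inside piece, let $A_{i,j}$ be the arc system on the bordered surface $F_i$ obtained by viewing the edges of the ideal triangulation $\hat{\nu}_{i,j}$ of $\hat{F}_i$ as arcs between components of $\partial F_i$, and take $N_{i,j} \cap F_i$ to be some number of parallel copies of $A_{i,j}$. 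A standard argument shows that iterated left Dehn twist of $A_{i,j}$ along $\partial F_i$ converges, in the Hausdorff topology of geodesic representatives on a fixed auxiliary hyperbolic structure, to the lift of $\hat{\nu}_{i,j}$ to $F_i$ with left spiral into every component of $\partial F_i$, which equals $\nu_{i,j} \cap F_i$ by property (i).

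The main technical point, and the only real obstacle, is the combinatorial bookkeeping along $\partial F_i$. Each component $c$ of $\partial F_i$ receives from the outside $2k$ endpoints per copy of $N_i \cap (S \setminus F_i)$ (where $N_i$ itself uses $k$-fold parallel arcs per pair of pants), and from the inside a fixed number of endpoints determined by $\hat{\nu}_{i,j}$. Using the freedom in the choice of $k$ and of overall multiplicities on either side so that the endpoint counts match on every component of $\partial F_i$, I pair endpoints consistently with the cyclic order at each $c$ to assemble an embedded multiloop $N_{i,j}$ on $S$. Any legitimate pairing yields the same Hausdorff limit, since iterated twisting along $c$ forces every crossing of $c$ to spiral left into $c$ irrespective of the choice. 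Consequently, iterated left Dehn twists of $N_{i,j}$ along $M_i \cap M_{i+1}$ converge to the geodesic lamination on $S$ whose restrictions to $F_i$ and to $S \setminus F_i$ are as above, namely $\nu_{i,j}$.
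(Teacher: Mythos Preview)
Your proposal is correct and follows essentially the same approach as the paper: both construct $N_{i,j}$ by taking $N_i$ (with an appropriate multiplicity $k$) on $S \setminus F_i$ and gluing it along $\partial F_i$ to an arc system on $F_i$ coming from the ideal triangulation $\hat{\nu}_{i,j}$. The paper is terser---it simply fixes $k=3$ in the torus case and $k=2$ in the four-holed sphere case and refers to a figure for the matching---whereas you spell out the endpoint-matching and the reason any consistent pairing yields the same Hausdorff limit under iterated twisting along $\partial F_i \subset M_i \cap M_{i+1}$.
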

\begin{proof}
Recall that, for every $i \in \{1,\dots, n-1\}$ and  every positive integer $k$, we can take the multiloop $N_i$ inducing $\nu_i$ such that, if $P$ is a complementary pants of $M_i$ in $S$, then, for each pair of different boundary components of $P$,  the multiarc $N_i \cap P$ contains exactly $k$ parallel arcs of connecting the components.
Thus, for each for every $i \in \{1,\dots, n-1\}$, we fix such a multiloop $N_i$  given by $k = 3$ in Case One and $k = 2$ for Case Two.
Then for each $j \in \Z$, we can construct a multiloop $N_{i, j}$ such that $N_{i, j} = N_i$ in $S \minus F_i$ and the restriction of $N_{i,j}$ to $F_i$ induces $\hat{\nu}_{i,  j}$ on $\hat{F}_i$ by the integration of the left Dehn twist along $M_i$: See Figure \ref{12-29no2} for an example of $N_{i, j}$  with $k = 2$ in Case Two. 
\begin{figure}[h]
\begin{overpic}[width=3in,
]{}
      \end{overpic}
\caption{}\label{12-29no2}
\end{figure}
\end{proof}

\section{Existence of admissible loops}\label{admissible}

Given a $\rho$-equivariant  pleated surface $\beta\col \H^2 \to \H^3$, the following theorem gives a way of finding an admissible loops on a projective structure $C$ with holonomy $\rho$ when its pleated surface is ``close'' to $\beta$. 
   
\begin{theorem}[c.f. \S 7 in  \cite{Baba-15gt}]\Label{admissibleII}
Let $\rho\cn \po(S) \to \psl$ be a homomorphism.
Let $\beta\cn \h^2 \to \h^3$ be a $\rho$-equivariant pleated surface realizing $(\sigma, \nu) \in \TT \times \gl$.
Then, there exists $\dl > 0$ such that, if a projective structure $C  \cong (\tau, L)$ with holonomy $\rho$ satisfies $\angle_\sigma(L, \nu) < \dl$ and a loop $\ell$ on $S$ satisfies $\angle_\tau(\ell, |L|) < \dl$,  then there is an admissible loop on $C$ isotopic to $\ell$.
\end{theorem}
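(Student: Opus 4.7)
The plan is to produce an admissible loop isotopic to $\ell$ by constructing a smooth traintrack on $C$ whose branches are supported on round cylinders, and then invoking Lemma \ref{123112}. First, for small $\epsilon>0$ and large $K>0$ to be chosen later, I would build an $(\epsilon,K)$-nearly straight smooth traintrack $T_\sigma$ on $\sigma$ that is a traintrack neighborhood of $\nu$, whose branches have horizontal edges of length at least $K$ and vertical edges of length at most a small $w>0$. By making the branches long and thin, the total bending of the pleated surface $\beta$ across any single lift of a branch can be made uniformly small.

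Next, given the hypothesis $\angle_\sigma(L,\nu)<\delta$, Theorem \ref{12-20} supplies a marking preserving $\epsilon$-rough isometry $\psi\cn\sigma\to\tau$ such that the two $\rho$-equivariant pleated surfaces (the one realizing $(\sigma,\nu)$ and the natural pleated surface associated to $C\cong(\tau,L)$) are $\epsilon$-close. Transporting $T_\sigma$ by $\psi$ yields a nearly straight smooth traintrack $T_\tau$ on $\tau$ that carries $L$. Pulling back by the collapsing map $\kp\cn C\to\tau$ produces a traintrack $T_C$ on $C$ that carries the canonical lamination $\mL$, where branches crossing closed leaves of $L$ with positive weight are elongated by the corresponding Euclidean strips but remain smooth rectangles after a harmless reparametrization.

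The central step is to show that each branch $R$ of $T_C$ is supported on a round cylinder in $\rs$. Choosing a central point $p\in R$, Proposition \ref{4-23no3} identifies, inside the canonical neighborhood $U(p)$, the Thurston coordinates and the pleated map near $p$ with those coming from $C$; since the total bending of $\mL$ across $R$ is small (controlled by $w$ together with the pleated-surface closeness), a lift of $R$ to $\td{C}$ is entirely contained in $U(p)$, so $dev(C)$ embeds $R$ into $\rs$. The two maximal balls $B_1,B_2$ associated to the two vertical edges of $R$ then develop to round open disks in $\rs$ whose boundary circles are disjoint (the bending between them is small but nonzero on each branch, because branches are traintrack neighborhoods of leaves of $\nu$); these two circles bound a round cylinder $A\subset\rs$ into which $R$ develops, with distinct vertical edges going to distinct boundary circles of $A$ and the horizontal edges crossing the circular foliation of $A$ transversally thanks to the $(\epsilon,K)$-nearly straight property.

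Finally, since $\angle_\tau(\ell,|L|)<\delta$, choosing $\delta$ small enough guarantees that $\ell$ is isotopic on $\tau$ to a loop lying in $T_\tau$ and, via $\kp$, to a loop on $C$ carried by $T_C$; a further small isotopy through carried loops makes it transverse to the circular foliation on each branch. Lemma \ref{123112} then yields that this loop is admissible, which is exactly what we need. The main difficulty is the third step: arranging quantitatively that each thin branch really does lie in a single canonical neighborhood and that its developed image fits into a single round cylinder. This forces the order of quantifiers to be chosen carefully: pick the target proximity $\epsilon$, then the traintrack parameters $K$ and $w$ giving small bending per branch, and only afterwards choose $\delta$ small enough that Theorem \ref{12-20} delivers the required $\epsilon$-closeness and $\ell$ is sufficiently close to $|L|$ to be carried by $T_\tau$.
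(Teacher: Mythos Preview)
Your approach has a genuine gap at the very first construction step. You build the traintrack $T_\sigma$ as a neighborhood of the \emph{fixed} lamination $\nu$, and then assert that its transport $T_\tau$ carries $L$ and that $\ell$ is (after isotopy) carried by $T_\tau$. Neither follows from the hypotheses. The condition $\angle_\sigma(L,\nu)<\delta$ only controls angles at points where leaves of $L$ cross leaves of $\nu$; it says nothing about where $|L|$ lives away from $|\nu|$. Thus $|L|$ can have support arbitrarily far (in the Hausdorff sense) from $|\nu|$, and no choice of $\delta$ forces $L$ to be carried by a fixed traintrack neighborhood of $\nu$. The same objection applies to $\ell$: $\angle_\tau(\ell,|L|)<\delta$ does not make $\ell$ carried by a neighborhood of $\nu$. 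Without $T_\tau$ carrying $L$, the pullback $\kappa^{-1}(T_\tau)$ is not a traintrack on $C$ in any useful sense (branches need $L$-leaves to cross them from vertical edge to vertical edge), so the appeal to round-cylinder support and Lemma~\ref{123112} never gets off the ground.

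The paper avoids exactly this by arguing by contradiction: from a hypothetical sequence $(C_i,L_i,\ell_i)$ violating the conclusion, it extracts Hausdorff limits $\lambda_\infty\supset\nu$ and $\ell_\infty$ with $\angle(\ell_\infty,\lambda_\infty)=0$, and then builds the $(\epsilon,K)$-nearly straight traintrack around the \emph{limit} lamination $\ell_\infty\cup\lambda_\infty$. By Hausdorff convergence this traintrack (after transport to $\tau_i$) carries both $\lambda_i$ and $\ell_i$ for large $i$, and then \cite[Proposition~7.12]{Baba_10-1} supplies the round-cylinder-supported traintrack on $C_i$ directly. If you want to keep a direct argument, you would have to build $T_\tau$ around a lamination that is guaranteed to carry both $|L|$ and $\ell$ (for instance, around $|L|\cup\ell$ itself when $\angle_\tau(\ell,|L|)$ is small), and then control it uniformly in terms of $(\sigma,\nu)$; but as written, anchoring the traintrack to $\nu$ alone is the step that fails.
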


\begin{remark}
For every $\ep > 0$,  if $\delta > 0$ is sufficiently small, then $\sigma$ and $\tau$ are $\ep$-close. 
We use $\angle_\sigma$ and $\angle_\tau$ because it seems to be natural choices. 
However (recall that) the choices of the hyperbolic structures on $S$ are not important for such angles as long as it is fixed or bounded in $\TT$.

\end{remark}

\Proof
Suppose that Theorem \ref{admissibleII} fails.
 Then there exists
 a sequence of projective structures $C_i \cong (\tau_i, L_i)$ with holonomy $\rho$  such that, letting $\lam_i = |L_i|$,  we have  $\angle (\lam_i, \nu) \to 0$ as $i \to \ify$ and a sequence of geodesic loops $\ell_i$ on $\tau_i$ with  $\angle(\ell_i, \lam_i) \to 0$ as $i \to \ify$ such that there is no admissible loop on $C_i$ homotopic to $\ell_i$.
Then, by Theorem \ref{12-20}, there are marking-preserving bilipschitz maps $ \psi_i \cn \sigma \to \tau_i$  converging to an isometry as $i \to \In$ and $\beta$ is the limit of the pleated surfaces $\beta_i$ corresponding to $C_i$ via $\psi_i$.

Since $\GL$ is compact in the Chabauty topology,  
by taking a subsequence if necessary, we can in addition assume that $\ld_i$ converges to a geodesic lamination $\ld_\If$ on $S$.
Since $\beta_i \to \beta$, thus $\nu$ is a sublamination of  $\lambda_\infty$. 
 We can also assume that the sequence of geodesic loops $\ell_i$ converges to a geodesic lamination $\ell_\If$,  taking a subsequence if necessary.
Then $\angle(\ell_\If, \ld_\If) = 0$.
Thus $\ell_\infty \cup \lambda_\infty$ is a geodesic lamination on $\sigma$.

There is a constant $K > 0$ depending on $(\sigma, \ell_\In \cup \lam_\In)$, such that for every $\ep > 0$, there is an $(\ep, K)$-nearly straight traintrack neighborhood $T_\ep$ of $\ell_\If \cup \ld_\infty$ (\cite[Lemma 7.10]{Baba-15gt}). 
Then, if $i \in N$ is sufficiently large, by the above convergences, there is a sufficiently small isotopy of  $\psi_i(T_\ep)$  into an $(\ep, K)$-straight traintrack $T_i$ on $\tau_i$ that carries both  $\ld_i$ and $\ell_i$.

For each $i \in \N$, let $\kap_i\cn C_i \to \tau_i$ denote the collapsing map, and let $\LL_i$ be the circular measured lamination on $C$ that descends to $L$ via $\kap_i$.
By \cite[Proposition 7.12]{Baba-15gt}, for sufficiently large $i$, there is a corresponding traintrack $\TT_i$ on $C_i$ diffeomorphic to $T_i$ such that 
\begin{itemize}
\item each branch of $\TT_i$ is supported on a round cylinder on $\rs$,
\item $\kap_i(|\TT_i|) = T_i$, and
\item  $\kap_i(\TT_i)$ is $\ep$-close to $T_i$, i.e. the $\kap_i$-image of each branch $\TT_i$ is $\ep$-close to a corresponding branch of $T_i$ in the hausdorff metric on $\tau_i$.
\end{itemize}

Then $\TT_i$ carries the measured lamination $\mL_i$.
Since $T_i$ carries $\ell_i$,   thus $\TT_i$ carries a corresponding loop $m_i$ so that a small homotopy transforms $\kap_i | m_i$ into $\ell$ in $|T_i|$.
Since its branches are supported on cylinders, $\TT_i$ is  foliated by vertical circular arcs  (\S \ref{traintracks}).
Then, since $m_i$ is carried by $\TT_i$, we can (further) isotope $m_i$ through loops carried by $\TT_i$, so that $m_i$ is transversal to this foliation of $\TT_i$. 
Then by Lemma \ref{123112}, $m_i$ is admissible which is a contradiction.  
 \Qed{admissibleII}
 
 Let $C \cong (\tau, L)$ be a projective structure on $S$ with holonomy $\rho$. 
 Let $\beta \col \h^2 \to \h^3$ be the $\rho$-equivariant pleated surface induced by $(\tau, L)$.
 Then, since $\angle(L, L) = 0 < \del$,  
Theorem \ref{admissibleII} immediately implies
\begin{corollary}\Label{AdmissibleLoop}
There is $\del > 0$ such that, if a geodesic loop $\ell$ on $\tau$ satisfies $\angle_\tau (\ld, \ell) < \del$, then there is an admissible loop on $C$ isotopic to $\ell$. 
\end{corollary}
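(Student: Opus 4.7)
The plan is to deduce this as the diagonal special case $C' = C$ of Theorem \ref{admissibleII}. Concretely, I would take $(\sigma, \nu) := (\tau, |L|)$ in the setup of Theorem \ref{admissibleII}. The $\rho$-equivariant pleated surface $\beta : \h^2 \to \h^3$ induced by $(\tau, L)$ is by construction the pleated surface realizing $(\tau, |L|) \in \TT \times \GL$, so the data required by Theorem \ref{admissibleII} is in place.

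I would then invoke Theorem \ref{admissibleII} to obtain a constant $\delta > 0$ associated to the triple $(\beta, \tau, |L|)$. Applying the theorem to the projective structure $C$ itself, viewed as $(\tau', L') = (\tau, L)$, the hypothesis $\angle_\sigma(L', \nu) < \delta$ reduces to $\angle_\tau(L, |L|) = 0 < \delta$, which is automatic. Hence, for any geodesic loop $\ell$ on $\tau$ with $\angle_\tau(\ell, |L|) < \delta$, the remaining hypothesis of Theorem \ref{admissibleII} is satisfied, and the theorem produces an admissible loop on $C$ isotopic to $\ell$.

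There is no real obstacle: the corollary is just the observation that Theorem \ref{admissibleII} already allows the target projective structure to coincide with the one whose pleated surface we started from. All the substantive content, such as the traintrack approximation of $\nu \cup \ell_\infty$ by $(\epsilon, K)$-nearly straight traintracks, the lift of such traintracks to traintracks on $C$ whose branches are supported on round cylinders, and the appeal to Lemma \ref{123112} to conclude admissibility, was absorbed in the proof of Theorem \ref{admissibleII}. The only point that merits an explicit remark is that, although Theorem \ref{admissibleII} is phrased with two independent small quantities (the angle between $L$ and $\nu$, and the angle between $\ell$ and $|L|$), the single $\delta$ it provides works uniformly, so setting the first angle equal to zero is harmless.
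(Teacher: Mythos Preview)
Your proposal is correct and matches the paper's own reasoning exactly: the paper states that since $\angle(L,L)=0<\delta$, Theorem \ref{admissibleII} immediately yields the corollary, which is precisely the specialization $(\sigma,\nu)=(\tau,|L|)$ and $C'=C$ that you describe.
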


In addition
\begin{corollary}\Label{12-12no1}
For every a  projective structure $C \cong(\tau, L)$ on $S$,
there exists sufficiently small  $\del > 0$ such that, if a geodesic multiloop $M$  on $\tau$ satisfies
\begin{itemize}
\item[(1)] $\angle_\tau(M, L) < \del$ and
\item[(2)] $|L|$ is contained in the $\del$-neighborhood of $M$ in $\tau$, 
\end{itemize}
then  every loop $\ell$ on $C$ satisfying $\angle_\tau(\ell, M) <  \ep$ is isotopic to an admissible loop.
\end{corollary}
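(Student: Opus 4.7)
The plan is to reduce the statement to Corollary \ref{AdmissibleLoop}, applied to the pleated surface induced by $C$ itself. That corollary yields $\delta_0 > 0$ such that any geodesic loop on $\tau$ making angle less than $\delta_0$ with $|L|$ is isotopic to an admissible loop on $C$. It therefore suffices to show that, for $\delta$ (and the implicit parameter $\ep$) small enough, the hypotheses (1), (2), together with $\angle_\tau(\ell, M) < \ep$, force $\angle_\tau(\ell, |L|) < \delta_0$.

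I would establish this by contradiction and compactness. Suppose instead that there exist sequences of geodesic multiloops $M_n$ with $\angle_\tau(M_n, L) \to 0$ and $|L|$ contained in the $(1/n)$-neighborhood of $|M_n|$ in $\tau$, and geodesic loops $\ell_n$ with $\angle_\tau(\ell_n, M_n) \to 0$, yet $\angle_\tau(\ell_n, |L|) \geq \delta_0$ for every $n$. By compactness of the space of geodesic laminations in the Hausdorff topology, pass to subsequences so that $M_n \to M_\infty$ and $\ell_n \to \ell_\infty$, where $M_\infty$ and $\ell_\infty$ are geodesic laminations on $\tau$. Condition (2) passes to the limit, yielding $|L| \subset |M_\infty|$.

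The main step is to upgrade angular closeness to containment: I claim both $L$ and $\ell_\infty$ are sublaminations of $M_\infty$. For $L$, take $p \in |L| \subset |M_\infty|$ and suppose, for contradiction, that the leaf of $L$ through $p$ and some leaf of $M_\infty$ through $p$ meet transversely at angle $\alpha > 0$. Choose points $q_n \in |M_n|$ with $q_n \to p$; since transverse intersections of geodesics are stable, for large $n$ the leaf of $L$ near $p$ must intersect a leaf of $M_n$ near $q_n$ transversely at angle close to $\alpha$, contradicting $\angle_\tau(M_n, L) \to 0$. Hence every leaf of $L$ meeting $|M_\infty|$ is tangent to $M_\infty$ at every intersection, and being geodesic, it coincides with a leaf of $M_\infty$; so $L$ is a sublamination of $M_\infty$. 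The identical argument using $\angle_\tau(\ell_n, M_n) \to 0$ shows $\ell_\infty$ is a sublamination of $M_\infty$.

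Finally, choose for each $n$ a point $p_n \in \ell_n \cap |L|$ at which $\ell_n$ meets the leaf of $L$ through $p_n$ at angle $\geq \delta_0$. After passing to a further subsequence, $p_n \to p_\infty \in |\ell_\infty| \cap |L|$, and the corresponding tangent vectors converge to unit vectors at $p_\infty$ meeting at angle $\geq \delta_0$. These are tangent to a leaf of $\ell_\infty$ and to the leaf of $L$ through $p_\infty$ respectively; but both of these leaves lie in $M_\infty$ and pass through $p_\infty$ in distinct directions, contradicting the fact that distinct leaves of a geodesic lamination are disjoint. The key technical point is the angle-to-tangency upgrade in the previous paragraph, which genuinely uses the stability of transverse intersections of geodesics together with compactness of $\GL$.
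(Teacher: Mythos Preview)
Your approach coincides with the paper's: reduce to Corollary~\ref{AdmissibleLoop} by showing that conditions (1), (2) together with $\angle_\tau(\ell, M)$ small force $\angle_\tau(\ell, |L|)$ to be small. The paper simply asserts this geometric implication in one sentence and then cites Corollary~\ref{AdmissibleLoop}; you supply a detailed compactness argument for it, which is a legitimate way to fill in that step.

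One imprecision worth flagging: your claim that ``$\ell_\infty$ is a sublamination of $M_\infty$'' is not established by ``the identical argument''. The argument for $L$ began from $|L| \subset |M_\infty|$, which came from condition~(2); you have no analogous containment $|\ell_\infty| \subset |M_\infty|$ a priori (if each $\ell_n$ were disjoint from $M_n$, the angle hypothesis would be vacuous). What the argument actually proves in both cases is the \emph{local} statement: at any point of $|\ell_\infty| \cap |M_\infty|$ (respectively $|L| \cap |M_\infty|$), the leaves through that point coincide. Fortunately this local version is all your final paragraph uses: since $p_\infty \in |L| \subset |M_\infty|$ and $p_\infty \in |\ell_\infty|$, you have $p_\infty \in |\ell_\infty| \cap |M_\infty|$, and the stability-of-transverse-intersections argument (now with both $\ell_n$ and $M_n$ varying) applies there to give the contradiction. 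So the proof stands once you rephrase that step locally rather than as a global sublamination claim.
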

\begin{proof}
Let $L = (\ld, \mu)$ denote the measured lamination, where $\ld \in \GL$ and $\mu \in \TM(\lam)$.
Then,  for every $\ep > 0$, there exists $\del > 0$, such that, if a multiloop $M$ on $\tau$ satisfies (1) and (2) for this $\del$,  then $\angle_\tau(\ell, \ld) < \ep$ for every loop $\ell$ on $\tau$ with $\angle_\tau(\ell, M) < \del$. 
Thus if $\ep > 0$ is sufficiently small, then  by Corollary \ref{AdmissibleLoop},  $\ell$ is isotopic to an admissible  loop on $C$.
\end{proof}

\subsection{Admissible loops close to $\nu_{i, j+1}$ on projective surfaces close to $\nu_{i,j}$ in Thurston coordinates}
We carry over the notations from \S \ref{5-14}.
Then
\begin{proposition}\Label{3-2no1}
For all $i \in \{ 0, 1, \dots, n-1\}$ and $j \in \Z$, 
there exists $\del> 0$,  such that, if a projective structure  $C \cong(\tau, L)$  on $S$ satisfies $L$ and $\nu_{i, j}$ are $\del$-hausdorff close, for 
  every  loop $\ell$ on $C$ such that the hausdorff distance between $\ell$ and $\nu_{i, j + 1}$ are $\del$-close,  there is an admissible loop on $C$ isotopic to $\ell$\,; indeed such a loop $\ell$ exists.
\end{proposition}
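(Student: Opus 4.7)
Existence of a loop $\ell$ with $\angle(\ell, \nu_{i,j+1}) < \delta$ is standard: simple closed geodesics are dense in the space of geodesic laminations of a closed hyperbolic surface in the Hausdorff topology, and this implies convergence in angle. So existence is routine; the substance of the proposition is admissibility.

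The natural attempt is to apply Theorem \ref{admissibleII} with $\nu = \nu_{i,j+1}$, but this fails because the hypothesis gives $L$ close to $\nu_{i,j}$, not to $\nu_{i,j+1}$. Instead I would imitate the contradiction argument in the proof of Theorem \ref{admissibleII}. Assume a counterexample sequence $C_n \cong (\tau_n, L_n)$ with $\rho$-holonomy, $\angle(|L_n|, \nu_{i,j}) \to 0$, loops $\ell_n$ with $\angle(\ell_n, \nu_{i,j+1}) \to 0$, and no $\ell_n$ isotopic to an admissible loop on $C_n$. After passing to Chabauty-convergent subsequences one has $|L_n| \to \lambda_\infty \subseteq \nu_{i,j}$ and $\ell_n \to \ell_\infty \subseteq \nu_{i,j+1}$; by Theorem \ref{12-20} the pleated surfaces of $C_n$ converge to the $\rho$-equivariant pleated surface $\beta_{i,j}$ realizing $\nu_{i,j}$. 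The difficulty is that $\lambda_\infty \cup \ell_\infty$ is not in general a geodesic lamination, so the nearly straight traintrack neighborhood of \cite[Lemma 7.10]{Baba_10-1} is unavailable on the whole surface.

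To overcome this, I would exploit the explicit structure from Section \ref{5-14}: by construction $\nu_{i,j}$ and $\nu_{i,j+1}$ coincide outside the subsurface $F_i$ and, inside $F_i$, both spiral around $\partial F_i \subset M_i \cap M_{i+1}$, differing only by a (possibly twisted) diagonal exchange in a bounded core region of $F_i$. Fix a compact set $K$ in the interior of $F_i$ containing this core. On $S \setminus \mr{K}$ the union $\nu_{i,j} \cup \nu_{i,j+1}$ is a genuine geodesic lamination, so the construction inside the proof of Theorem \ref{admissibleII} gives, for large $n$, an $(\epsilon, K_0)$-nearly straight traintrack on $\tau_n$ carrying both $|L_n|$ and $\ell_n$ outside $K$; this lifts via \cite[Proposition 7.12]{Baba_10-1} to a partial traintrack on $C_n \setminus \kap_n^{-1}(K)$ with branches supported on round cylinders. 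The arcs of $\ell_n$ passing through $K$ are uniformly short and bounded in number, so they can be completed across $\kap_n^{-1}(K)$ by concatenating through canonical neighborhoods of $C_n$ (Proposition \ref{4-23no3}) into a loop everywhere transverse to the combined circular-arc foliation; Lemma \ref{123112} then yields an admissible loop isotopic to $\ell_n$, contradicting the assumption.

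The principal obstacle is exactly this completion step inside $\kap_n^{-1}(K)$, where $|L_n|$ and $\ell_n$ cross each other transversely. One must verify that the short arcs of $\ell_n$ through $K$ develop, via $\mathrm{dev}(C_n)$, to arcs in $\rs$ that fit together with the round-cylinder branches from the outside into a genuine traintrack structure. Here the purely loxodromic hypothesis on $\rho$ is essential through Lemma \ref{7-31-12no1}, which ensures that the relevant developed endpoints on $\rs$ are pairwise distinct, so that the concatenation across $K$ can be arranged transverse to the circular foliation without creating non-loxodromic identifications.
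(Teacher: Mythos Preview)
Your overall architecture matches the paper's: outside the region where the diagonal exchange occurs, the laminations $\nu_{i,j}$ and $\nu_{i,j+1}$ agree and the standard round-cylinder traintrack machinery of \cite[Proposition 7.12]{Baba_10-1} applies; the real work is inside, where the new diagonal $d_{j+1}$ crosses the old diagonal $d_j$ of $\nu_{i,j}$. You also correctly identify that Lemma \ref{7-31-12no1} is the place where purely loxodromic enters. The paper proceeds directly rather than by contradiction, but that is a cosmetic difference.

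The genuine gap is your completion step across $K$. Invoking Proposition \ref{4-23no3} does not do the job: that proposition only says that on the region $W(p)$ the canonical lamination of the canonical neighborhood $U(p)$ coincides with that of $C$. It gives no mechanism for producing a branch of a traintrack supported on a round cylinder in a region where $\ell_n$ and $|L_n|$ are transverse, and Lemma \ref{123112} requires exactly that structure. ``Concatenating through canonical neighborhoods'' is not enough to guarantee that the developed arc lies in a single round cylinder with its vertical edges on the boundary circles; without that, transversality to a circular foliation is undefined and Lemma \ref{123112} does not apply.

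The paper fills this gap with an explicit geometric construction that your sketch is missing. One works in the ideal quadrangle $Q_{i,j} = \Delta_1 \cup \Delta_2$ of $(\tilde{\sigma}_{i,j}, \tilde{\nu}_{i,j})$ whose diagonals are $\tilde{d}_j$ and $\tilde{d}_{j+1}$. The purely loxodromic hypothesis via Lemma \ref{7-31-12no1} forces $\beta_{i,j}$ to send the four ideal vertices of $Q_{i,j}$ to four \emph{distinct} points of $\hat{\mathbb{C}}$. One then chooses round circles $r_1, r_2$ near the crossing point $\tilde{d}_j \cap \tilde{d}_{j+1}$, bounding hyperbolic planes orthogonal to the geodesic $\beta_{i,j}(\tilde{d}_j)$, and uses them to subdivide the single problematic branch $\mathcal{R}_0$ into three consecutive subbranches. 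The two outer subbranches lie in regions $\mathcal{W}_1, \mathcal{W}_2$ of $\tilde{C}$ over the complementary triangles $\Delta_1, \Delta_2$, and are automatically supported on the round cylinders bounded by $c_h$ and $r_h$. For the middle subbranch $\check{\mathcal{R}}$ one builds, stratum by stratum from $(\tilde{C}, \tilde{\mathcal{L}})$, a larger rectangle $\overline{\mathcal{R}}$ supported on the cylinder bounded by $r_1$ and $r_2$, containing $\check{\mathcal{R}}$ and meeting $\tilde{\mathcal{T}}$ only in $\check{\mathcal{R}}$; an isotopy inside $\overline{\mathcal{R}}$ then makes $\check{\mathcal{R}}$ supported on that cylinder. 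This is the substantive content your proposal gestures at but does not supply.
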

\Proof
Let $\tau_{i ,j}$ be the hyperbolic structure on $S$ such that $(\tau_{i, j}, \nu_{i, j})$ is realized by the $\rho$-equivariant pleated surface  $\beta_{i, j}$.  
Given a projective structure $C \cong (\tau, L)$ on $S$, let $L = (\lambda, \mu)$ denote the measured lamination, where $\lam \in  \GL$ and $\mu \in \TM(\lam)$.  
For every $\ep > 0$, if $\angle(\nu_{i.j},\lambda) > 0$ is sufficiently small, then there is a marking preserving $\ep$-rough isometry $\psi_{i,j} \cn \tau_{i,j} \to \tau$ given by Theorem \ref{12-20}.

Let $K$ be any positive number less than one third of the shortest closed leaf of (the geodesic representative of) $\nu_{i,  j + 1}$ on $\tau_{i, j}$. 
Then,  for every  $\ep > 0$, there is an $(\ep, K)$-nearly straight traintrack $T_{i, j +1}$ on $\tau_{i,j}$ that carries $\nu_{i,j+1}$ on $\tau_{i, j}$ (Proposition 7.11 in \cite{Baba-15gt}).  
In addition, it is easy to show that,  for every $H > 0$,   we can in addition assume, if a branch $T_{i, j+1}$ intersects no closed leaf of $\nu_{i, j}$, then its length is at least $H$ (since $K$ is determined by the lengths of closed leaves of $\nu_{i, j + 1}$).

If $\ep > 0$ is sufficiently small, each branch of $T_{i, j+1}$ is close to a geodesic segment of length at least $K$. 
Recall that $\nu_{i, j}$ and $\nu_{i, j+1}$  differ by a diagonal exchange or two simultaneous diagonal exchanges. 
Therefore, we can naturally assume that, for every leaf $d \in \nu_{i, j}$ with  $d \notin \nu_{i, j  + 1}$, there is a unique branch $R$ of $T_{i, j + 1}$ such that  $d$ transversally intersects each horizontal edge of $R$ exactly once (so that $d \cap R$ is a single geodesic segment) and such that  no other leaf of $\nu_{i, j}$ intersects $R$.
  
We show that, if $H > 0$ is sufficiently large and  $\ep > 0$ are sufficiently small, then the traintrack  $T_{i,j + 1}$ (on $\tau_{i, j}$) is \textit{admissible} on $C$. 
That is, for every loop $\ell$ carried by $T_{i,j +1}$, there is an admissible loop on $C$ isotopic to it. 
The proof is similar to that of Theorem \ref{admissibleII}, except more careful arguments for branches corresponding to leaves $d$ of $\nu_{i, j + 1}$ with  $d \notin \nu_{i ,j}$, where $d$ and $L$ are not (necessarily) close to being parallel. 

{\it Case One.} First suppose that $F_i$ is a one-holed torus.
Let $d_j$ be the leaf of $\nu_{i,j}$ removed  by the diagonal exchange of $\nu_{i, j}$ yielding $\nu_{i , j+1}$, and let $d_{j+1}$ be the leaf of $\nu_{i, j+1}$ added by the exchange. 
Then there is a unique branch $R_0$ of $T_{i, j + 1}$ such that  $d_j$ is the only leaf of $\nu_{i, j}$ intersecting $R_0$.
Since $d_{j + 1}$ is not a closed leaf, we can assume that $R_0$ has length at least $H > 0$. 
For every $\zeta > 0$, if $\del > 0$ is sufficiently small, then the distance between $\tau$ and $\tau_{i, j}$ is less than $\zeta$.
Thus, since $T_{i, j+1}$ is $(\ep, K)$-straight, we can assume that there is an $(2\ep, K)$-straight traintrack $\TTT$ also on $\tau$ obtained by a small perturbation of $\psi_{i,j}(T_{i, j+1})$.
Let $\TT = \cup_{k = 0}^m \RR_k$ be the traintrack on $C$ that descends to $\TTT$ via the collapsing map $\kp\cn C \to \tau$. 
We can assume that $\RR_0$ is the branch of $\TT$ corresponding to $R_0$.
For every  branch $\RRR$ of $\TTT$ that does not corresponding to $R_0$,   $\RRR \cap L$ is a union of geodesic segments connecting the vertical edges of $\RRR$.
Then, since $\TTT$ is $(2\ep, K)$-nearly straight with sufficiently small $\ep > 0$,  as in \cite[Proposition 7.12]{Baba-15gt}, by a small isotopy of  $\TT$  on $C$ without changing $|\TT|$, we may assume that  $\RR_k$ is a rectangle supported on a round cylinder for each $k =  1, \dots, m$. 

To complete the proof, we show there is an isotopy of $T$ on $C$ such that
\begin{itemize}
\item this isotopy is {\it supported} on $R_0$ (i.e. it fixes $T \minus R_0$), and 
\item after the isotopy, $\RR_0$ can be subdivided into three branches which are supported on three (consecutive) round cylinders.
\end{itemize}
After such an isotopy, the traintrack $\TT$ is admissible on $C$ by Lemma \ref{123112}; moreover,  by Lemma \ref{12-31no1}, there are many distinct loops carried by $\TT$.  

Let $\sigma_{i,j}$ be the subsurface of $\tau_{i,j}$ with geodesic boundary which is isotopic to the subsurface $F_i$ of $S$.
Then the boundary component of $\sigma_{i,j}$ is a closed leaf of $\nu_{i,j}$, and the lamination $\nu_{i,j}$ decomposes $\sigma_{i, j}$ into two ideal triangles.
 Let $\tilde{\sigma}_{i,j}$ be the universal cover of $\sigma_{i,j}$. 
Then $\td{\sigma}_{i, j}$ is a convex subset of $\h^2$ bounded by the geodesics which cover the boundary component of $\sigma_{i, j}$.
Then the total lift $\til{\nu}_{i,j}$ yields an ideal triangulation of $\til{\sigma}_{i, j}$.
Let  $\td{d}_j$  be a lift of  $d_j$ to $\td{\sigma}_{i, j}$.
Then $\td{d}_j$ separates adjacent  ideal triangles $\Delta_1$ and $\Delta_2$ of $\td{\sigma}_{i, j} \minus \td{\nu}_{i, j}$.
Then  $\Delta_1 \cup \Delta_2 (=: Q_{i,j})$ is a fundamental domain of $\tilde{\sigma}_{i,j}$, and it  is an ideal quadrangle.
Different vertices of $Q_{i,j}$ are endpoints of different boundary geodesics of $\tilde{\sigma}_{i,j}$, and 
	different boundary geodesics $\ell_1, \ell_2$ of $\td{\sigma}_{i, j}$ are preserved by different elements   $\gam_1, \gam_2 \in \pi_1(S) \minus \{id\}$. 
Then, since $\rho$ is purely loxodromic, by Lemma \ref{7-31-12no1}, the axes of the loxodromic elements $\rho(\gam_1)$ and $\rho(\gam_2)$ share no endpoint. 
Then, since the pleated surface $\beta_{i,j}\cn \h^2 \to \h^3$ is $\rho$-equivariant,  $\beta_{i,j}$  takes different boundary geodesics of $\td{\sigma}_{i,j}$ to distict geodesics in $\h^3$ without any common endpoint.
In particular, $\beta_{i, j}$ takes the vertices of $Q_{i, j}$ to distinct points on $\rs$.

Since the branches $\RR_1, \cdots, \RR_m$ are supported on round cylinders, the vertical edges of $\RR_0$ are circular. 

Since $d_{j + 1}$ is the only leaf of $\nu_{i, j + 1}$ intersecting $R_0 \st \sigma_{i, j}$,  there is a unique lift  $\td{R}_0$ of $R_0$  contained in $Q_{i,j}$.
Then,  if $H > 0$ is  sufficiently large and $\ep > 0$ is sufficiently small, then the opposite vertical edges of $\til{R}_0$ are sufficiently far in $Q_{i, j}$. 
Let $\td{\RR}_0$ be the branch of $\td{\TT}$ corresponding to $\til{R}_0$.
Then the vertical edges of $\til{\RR}_0$ are contained in a vertical edge of another branch of $\til{\TT}$, which is supported on a round cylinder.


In the quadrangle $Q_{i, j}$, the diagonals $\td{d}_j$ and $\td{d}_{j+1}$ intersect in a single point.
Let $v_1, v_2, v_3, v_4$ denote the vertices of  $Q_{i, j}$ so that $v_1$ is the vertex of $\Delta_1$ opposite of the diagonal $d_{i, j}$,
 $v_2$ is the vertices of $\Delta_2$ opposite of the diagonal $d_{i, j}$, and 
 $v_3, v_4$ are the endpoints of $d_{i. j}$. 

Then take disjoint round disks $D_1, D_2$ in $\rs$ so that  $D_1$ contains $\beta_{i, j}(v_1), c_1$ and $\beta_{i, j}(v_3)$ and $D_2$ contains $\beta_{i, j}(v_2), c_2$ and $\beta_{i, j}(v_4)$ (see Figure \ref{fDisks}).
 For each $i = 1, 2$, let $r_i$ be the round circle on $\rs$ bounding $D_i$.  

\begin{figure}[H]
\begin{overpic}[scale=.6
] {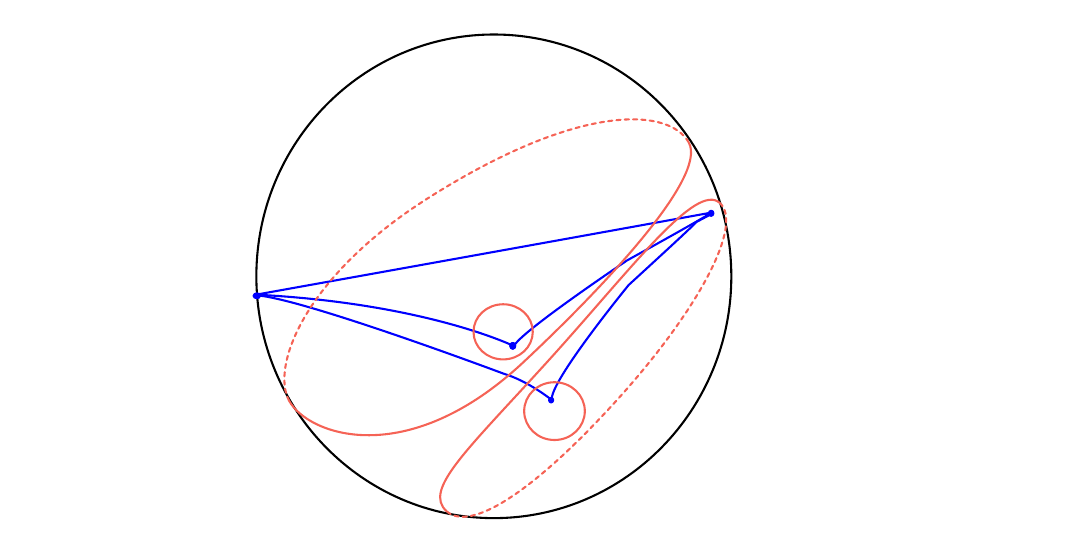} 
\put(45 , 19 ){\textcolor{blue}{\tiny $v_1$}}  
\put(50 , 10 ){\textcolor{blue}{\tiny $v_2$}}  
\put(24 , 24 ){\textcolor{blue}{\tiny $v_4$}}  
\put(65 , 27 ){\textcolor{blue}{\tiny $v_3$}}  
\put(45 , 23 ){\textcolor{Red}{$c_1$}}  
\put(54 , 14 ){\textcolor{Red}{$c_2$}}  
\put(60 , 33 ){\textcolor{Red}{$\gamma_1$}}  
\put(43 , 4 ){\textcolor{Red}{$\gamma_2$}}  
      \end{overpic}
\caption{The $\beta_{i, j}$-images of the vertices $v_i$ and the round circles separating them.}\label{fDisks}
\end{figure}

\begin{claim}
If $\del > 0$ is sufficiently small, then there is an isotopy of $\TT$ on $C$ which only moves $\RR_0$ so that $r_1$ and $r_2$ decompose $\RR_0$ into three branches supported on three consecutive round cylinders: the round cylinder $\AA_1$ bounded by $c_1$ and $r_1$, the round cylinder   $\AA$, and  the round cylinder bounded $\AA_2$ by $c_2$ and $r_2$. 
\end{claim}

\proof

\begin{figure}[H]
\begin{overpic}[scale=.6
] {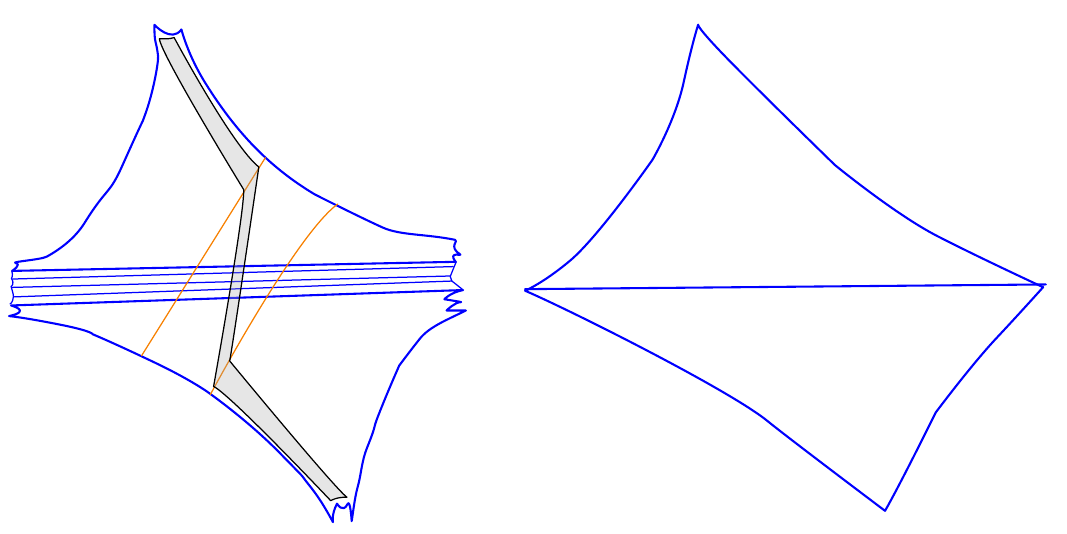}
 \put(30 ,32 ){\textcolor{Blue}{$\mathcal{Q}$}}  
 \put(15 ,30 ){\textcolor{Blue}{$\mathcal{W}_1$}}  
 \put(30 , 15 ){\textcolor{Blue}{$\mathcal{W}_2$}}  
 \put(70 ,30 ){\textcolor{Blue}{$\Delta_1$}}  
 \put(75 , 15 ){\textcolor{Blue}{$\Delta_2$}}  
      \end{overpic}
\caption{Left: $\RR_0$ after isotopy, Right: the ideal quadrangle $Q_{i, j}$}\label{fAfterIsotopy}
\end{figure}
Let $P$ be a stratum of $(\h^2, \til{L})$ that intersects $\til{R}_0$.
Then, as $L$ and $\nu_{i, j}$ are sufficiently Hausdorff-close,  $P$ is Hausdorff close to either $\Delta_1, \Delta_2$ or  $d_{i, j}$ (as a subset of  the disk $\H^2 \cup \bdr_\infi \H^2$).  
Therefore  the ideal points $\bdr_\infi P$ of $P$ map into $D_1 \cup D_2$ by $\beta_{i, j}$, and there is at least one point of $\bdr_\infi P$ maps into $D_i$ for each $i = 1, 2$.
Let $\mP$ be a  stratum of $(\tilde{C}, \tilde{\LL})$ that corresponds to $P$ by the collapsing map. 
Let $f$ be the developing map of $C$.
Then  the $f$-preimage of   $r_1 \sqcup r_2$ decomposes $\mP$ into three connected regions mapping into the different complementary regions $D_1, D_2$  and $\AA$ of $r_1 \sqcup r_2$. 

In particular, the preimage of $\AA$ is either an arc  or a rectangle supported in the round cylinder $\AA$, and it depends on whether $\mP$ is one-dimensional or two-dimensional. 
Let $\mathcal{Q}$ be the union, over all strata $\mP$ of $(\til{C}, \til{L})$ whose $\tilde{\kap}$-images intersect $\til{R}_0$; then $\mathcal{Q}$ is topologically a closed disk.
Let $U$ be the $f$-preimage of $\AA$ in $\mathcal{Q}$. 
Then $U$ is a union of a rectangles and arcs supported $\AA$, and $U$ is a projective structure on a rectangle supported on $\AA$.

Let $\WW_i$ be the stratum of $(\til{C}, \til{L})$ which map to a stratum of $(\H^2, \til{L})$ Hausdorff close to to $\Delta_i$. 
Then $\WW_i$ is bounded by circular arcs, and there is a unique boundary circular arc $a_i$ intersecting both $r_i$ and $c_i$ transversally. 
Moreover, $a_i$ is transversal to the circular foliation on $\AA_i$, and
 $f^{-1}(\AA_i) \cap \WW_i$ contains a rectangle $\RR_{0, i}$  supported on $\AA_i$ such that its vertical edge on $c_i$ matches with the vertical edge of $\RR_0$ on $c_i$. 
 
In addition,  there is a rectangle $\RR_{0,0}$ in $U$ supported on $\AA$ such that its vertical edges are the vertical edge of $\RR_{0,1}$ on $r_1$ and the vertical edge of $\RR_{0,2}$ on $r_2$.

Then it is easy to make an isotopy of $\RR_0$ to $\RR_{0,0} \cup \RR_{0,1} \cup \RR_{0, 2}$ (Figure \ref{fAfterIsotopy}).

{\it Case Two.}
Suppose that $F_i$ is a four-holed sphere. 
Then the proof is similar to Case One, and we leave the proof to the readers.  
The only difference is that  $\nu_{i,j}$ and $\nu_{i, j+1}$ differ by two diagonal exchanges (intend of one).
We accordingly deal with two branches of the traintracks more carefully than the other branches.
\Qed{3-2no1}


\section{Sequence of grafts traveling in $\gl$.}\Label{sTravelingInGL}
In this section, we prove our main theorem (Theorem \ref{main}) modulo {\tt}  Corollary \ref{3-10no3}.
This corollary will be independently proved in Part \ref{iteration}.
Recall that, in  \S \ref{5-14},  starting with two projective structures on $S$ sharing purely loxodromic holonomy $\rho\col \pi_1(S) \to \PSL$, we have constructed finitely many geodesic laminations $\nu_0, \nu_1, \dots, \nu_n$ on $S$ and, for each   $0 \leq i \leq n$,  a family of geodesic laminations $\nu_{i, j}\, (j \in \Z)$ on $S$ ``connecting'' $\nu_i$ to $\nu_{i + 1}$. 
 By Proposition \ref{3-2no1}, 
given a projective structure whose measured lamination is sufficiently Hausdorff close to  $\nu_{i, j}$, one can find an admissible loop which is sufficiently close to  $\nu_{i, j + 1}$;
then, by grafting along the admissible loop sufficiently many time, we obtain a projective structure whose measured lamination is Hausdorff close to $\nu_{i, j +1}$.
Namely
\begin{proposition}\Label{010313}
Given $0 \leq i < n$ and   $j \in \Z$, there exists $\dl_{i, j} > 0$ such that, if a projective structure $C \cong(\tau, L)$ with purely loxodromic holonomy $\rho\cn \pi_1(S) \to \psl$ satisfies $\angle(\lambda, \nu_{i,j}) < \delta_{i, j}$, then, for every $\ep_{i, j} > 0$, there is an admissible loop $\ell$ on $C$ such that, letting $\Gr^k_\ell(C) \cong (\tau_k, L_k)$ in Thurston coordinates, we have $\angle(L_k, \nu_{i, j+1}) < \ep_{i , j}$ for sufficiently large $k$.
\end{proposition}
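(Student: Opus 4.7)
The plan is to combine the two main inputs directly: Proposition~\ref{3-2no1} supplies an admissible loop $\ell$ that lies close (in angle on $\tau$) to $\nu_{i,j+1}$, and Corollary~\ref{3-10no3} (an asymptotic consequence of Theorem~\ref{ThmB}) then controls what happens to the Thurston lamination when we iterate $\Gr_\ell$ many times. So the target projective structure $\Gr^k_\ell(C)$ will be produced from these two ingredients in a straightforward fashion.

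First, choose $\delta_{i,j}$ at least as small as the constant furnished by Proposition~\ref{3-2no1} for the pair $(\nu_{i,j},\nu_{i,j+1})$, and in fact small enough that whenever $\angle(\lambda,\nu_{i,j}) < \delta_{i,j}$ we can find an admissible loop $\ell$ on $C$ whose geodesic representative on $\tau$ satisfies $\angle_\tau(\ell,\nu_{i,j+1}) < \delta'$, for a prescribed $\delta' > 0$ to be fixed at the end. This step uses only the already-proved part of the paper and directly guarantees existence of the loop along which we will graft.

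Next, apply Corollary~\ref{3-10no3} to the iterated grafts $C_k := \Gr_\ell^k(C) \cong (\tau_k, L_k)$. By Theorem~\ref{ThmB}, $\tau_k$ converges in $\mathscr{T}$ and $L_k$ converges to a heavy measured lamination $L_\infty$ in which $\ell$ is the unique leaf of infinite weight; Corollary~\ref{3-10no3} packages this so that, for large $k$, the angle $\angle_{\tau_k}(L_k,\ell)$ is arbitrarily small (the support $|L_k|$ is absorbed near $\ell$, while the finite-weight part of the limit contributes negligibly to the angle with $\ell$). Combining this with Step~1 by the triangle-type inequality
\[
\angle(L_k,\nu_{i,j+1}) \;\leq\; \angle(L_k,\ell) + \angle(\ell,\nu_{i,j+1}),
\]
and choosing $\delta'$ and $k$ small/large enough, we obtain $\angle(L_k,\nu_{i,j+1}) < \epsilon_{i,j}$, as required.

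The main obstacle is the second step: one must verify that convergence of $L_k$ to $L_\infty$ in $\ML$, together with $\ell$ having infinite weight in $L_\infty$, actually forces the geodesic support $|L_k|$ (viewed on $\tau_k$) to be angularly close to $\ell$. Measure convergence does not immediately control the supremum-angle, so Corollary~\ref{3-10no3} must capture the fact that the finite-weight ``residue'' of $L_k$ occurs on strata whose geodesic directions either coincide with $\ell$ or contribute only on arbitrarily small sets. All other pieces of the argument (the triangle-type estimate, the choice of constants, transport between $\tau$ and $\tau_k$ via a bilipschitz marking map coming from Theorem~\ref{12-20}) are routine once this angular control is in hand, which is why the proposition is deferred to Corollary~\ref{3-10no3} in Part~\ref{iteration}.
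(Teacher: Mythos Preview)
Your approach is the paper's: the proposition is asserted to follow ``immediately'' from Proposition~\ref{3-2no1} and Corollary~\ref{3-10no3}, and you correctly invoke these in the right order.

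There is, however, a genuine gap in the way you combine them. The ``triangle-type inequality''
\[
\angle(L_k,\nu_{i,j+1}) \leq \angle(L_k,\ell) + \angle(\ell,\nu_{i,j+1})
\]
is \emph{false} in general: the angle between laminations is a supremum over intersection points, so leaves of $L_k$ disjoint from $\ell$ contribute nothing to $\angle(L_k,\ell)$ yet may cross $\nu_{i,j+1}$ at large angle. Requiring only $\angle(\ell,\nu_{i,j+1})<\delta'$ is therefore not enough. The subtlety you flag in your last paragraph---whether Corollary~\ref{3-10no3} really gives $\angle(L_k,\ell)\to 0$---is not the issue; that corollary holds as stated. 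The actual obstacle is the passage from $\angle(L_k,\ell)$ small to $\angle(L_k,\nu_{i,j+1})$ small.

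What rescues the argument is that $\nu_{i,j+1}$ is \emph{maximal} and Proposition~\ref{3-2no1} allows you to pick $\ell$ not merely at small angle but Hausdorff-close (in the Chabauty sense on the unit tangent bundle) to $\nu_{i,j+1}$; such loops exist by the Dehn-twist description in Lemma~\ref{12-31no1}. Once $\ell$ is Chabauty-$\epsilon$-close to the maximal $\nu_{i,j+1}$, any geodesic that crosses a leaf of $\nu_{i,j+1}$ at angle $\theta\gg\epsilon$ must also cross (some lift of) $\ell$ nearby at comparable angle, so one gets the honest estimate $\angle(L_k,\nu_{i,j+1})\le\angle(L_k,\ell)+O(\epsilon)$. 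This is exactly how the paper argues in the parallel Claim~\ref{010513}, where the hypothesis is explicitly ``$\ell$ sufficiently close to $\nu_0$ with the Hausdorff metric''. With this correction your outline goes through and coincides with the paper's intended argument.
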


Next, using grafting obtained by Proposition \ref{010313}, we can transform a projective structure close to $\nu_i$ to a projective structure close to $\nu_{i +1}$:
\begin{proposition}\Label{1-1no1}
For $ i = 0, 1, \cdots, n-1$, there exists $\del_i  > 0$, such that if $C \cong(\tau, L )$ is a projective structure in $\PP_\rho$ with $\angle(\lambda, \nu_i) < \del_i$, then, for every $\ep > 0$, there is a finite composition of grafts starting from $C$, 
$$C = C_0  \xrightarrow{Gr_{\ell_1}} C_1  \xrightarrow{Gr_{\ell_2}} C_2 \to \ldots  \xrightarrow{Gr_{\ell_k}} C_k$$
so that the last projective structure $C_k \cong (\tau_k, L_k)$ satisfies $\angle( L_k, \nu_{i +1}) < \ep$.
\end{proposition}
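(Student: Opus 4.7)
The plan is to move along the bi-infinite sequence $\{\nu_{i,j}\}_{j \in \Z}$ using Proposition \ref{010313} one rung at a time, starting from some very negative $J_-$ (where $\nu_{i, J_-}$ is close to $\nu_i$) and ending at some very positive $J_+$ (where $\nu_{i, J_+}$ is close to $\nu_{i+1}$). Each rung is implemented by a single application of Proposition \ref{010313} with an appropriately chosen exit tolerance $\epsilon_{i, j}$, and the final composition of grafts in Proposition \ref{1-1no1} is obtained by concatenating the iterated grafts produced along the way.

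First I would fix $J_- \in \Z$ sufficiently negative so that $\angle(\nu_i, \nu_{i, J_-}) < \delta_{i, J_-}/2$, and set $\delta_i \defeq \delta_{i, J_-}/2$. Such a $J_-$ exists because the geometric data underlying the proof of Proposition \ref{3-2no1} near $\nu_{i, j} \cup \nu_{i, j+1}$ stabilizes as $j \to -\infty$ (the ideal quadrangle $Q_{i,j}$ and its traintrack carrying $\nu_{i,j+1}$ approach fixed geometry determined by $\nu_i$), so $\delta_{i, j}$ admits a positive lower bound on a deep enough negative tail, while $\angle(\nu_i, \nu_{i, j}) \to 0$. With this choice any $C \cong (\tau, L)$ with $\angle(L, \nu_i) < \delta_i$ satisfies $\angle(L, \nu_{i, J_-}) < \delta_{i, J_-}$ by the triangle inequality, so Proposition \ref{010313} is applicable at level $J_-$.

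Given $\epsilon > 0$, I then choose $J_+ \in \Z$ large enough that $\angle(\nu_{i, J_+}, \nu_{i+1}) < \epsilon/2$, and I backward-chain the exit tolerances: set $\epsilon_{i, J_+ - 1} \defeq \epsilon/2$ and, for $j$ descending from $J_+ - 2$ down to $J_-$, set $\epsilon_{i, j} \defeq \delta_{i, j+1}$. This choice is tuned so that the output of the $j$-th rung lies in the radius of applicability of the $(j+1)$-st rung. Now iterate: put $C^{(J_-)} \defeq C$, and for each $j = J_-, J_-+1, \ldots, J_+ - 1$, Proposition \ref{010313} supplies an admissible loop $\ell_j$ on $C^{(j)}$ and an integer $k_j \geq 1$ such that $C^{(j+1)} \defeq \Gr^{k_j}_{\ell_j} C^{(j)} \cong (\tau^{(j+1)}, L^{(j+1)})$ satisfies $\angle(L^{(j+1)}, \nu_{i, j+1}) < \epsilon_{i, j}$. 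For $j < J_+ - 1$ this equals $\delta_{i, j+1}$, so the hypothesis of the next rung is met; at $j = J_+ - 1$ we obtain $\angle(L^{(J_+)}, \nu_{i, J_+}) < \epsilon/2$, and the triangle inequality yields $\angle(L^{(J_+)}, \nu_{i+1}) < \epsilon$. Concatenating the grafts $\Gr^{k_{J_-}}_{\ell_{J_-}}, \ldots, \Gr^{k_{J_+ - 1}}_{\ell_{J_+ - 1}}$ gives the desired finite composition starting from $C$.

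The hard part will be the first step: establishing that $\delta_{i, j}$ does not shrink to $0$ faster than $\angle(\nu_i, \nu_{i, j})$ as $j \to -\infty$, so that a legitimate $J_-$ exists. This requires revisiting the construction in Proposition \ref{3-2no1}, tracking how the constant $\delta$ depends on the $(\epsilon, K)$-nearly straight traintrack $T_{i, j+1}$ on $\tau_{i, j}$, on the length threshold $H$ for the branch $R_0$, and on the geometry of the pleated ideal quadrangle $Q_{i,j}$ under $\beta_{i,j}$; and then verifying that these data converge, as $j \to -\infty$, to analogous fixed data depending only on $\nu_i$. Once this uniformity is in place, the remaining induction is clean and purely formal.
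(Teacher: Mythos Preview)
Your backward-chaining from $J_+$ down through the $\delta_{i,j}$'s is exactly what the paper does in its second paragraph. The difference is entirely in the first step, and the paper's route there is simpler than yours.

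You propose to enter the chain at level $J_-$ by using the triangle inequality to pass from $\angle(L,\nu_i)<\delta_i$ to $\angle(L,\nu_{i,J_-})<\delta_{i,J_-}$, which forces the race condition you correctly flag: you need $\delta_{i,J_-}$ not to decay faster than $\angle(\nu_i,\nu_{i,J_-})$ as $J_-\to-\infty$. Establishing this would mean reopening the proof of Proposition~\ref{3-2no1} and tracking uniform dependence of its constant on $j$. That is plausible (the geometry does stabilize), but it is real extra work.

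The paper sidesteps this entirely. Its first step does not invoke Proposition~\ref{010313} at all: it uses Theorem~\ref{admissibleII} (applied once, to the pleated surface realizing $\nu_i$) to get a fixed $\delta_i$ such that any loop $\ell$ close to $\nu_i$ is admissible on $C$ whenever $\angle(L,\nu_i)<\delta_i$. Then, for \emph{any} $j$ however negative, one picks $\ell$ Hausdorff-close to $\nu_{i,j}$ (hence close to $\nu_i$, hence admissible) and iterates $\Gr_\ell$; by Corollary~\ref{3-10no3} this drives $\angle(L_h,\ell)\to 0$, so $\angle(L_h,\nu_{i,j})<\delta_{i,j}$ for large $h$. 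The point is that the admissibility threshold $\delta_i$ comes from $\nu_i$ alone and is decoupled from $\delta_{i,j}$, so $j$ can be chosen after the backward chain is set up and after $\delta_{i,j}$ is known, with no race. This is what buys the paper its $\epsilon$-independent $\delta_i$ without any uniformity estimate on the $\delta_{i,j}$.

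So your proposal is not wrong, but the ``hard part'' you identify is an artifact of entering the ladder via Proposition~\ref{010313}; if you instead graft directly along a loop approximating $\nu_{i,j}$ (admissible by Theorem~\ref{admissibleII}, effective by Corollary~\ref{3-10no3}), the difficulty evaporates.
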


\begin{proof} 
Let  $C \cong(\tau, L)  \in \PP_\rho$ be such that  $\angle(L, \nu_i) > 0$ is  sufficiently small.
Recall that $\nu_{i, j} \to \nu_i$ as $j \to -\In$  in the Chabauty topology.
Thus, if $j \in \Z$ is sufficiently small and $\ell$ is a loop on $C$ (whose geodesic representative) is sufficiently close to $\nu_{i, j}$ on $\tau_{i, j}$, then $\angle(\ell, \nu_i) > 0$ is also sufficiently small. 
Then, by Theorem \ref{admissibleII}, a loop $\ell$ on $C$ is admissible up to an isotopy.
Set $\Gr^h_\ell(C) \cong (\tau_h, L_h)$ for each $h \in \Z_{> 0}$. 
Therefore, for every $\del > 0$, if $j \in \Z$ is sufficiently small and $\ell$ is sufficiently close to $\nu_{i, j}$, then by Corollary \ref{3-10no3}, we have $\angle(L_h, \nu_{i, j}) < \del$ for sufficiently large $h \in \N$. 

Since $\nu_{i, j} \to \nu_{i +1}$ as $j \to \In$, for every $\ep > 0$,   we can pick sufficiently large $j_\ep \in \N$ such that $\angle(\nu_{i +1}, \nu_{i, j_\ep}) < \ep$.
Then let $\del_{i, j_\ep} > 0$ be such that, if a loop $\ell$ satisfies $\angle(\ell, \nu_{i, j_\ep}) < \del_{i, j_\ep}$, then $\angle(\ell, \nu_{i + 1}) < \ep$. 

For every $j \in \Z$ with $j <  j_\ep$, inductively define $\del_{i, j} > 0$  inductively so that $\del_{i, j}$ is the constant obtained by applying Proposition \ref{010313} to $\del_{i, j+1}$.
Thus, for  $j < j_\ep$, if there is a projective structure $C' \cong (\tau', L')$ satisfies that $\angle(\nu_{i, j}, L') < \del_{i, j}$, then we can inductively construct a composition of grafts starting from $C'$, 
$$C' = C_0  \xrightarrow{Gr_{\ell_1}} C_1  \xrightarrow{Gr_{\ell_2}} C_2 \to \ldots  \xrightarrow{Gr_{\ell_k}} C_k = (\tau_k, L_k), $$
such that  $\angle( L_k, \nu_{i +1}) < \ep$.
In addition, it follows from  the first paragraph in this proof that, if $j \in \Z$ is sufficiently small, then such a projective structure $C'$ can be obtained by a finite iteration of grafts of $C$ along a fixed admissible loop. 
This completes the proof.
\end{proof}

Recall that we started with arbitrary projective structures $C_\sharp \cong (\tau_\sharp , L_\sharp)$ and $C_\flat \cong (\tau_\flat, L_\flat)$ on $S$  sharing purely loxodromic holonomy $\rho\cn \po(S) \to \psl$  and that  $L_\sharp = (\lambda_\sharp, \mu_\sharp)$ and $L_\flat = (\lambda_\flat, \mu_\flat)$ are measured laminations. 
Then
\begin{proposition}\Label{4-21no1}
For every $\ep > 0$, there exists a finite composition of grafts starting from $C_\sharp $ $$C_\sharp  = C_0  \xrightarrow{Gr_{\ell_1}} C_1  \xrightarrow{Gr_{\ell_2}} C_2 \to \ldots\to C_n$$ such that the last projective structure $C_n \cong (\tau_n, L_n)$ satisfies  $\angle(\lam_\flat, L_n) < \ep$.
\end{proposition}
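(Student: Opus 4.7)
The plan is to chain Proposition \ref{1-1no1} along the finite sequence of maximal laminations $\nu_0,\nu_1,\dots,\nu_n$ constructed in \S\ref{5-14}, one lamination at a time, so that each stage of grafting pushes the canonical lamination from ``near $\nu_i$'' to ``near $\nu_{i+1}$''. At the end, closeness to $\nu_n$ is converted into closeness to $\lambda_\flat$ by appeal to the proximity of $\nu_n$ to $M_\flat$ and of $M_\flat$ to $L_\flat$.

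Once the $\nu_i$ are fixed, Proposition \ref{1-1no1} yields positive constants $\delta_0,\delta_1,\dots,\delta_{n-1}$, where $\delta_i$ controls when a projective structure sufficiently close to $\nu_i$ can be grafted to become close to $\nu_{i+1}$. Because $\nu_0$ contains $M_\sharp = M_0$ as closed leaves and each non-closed leaf of $\nu_0$ spirals into $M_\sharp$, the support of $L_\sharp$ (being contained in a thin tubular neighborhood of $M_\sharp$ on $\tau_\sharp$) meets $\nu_0$ only in a neighborhood of $M_\sharp$, where both $L_\sharp$ and $\nu_0$ are nearly parallel to $M_\sharp$. Hence by refining the initial choice of $M_\sharp$ if necessary (making both $\angle_{\tau_\sharp}(M_\sharp,L_\sharp)$ and the width of the neighborhood containing $L_\sharp$ smaller, which does not alter the combinatorial type of $\nu_0$ nor the associated $\delta_0$), we may arrange $\angle(\lambda_\sharp,\nu_0)<\delta_0$. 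The analogous remark applies to $M_\flat$ and $\nu_n$: we may choose $M_\flat$ so that any lamination within angle $\delta'$ of $\nu_n$ (for any prescribed $\delta'>0$) lies within angle $\ep$ of $\lambda_\flat$.

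Now we iterate. Starting from $C^{(0)}=C_\sharp$ with $\angle(L^{(0)},\nu_0)<\delta_0$, Proposition \ref{1-1no1} applied with target precision $\delta_1$ produces a finite composition of grafts
\[
C^{(0)}\xrightarrow{\mathrm{Gr}_{\ell^0_1}}\cdots\xrightarrow{\mathrm{Gr}_{\ell^0_{k_0}}}C^{(1)}\cong(\tau^{(1)},L^{(1)})
\]
with $\angle(L^{(1)},\nu_1)<\delta_1$. Applying Proposition \ref{1-1no1} again, now centered at $\nu_1$ with target $\delta_2$, we extend this to a composition ending at $C^{(2)}$ with $\angle(L^{(2)},\nu_2)<\delta_2$. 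Proceeding inductively for $i=0,1,\dots,n-2$ and, at the last step, applying Proposition \ref{1-1no1} at $\nu_{n-1}$ with the sufficiently small target precision $\delta'$ chosen above, we obtain a single finite composition of grafts
\[
C_\sharp=C^{(0)}\xrightarrow{\mathrm{Gr}_{\ell_1}}C_1\xrightarrow{\mathrm{Gr}_{\ell_2}}\cdots\xrightarrow{\mathrm{Gr}_{\ell_N}}C^{(n)}=:C_n\cong(\tau_n,L_n)
\]
with $\angle(L_n,\nu_n)<\delta'$, and hence $\angle(\lambda_\flat,L_n)<\ep$ by the choice of $\delta'$, as required.

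The main technical point is a dependency issue: each $\delta_i$ is determined by $\nu_i$, while $\nu_0$ and $\nu_n$ depend on $M_\sharp$ and $M_\flat$, which we may wish to refine in order to satisfy the base-case hypothesis and the final-step conclusion. This is handled at the outset by first fixing a sufficiently fine path in the pants graph (and hence the family $\{\nu_i\}$ and the constants $\delta_i$), and then shrinking the tubular neighborhoods of $M_\sharp$ and $M_\flat$ in which $L_\sharp$ and $L_\flat$ are required to lie---a refinement permitted by the initial choice in \S\ref{5-14} and which leaves the underlying $\nu_i$ (and hence the $\delta_i$) unchanged, since the $\nu_i$ depend only on the multiloops $M_i$ and the combinatorial spiraling data, not on how narrowly $L_\sharp,L_\flat$ are confined around $M_\sharp,M_\flat$.
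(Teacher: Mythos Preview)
Your chain-of-Proposition-\ref{1-1no1} argument for the inductive part is exactly the paper's approach, and your handling of the terminal step ($\nu_n\Rightarrow\lambda_\flat$) is fine. The gap is in the initial step, where you try to feed $C_\sharp$ directly into Proposition~\ref{1-1no1} by arranging $\angle(\lambda_\sharp,\nu_0)<\delta_0$.

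The constant $\delta_0$ coming out of Proposition~\ref{1-1no1} depends on the actual geodesic lamination $\nu_0$ (through the $\rho$-equivariant pleated surface realizing it, Theorem~\ref{admissibleII} and Proposition~\ref{3-2no1}), not merely on its ``combinatorial type''. Since $\nu_0$ is built from $M_\sharp$, any refinement of $M_\sharp$ to shrink $\angle(\lambda_\sharp,\nu_0)$ changes $\nu_0$ and hence may change $\delta_0$; there is no a~priori lower bound on $\delta_0$ as $M_\sharp$ ranges over multiloops approximating $\lambda_\sharp$. Your last paragraph tries to resolve this by ``shrinking the tubular neighborhoods'' while keeping the $\nu_i$ fixed, but once $M_\sharp$ is fixed as an isotopy class, so are the geodesic representatives of $M_\sharp$ and $\lambda_\sharp$ on $\tau_\sharp$, and hence so is $\angle(\lambda_\sharp,\nu_0)$; you cannot shrink it without changing $M_\sharp$. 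This is a genuine circularity.

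The paper breaks this circularity with an extra preparatory grafting step (Claim~\ref{010513}): once $\nu_0$ and $\delta_0$ are fixed, choose a loop $\ell$ Hausdorff-close enough to $\nu_0$ that $\angle(\ell,\nu_0)<\delta_0$. Admissibility of $\ell$ on $C_\sharp$ comes from Corollary~\ref{AdmissibleLoop}, whose threshold depends only on $C_\sharp$ (not on $\delta_0$); this is what the smallness of $\zeta$ is spent on. Then Corollary~\ref{3-10no3} gives $\angle(L_k,\ell)\to 0$ for $\Gr_\ell^k(C_\sharp)$, so $\angle(L_k,\nu_0)<\delta_0$ for large $k$, and the induction can start. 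You should insert this step in place of the direct claim that $\angle(\lambda_\sharp,\nu_0)<\delta_0$.
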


\proofof{Proposition \ref{4-21no1}}
Recall that the multiloops  $M_\sharp$ and $M_\flat$ are taken to be sufficiently close to $\lam_\sharp$ and $\lam_\flat$,  respectively.
In other words, given  $\zeta > 0$,   we can assume that $\angle(M_\flat, \lam_\flat) < \zeta$, $\angle(M_\sharp, \lam_\sharp) < \zeta$ and  that $\lam_\flat$ and $\lam_\sharp$ are contained in the $\zeta$-neighborhoods of $M_\flat$ and $M_\sharp$, respectively.
Since $\nu_n$ contains $M_n=  M_\flat$, for every $\ep > 0$, if $\zeta > 0$ is sufficiently small, there exists $\del_n > 0$ such that  if a geodesic lamination $\lam$ on $S$ satisfies $\angle(\lam, \nu_n) < \del_n$, then $\angle(\lam, \lam_\flat) < \ep$.
For $i = 0, 1, \dots, n-1$, let $\del_i > 0$ be the constant given by Proposition \ref{1-1no1}.  

\begin{claim}\Label{010513}
If $\zeta > 0$ is sufficiently small, then, for loops $\ell$ sufficiently close to $\nu_0$ with the Hausdorff metric on $\tau_\sharp$, 
\begin{itemize}
\item $\ell$ is admissible on $C_\sharp$ (up to an isotopy), and

\item for $k \in \N$ letting $\Gr_\ell^{k}(C_\sharp ) \cong (\tau_{k}, L_{k})$ in Thurston coordinates, we have $\angle(L_{k}, \nu_0) < \del_0$ for sufficiently large $k$.
\end{itemize}
\end{claim}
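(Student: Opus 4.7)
The plan is to handle the two bullets in sequence; the only non-routine input will be Corollary~\ref{3-10no3}, i.e.\ the asymptotic description of the Thurston coordinates of iterated grafts supplied by Theorem~\ref{ThmB}.

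For the admissibility bullet, I will invoke Corollary~\ref{12-12no1} applied to $C_\sharp$ and $M_\sharp$. By the very choice of $M_\sharp$, for $\zeta$ sufficiently small we have $\angle_{\tau_\sharp}(M_\sharp,L_\sharp)<\zeta$ and $|L_\sharp|$ is contained in the $\zeta$-neighborhood of $M_\sharp$ in $\tau_\sharp$; Corollary~\ref{12-12no1} then yields some $\eta>0$ such that every loop $\ell$ on $C_\sharp$ with $\angle_{\tau_\sharp}(\ell,M_\sharp)<\eta$ is isotopic to an admissible loop. Since $M_\sharp$ is the sublamination of closed leaves of $\nu_0$, any geodesic loop $\ell$ which is sufficiently Hausdorff-close to $\nu_0$ on $\tau_\sharp$ must make a small angle with every leaf of $\nu_0$ it crosses, in particular with the leaves of $M_\sharp$; so $\angle_{\tau_\sharp}(\ell,M_\sharp)<\eta$ holds and the admissibility follows.

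For the second bullet, I will invoke Corollary~\ref{3-10no3} applied to the admissible loop $\ell$ obtained above. It asserts that the iterated grafts $\Gr^k_\ell(C_\sharp)\cong(\tau_k,L_k)$ satisfy $L_k\to L_\infty$ in the space of (heavy) measured laminations, where $L_\infty$ has support $|L_\sharp|\cup\ell$ with $\ell$ as its unique heavy leaf. I will estimate $\angle_{\tau_\sharp}(|L_\infty|,\nu_0)$ by splitting the support: leaves of $|L_\sharp|$ lie $\zeta$-close to $M_\sharp\subset\nu_0$, both in Hausdorff distance and in angle, while $\ell$ is by hypothesis Hausdorff-close to $\nu_0$ and hence makes small angle with every leaf of $\nu_0$ it meets. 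Taking $\zeta$ and the Hausdorff threshold for $\ell$ small enough one gets $\angle(|L_\infty|,\nu_0)<\delta_0/2$. By continuity of the angle under the convergence $L_k\to L_\infty$ (and $\tau_k\to\tau_\infty$), we obtain $\angle(L_k,\nu_0)<\delta_0$ for all $k$ sufficiently large.

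The main obstacle is the asymptotic statement Corollary~\ref{3-10no3}, which will be proved separately in Part~\ref{iteration}; once that is in hand, the remainder of the argument is quantitative bookkeeping, choosing $\zeta$ and the Hausdorff threshold for $\ell$ small enough that the various $\zeta$, $\eta$ and $\delta_0$ estimates cohere. The other ingredients — Corollary~\ref{12-12no1} for admissibility, the stability of transverse intersection angles under Hausdorff convergence of geodesic laminations on a hyperbolic surface, and the continuity of angles under convergence of measured laminations — are all standard.
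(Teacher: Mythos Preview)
Your treatment of the first bullet is fine and essentially equivalent to the paper's: you route through Corollary~\ref{12-12no1} while the paper goes directly via Corollary~\ref{AdmissibleLoop}, but since the former is an immediate consequence of the latter, and since $M_\sharp = M_0 \subset \nu_0$, the two arguments amount to the same thing.

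The second bullet contains a genuine error. Neither Corollary~\ref{3-10no3} nor Theorem~\ref{3-10no2} says that the limit lamination $L_\infty$ has support $|L_\sharp|\cup\ell$. Theorem~\ref{3-10no2}(iii) says $L_\infty$ is supported on some $\lambda_\infty$ containing $\ell$, where $\lambda_\infty$ is the pleating locus of the \emph{limit} pleated surface; this lamination is built from the Thurston coordinates $(\sigma_\infty, N_\infty)$ of the limit structure $\CC_\infty$ (\S\ref{6-8no1}--\S\ref{identification}) and there is no reason for it to coincide with $|L_\sharp|$ away from $\ell$. Consequently your ``split the support'' estimate for $\angle(|L_\infty|,\nu_0)$ does not go through: you have no a priori control over the angle that the leaves of $\lambda_\infty\setminus\ell$ make with $\nu_0$.

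The paper sidesteps this entirely by using what Corollary~\ref{3-10no3} actually states, namely $\angle(L_k,\ell)\to 0$. Combined with $\angle(\ell,\nu_0)$ small (which you already have from $\ell$ being Hausdorff-close to the maximal lamination $\nu_0$), this yields $\angle(L_k,\nu_0)<\delta_0$ for large $k$ directly, without ever needing to identify the support of $L_\infty$. You should rewrite the second bullet along these lines.
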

\begin{proof}

Since $\ell$ is sufficiently (Hausdorff-)close to $\nu_0$, then $\angle(\nu_0, \ell)$ is sufficiently small. 
Since $\angle(\lam_\sharp, \nu_0) < \zeta$ is sufficiently small and $\nu_0$ is maximal,  $\angle(\lam_\sharp, \ell)$ is also sufficiently small.  
Thus, by Corollary \ref{AdmissibleLoop}, $\ell$ is admissible. 

By  Corollary \ref{3-10no3}, $\angle(L_k, \ell) \to 0$ as $k \to \In$.
Thus, since $\angle(\ell, \nu_0)$ is sufficiently small, $\angle(L_k, \nu_0) < \del_0$ for sufficiently large $i$.
\end{proof}
 Let $C_{k_0}$ be $Gr_{\ell}^{k_0}(C) \cong (\tau_{k_0}, L_{k_0})$, given by Claim \ref{010513}, with a sufficiently large $k_0 \in \N$. 
Then, since  $\angle(L_{k_0}, \nu_0) < \del_0$, by Proposition \ref{1-1no1}, there is a composition of grafts from $C_{k_0}$
 $$C_{k_0}  \rightarrow C_1  \rightarrow C_2 \to \ldots\to C_{k_1} \cong (\tau_{k_1}, L_{k_1}),$$ such that, $\angle(\nu_1, L_{k_1}) < \del_1$.
 By inductively applying Proposition \ref{1-1no1},   for each $i = 0, 1, \cdots, n$,  we can extend this composition of grafts  to 
 $$ C_{k_0}  \to \dots \to C_{k_1}  \to \dots \to C_{k_2} \to \dots \to C_{k_i}$$ 
 so that the last projective structure $C_{k_i} \cong (\tau_{k_i}, L_{k_i})$ satisfies $\angle(\nu_i, L_{k_i}) < \del_i$.
In particular,  when $i = n$, we have $\angle(\nu_n, L_{k_n}) < \delta_n$.
Hence $\angle(\lam_\flat, L_{k_n}) < \ep$.
\Qed{4-21no1}

\proofof{Theorem \ref{main}} 
Let $\del > 0$ be the constant obtained by applying Theorem \ref{8-14-12no1} to $C_\flat  \cong (\tau_\flat, L_\flat)$.
Then, by Proposition \ref{4-21no1},  there is a composition of grafts along loops, 
$$C_\sharp  = C_0  \xrightarrow{Gr_{\ell_1}} C_1  \xrightarrow{Gr_{\ell_2}} C_2 \to \ldots\to C_n,$$  such that, letting $C_n \cong (\tau_n, L_n),$ we have 
$\angle(L_n, L_\flat) < \del$.
Then we can graft $C_n$ and $C_\sharp $ along multiloops to a common projective structure.    
Hence there are admissible loops $M_n$ on $C_n$ and $M_\flat$ on $C_\flat$ such that $\Gr_{M_n}(C_n) = \Gr_{M_\flat} (C_\flat)$.
Since the grafting $\Gr_{M_n}$ of $C_n$ is naturally a composition of grafts along loops of $M_n$, which completes the proof. 

\part{Iteration of grafting along a loop}\label{4-25no1}\Label{iteration}

\section{Limit in Thurston coordinates}
 We prove
\begin{theorem}\Label{3-10no2}
Let $\ell$ be an admissible loop on a projective surface $C$.
Let $C \cong(\tau, L) \in \TT \times \ml$, and let $L = (\ld, \mu)$ with $\ld \in \GL$ and $\mu \in \mc{TM}(\lam)$.
For each $i \in \N$, let $C_i = Gr_\ell^i(C)$, the $2\pi i$-graft of $C$. 
Similarly, let $C_i \cong (\tau_i, L_i)$ and $L_i = (\ld_i, \mu_i)$.
Let $\beta_i\col\h^2 \to \h^3$ be the pleated surface corresponding to $C_i$.
Then 
\begin{itemize}
\item[(i)] $\tau_i$ converges to a hyperbolic surface $\tau_\In \in \TT$ as $i \to \If$.
\item[(ii)] $\beta_i$ converges to a $\rho$-equivariant pleated surface realizing $(\tau_\In, \lam_\In)$ for some $\lam_\In \in \GL$ containing $\ell$.
\item[(iii)] $L_i$ converges to an (heavy) measured lamination $L_\In$ supported on $\lam_\In$ such that 
$\ell$ is the only leaf with weight infinity. 
\end{itemize}
\end{theorem}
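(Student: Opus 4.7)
The plan is to reduce the entire iteration to the case where $\ell$ is already a closed leaf of the canonical lamination. The key claim is that after the very first graft the loop $\ell$ becomes a closed leaf of $\lambda_1$ with positive atomic weight; from that point on, each subsequent graft simply adds $2\pi$ to this weight while leaving $\tau_i$, $\lambda_i$, and the restriction of $\mu_i$ to the complement of $\ell$ unchanged.

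First I would verify that $\ell$ is a closed leaf of $\lambda_1$. By definition, $C_1 = \Gr_\ell(C)$ is obtained by cutting $C$ along $\ell$ and inserting a projective annulus, namely a fundamental domain of the projective cylinder $T_\ell = (\rs \setminus \mathrm{fix}\,\rho(\ell))/\langle \rho(\ell) \rangle$ bounded by two round circles. The inserted annulus carries the circular foliation of $T_\ell$ by round circles; each such circle is a closed leaf of the canonical lamination $\LL_1$ of $C_1$ (by the maximal-ball description of $\LL$ for round cylinders in $\S\ref{traintracks}$), and each maps to $\ell$ under the collapsing map $\kap_1$. Hence $\ell$ is a closed leaf of $\lambda_1$ with atomic weight $w_1 = w_0 + 2\pi$, where $w_0 \geq 0$ is the weight of $\ell$ in $\mu$ (with $w_0 = 0$ if $\ell \notin \lambda$).

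Next I would verify the stability statement for $i \geq 1$: if $\ell$ is a closed leaf of $\lambda_i$ with weight $w > 0$, then $C_{i+1} \cong (\tau_i, L_i')$ where $L_i'$ agrees with $L_i$ except that the weight at $\ell$ is $w + 2\pi$. This is essentially Goldman's original computation of grafting at a weighted simple closed curve: since $\ell$ is now a closed leaf with positive weight, the Euclidean cylinder $\AA_{\ell,i}$ already exists in $C_i$, and the graft surgery amounts to concatenating an additional $2\pi$-wide Euclidean strip inside $\AA_{\ell,i}$; because $\kap_i$ collapses this strip onto $\ell$, neither $\tau_i$ nor any other stratum of $\lambda_i$ is affected. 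By induction, for every $i \geq 1$ we obtain $\tau_i = \tau_1$, $\lambda_i = \lambda_1 \ni \ell$, and the weight of $\ell$ in $\mu_i$ is $w_1 + 2\pi(i-1)$. Setting $\tau_\infty := \tau_1$ gives (i) at once. Defining $L_\infty$ to agree with $L_1$ on every transversal missing $\ell$ but to assign weight $+\infty$ to $\ell$, we get $L_i \to L_\infty$ with $\ell$ the unique heavy leaf, which is (iii). For (ii), the bending of $\beta_i$ at each lift of $\ell$ is by angle $w_i$; since a $2\pi$-rotation about a geodesic in $\h^3$ is the identity in $\psl$, the map $\beta_i$ is independent of $i$ for $i \geq 1$, so $\beta_i \to \beta_\infty := \beta_1$, a $\rho$-equivariant pleated surface realizing $(\tau_\infty, \lambda_\infty)$ with $\ell \in \lambda_\infty$.

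The main obstacle is the first step. Although the Kulkarni--Pinkall picture makes it plausible that the inserted projective annulus contributes a closed leaf of $\LL_1$, one must check that the maximal balls of $C_1$ near $\ell$ (which may extend from $C \setminus \ell$ into the inserted annulus and beyond) remain compatible with the circular foliation of $T_\ell$, so that $\ell$ really becomes a closed leaf of $\lambda_1$ with the claimed weight $w_0 + 2\pi$. This is most delicate when $\ell$ meets $|\lambda|$ transversally, since then the canonical lamination of $C$ near $\ell$ has no a priori compatibility with the circular foliation being glued in; the verification will require tracking the maximal balls centered in a small bicollar of the gluing locus using the local canonical-neighborhood machinery of $\S\ref{5-4no1}$.
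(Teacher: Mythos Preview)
Your reduction hinges on the claim that after one graft the loop $\ell$ becomes a closed leaf of $\lambda_1$. This is false in general, and the failure is not a technicality but the heart of the matter. The paper (Definition \ref{4-27no3}) distinguishes two kinds of admissible loops: $\ell$ is \emph{roughly circular} if a lift $\tilde\ell$ develops to a curve in $\rs$ that is equivariantly homotopic to a circular arc joining the fixed points of $\rho(\ell)$, and $\ell$ \emph{spirals} otherwise. If $\ell$ spirals on $C$, it spirals on every $C_i$, since the developing map on a lift of $\ell$ is unchanged by the grafting surgery. But every closed leaf of the canonical lamination $\LL_i$ is circular and hence develops to a round arc joining the fixed points; so if $\ell$ spirals, there is no circular loop on $C_i$ in the isotopy class of $\ell$, and $\ell$ is never a closed leaf of $\lambda_i$ for any finite $i$. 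Your inductive step, and with it the eventual constancy of $(\tau_i,\lambda_i)$, never gets off the ground. The spiraling case is not exotic: it is exactly what happens when $\ell$ is essentially transverse to $\lambda$, which is the generic situation and the one your last paragraph flags as ``delicate''.

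In the spiraling case the actual limiting picture (Proposition \ref{4-27no2}(ii)) is that, even in $\CC_\infty$, the lamination $N_\infty$ has leaves spiraling toward the boundary corresponding to $\ell$ rather than a closed leaf there; the heavy leaf $\ell$ in $L_\infty$ appears only after isometrically identifying the two open boundary geodesics of $\sigma_\infty$ (\S\ref{identification}), not from any closed leaf present at a finite stage. Correspondingly $(\tau_i,\lambda_i)$ genuinely varies with $i$, and the convergence in (i)--(iii) is honest analysis, not eventual constancy.

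The paper's route is entirely different. It builds the limit projective structure $\CC_\infty$ on $S\setminus\ell$ by attaching half-infinite grafting cylinders to $C\setminus\ell$ (\S\ref{6-8no1}), reads off its Thurston coordinates $(\sigma_\infty,N_\infty)$ via Corollary \ref{6-3no2} and Proposition \ref{4-27no2}, and glues the boundary geodesics of $\sigma_\infty$ to produce $(\tau_\infty,L_\infty)$. The convergence $(\tau_i,L_i)\to(\tau_\infty,L_\infty)$ is then obtained by showing that canonical neighborhoods in $\tilde C_i$ converge in $\rs$ to those in $\tilde\CC_\infty$ (Proposition \ref{4-23no2}), and that convergence of domains in $\rs$ forces convergence of their Thurston coordinates (Theorem \ref{4-23no4}). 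None of this reduces to Goldman's weighted-multicurve computation; that computation is precisely the special case where $\ell$ is already a leaf of $\lambda$, and the theorem is interesting because it handles the complementary case.
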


Theorem \ref{3-10no2} (i) (ii) immediately imply
\begin{corollary}\Label{3-10no3}
$\angle( L_i,\ell )$  converges to $0$ as $i \to \If$.
\end{corollary}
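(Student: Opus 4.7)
The plan is to argue by contradiction, extracting a limiting transversal intersection between $\ell$ and the bending lamination of the limiting pleated surface, which is forbidden by the lamination property. Using the marking-preserving rough isometries furnished by (i), I would identify $\tau_i$ with $\tau_\infty$ so that all angles, geodesics, and Hausdorff comparisons can be made on the fixed hyperbolic surface $\tau_\infty$.

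Suppose the conclusion fails. Then there exist $\epsilon > 0$, a subsequence (still indexed by $i$), points $p_i \in |L_i| \cap \ell$, and leaves $\ell_i$ of $|L_i|$ through $p_i$ with $\angle_{p_i}(\ell_i, \ell) \geq \epsilon$. By compactness of $\ell$ and of the space of geodesics that meet $\ell$ at angle $\geq \epsilon$ while passing through a bounded region, one may pass to a further subsequence such that $p_i \to p_\infty \in \ell$ and $\ell_i$ converges, as geodesics on $\tau_\infty$, to a geodesic $\ell_\infty$ through $p_\infty$ with $\angle_{p_\infty}(\ell_\infty, \ell) \geq \epsilon$. The key step is then to show $\ell_\infty \subset |\lambda_\infty|$. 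Each $\beta_i$ is totally geodesic precisely off $|L_i|$, while $\beta_\infty$ is totally geodesic precisely off $|\lambda_\infty|$; by (ii), $\beta_i \to \beta_\infty$ in $C^0$ and, away from any neighborhood of $|\lambda_\infty|$, in $C^\infty$ (via the closeness statement underlying Theorem \ref{12-20}). An upper-semicontinuity argument then shows that any Hausdorff sub-limit of the bending loci $|L_i|$ lies in $|\lambda_\infty|$: otherwise a persistent leaf of $|L_i|$ accumulating away from $|\lambda_\infty|$ would sit in a complementary stratum where $\beta_\infty$ is smooth, contradicting $C^\infty$-convergence near that stratum (a non-trivial bending crease of $\beta_i$ is detected at the $C^1$-level). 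Applied along the above subsequence, this yields $\ell_\infty \subset |\lambda_\infty|$.

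Since $\lambda_\infty$ is a geodesic lamination containing $\ell$ as a leaf, no other leaf of $\lambda_\infty$ may cross $\ell$ transversally, while any leaf of $\lambda_\infty$ that passes through a point of $\ell$ must coincide with $\ell$. But $\ell_\infty \subset |\lambda_\infty|$ meets $\ell$ at $p_\infty$ with angle $\geq \epsilon > 0$, a contradiction; hence $\angle(L_i, \ell) \to 0$. I expect the main obstacle to be the upper-semicontinuity step, namely converting the $C^0/C^\infty$ convergence of pleated surfaces provided by (ii) into the containment of the limit leaf $\ell_\infty$ in $|\lambda_\infty|$; this is morally standard in the theory of pleated surfaces (Thurston, Canary--Epstein--Green), but making it precise in the present setting — where leaves of $|L_i|$ may carry arbitrarily small bending weights — requires care. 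The remaining pieces are routine compactness extractions.
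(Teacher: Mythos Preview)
The paper offers no argument beyond the single sentence that parts (i) and (ii) of Theorem~\ref{3-10no2} ``immediately imply'' the corollary, so your contradiction argument is far more detailed than anything the paper provides; it is the natural way to unpack what the author leaves implicit, and the compactness extractions together with the concluding contradiction (distinct leaves of a geodesic lamination cannot cross transversally) are correct.

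The one point that needs tightening is your justification of the upper-semicontinuity step via Theorem~\ref{12-20}. That theorem runs in the direction ``small lamination angle $\Rightarrow$ pleated surfaces close,'' not the converse, and its $C^\infty$-closeness is asserted only in the complement of the $\epsilon$-neighbourhood of $|\nu|\cup|\nu_0|$ --- that is, away from \emph{both} bending loci --- so you cannot appeal to $C^\infty$-convergence of $\beta_i$ precisely where the putative stray leaf of $\lambda_i$ sits. A cleaner route bypasses the inclusion $\ell_\infty\subset|\lambda_\infty|$ altogether: each lift $\tilde\ell_i$, being a leaf of the pleating locus of $\beta_i$, is sent by $\beta_i$ to a geodesic in $\h^3$; by the $C^0$-convergence $\beta_i\to\beta_\infty$ together with $\tilde\ell_i\to\tilde\ell_\infty$, the image $\beta_\infty(\tilde\ell_\infty)$ is itself a geodesic. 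But $\tilde\ell_\infty$ crosses a lift of $\ell$ at angle at least $\epsilon$, and since $\ell\in\lambda_\infty$ with $\lambda_\infty$ the minimal realizing lamination, $\beta_\infty$ genuinely bends along $\ell$, so $\beta_\infty(\tilde\ell_\infty)$ has a corner at the crossing --- a contradiction. This is presumably the ``immediate'' argument the author has in mind, and it sidesteps the small-weight issue you flagged.
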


Arguments in Part \ref{iteration} are independent of the rest of paper, and this corollary is an important ingredient for the proof of Theorem \ref{main}, proved in Part 1.

\section{Limit of the complement of the admissible loop}\Label{6-8no1}

Let $C$ be a projective structure on $S$ with holonomy $\rho\cn \po(S) \to \psl$, and 
	let $\ell$ be an admissible loop on $C$. 
\begin{definition}\Label{4-27no3}
Let $\td{\ell}$ be a lift of $\ell$ to the universal cover $\til{C}$, and $\gam_\ell$ be the element of $\po(S)$ that represents $\ell$ and preserves $\td{\ell}$. 
Normalize the developing map $f$ of $C$ by an element of $\PSL$ so that the loxodromic $\rho(\gam_\ell) \in \PSL$ fixes $0$ and $\If$ in $\rs$. 
Pick a parametrization   $\td{\ell}\cn \R \to \til{C}$ of $\td{\ell}$.
Then its $f$-image can be written in polar coordinates $(e^{r(t)}, \theta(t)), \, t \in \R,$ so that $$f \cc \td{\ell} (t) = \exp[r(t) + i \theta(t)]$$ where $r\cn \R \to \R_{> 0}$ and $\theta\cn \R \to \R$ are continuous functions.  
Then we say that $\ell$  \textit{spirals} if $\theta$ is an unbounded  function. 
Otherwise it is called \textit{roughly circular}. 
\end{definition} 

Indeed, an admissible loop $\ell$ is roughly circular if and only if there is a  homotopy (or an isotopy) between $f \cc \td{\ell}$ and a circular arc on $\rs$ connecting the fixed points of $\rho(\gam_\ell)$ such that the homotopy is equivariant under the restriction of $\rho\cn \po(S) \to \psl$  to the infinite cyclic group generated by $\gam_\ell$.

In the setting of Theorem \ref{3-10no2},
since $C_i = \Gr_\ell^i(C)$, then $C \sm \ell$ isomorphically embeds into $C_i$ so that the complement of $C \sm \ell$ in $C_i$ is the cylinder inserted by the grafting $\Gr_\ell^i$.
 This cylinder is naturally cut, along parallel isomorphic copies of $\ell$,  into  $i$ isomorphic copies of a grafting cylinder (of ``length $2\pi$'').
Let $M_i$ be the union of $i +1$ parallel copies of $\ell$ on $C_i$ that decomposes  $C$ into the $i$ cylinders and $C \sm \ell$.
Then let $\ell_i$ be, if $i + 1$ is odd, the middle loop of $M_i$ and, if $i$ is odd, a boundary component of the middle grafting cylinder. 
Let $\CC_i = C_i \sm \ell_i$.
Then, there is a natural isomorphic embedding  of $\CC_i$  into $\CC_{i+1}$.
Therefore, we let $$\CC_\In = \lim_{i \to \If} (C_i \sm \ell_i).$$
Then $\CC_\In$ is a projective structure on $S \sm \ell$, and its holonomy is the restriction of $\rho$ to $\po(S \sm \ell)$.
(Equivalently  $\CC_\In$ is obtained by attaching a half-infinite grafting cylinder along each boundary component of $C \sm \ell$.)

Recall that the holonomy $\rho$ of $C$ is non-elementary \cite{Gallo-Kapovich-Marden}.
Noting that $C \minus \ell$ has either one or two connected components, we have 
\begin{lemma}\Label{4-20no2}
Let $P$ be a connected component of $C \minus \ell$.
Then, the restriction of $\rho$ to  $\pi_1(P)$ is non-elementary.
\end{lemma}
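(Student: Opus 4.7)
The plan is to argue by contradiction: assume $\rho|_{\pi_1(P)}$ is elementary and derive that $\rho$ itself is elementary, contradicting the Gallo--Kapovich--Marden theorem. Since $\ell$ is admissible, $\rho(\gamma_\ell)$ is loxodromic with exactly two fixed points $\{p_+, p_-\}\subset \bdry\h^3$; because $\gamma_\ell\in\pi_1(P)$ (realized as a boundary loop of $P$) and $\rho(\pi_1(P))$ contains this loxodromic, any global invariant set $F$ of $\rho(\pi_1(P))$ must lie in $\bdry\h^3$ and satisfy $F\subseteq\{p_+,p_-\}$, so $|F|\in\{1,2\}$.

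I would first handle the non-separating case. There $\pi_1(S)$ is an HNN extension of $\pi_1(P)$ with stable letter $t$ satisfying $t\alpha t^{-1}=\beta^{\pm 1}$, where $\alpha$ and $\beta$ are the two boundary loops of $P$, both freely homotopic to $\ell$ in $S$. When $|F|=2$, both $\rho(\alpha)$ and $\rho(\beta)$ preserve the pair $F$, so are loxodromics with axis equal to $A$, the geodesic through $p_+$ and $p_-$. The conjugation relation $\rho(t)\rho(\alpha)\rho(t)^{-1}=\rho(\beta)^{\pm 1}$ then forces $\rho(t)$ to preserve $A$, whence $\rho(\pi_1(S))=\langle\rho(\pi_1(P)),\rho(t)\rangle$ preserves $A$---contradicting the non-elementarity of $\rho$. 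When $|F|=1$, say $F=\{p\}$, I examine $\rho(t)\cdot p$: since $\rho(t)$ sends $\mathrm{Fix}(\rho(\alpha))$ onto $\mathrm{Fix}(\rho(\beta))$, either $\rho(t)p=p$ (and then $\rho(\pi_1(S))$ fixes $p$), or $\rho(t)p$ equals the other endpoint of $\mathrm{Fix}(\rho(\beta))$; iterating $\rho(t)^n$ and using that $\rho(\pi_1(P))$ lies in the Borel stabilizer $B_p$ of $p$, I would extract a finite $\rho(\pi_1(S))$-invariant subset of $\bdry\h^3$, again forcing $\rho$ elementary.

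The separating case $\pi_1(S)=\pi_1(P)*_{\gamma_\ell}\pi_1(P')$ is treated symmetrically: running the same dichotomy on the other factor $\pi_1(P')$, either both restrictions share a common invariant set $F$ (so $\rho(\pi_1(S))$ preserves $F$), or the amalgamation relation---together with the fact that $\rho(\gamma_\ell)$ lies in both factors and pins down the two-point set $\{p_+,p_-\}$---forces a finite $\rho(\pi_1(S))$-orbit; either conclusion again contradicts non-elementarity.

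The principal obstacle is the subcase $|F|=1$: the Borel $B_p$ is solvable but not abelian, so one cannot directly conclude that the axes of $\rho(\alpha)$ and $\rho(\beta)$ coincide, and instead one must carefully track how the stable letter (or amalgamating element) acts on the two-point sets $\mathrm{Fix}(\rho(\alpha))$ and $\mathrm{Fix}(\rho(\beta))$ to produce either a global fixed point or a bounded invariant orbit. This orbit analysis, rather than the group-theoretic dichotomy itself, is the delicate step.
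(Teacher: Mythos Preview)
Your non-separating argument is sound: once $\rho(\pi_1(P))$ is elementary and contains the loxodromic $\rho(\alpha)$, its limit set is exactly $\{p_+,p_-\}=\mathrm{Fix}(\rho(\alpha))$, and since $\rho(\beta)\in\rho(\pi_1(P))$ is also loxodromic it must share this axis; the HNN relation then forces $\rho(t)$ to preserve $\{p_+,p_-\}$, so $\rho(\pi_1(S))$ is elementary. (Incidentally, the $|F|=1$ subcase you flag as delicate does not actually arise: an elementary group containing a loxodromic always has a two-point limit set, so you may simply take $F=\{p_+,p_-\}$.)

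The separating case, however, has a genuine gap. You write that you will ``run the same dichotomy on the other factor $\pi_1(P')$'', but nothing in the hypotheses tells you that $\rho|_{\pi_1(P')}$ is elementary. The amalgamation relation only says $\gamma_\ell\in\pi_1(P')$; this pins down $\mathrm{Fix}(\rho(\gamma_\ell))=\{p_+,p_-\}$ but does \emph{not} force $\rho(\pi_1(P'))$ to preserve that pair. Abstractly there is no obstruction to $\rho(\pi_1(P))$ landing in the axis stabilizer $\C^*\rtimes\Z/2$ while $\rho(\pi_1(P'))$ is genuinely non-elementary: since the commutator subgroup of $\C^*\rtimes\Z/2$ is all of $\C^*$, the relation ``$\gamma_\ell$ is a product of commutators in $\pi_1(P)$'' imposes no constraint. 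So the implication ``$\rho|_{\pi_1(P)}$ elementary $\Rightarrow$ $\rho$ elementary'' is simply false at the level of representations, and a purely group-theoretic argument cannot close this case.

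The paper's proof avoids this by using the projective structure rather than the representation alone. It passes to the component $\CC_P$ of $\CC_\infty$ (the limit of $C_i\setminus\ell_i$), observes that if $\rho(\pi_1(P))$ were elementary its domain of discontinuity would be $\C\setminus\{0\}$ with a complete Euclidean metric on which $\rho(\pi_1(P))$ acts isometrically, and pulls this metric back via the developing map to $\CC_P$ minus finitely many points (the discrete preimage of the limit set). This yields a complete Euclidean metric on a surface of negative Euler characteristic, which is impossible. The geometric input---existence of a developing map with the right equivariance---is exactly what compensates for the failure of the group-theoretic implication in the separating case.
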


\begin{proof}
Suppose, to the contrary, that $ \rho( \po(P) )$ is elementary. 
Then, since $\rho(\ell)$ is loxodromic, the limit set $\Lambda$ of $\rho( \po(P) )$ contains only the two fixed points of the loxodromic $\rho(\ell)$.
Then the domain of discontinuity of $ \rho( \po(P) )$ is identified with $\C \minus \{0\}$, and it admits a complete Euclidean metric given by the exponential map $\exp\col \C (\cong \R^2) \to \C \minus \{0\}$, so that the $ \rho( \po(P) )$-action on the domain is isometric.

Let $\CC_P$ denote the  connected component  of $\CC_\In$ corresponding to $P$.
Consider the inverse image of  $\Lambda$ under $dev(\CC_P)$.
Since $\Lambda$ is in particular a discrete subset of $\rs$,  the $dev(\CC_P)$-inverse image of $\Lambda$ is a discrete subset preserved by $\po(P)$, and it descends to a set of finitely many points on $\CC_P$.
By pulling back the Euclidean metric  via the developing map,  $\CC_P$ minus the finitely many points carries a complete Euclidean metric.
This is a contradiction since the Euler characteristic of $P$ is negative.
\end{proof}

Let $\td{\CC}_i$ and $\td{\CC}_\In$ denote the universal covers of $\CC_i$ and $\CC_\In$, respectively.  
Then, by Lemma \ref{4-20no2}, the holonomy of every connected component of $\CC_\In$ is non-elementary. 
Thus, by Corollary  \ref{6-3no2}, let $\CC_\In \cong (\sigma_\If, N_\If)$ be the Thurston coordinates, where $\sigma_\In$ is a convex hyperbolic surface with geodesic boundary whose interior is homeomorphic to $S \sm \ell$ and $N_\In$ is a (possibly heavy) measured geodesic lamination on $\sigma_\If$.
(Recall from Theorem \ref{6-3no1},  each boundary component of $\sigma_\If$ may be either open  or closed, i.e. entirely contained in $\sigma_\If$ or not contained in $\sigma_\If$ at all; moreover a closed boundary is must be a leaf of the Thurston lamination and its weight is $\infi$, which corresponds to a half infinite cylinder. )
We have
\begin{proposition}\Label{4-27no2}
The boundary of  $\sigma_\infi$ is the union of   two  geodesic loops  corresponding the boundary circles of $S \minus \ell$. 
The lengths of boundary components are the translation length of $\rho(\ell)$.
Furthermore
\begin{itemize}
\item[(i)]  Suppose that $\ell$ is roughly circular.
Then $\sigma_\infi$ contains both boundary geodesic loops (i.e. closed boundary), and 
they are isolated leaves of $N_\If$ with weight infinity.  
\item[(ii)]  Suppose that $\ell$ spirals.
Then $\sigma_\In$ contains no boundary geodesic (i.e. open boundary).  
Leaves of the lamination $N_\In$ spiral towards each boundary component of $\sigma_\In$ in the same direction with respect to the orientation on          the boundary components of $\tau$ (induced by the orientation of $S$);  see Figure \ref{6-7no2}.
In particular $N_\In$ contains no heavy leaves.   

\end{itemize}

\begin{figure}[h]
\begin{overpic}[scale=.1,
]{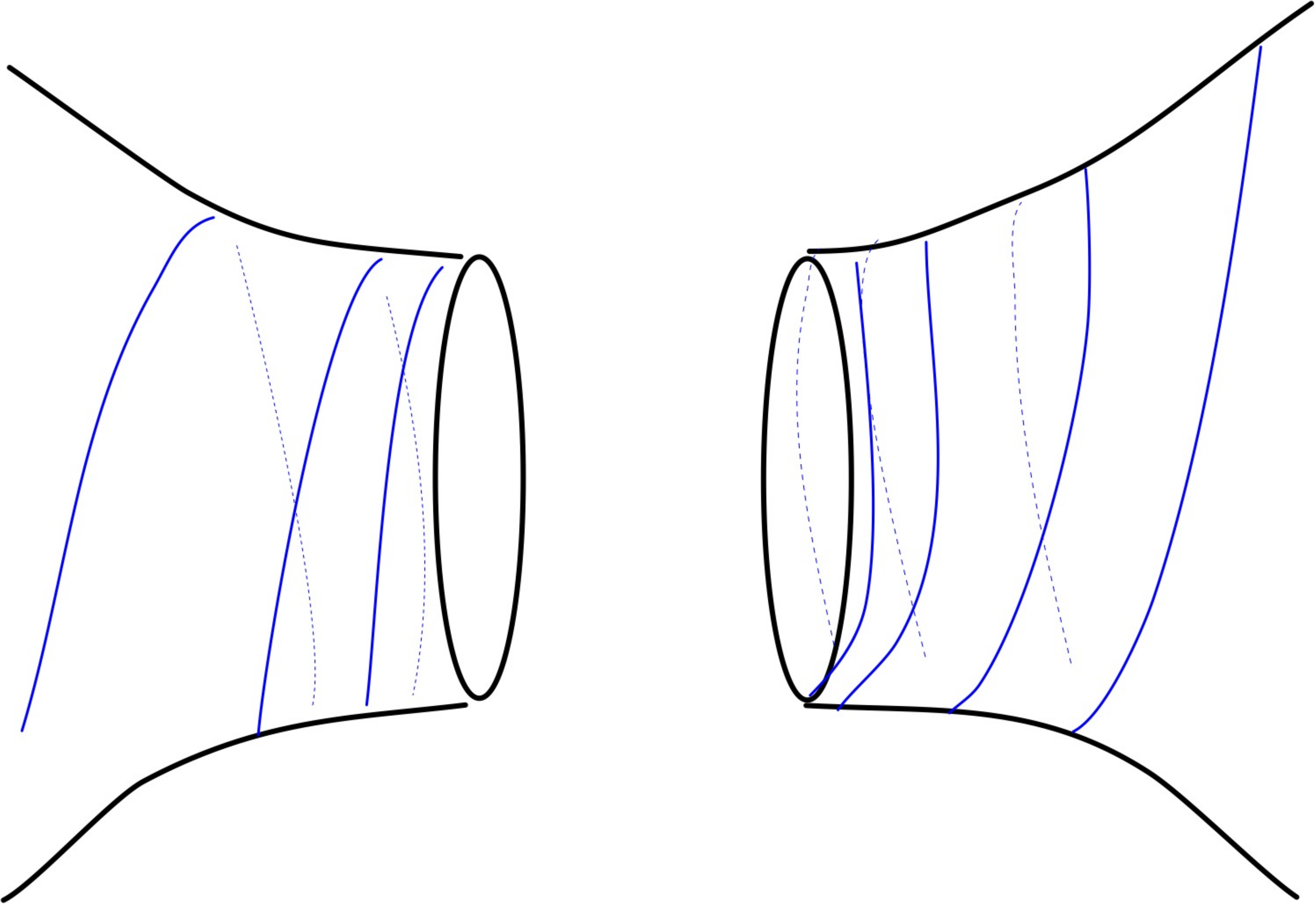}
      \end{overpic}
\caption{Geodesics spiraling to the left towards both boundary components  (when you stand on the surface facing toward boundary).}\label{6-7no2} 
\end{figure}
\end{proposition}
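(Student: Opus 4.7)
The plan is to analyze $\CC_\In$ near each of its boundary circles via the half-infinite grafting cylinder attached there, using the theory of canonical neighborhoods (\S \ref{5-4no1}) and the classification of Thurston coordinates on disks (Theorem \ref{6-3no1}). The topological part of the claim is immediate: $S \sm \ell$ has exactly two boundary circles (whether or not $\ell$ separates $S$), so $\sigma_\In$ has two geodesic boundary loops, each carrying holonomy $\rho(\ell)$ under the $\rho$-equivariant pleated surface associated with $\CC_\In$. Each such boundary geodesic must map into the axis of $\rho(\ell)$ in $\h^3$ (the only $\rho(\ell)$-invariant geodesic), so its hyperbolic length equals the translation length of $\rho(\ell)$.

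For the case analysis, fix a boundary circle $b$ of $\CC_\In$, let $\AA_\In$ denote the attached half-infinite grafting cylinder, and normalize $f = dev(\CC_\In)$ so that $\rho(\gamma_\ell)$ fixes $0$ and $\In$. Write $f \cc \tilde{b}(t) = e^{r(t) + i\theta(t)}$ as in Definition \ref{4-27no3}, and let $b_\sigma$ denote the boundary geodesic of $\sigma_\In$ corresponding to $b$. For a point $q$ far inside $\AA_\In$, the canonical neighborhood $U(q)$ develops into $\rs \sm \{0,\In\}$ and factors through the infinite cyclic cover $\C \xrightarrow{\exp} \rs \sm \{0,\In\}$, so the maximal ball $B(q)$ can be tracked explicitly; the dichotomy depends on how these balls behave as $q$ escapes to infinity in $\AA_\In$.

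When $\ell$ is roughly circular, $\theta$ is bounded along $\tilde{b}$, so $f \cc \tilde{b}$ stays in a bounded angular sector of $\rs \sm \{0,\In\}$. Taking $q$ deep in $\AA_\In$, the ball $B(q)$ eventually wraps fully around $\{0,\In\}$, omitting only small cusps, and its core in $\h^2$ is a geodesic with both ideal endpoints at $\{0,\In\}$ and is therefore $\rho(\gamma_\ell)$-invariant. Such a core descends in $\CC_\In$ to a closed leaf of the canonical lamination parallel to $b$; varying $q$ foliates $\AA_\In$ by such closed leaves, making $\AA_\In$ an infinite-height Euclidean region. The collapsing map collapses this region onto $b_\sigma$, giving $b_\sigma$ atomic weight $\In$; in particular, $b_\sigma$ is an isolated closed leaf of $N_\In$ and is contained in $\sigma_\In$ (closed boundary).

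When $\ell$ spirals, $\theta(t) \to \pm\In$ and $f \cc \tilde{b}$ itself winds infinitely around $\{0,\In\}$. No $\rho(\gamma_\ell)$-invariant core of a maximal ball can exist, since that would force $f \cc \tilde{b}$ to wind only finitely; instead the cores of $B(q)$ are geodesics in $\h^2$ whose ideal endpoints drift with $q$, one tending to an endpoint of $\tilde{b}$ in $\bd \h^2$. These cores descend to geodesic leaves of $\LL_\In$ asymptotic to $b$, and under the collapsing map they become leaves of $N_\In$ spiraling toward $b_\sigma$; the absence of invariant cores forbids any atomic weight at $b_\sigma$, so $b_\sigma$ lies outside $\sigma_\In$ (open boundary) and $N_\In$ carries no heavy leaf. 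The direction of the spiral is dictated by the sign of $\theta \to \pm\In$, an intrinsic attribute of $f \cc \tilde{\ell}$ that is independent of which side of $\ell$ produced the boundary $b$, so the spiral direction relative to the orientation of $S$ agrees at both boundary components of $\sigma_\In$. The main obstacle will be the quantitative control of maximal balls as $q$ escapes to infinity in $\AA_\In$, in particular ruling out, in the spiraling case, any $\rho(\gamma_\ell)$-invariance in the limit that would manufacture an unwanted atomic measure at $b_\sigma$.
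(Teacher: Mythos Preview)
Your plan follows essentially the same route as the paper: analyze the half-infinite grafting cylinder via maximal balls and their ideal boundaries, and in the spiraling case reduce to the dichotomy ``some maximal ball has the ideal point $p^+$, but no maximal ball has both $p^+$ and $p^-$'' (the paper isolates these as Lemmas \ref{5-3no1} and \ref{6-7no3}). Two places need tightening. In (i), a maximal ball is a round disk in $\rs$ and does not ``wrap around'' $\{0,\In\}$; what you want is that, because $\theta$ is bounded, there is an honest \emph{circular} loop in $\AA_\In$ homotopic to $\ell$, whose lift is a circular arc with ideal endpoints $0,\In$ --- this is exactly the core of its maximal ball, and the paper argues this way directly rather than through a limiting maximal-ball picture. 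In (ii), your ``same direction'' argument is too quick: the sign of $\theta\to\pm\In$ is intrinsic to $f\circ\td\ell$, but the two boundary components of $\CC_0$ sit on \emph{opposite} sides of $\td\ell$, so the roles of $p^+$ and $p^-$ actually swap between the two ends. The paper observes this swap and then notes that the outward normals to the two boundary components are also opposite on $C$, so the two sign flips cancel and the spiraling direction agrees relative to the orientation of $S$. Your conclusion is right, but the justification needs this extra step.
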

\begin{remark}
\begin{itemize}
\item In (ii), the metric completion of $\sigma_\In$ is  the union of $\sigma_\In$ and the boundary loops. 
 Then $N_i$ naturally extends to yet a heavy measured lamination on the completion, so that both boundary loops are leaves of weight infinity.
\item  Let $\zeta_\If\cn \td{\sigma}_\If \to \h^3$ be the pleated surface associate with $\CC_\If$, where $\td{\sigma}_\If$ is the universal cover of $\sigma_\In$ (note that, if $\ell$ is separating, $\td{\sigma}_\In$ has two connected components). 
Then,
let $m$ be a boundary geodesic of $\td{\sigma}_\In$ and let $\gam_m$ be a non-trivial deck transformation preserving $m$.
Then,  in  both Case (i) and (ii), $\zeta_\If$ isometrically takes  $m$ onto the axis of the loxodromic element $\rho(\gm_m)$.

\end{itemize}
\end{remark}

\proof[Proof of Proposition \ref{4-27no2}]
{\it Case (i).}
Suppose that $\ell$ is roughly circular. 
Let $\AA$ be a component of $\CC_\In \minus \CC_0$, which is a half infinite (grafting) cylinder attached a boundary component of $\CC_0$. 
As $\ell$ is roughly circular, the developing map of $\AA$ is the restriction of  the exponential map $\exp\col \C \to \C^\ast$ to a region $\til{\AA}$ bounded by a property embedded curve in $\C$ whose imaginary coordinate is bounded form below and above. 
Then we can in addition assume that $\pi_1(\AA) \cong \Z$ acts on $\til{\AA}$ by translations by integers (conjugating by an element of $\PSL$).  
Then we can find  a circular loop $\ap$ in $\til{\AA}$, which lifts to a horizontal line in $\R$.
Then $\ap$ bounds  a (smaller) half-infinite cylinder $\AA'$ isotopic to $\AA$, and $\AA'$ is uniquely foliated by circular loops. 
Let $\ell'$ be such a circular loop in $\AA'$.

Let $\NN_\If$ be the circular measured lamination on $\td{\CC}_\In$, which descends to $N_\In$.
 In  $\til{\AA}$, one can easily find two horizontal parallel lines of distance  $\pi$ apart  $\til{\AA}$. 
Then the  regions bounded by such two lines is a maximal ball, and its core is the horizontal line in the middle of them.
 The core descends to  a closed leaf  $\ell'$ of $\NN_\In$.
In addition we can assume that  $\AA'$ is a maximal cylinder in $\CC_\infi$ that is isotopic to $\AA$  such that $\AA'$ is foliated by closed leaves of $\NN_\If$.
(Then $\AA'$ may not be contained in $\AA$ anymore.)
Then $\AA'$ is still half-infinite and the total transversal measure of $\NN_\If$ on $\AA'$ is infinite. 
Let $\iota_\In\cn \CC_\In \to \sigma_\If$ be the collapsing map. 
The $\iota_\In$ takes $\AA'$ to a boundary geodesic loop of $\sigma_\In$ of infinite weight.
Conversely the inverse-image of the geodesic loop is $\AA'$ since $\AA'$ is maximal.
Since $\NN_\If$ has infinite measure on $\AA'$, the boundary component is a leaf of $N_\In$ with infinite weight. 
Then, since the transversal measure of $\NN_\In$ is locally finite,   no leaf of $\NN_\In$ spirals towards the boundary loop of $\AA'$.
 Thus the boundary geodesic of $\sigma_\In$ is an isolated leaf of $N_\In$.

(ii).
Suppose that $\ell$ spirals. 
Since $\ell$ is admissible, the restriction of  $dev(C)$ to a lift  $\td{\ell}$ to $\td{S}$ is a simple curve  on $\rs$,  and we can assume that it connects  $0$ and $\In$.
Then, as in Definition \ref{4-27no3},  it lifts to a curve  
\begin{eqnarray*}
{\bf l} \cn \R &\to& \R^2   \\
t &\mapsto& (\theta(t), r(t))
\end{eqnarray*}
through $\exp\cn \C \cong \R^2 \to \C \minus \{0\}$, where $\theta \cn \R \to \R$ and $r\cn \R \to \R$ are continuous functions, so that $\exp (r(t) + i \theta(t))$ is the curve $dev(C) | \td{\ell}$.
Since $\ell$ is admissible,  ${\bf l}$ is a simple curve.
Since $dev(C)|\td{\ell}$ is preserved by the loxodromic $\rho(\ell)$, 
accordingly ${\bf l}$ is preserved by a nontrivial translation of $\R^2$ (along a geodesic).
Since $\rho(\ell)$ is loxodromic and $\ell$ is  spiraling,  the axis of this translation intersects both $\theta$ and $r$-axes transversally.
Thus one connected component of $\R^2 \sm {\bf l}$ lies {\it above} ${\bf l}$, i.e. it contains $\{ 0 \} \times  [R, \In)$ for sufficiently large $R > 0$, and the other component lies {\it below}. 
Let $c$ be a boundary component of $\CC_0$, which is isomorphic to $\ell$. 
Let $\td{c}$ be a lift of $c$ to the universal cover $\td{\CC}_0$, so that $\til{c}$ is isomorphic to $\td{\ell}$. 
Then we can assume that $dev(\CC_0)$ takes  the small neighborhood of $\td{c}$ into the region below ${\bf l}$ in $\R^2$, if necessary, by exchanging $0$ and $\If \in \rs$ by an element of $\psl$.

Cearly $\td{\CC}_0$ is isomorphically embedded in  $\td{\CC}_\In$.
Then in particular $\td{c}$ is embedded in $\td{\CC}_\In$ and we can regard  the endpoints of $\td{c}$ as distinct ideal points of both $\td{\CC}_0$ and $\td{\CC}_\In$.
Then let ${\tt p}^+$ and ${\tt p}^-$ be the ideal points corresponding to $\If$ and $0$, respectively, via ${\bf l}$.
\begin{lemma}\Label{5-3no1}
There is a maximal ball $B$ in $\td{\CC}_\In$ such that  ${\tt p}^+$ is an ideal point of  $B$.
\end{lemma}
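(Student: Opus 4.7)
The plan is to first exhibit an embedded round open disk $V \subset \td{\CC}_\In$ whose closure in the metric completion $\td{\CC}_\In \cup \partial_\infty \td{\CC}_\In$ contains ${\tt p}^+$, and then apply the standard Zorn's lemma argument to the family of embedded round open disks of $\td{\CC}_\In$ containing $V$ (ordered by inclusion) to obtain a maximal one $B \supseteq V$; cf.\ \S \ref{5-4no1} and \cite{Kullkani-Pinkall-94}. Since $V \subset B$ implies $\overline{V} \subset \overline{B}$ in the metric completion, and ${\tt p}^+ \in \partial V \cap \partial_\infty \td{\CC}_\In$, the conclusion ${\tt p}^+ \in \partial B \cap \partial_\infty \td{\CC}_\In = \partial_\infty B$ follows.

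To construct $V$, I work in the lift $\td{\AA}$ of the half-infinite grafting cylinder attached along $\td{c}$.  Via the loxodromic holonomy $\rho(\ell)$, which fixes $0, \infty \in \rs$, the lift $\td{\AA}$ is parameterized by one component $U$ of $\C \setminus {\bf l}$ with developing map $\exp\cn U \to \C^*$, where ${\bf l}$ is invariant under translation by $\log \lambda$.  The end ${\bf l}(t)$ as $t \to +\infty$ corresponds to ${\tt p}^+$ in the metric completion, because $\exp({\bf l}(t)) \to \infty \in \rs$ with finite length in the pulled-back spherical metric.  Since $\ell$ spirals, $\log \lambda \in \C$ has nonzero real and imaginary parts, and so one can choose a thin one-sided tubular neighborhood $T \subset U$ of the positive end of ${\bf l}$, of Euclidean perpendicular width strictly less than $\pi$, on which $\exp$ is injective (since the $\Im z$-difference between any two points of $T$ stays below $2\pi$).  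Working in the chart $u = 1/w$ at $\infty$, one inscribes inside $\exp(T) \cup \{\infty\}$ a small round disk $D \subset \rs$ with $\infty \in \overline{D}$, namely a half-plane in the $u$-chart tangent to $u = 0$ on the $T$-side, and sets $V := (\exp|T)^{-1}(D \setminus\{\infty\})$.  By construction, $V$ is an embedded round open disk in $\td{\CC}_\In$ developing homeomorphically onto $D\setminus\{\infty\}$, and since ${\bf l}(t) \in T$ for $t$ large with $\exp({\bf l}(t)) \to \infty \in \overline{D}$, the metric-completion limit ${\tt p}^+$ of ${\bf l}(t)$ lies in $\partial V$.

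The main obstacle is the inscription of the round disk $D$ into the spiraling wedge $\exp(T) \cup \{\infty\}$: the boundary of $\exp(T)$ approaches $\infty$ along logarithmic spirals, admitting no classical tangent direction there.  This is surmounted by tuning $T$'s perpendicular width so that in the $u = 1/w$-chart at $\infty$, the set $1/\exp(T)$ near $u = 0$ lies inside an arbitrarily narrow sector (whose angular spread can be taken as small as desired by shrinking $T$); a round disk tangent to $u = 0$ from the $T$-side can then be inscribed inside this narrow sector, which corresponds to the desired half-plane $D$ in $w$-coordinates.
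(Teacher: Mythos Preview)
Your overall strategy---produce a round ball $V\subset\td\CC_\infty$ with ${\tt p}^+\in\partial_\infty V$ and then enlarge it to a maximal ball---is exactly the paper's approach.  The gap is in your construction of $V$.

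Both of your key geometric claims about the thin one-sided tube $T$ along the positive end of ${\bf l}$ are false, and the second one is fatal.  First, the injectivity of $\exp$ on $T$ cannot be justified by bounding $\Im z$-differences: since $\ell$ spirals, $\theta(t)=\Im {\bf l}(t)$ is \emph{unbounded}, so points of $T$ (which follows ${\bf l}$) have arbitrarily large $\Im z$-difference.  (Injectivity does hold for thin enough $T$, but for a different reason: the $2\pi i$-translates of ${\bf l}$ are disjoint from ${\bf l}$, so a sufficiently thin tube about ${\bf l}$ meets none of its $2\pi i$-translates.)  Second, and more seriously, your inscription step cannot work: a round disk $D\subset\rs$ with $\infty\in\partial D$ is a Euclidean half-plane, and $\exp(T)$ is a thin strip spiralling around $\infty$ (its boundary curves are logarithmic spirals, since $\arg\exp({\bf l}(t))=\theta(t)\to\pm\infty$).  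In the $u=1/w$ chart this strip winds around $0$ infinitely often and is \emph{not} contained in any sector, however thin you make $T$; hence no half-plane fits inside $\exp(T)\cup\{\infty\}$.

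The remedy is the opposite of hugging ${\bf l}$: go deep into $U$.  Because ${\bf l}$ lies within bounded distance of a line of direction $\log\lambda$ (which has nonzero $r$-component), the region $U$ above ${\bf l}$ contains a genuine Euclidean half-plane $\{z:\,n\cdot z>C\}$ with normal $n$ having positive $r$-component.  For any sufficiently large $c>0$, one branch of $\log\{\,w:\Re w>c\,\}$ (a ``horn'' with $|\theta|<\pi/2$ and $r>\log(c/\cos\theta)$) lies inside that half-plane and hence in $U$; on it $\exp$ is a bijection onto the round half-plane $\{\Re w>c\}$.  This is the round ball the paper's Figure~\ref{6-6no1} is pointing at.  A path in $U$ with $r\to+\infty$ has vanishing spherical length, so the ideal point of this horn over $\infty$ coincides with ${\tt p}^+$, and the rest of your argument (enlarge to a maximal ball) goes through.
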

\begin{proof}
Let $A$ be the connected component of $\CC_\In \minus \CC_0$ bounded by $c$, so that $A$ is a half-infinite grafting cylinder.
Let $\td{A}$ be the corresponding connected component of $\td{\CC}_\In \sm \td{\CC}_0$ bounded by $\td{c}$, so that $\td{A}$ covers $A$.
Then $dev(A)$ lifts, through $\exp$,   to an embedding onto the component of $\R^2 \minus {\bf l}$ above ${\bf l}$. 
 A {\it round ball} on $\rs$ is a ball bounded by a round circle.
Thus we can find a round ball $B$ contained in $\td{A}$ such that ${\tt p}^+$ is an ideal point  of $B$ ; see Figure \ref{6-6no1}.
Since $\td{A} \st \td{\CC}_\In$,  there is a desired maximal ball in $\td{\CC}_\In$  that contains $B$. 
\end{proof}
 
\begin{figure}[h]
\begin{overpic}[scale=.2,
]{}
     \put(43, 45){$\In$}
     \put(8,20){$f \cc \td{\ell}$}
     \put(45, 25){$B$}
      \end{overpic}
\caption{}\label{6-6no1}
\end{figure}

\begin{lemma}\Label{6-7no3}
There is no maximal ball $B$ in $\td{\CC}_\In$ such that ${\tt p}^+$ and ${\tt p}^-$ are both its ideal points.
\end{lemma}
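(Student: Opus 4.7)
I would proceed by contradiction. Suppose $B\subset\td{\CC}_\In$ were a maximal ball with both ${\tt p}^+,{\tt p}^-\in\partial_\In B$. Then $f:=dev(\td{\CC}_\In)$ embeds $B$ onto a round open disk $D\subset\rs$ whose boundary circle contains both $f({\tt p}^-)=0$ and $f({\tt p}^+)=\infty$; hence $\partial D$ is a Euclidean line through the origin in $\C$, and $D$ is one of the two open half-planes it bounds. Identifying $B$ with $\h^2$, the hyperbolic geodesic $\gamma\subset B$ joining ${\tt p}^-$ to ${\tt p}^+$ develops to the hyperbolic geodesic of $D$ from $0$ to $\infty$, which is a Euclidean ray from the origin at some angle $\theta^{*}$.

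Next I would pin down the loxodromic data: under our normalization, $\rho(\gamma_\ell)$ acts on $\C$ as $z\mapsto r_0 e^{i\phi_0}z$ with $r_0>0$, $r_0\neq 1$, and $\phi_0\in(-\pi,\pi]$. The $\rho(\gamma_\ell)$-invariance identity $(r(t+T),\theta(t+T))=(r_0 r(t),\theta(t)+\phi_0)$ for the parametrization $f\circ\td\ell(t)=\exp[r(t)+i\theta(t)]$ shows that $\theta$ is bounded if and only if $\phi_0\equiv 0\pmod{2\pi}$; the spiraling hypothesis of Case~(ii) is therefore precisely $\phi_0\not\equiv 0\pmod{2\pi}$.

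The contradiction will come from applying the deck transformation $g$ of $\td{\CC}_\In\to\CC_\In$ corresponding to the boundary loop $c$ of $\CC_0$: $g$ preserves $\td c$, fixes its ideal endpoints ${\tt p}^\pm$, and has holonomy $\rho(g)=\rho(\gamma_\ell)$. The plan is to first establish $g\cdot\gamma=\gamma$ as subsets of $\td{\CC}_\In$, and then to derive a contradiction via $\rho$-equivariance of $dev$. For the former, recall that the collapsing map $\td\kappa\colon\td{\CC}_\In\to\td\sigma_\In\subset\h^2$ sends each leaf of $\tilde{\mathcal{L}}_\In$ to a geodesic of $\h^2$ uniquely determined by its pair of ideal endpoints; both $\gamma$ and $g\cdot\gamma$ have ideal endpoints ${\tt p}^\pm$, so their collapses coincide with the axis of $g$ in $\td\sigma_\In$. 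When $\gamma$ is itself a leaf of $\tilde{\mathcal{L}}_\In$, the uniqueness of leaves-with-given-endpoints forces $g\cdot\gamma=\gamma$ directly; when $\gamma$ is merely a chord of a 2-dimensional stratum of $\tilde{\mathcal{L}}_\In$, the two leaves of $\Core(B)$ meeting ${\tt p}^+$ and ${\tt p}^-$ must by the same uniqueness argument lie in a common Euclidean strip in $\td{\CC}_\In$ covering a weighted closed leaf of $\td\sigma_\In$, and the $g$-action on that strip (by the standard translation of its cyclic quotient as described in \S\ref{thurston}) fixes every horizontal leaf setwise, again yielding $g\cdot\gamma=\gamma$. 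Once $g\cdot\gamma=\gamma$ is in hand, $\rho$-equivariance gives $\rho(\gamma_\ell)\cdot dev(\gamma)=dev(\gamma)$, so the ray at angle $\theta^{*}$ is preserved setwise by $z\mapsto r_0 e^{i\phi_0}z$, forcing $\phi_0\equiv 0\pmod{2\pi}$ — contradicting the spiraling of $\ell$. The hardest step will be the case analysis making $g\cdot\gamma=\gamma$ rigorous when $\gamma$ is not already a leaf of $\tilde{\mathcal{L}}_\In$, which requires a careful unpacking of which two-dimensional strata can have both ${\tt p}^\pm$ among (or in the closure of) their ideal vertices and of how the Euclidean-strip structure near a weighted leaf of $\td\sigma_\In$ interacts with the $g$-action.
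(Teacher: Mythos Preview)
Your approach is considerably more elaborate than the paper's and contains a genuine gap at the final step. The paper argues directly: the core $\Core(B)$ contains a circular arc $\alpha$ joining ${\tt p}^-$ to ${\tt p}^+$; since $\CC_\In$ is of hyperbolic type and $\alpha,\td c$ share both ideal endpoints in $\td\CC_\In$, the arc $\alpha$ covers a circular loop isotopic to $c$; but $c$ is (a copy of) the spiraling loop $\ell$, and a spiraling loop admits no circular representative in its isotopy class. No stratum analysis, no collapsing map, no computation of $\phi_0$ is needed. If one wants to make the step ``$\alpha$ covers a loop'' explicit, the clean observation is that $g\cdot B$ is again a maximal ball with ${\tt p}^\pm$ among its ideal points, and cores of distinct maximal balls are disjoint; hence $\alpha$ and $g\cdot\alpha$ are equal or disjoint, so $\alpha$ descends to a simple circular loop in $\td\CC_\In/\langle g\rangle$ isotopic to $c$.

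The gap in your plan is the final contradiction. Your equivariance identity should read $\theta(t+T)=\theta(t)+\phi_0+2\pi k$ for an integer $k$ determined by the curve $\td\ell$ (not by $\rho(\gamma_\ell)$ alone), so ``$\theta$ bounded'' is equivalent to $\phi_0+2\pi k=0$, not to $\phi_0\equiv 0\pmod{2\pi}$. Concretely, if $\rho(\gamma_\ell)(z)=r_0 z$ is purely hyperbolic ($\phi_0=0$), the logarithmic spiral $t\mapsto r_0^{\,t}e^{2\pi i t}$ is a simple $\rho(\gamma_\ell)$-invariant arc from $0$ to $\infty$ with unbounded $\theta$; an admissible loop developing onto it spirals even though $\phi_0=0$. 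Thus deducing $\phi_0\equiv 0\pmod{2\pi}$ from ``a ray is $\rho(\gamma_\ell)$-invariant'' does \emph{not} contradict spiraling. Even if your case analysis for $g\cdot\gamma=\gamma$ were completed (and note that ``uniqueness of leaves with given endpoints'' already fails across a Euclidean strip), the correct way to finish is the paper's: once $\gamma$ is $g$-invariant, the quotient $\gamma/\langle g\rangle$ is a circular loop isotopic to $c$, and the resulting $g$-equivariant isotopy from $\td c$ to $\gamma$ develops to a $\rho(\gamma_\ell)$-equivariant homotopy from $f\circ\td\ell$ to a circular arc --- precisely the definition of ``roughly circular''.
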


\begin{proof}
Suppose, to the contrary, that there is a maximal ball $B$ in $\td{\CC}_\In$ such that $\pt_\If B$ contains both ${\tt p}^+$ and ${\tt p}^-$.
Then its core $\Core(B)$  contains a circular arc $\ap$ connecting ${\tt p}^+$ and ${\tt p}^-$. 
The surface $\td{C}_\In$ is of hyperbolic type (topologically). 
Then since $\ap$ and $\td{c}$ share their ideal end points, they must project to isotopic  loops on $C_\In$. 
Thus $\ap$ covers a circular loop on $\CC$ isotopic to $c$.
This contradicts that $c$ spirals. 
\end{proof}

Next we show that $\sigma_\In$ has an open boundary component that is a closed geodesic  homotopic to $c$.
Let $B$ be a maximal ball of $\td{\CC}_\In$ given by Lemma \ref{5-3no1} so that $\pt_\If B \ni {\tt p}^+$.
Then the collapsing map $\td{\iota}_\In\col \til{\CC}_\infi \to \til{\sigma}_\infi$ projects  $\Core(B)$ onto a convex subset $X$ of $\td{\sigma}_\In$. 
Moreover $\td{\iota}_\infi$ continuously extends to a map from the ideal boundary of $\til{\CC}_\infi$ to the ideal boundary of $\til{\sigma}_\infi$.
Since $c$ is an essential loop, there are  points $p^+$ and $p^-$ on the ideal boundary of $\td{\sigma}_\In$ corresponding to ${\tt p}^+$ and ${\tt p}^-$, respectively.
Then, by Lemma \ref{6-7no3},  $\pt_\In X \,(\st \pt_\In \h^2)$ contains $p^+$ but not $p^-$. 

By regarding $\td{\sigma}_\In$ as a convex subset of $\h^2$, there is a unique  geodesic  $g$  connecting $p^+$ and $p^-$ in $\h^2$.
Let $\gam_c$ be a deck transformation corresponding to $c$ so that $\gam_c$ preserves $g$.
Then we see that $(\gam_c)^j X$ converges to $g$ uniformly on compacts as $j \to \In$, if necessary, changing $\gam_c$ to its inverse (Figure\ref{6-7no1}).
By Lemma \ref{6-7no3},  the geodesic $g$ is not contained in $\td{\sigma}_\In$.
Thus $g$ descends to a desired open boundary component of $\sigma_\If$.
Let $\zeta_\If\cn \td{\sigma}_\In \to \h^3$ be the pleated surface for $C_\In$.
Then since $\zeta_\If$ is equivariant and 1-Lipschitz,  the continuous extension of $\zeta_\If$ takes  $g$ isometrically onto  the axis of the loxodromic $\rho(c)$.
Therefore the translation length of $\rho(c)$ is the length of the boundary component of $\sigma_\In$ homotopic to $c$. 
In addition the convergence $(\gam_c)^j X \to g$ implies that leaves of $N_\infi$ spiral towards $c$.

\begin{figure}[h]
\begin{overpic}[scale=.3
]{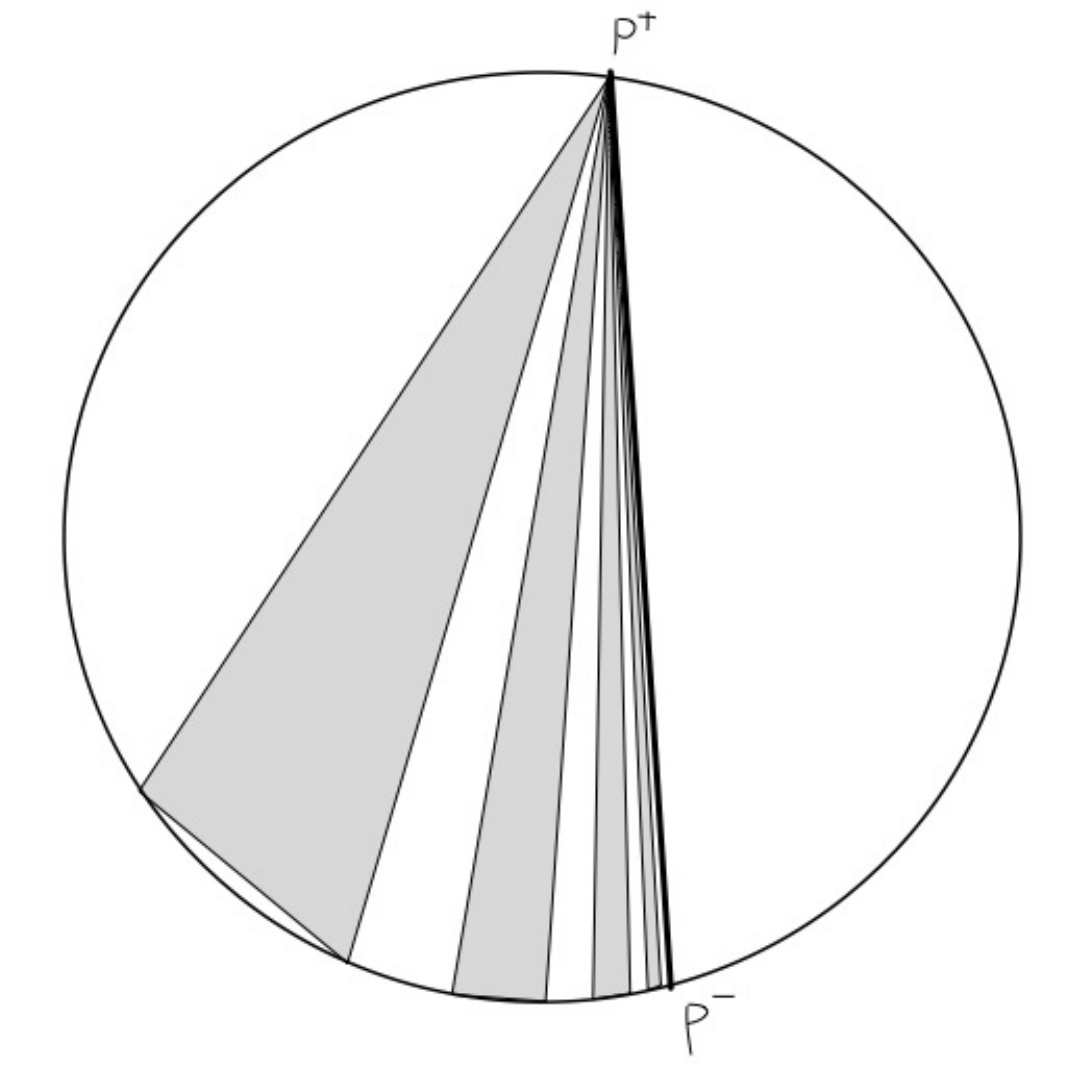}
      \put(64,50){$g$}
      \put(20,30){$(\gam_c)^j X$}
     
      \end{overpic}
\caption{}\label{6-7no1}
\end{figure}

Recall that  a small neighborhood of the boundary component $c$ of $C_0$ in $\AA$ develops above ${\bf l}$, which is used to distinguish $p^+$ and $p^-$. 
Letting $c'$ be the other boundary component of $C_0$, then $dev (C_0)$ takes a small neighborhood of $c$ to the region below ${\bf l}$ (with respect to  $c = c'$ on $C$).
Then it follows that the labels $p^+$ and $p^-$ are opposite for  $c'$ and $c$. 
However, since the normal directions of $c$ and $c'$ of $C_0$ are the opposite on $C$, leaves of
$N_\In$ spiral towards both boundary components in the same direction with respect to the normal directions of the boundary components.
\Qed{4-27no2}

\section{Identification of  boundary components of the limit structure}\Label{identification}
We have obtained the Thurston coordinates $(\sigma_\If, N_\In)$ of $\CC_\In$ (Proposition \ref{4-27no2}). 
In particular the boundary components of $\sigma_\If$ are two closed geodesic whose lengths are equal to the translation length of the loxodromic $\rho(\ell)$ corresponding to the admissible loop $\ell$ on $C$.
Thus we can identify the boundary components of $\sigma_\If$ and obtain a hyperbolic structure $\tau_\In$ on $S$ and a heavy measured lamination $L_\In$ with a unique heavy leaf homotopic to $\ell$. 
Although this (isometric) identification is a priori unique up to sharing along the heavy leaf,  in fact
\begin{lemma}\Label{7-17-12no2}
There is a unique identification of the boundary components of $\sigma_\If$ so that the resulting pair $(\tau_\In, L_\In)$ is realized by a $\rho$-equivariant  pleated surface that coincides, in the complement of $\ell$,  with  the pleated surface $\iota_\In$ corresponding to $\CC_\In$. 
\end{lemma}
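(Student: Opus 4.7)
The plan is to parametrize the possible gluings of the two boundary components of $\sigma_\In$ by a single real parameter (a ``twist''), and then use the requirement of $\rho$-equivariance together with the known behavior of $\zeta_\In$ on $\pt \sigma_\In$ to pin down the parameter both uniquely and consistently.

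First, I would work in the universal cover. Fix a lift $\td{\ell}$ of $\ell$ in $\td{S}$ and let $\gm_\ell \in \po(S)$ be its stabilizer. In $\td{\sigma}_\In$ the curve $\td{\ell}$ is replaced by two boundary geodesics $m_+$ and $m_-$ -- the ``two sides'' of $\td{\ell}$ -- both stabilized by $\gm_\ell$. Proposition \ref{4-27no2} (and the subsequent remark) says that $\zeta_\In$ maps each of $m_+, m_-$ isometrically onto the axis $A_\ell \st \h^3$ of $\rho(\gm_\ell)$, equivariantly for the $\gm_\ell$-action versus the $\rho(\gm_\ell)$-action. A choice of identification between the two boundary loops of $\sigma_\In$ lifts to a $\gm_\ell$-equivariant orientation-preserving isometry $\phi\cn m_+ \to m_-$; any two such isometries differ by post-composition with a translation of $m_-$ commuting with $\gm_\ell$, so the space of possible identifications is an $\R/(\text{translation length of }\rho(\gm_\ell))\Z$-torsor.

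For uniqueness, suppose the glued surface $(\tau_\In, L_\In)$ admits a $\rho$-equivariant pleated realization $\td{\zeta}\cn \td{\tau}_\In \to \h^3$ extending $\zeta_\In$ off $\ell$. Then $\td{\zeta}$ must be well-defined on the total lift of $\ell$, so the two sides $m_+$ and $m_-$ of $\td{\ell}$ must be identified by $\phi$ in such a way that $\zeta_\In(\phi(p)) = \zeta_\In(p)$ for every $p \in m_+$. Since the two restrictions $\zeta_\In|_{m_\pm}\cn m_\pm \to A_\ell$ are isometric parametrizations of $A_\ell$, this condition specifies $\phi$ uniquely and hence pins down the twist. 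For existence, I would define $\phi$ by this same formula: for $p \in m_+$ take $\phi(p)$ to be the unique point of $m_-$ with $\zeta_\In(\phi(p)) = \zeta_\In(p)$. This prescription is automatically a $\gm_\ell$-equivariant isometry because $\zeta_\In|_{m_\pm}$ are $\gm_\ell$-equivariant isometries onto $A_\ell$. The corresponding identification of boundary components of $\sigma_\In$ produces $\tau_\In$, and the map $\td{\zeta}$ obtained by gluing $\zeta_\In$ to itself along $\phi$ is $\rho$-equivariant: the $\rho|_{\po(S \sm \ell)}$-equivariance is inherited from $\zeta_\In$, while the action of the generator ``crossing $\ell$'' is realized as composition with $\rho(\gm_\ell)$, as dictated by the definition of $\phi$.

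The main subtlety is to check that the resulting $\td{\zeta}$ is indeed the pleated surface associated to $(\tau_\In, L_\In)$ as Thurston coordinates, rather than corresponding to some other bending prescription along $\ell$. Here the key point is that $\ell$ carries weight infinity in $L_\In$ (Proposition \ref{4-27no2}), which translates into the requirement that the transition across $\ell$ is a pure glide along $A_\ell$ with no extra rotational component around $A_\ell$ -- a finite bending angle would correspond to adding a nontrivial elliptic rotation fixing $A_\ell$ in the matching. The condition $\zeta_\In \cc \phi = \zeta_\In$ on $m_+$ is exactly the vanishing of this rotational component, so the $\phi$ produced above realizes $L_\In$ with its prescribed heavy leaf and no other bending along $\ell$, completing the argument.
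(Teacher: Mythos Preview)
Your approach is essentially the same as the paper's: fix a lift $\td\ell$, observe that the two boundary geodesics $m_\pm$ map isometrically onto the axis $A_\ell$ of $\rho(\gm_\ell)$, and conclude that the only gluing making the two pleated maps agree continuously across $\td\ell$ is the one given by $\zeta_\In|_{m_-}^{-1}\cc\zeta_\In|_{m_+}$. The paper then checks $\rho$-equivariance by noting that the identification is independent of the chosen lift $\td\ell$ (since everything is built from the $\rho$-equivariant developing map of $C$), which is the point you gloss over with the vague remark about ``the generator crossing $\ell$'' being ``composition with $\rho(\gm_\ell)$''---that sentence is not quite right (it should be $\rho$ of the crossing element, not $\rho(\gm_\ell)$), but the underlying idea is fine.

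Your final paragraph is unnecessary and somewhat backwards. Since $\ell$ has weight infinity in $L_\In$, there is \emph{no} bending-angle constraint at $\ell$: any continuous gluing that is totally geodesic along the axis automatically yields a pleated surface realizing the geodesic lamination $|L_\In|$, which is all the lemma asserts. The infinite weight does not ``translate into'' a vanishing-rotation requirement; the vanishing rotation is already forced by the continuity condition $\zeta_\In\cc\phi=\zeta_\In$ you used for uniqueness, and the heavy leaf simply means there is nothing further to check.
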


\begin{proof}
Let $\td{\ell}$ be a lift of $\ell$ to $\St$. 
Let $\hat{\ell}$ be the total lift of $\ell$ to $\St$. 
Let $\gm_\ell$ be the element of $\po(S)$ that corresponds to $\ell$ and preserves $\td{\ell}$.
Let $P_1$ and $P_2$ be the connected components of $\St \sm \hat{\ell}$ that are adjacent along $\td{\ell}$.

For each $k = 1,2$, let $\iota_{P_k}\cn X_k \to \h^3$ denote the pleated surface for the connected component of $\CC_\In$ corresponding to $P_k$ so that  $\iota_{P_k}$ is equivariant under the restriction of $\rho$ to the subgroup $\po(S)$ that preserves $P_k$.
Let $g_k$ be the boundary geodesic of $X_k$  corresponding to $\td{\ell}$.
Then $\iota_{P_k}$ isometrically takes $g_k$ to the axis of $\rho(\gm_\ell)$.
Thus there is a unique identification of $g_1$ and $g_2$ so that $\iota_{P_1}$ and $\iota_{P_2}$ continuously extends the union $X_1 \cup X_2$ given by the identification.
Then, by quotienting out $X_1 \cup X_2$ by the infinite cyclic group generated by $\gm_\ell$, the identification of $g_1$ and $g_2$ descends to a unique isometry between the boundary components of $\sigma_\In$.

Since $\CC_\In$ is obtained by grafting  $C$ and $dev(C)$ is $\rho$-equivariant, the identification of the boundary components of $\sigma_\In$ is independent of the choice of the lift $\td{\ell}$.
Then, applying the identification of boundary components for all adjacent components of $\til{S} \minus \hat{\ell}$,  we obtain a $\rho$-equivariant pleated surface from $\h^2$ to $\h^3$ realizing $(\tau_\In, L_\In)$.
\end{proof}

\section{Convergence of canonical neighborhoods under grafting}\Label{sNeighborhoodsConverging}

Recall, from \S \ref{6-8no1}, that $C_i = \Gr_\ell^i(C)$, where $\ell$ is an admissible loop on a projective structure $C$ on $S$, and  $\CC_i = C_i \minus \ell_i$ where $\ell_i$ is an isomorphic copy of $\ell$ which sits in the ``roughly middle''' of the cylinder inserted by $\Gr_\ell^i$.
For each $i \in \n$, let $e_i\cn \CC_i \to \CC_\If$ denote the canonical isomorphic embedding. 
Then  the embeddings $e_i$ give an exhaustion of $\CC_\If$,
  $$\CC_1 \st \CC_2 \st  \CC_3 \st \cdots (\st \CC_\If).$$
Let $p$ be a point on $\CC_\In$, and let $\td{p}$ be a lift of $p$ to the universal cover $\td{\CC}_\If$. 
Let $U_\If(\td{p}) \st \td{\CC}_\In$ be the canonical neighborhood (\S\ref{5-4no1}) of $\td{p}$.

For each $i \in \n$, let $\td{C}_i$ be the universal cover of $C_i$.
If $\ell$ is non-separating, then let $\hat{C}_i$ be the quotient of  $\td{C}_i$  by   $\pi_1(S \minus \ell)$.
If $\ell$ is separating, for each connected component  $F$ of $S \minus \ell$, quotient $\td{C}_i$ by  $\pi_1(F)$, and let $\hat{C}_i$ be the disjoint union of both quotients. 
Then $\CC_i$ is isomorphically embedded in $\Ch_i$.
\begin{displaymath}
\xymatrix{
\Ch_i \ar[d] \\
C_i & ~\CC_i~ \ar@{_{(}->}[l]  \ar@{_{(}->}[ul] \ar@{^(->}[r]^{e_i} & \CC_\If
}
\end{displaymath}

For sufficiently large $i$, we have  $p \in \CC_i \st C_i$.
Accordingly $\td{p} \in \mCt_i \st \Ct_i$.
Let $U_i$ and $\UU_\In$  be the canonical neighborhoods of the point $\td{p}$ in $\Ct_i$ and  $\td{\CC}_\If$, respectively.
Fix any metric on $\rs$ inducing the (standard) topology of $\rs$ (e.g. a spherical metric).
Note that canonical neighborhoods embed into $\rs$ by developing maps.  
We consider  a version of the Hausdorff metric: 
For two proper subsets $X$ and $Y$ of $\rs$, $X$ and $Y$ are
  \textit{$\ep$-close} if
 \begin{itemize}
 \item the Hausdorff distance of $X$ and $Y$ is less than $\ep$ and
 \item the Hausdorff distance of $\pt X$ and $\pt Y$ is less than $\ep$. 
\end{itemize}
With this distance on the subsets on $\rs$, we have
\begin{proposition}\Label{4-23no2}
$U_i$  converges to $\mc{U}_\If$ as $i \to \If$.
\end{proposition}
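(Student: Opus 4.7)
The plan is to compare $U_i$ with $\mc{U}_\infty$ as subsets of $\rs$ (via the developing maps $f_i$ and $f_\infty$, normalized to agree on a fixed lift of $\mc{C}_0$ containing $\td{p}$) by establishing two inclusions in the relative Hausdorff distance: every round ball of $f_\infty(\mc{U}_\infty)$ is a limit of round balls of $f_i(U_i)$, and every subsequential limit of $f_i(U_i)$ is contained in $f_\infty(\mc{U}_\infty)$. Throughout I work through the intermediate canonical neighborhood of $\td{p}$ in a chosen lift of $\mc{C}_i$ sitting inside $\td{C}_i$, and pass across using the embedding $e_i$.

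First I would reduce to the ``cut'' picture. Let $\td{\CC}_i^\circ \st \td{C}_i$ denote the connected lift of $\mc{C}_i$ containing $\td{p}$, so that $\td{\CC}_i^\circ$ is isomorphic to $\td{\CC}_i$ as a projective structure. The boundary of $\td{\CC}_i^\circ$ in $\td{C}_i$ is a union of lifts of the cutting loop $\ell_i$, and under $e_i$ each such boundary circle corresponds to the middle loop of a half-infinite grafting cylinder in $\td{\CC}_\infty$. Since $\CC_\infty = \varinjlim \CC_i$, these boundary circles recede to the ideal ends of $\td{\CC}_\infty$ as $i \to \infty$ (at a rate controlled by the grafted length $2\pi i$). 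I would argue that, for sufficiently large $i$, no maximal ball in $\td{C}_i$ containing $\td{p}$ can cross a lift of $\ell_i$, so that $U_i$ coincides with the canonical neighborhood of $\td{p}$ computed inside $\td{\CC}_i^\circ$. The key input is that the cores of maximal balls through $\td{p}$, projected via the tree map $\Psi \cn \td{\CC}_\infty \to T$, all lie in a fixed bounded neighborhood of $\Psi(\td{p})$ (Lemma \ref{5-31no1}), while the $\Psi$-images of the receding boundary loops exit every compact subset of $T$.

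Once the reduction is in place, I would match maximal balls on the two sides using $e_i$ together with Proposition \ref{4-23no3}. Given a maximal ball $\mc{B}$ in $\td{\CC}_\infty$ containing $\td{p}$, Lemma \ref{5-31no1} describes $\bd_\infi \mc{B}$ in terms of the maximal balls whose cores $\Psi$-project into a fixed neighborhood of $\Psi(\td{p})$ in $T$; all such cores lie in a compact subset of $\td{\CC}_\infty$, hence are contained in $e_i(\td{\CC}_i^\circ)$ for large $i$, and Proposition \ref{4-23no3} then lifts $\mc{B}$ to a maximal ball in $\td{\CC}_i^\circ$ with the same developed image. Conversely, any subsequential limit (in the relative Hausdorff topology on $\rs$) of $f_i(B_i)$ with $B_i$ a maximal ball in $\td{C}_i$ through $\td{p}$ is a round disk which, by Proposition \ref{4-23no3} applied backwards, is the developed image of a round ball in $\td{\CC}_\infty$ through $\td{p}$; its maximality follows because any strict enlargement inside $\td{\CC}_\infty$ would already be present in some $e_{i_k}(\td{\CC}_{i_k}^\circ)$, contradicting the maximality of the corresponding $B_{i_k}$.

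The main obstacle will be making the first step rigorous: uniformly controlling, in terms of a bound that does not depend on $i$, how far the lifts of $\ell_i$ in $\td{C}_i$ lie from $\td{p}$ with respect to the $\Psi$-image, so as to rule out that a maximal ball through $\td{p}$ wraps around via the grafted region. The key geometric inputs are Proposition \ref{4-27no2}, which describes whether $\mc{C}_\infty$ has closed or open boundary (and hence exactly how the middle loops $\ell_i$ are pushed out to the ideal ends) depending on whether $\ell$ is roughly circular or spiraling, together with Lemma \ref{5-31no1}, which localizes the cores of the relevant maximal balls in the dual $\R$-tree. With these, the boundary convergence in the relative Hausdorff metric on $\rs$ follows from the convergence of ideal boundaries of the matched maximal balls.
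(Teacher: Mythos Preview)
Your reduction step has a genuine gap. You want to show that for large $i$ every maximal ball in $\td{C}_i$ through $\td{p}$ lies inside the lift $\td{\CC}_i^\circ$, so that $U_i$ coincides with the canonical neighborhood $\UU_i$ computed in $\td{\CC}_i$. But the tools you invoke --- Lemma~\ref{5-31no1} and the tree map $\Psi\cn \td{\CC}_\infty \to T$ --- are intrinsic to the single projective disk $\td{\CC}_\infty$; they say nothing directly about maximal balls in the different projective structure $\td{C}_i$, which has its own canonical lamination and its own dual tree. (Moreover, Lemma~\ref{5-31no1} does not assert that cores of maximal balls through $\td{p}$ lie in a bounded region of $T$; it only describes $\partial_\infty B(p)$ in terms of nearby cores.) The same conflation appears in your second step: Proposition~\ref{4-23no3} compares a projective disk with a canonical neighborhood \emph{inside it}, not two distinct disks $\td{\CC}_i$ and $\td{\CC}_\infty$; and a maximal ball $\mc{B}\subset\td{\CC}_\infty$ touches the ideal boundary, so it is not contained in any $\td{\CC}_i$ and cannot simply be ``lifted'' there.

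The paper avoids all of this and never claims $U_i=\UU_i$. It first proves $\UU_i\to\UU_\infty$ by approximating $\UU_\infty$ with finitely many \emph{closed} round balls through $\td{p}$; being compact, these lie in $\td{\CC}_i$ for large $i$. For the outer bound it shows directly that each ideal point $x\in\partial_\infty\UU_\infty$ has arbitrarily small neighborhoods not contained in $U_i$: one takes a circular ray in a leaf of the canonical lamination $\NN_\infty$ of $\CC_\infty$ limiting to $x$; every compact initial segment embeds in $\td{C}_i$, and the tail limits to a point of $\partial_\infty\td{C}_i$ (either because the ray stays in a compact part of $\CC_\infty$, or because it spirals toward a boundary component and $x$ is a fixed point of the corresponding loxodromic, hence already in $\partial_\infty\td{C}_i$). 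Covering $\partial_\infty\UU_\infty$ by finitely many such neighborhoods then traps $U_i$ inside the $\ep$-neighborhood of $\UU_\infty$. This sidesteps entirely the question of whether maximal balls in $\td{C}_i$ cross lifts of $\ell_i$.
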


\proof[Proof of Proposition \ref{4-23no2}]
Let $\UU_i$ be the canonical neighborhood of $\td{p}$ in $\td{\CC}_i$.
Then, since $\td{\CC}_i$ embeds into $\Ct_i$ and $\td{\CC}_\If$, canonically $\UU_i \st U_i$ and $\UU_i \st \UU_\If$.

\begin{lemma}\Label{7-18-12no3}
$\UU_i$ converges to $\UU_\If$   as $i \to \In$.
\end{lemma}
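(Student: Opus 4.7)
The plan is to exploit that the embeddings $\td{\CC}_i \hookrightarrow \td{\CC}_{i+1} \hookrightarrow \td{\CC}_\In$ are compatible with the developing maps and that they exhibit $\td{\CC}_\In$ as a monotone increasing exhaustion by open subsets. The convergence of canonical neighborhoods is then essentially a soft, set-theoretic consequence, once one translates to subsets of $\rs$ via $dev$.

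\textbf{Step 1: Monotonicity $\UU_i \subset \UU_{i+1} \subset \UU_\In$.} If $B$ is a maximal round ball in $\td{\CC}_i$ with $\td{p} \in B$, then, regarding $B$ as a subset of $\td{\CC}_\In$, it is a round ball there as well (possibly no longer maximal). Any maximal ball of $\td{\CC}_\In$ containing $B$ necessarily contains $\td{p}$, hence lies in $\UU_\In$. Therefore $B \subset \UU_\In$, and taking the union over all such $B$ gives $\UU_i \subset \UU_\In$; the identical argument with $i+1$ in place of $\infty$ yields $\UU_i \subset \UU_{i+1}$.

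\textbf{Step 2: Exhaustion $\bigcup_i \UU_i = \UU_\In$.} For any $q \in \UU_\In$, pick a maximal ball $B$ in $\td{\CC}_\In$ with $\td{p},q \in B$. Since $dev$ embeds $B$ homeomorphically onto a round disk in $\rs$, I may choose a strictly smaller closed round sub-disk $\ol{B'} \subset B$ still containing $\td{p}$ and $q$; then $\ol{B'}$ is compact in $\td{\CC}_\In$. Because $\CC_i$ exhausts $\CC_\In$, and the lifts $\td{\CC}_i$ are just the preimages of $\CC_i$ under the covering $\td{\CC}_\In \to \CC_\In$, the $\td{\CC}_i$ exhaust $\td{\CC}_\In$ by open subsets. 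Hence $\ol{B'} \subset \td{\CC}_i$ for all sufficiently large $i$, and $B'$ is a round open ball in $\td{\CC}_i$ containing $\td{p}$. Extending $B'$ to any maximal ball of $\td{\CC}_i$ (which automatically still contains $\td{p}$) shows $q \in B' \subset \UU_i$.

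\textbf{Step 3: Convergence in $\rs$.} Applying $dev$, Steps 1 and 2 give $dev(\UU_i) \nearrow dev(\UU_\In)$ as a monotone increasing union of open subsets of the compact space $\rs$. From this I would deduce Hausdorff convergence of the closures by the standard argument: any $y \in \ol{dev(\UU_\In)}$ is within $\ep/2$ of some $z \in dev(\UU_\In)$, and $z \in dev(\UU_j)$ for $j$ large, so $y$ is within $\ep$ of $dev(\UU_i)$ for $i \geq j$. For the boundary piece $\partial dev(\UU_i) \to \partial dev(\UU_\In)$ demanded by the relative Hausdorff distance introduced just before Proposition \ref{4-23no2}, I would invoke Lemma \ref{5-31no1}: the ideal boundary of each maximal ball is determined by the family of nearby maximal balls whose cores project to nearby points in the $\R$-tree. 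The monotonicity and exhaustion arguments in Steps 1--2 apply uniformly to all points close to $\td{p}$, so the collections of maximal balls involved converge, and the description of $\partial \UU_i$ via Lemma \ref{5-31no1} transports to $\partial \UU_\In$ in the limit.

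\textbf{Main obstacle.} The soft part (convergence of the sets themselves) is essentially formal from the exhaustion, and I do not expect difficulty there. The delicate point is the convergence of the boundaries: one must rule out ``spurious'' pieces of $\partial dev(\UU_i)$ that persist in $i$ without accumulating on $\partial dev(\UU_\In)$. The tree-theoretic characterization of Lemma \ref{5-31no1}, together with the fact that maximal balls in $\td{\CC}_i$ and $\td{\CC}_\In$ correspond bijectively to strata of the canonical laminations and these laminations are compatible under the embeddings $\td{\CC}_i \hookrightarrow \td{\CC}_\In$, should be the key ingredient that prevents such pathologies.
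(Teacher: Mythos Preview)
Your Steps 1 and 2 and the set-level Hausdorff convergence in Step 3 are correct and match the paper's argument exactly: the paper also picks finitely many closed round balls through $\td{p}$ that $\ep$-approximate $\UU_\infty$, uses compactness and the exhaustion $\td{\CC}_i \nearrow \td{\CC}_\infty$ to get them inside $\td{\CC}_i$, and hence inside $\UU_i$; combined with the already-stated inclusion $\UU_i \subset \UU_\infty$ this gives the lemma.

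Where you diverge from the paper is in Step 3, in worrying about the boundary part of the relative Hausdorff distance. The paper does \emph{not} treat boundary convergence inside the proof of this lemma: the lemma is used only for ordinary Hausdorff closeness of $\UU_i$ to $\UU_\infty$, and the delicate issue of getting $\partial U_i$ close to $\partial \UU_\infty$ is handled separately, after the lemma, via Lemma~\ref{6-24no2} and Corollary~\ref{011213} (which show no neighborhood of a point of $\partial_\infty \UU_\infty$ can sit inside $U_i$). So your ``main obstacle'' is not an obstacle for this lemma at all, and your appeal to Lemma~\ref{5-31no1} is misplaced: that lemma describes the ideal boundary of a single maximal ball in terms of nearby maximal balls, not the boundary of a canonical neighborhood, and the argument you sketch (``the description transports in the limit'') is too vague to constitute a proof even if boundary convergence were required here. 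Drop the boundary discussion and your proof is complete and identical in spirit to the paper's.
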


\begin{proof}
For every $\ep > 0$, there exits finitely many closed round balls $B_1, B_2 \dots, B_n$ in $\td{\CC}_\If$ containing $\td{p}$ such that $\cup_{k = 1}^n B_k$ is $\ep$-close to $\UU_\If$ in $\rs$.
Since $\{\td{\CC}_i \}$ exhausts $\td{\CC}_\If$,  thus each $B_k$ is also contained in $\td{\CC}_i$ for sufficiently large $i$. 
Thus $\td{\CC}_i$ contains $\cup_{k =1}^n B_k$,  and therefore $\cup_{k =1}^n B_k$ is contained in $\UU_i$. 
\end{proof}
Since canonical neighborhoods are topologically open balls,  by  $\UU_i \sub U_i$ and Lemma \ref{7-18-12no3}, it suffices to show that for every $\ep > 0$, if $i$ is large enough, then $\pt U_i$ is contained in the (honest) $\ep$-neighborhood of $\pt \UU_\If$.

\begin{lemma}\Label{6-24no2}
Given any $x \in \pt_\If \UU_\If$ and any neighborhood $\VV_x$ of $x$ in $\rs$, then  $\VV_x$ is not a subset of $U_i$ for sufficiently large $i \in \N$.  
\end{lemma}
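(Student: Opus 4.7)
The plan is to argue by contradiction. Suppose that for some neighborhood $\VV_x$ of $x$ in $\rs$ we have $\VV_x \sub U_i$ for infinitely many $i$, and pass to such a subsequence. Since $U_i$ embeds in $\rs$ via $dev$, we may view $\VV_x$ as sitting inside $U_i \sub \Ct_i$. In particular $x$ has a lift $q_i \in U_i$, and by definition of the canonical neighborhood there is a maximal ball $B_i \sub \Ct_i$ containing both $\td{p}$ and $q_i$; its image $dev(B_i)$ is an open round disk in $\rs$ containing $dev(\td{p})$ and $x$. Shrinking $\VV_x$ if necessary and choosing $B_i$ through a point of $\VV_x$ lying slightly past $x$ from $dev(\td{p})$, I can arrange that a fixed concentric round subdisk $\WW$ around $x$ is contained in $dev(B_i)$ for every $i$, so that $x$ sits in $dev(B_i)$ with a uniform positive margin. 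Since $\td{p}$ is interior to $B_i$ and the maximal ball centered at $\td{p}$ in $\td{\CC}_\In$ gives a definite lower bound on its radius, $dev(\td{p})$ also lies at positive distance from $\pt\, dev(B_i)$.

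Passing to a further subsequence, the disks $dev(B_i)$ converge in the Hausdorff topology on $\rs$ to a closed round disk $D$, with both $dev(\td{p})$ and $x$ in its interior. The crucial step is then to lift $\on{int}(D)$ to a round region $B_\In \sub \td{\CC}_\In$ containing $\td{p}$. For this I would exhaust $\on{int}(D)$ by compact round subdisks $K_n$ containing $dev(\td{p})$. For each fixed $n$, the region $(dev|_{B_i})^{-1}(K_n) \sub \Ct_i$ has diameter (in the developed Thurston metric) bounded independently of $i$, while the grafting cylinders in $\Ct_i$ grow with total width $2\pi i$. Hence for $i$ sufficiently large this region lies within a portion of $\Ct_i$ consisting of $\td{\CC}_i$ together with bounded-length subsegments of the long grafting cylinders, and this portion sits canonically inside $\td{\CC}_\In$ via $\td{\CC}_i \hookrightarrow \td{\CC}_\In$ extended through the infinite grafting cylinders of $\CC_\In$. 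Letting $n \to \In$ glues these lifts into an open round region $B_\In \sub \td{\CC}_\In$ with $dev(B_\In) = \on{int}(D)$ and $\td{p} \in B_\In$.

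Once $B_\In$ is constructed, it is a round ball in $\td{\CC}_\In$ containing $\td{p}$, hence contained in a maximal ball of $\td{\CC}_\In$ containing $\td{p}$, and therefore in $\UU_\In$. Since $x \in \on{int}(D) = dev(B_\In) \sub dev(\UU_\In)$, this forces $x \in \UU_\In$, contradicting $x \in \pt_\In \UU_\In$. The main obstacle is the lifting step: one must track how the maximal ball $B_i \sub \Ct_i$ relates to the total lift of $\ell_i$ (across which $\Ct_i$ enters a fresh translate of $\td{\CC}_i$ while $\td{\CC}_\In$ enters a tail of an infinite grafting cylinder), and use that the $2\pi i$-wide grafting cylinders in $\Ct_i$ are exactly the finite pieces of the infinite cylinders in $\td{\CC}_\In$, so that bounded pieces of $B_i$ have well-defined limits inside $\td{\CC}_\In$.
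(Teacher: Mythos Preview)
Your overall strategy---produce a round ball $B_\infty \subset \td{\CC}_\infty$ through $\td p$ whose developed image contains $x$, contradicting $x \in \pt_\infty \UU_\infty$---is sensible, but the lifting step has a real gap.

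First, the claim that $(dev|_{B_i})^{-1}(K_n)$ has Thurston diameter bounded independently of $i$ is not justified. The Thurston metric on $\Ct_i$ is determined by $(\tau_i, L_i)$, and these are precisely the quantities Theorem~\ref{3-10no2} is trying to control; at this point in the argument you cannot assume anything about them. In particular there is no reason the Thurston metric on $B_i$ should be uniformly comparable to the hyperbolic metric of the round disk $dev(B_i)$. Second, even granting some diameter bound, the transport from $\Ct_i$ to $\td{\CC}_\infty$ is more delicate than you indicate. The universal cover $\Ct_i$ is tiled by $\pi_1(S)$-translates of $\td{\CC}_i$ glued along lifts of $\ell_i$, and nothing prevents the maximal ball $B_i$ from crossing one (or several) such lifts into neighbouring translates. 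Your suggestion to ``extend through the infinite grafting cylinders of $\CC_\infty$'' only makes sense while $B_i$ remains inside the grafted cylinder of the neighbouring translate; once $B_i$ reaches the copy of $C \setminus \ell$ sitting in that translate, there is no corresponding region in $\td{\CC}_\infty$. You have not ruled this out, and indeed doing so would seem to require exactly the kind of metric control you do not yet have.

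The paper avoids this difficulty entirely by a different mechanism. Rather than transporting balls, it uses that endpoints of leaves of the canonical lamination $\td{\NN}_\infty$ are dense in $\pt_\infty \UU_\infty$, so one may assume $x$ is the endpoint of a circular ray $\td r$ in $\td{\CC}_\infty$. A compact initial segment of $\td r$ lies in $\td{\CC}_i \subset \Ct_i$ for large $i$, and one then argues (splitting into the cases where the projected ray stays in a compact set of $\CC_\infty$ or escapes toward a boundary component) that $x$ is an ideal point of $\Ct_i$. This contradicts $\VV_x \subset U_i$ directly, with no need to compare the global structures of $\Ct_i$ and $\td{\CC}_\infty$.
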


\begin{proof}
Suppose that the assertion fails; then there is a neighborhood $\VV_x$ of $x$ in $\rs$, such that, for every  $n \in \N$, there is  $i > n$ with $\VV_x \st U_i$.
Let $\NN_\In$ be the circular lamination of $\CC_\In$ that descends to $N_\In$, and
 let $\td{\NN}_\In$ be the total lift of $\NN_\In$ to the universal cover $\td{\CC}_\In$.

Since $\NN_\infi$ is nonempty, the endpoints of leaves of $\til{\NN}_\infi$ is dense in the ideal boundary $\bdry_\infi \UU_\infi (\sub \bdry_\infi \til{\CC}_\infi)$. 
Therefore we can in addition assume that $x$ is an endpoint of a leaf of $\til{\NN}_\infi$. 
Then the leaf contains a ray $\td{r}\cn [0, \In) \to \td{\CC}_\In$ ending at $x$. 
Let $r$ be the projection of $\td{r}$ to $\CC_\In$.

For every $s > 0$, since $r| [0, s]$ is a compact subset of  $\CC_\infi$, thus, for sufficiently large $i$, it is also a circular curve in $\CC_i$ and thus in $C_i$.
Accordingly $\til{r}| [0,s]$ is a circular arc embedded in $\til{C}_i$.
Since $\til{r}$ ends at $x$, if $s > 0$ is sufficiently large,  $\td{r}\vert [s, \In)$ is contained in $\VV_x$.
Therefore $\td{r}$ is contained in a compact subset of $U_i$ for sufficiently large $i$. 
Thus we induce a contradiction, showing that $x$ is an ideal point of  $\td{C}_i$ for sufficiently large $i$. 

First suppose that $r$ stays in the compact subset of $\CC_\In$.
Then, since every compact subset of $\CC_\If$ naturally embeds into $C_i$ for sufficiently large $i$, accordingly $\td{r}$ is naturally a circular ray in $\td{C}_i$  limiting to a point of $\pt_\In\td{C}_i$.

Next suppose that no compact subset of $\CC_\In$ contains $r$. 
Then the admissible $\ell$ loop must spiral--- otherwise $\ell$ is roughly circular, and every leaf of $\NN_\If$ is contained in a compact subset of $\CC_\In$.
The projection of $r$ to $\sigma_\In$ is an (eventually simple) geodesic ray spiraling towards  a boundary component $c$ of $\sigma_\If$.
Then $x$ is a fixed point of the corresponding loxodromic element.
For each $j \in \N$,  let $b_j$ be the boundary component of $\CC_j$ isotopic to $c$, so that $b_j$ isomorphic to $\ell$.
Then $b_j$ are parallel in $\CC_\infi$ and  $r \in \CC_\infi$ intersects $b_i$ for sufficiently large $i$.
Let  $\td{b}_j\cn \R \to \td{\CC}_j$ be a (parametrized) lift of $b_j$ so that $\td{b}_j(t)$ limits to $x$ as $t \to \If$.
Then $\VV_x$ contains $b_j(t)$ for all $t  > t_j$ with some $t_j$.
Therefore $U_i$ must contain  $b_j(t)~ (t > t_j)$ as well.   
Since $x$ is also an ideal point of $\til{C}_i$. 

 \begin{figure}[h]
\begin{overpic}[scale=.2
]{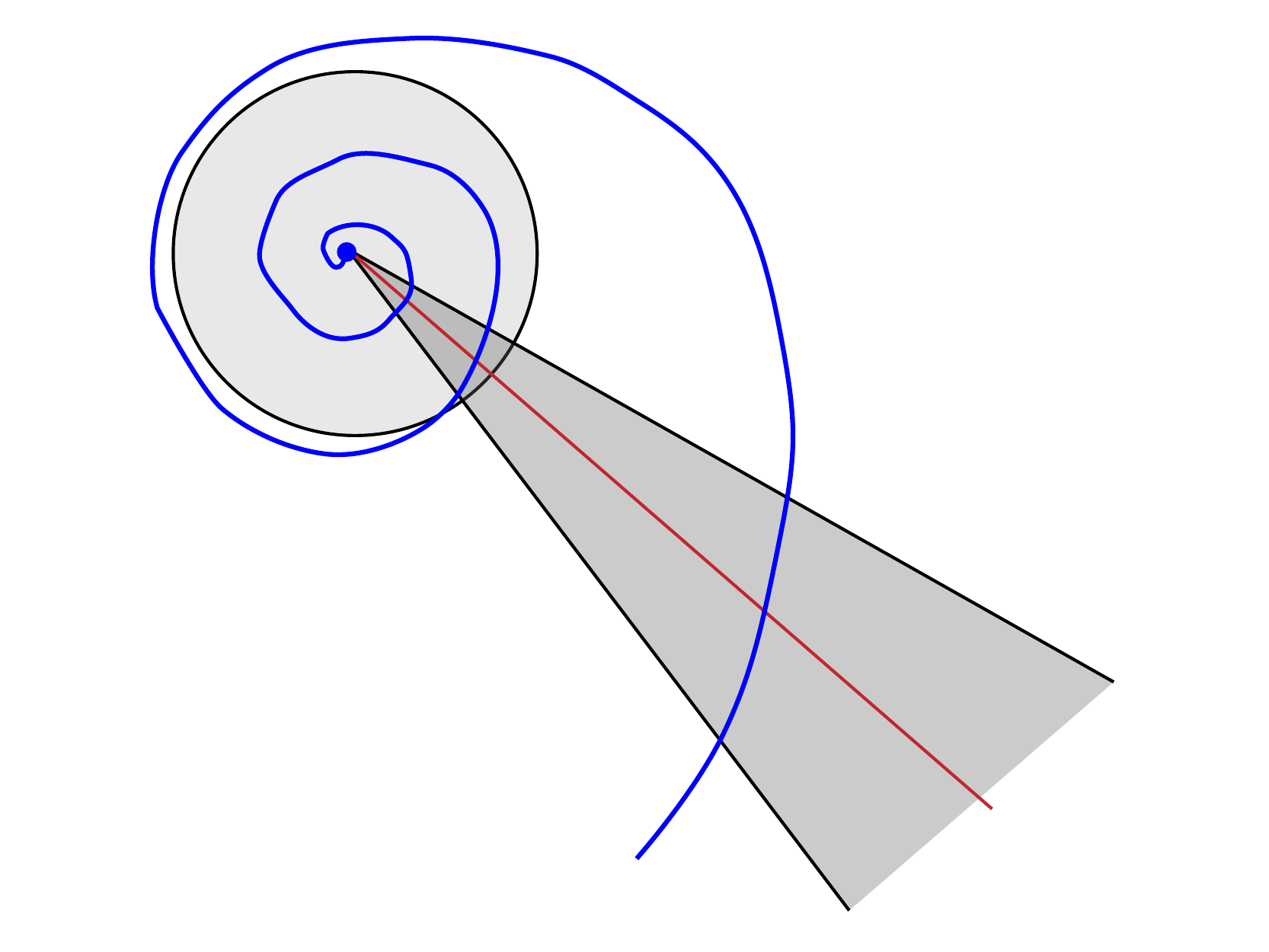}
         \put(62 ,11 ){\textcolor{black}{P}}
        \put(62,50){\textcolor{blue}{$\td{b}_j$}}
        \put(18, 50 ){\textcolor{blue}{$x$}}
        \put(70, 20){\textcolor{BrickRed}{$r$}}
       \put(41, 60){\textcolor{black}{$\VV_x$}} 
      \end{overpic}
\caption{}\label{  }
\end{figure}

\end{proof}

Then Lemma \ref{6-24no2} implies that 
\begin{corollary}\Label{011213}
For every $\ep > 0$, if $i \in \N$ is large enough, then the $\ep$-neighborhood of $\UU_\If$ contains all maximal balls $B_i$  in $\Ct_i$ containing $\td{p}$. 
\end{corollary}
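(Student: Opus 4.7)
The plan is to argue by contradiction using Lemma \ref{6-24no2}. Suppose the corollary fails. Then, after identifying canonical neighborhoods with their images in $\rs$ under the developing maps (which we normalize to agree on a neighborhood of $\td p$; this is legitimate since the $\Ct_i$ and $\td{\CC}_\In$ all carry the holonomy $\rho$ around loops based near $\td p$), there exist $\ep > 0$, an infinite subsequence of indices $i$, maximal balls $B_i \sub U_i$ containing $\td p$, and points $q_i \in B_i$ with spherical distance $d(q_i, \UU_\In) \geq \ep$ in $\rs$.

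Next I would extract Hausdorff limits by compactness. Since $\rs$ is compact and the space of closed round disks in $\rs$ containing the fixed point $dev(\td p)$ is compact in the Hausdorff topology, after passing to further subsequences one has $q_i \to q_\In$ in $\rs$ and $\ol{B_i} \to \ol{B_\In}$ in Hausdorff for some closed round disk $\ol{B_\In}$ whose open interior $B_\In$ contains $dev(\td p)$ and whose closure contains $q_\In$. The lower bound $d(q_\In, \UU_\In) \geq \ep > 0$ forces $q_\In \notin \ol{\UU_\In}$, so in particular $q_\In \notin \pt \UU_\In = \pt_\infi \UU_\In$; here the identification of topological and ideal boundary follows because the developing map embeds $\UU_\In$ into $\rs$.

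The crucial step is then a chord argument. I would consider the chord in $\ol{B_\In}$ joining $dev(\td p)$ to $q_\In$. It starts in the open set $\UU_\In$ and ends outside $\ol{\UU_\In}$, so it exits $\UU_\In$ at a first point $x \in \pt_\infi \UU_\In$. Since $q_\In \notin \pt_\infi \UU_\In$, one has $x \neq q_\In$, and since every point of the chord other than the possibly-boundary endpoint $q_\In$ lies in the open disk $B_\In$, it follows that $x$ lies in $B_\In$. Choosing a small neighborhood $\VV_x$ of $x$ in $\rs$ with $\ol{\VV_x} \sub B_\In$, Hausdorff convergence $\ol{B_i} \to \ol{B_\In}$ implies $\VV_x \sub B_i \sub U_i$ for all sufficiently large $i$, contradicting Lemma \ref{6-24no2} applied to $x$ and $\VV_x$.

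The main obstacle I anticipate is verifying that the exit point $x$ lies in the open interior of $B_\In$ rather than on its boundary circle; if $x$ were to coincide with $q_\In \in \pt B_\In$, Hausdorff convergence would no longer guarantee that neighborhoods of $x$ are eventually contained in the $B_i$. The positive distance bound $d(q_\In, \UU_\In) \geq \ep$ is used precisely to preclude this degeneracy, by forcing $q_\In \notin \pt_\infi \UU_\In$ while $x \in \pt_\infi \UU_\In$ and hence $x \neq q_\In$.
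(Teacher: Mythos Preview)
Your argument is correct and, like the paper, hinges on Lemma~\ref{6-24no2}, but the route is genuinely different. The paper argues directly rather than by contradiction: it chooses finitely many points $x_1,\dots,x_{n(\del)}$ whose $\del$-neighborhoods $\VV_k$ cover the compact set $\pt_\If\UU_\If$, applies Lemma~\ref{6-24no2} to each $x_k$ to obtain points $y_k\in\VV_k\setminus U_i$ for all large $i$, and then observes that any maximal ball in $\Ct_i$ containing $\td p$ must lie in the canonical neighborhood of $\td p$ in the punctured sphere $\rs\setminus\{y_1,\dots,y_{n(\del)}\}$. As $\del\to 0$ the finite set $\{y_k\}$ Hausdorff-converges to $\pt_\If\UU_\If$, so this auxiliary canonical neighborhood converges to $\UU_\If$, giving the $\ep$-containment. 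Your approach instead extracts a subsequential Hausdorff limit $\ol{B_\In}$ of offending maximal balls and locates a single point $x\in B_\In\cap\pt_\If\UU_\If$ via a chord, to which Lemma~\ref{6-24no2} applies once.

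The paper's discretization avoids the compactness extraction and the bookkeeping about degenerate limit balls; your argument is leaner in that it needs only one invocation of Lemma~\ref{6-24no2} and no appeal to canonical neighborhoods of punctured spheres. One small gap: your claim that $dev(\td p)$ lies in the \emph{open} interior $B_\In$ is not justified---Hausdorff convergence of the closures $\ol{B_i}$ only yields $dev(\td p)\in\ol{B_\In}$. Fortunately this claim is unnecessary: since $dev(\td p)$ lies in the open set $\UU_\In$ while $q_\In\notin\ol{\UU_\In}$, the exit point $x$ is strictly between the two endpoints of the chord, hence lies in the open disk $B_\In$ regardless of whether $dev(\td p)$ does.
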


\begin{proof}
The point  $\td{p}$ is contained in  $\rs \minus \pt_\If \UU_\If$.
Note $ \pt_\If \UU_\If$ is a compact subset of $\rs$.
Then $\rs \minus \pt_\If \UU_\If$ carries a canonical projective structure, and  $\UU_\If$  naturally is isomorphic to the canonical neighborhood of $\td{p}$ in the complement. 

For every $\del > 0$, take finitely many points  $x_1, \dots, x_{n(\del)}$ in $\pt_\If \UU_\If$ so that their $\del$-neighborhoods $\VV_1, \dots, \VV_{n(\del)}$ cover $\pt_\If \UU_\If$. 
Then their union $\VV_1 \cup \dots \cup \VV_{n(\del)}$ converges to $\pt_\In \UU_\If$ as $\del \to 0$ in the Hausdorff topology.
By Lemma \ref{6-24no2}, if $i$ is sufficiently large, there is a point $y_k$ in $\VV_k$ that is not contained in $U_i$ for each $k = 1, \dots, n(\del)$. 
Then $U_i$ is contained in the canonical neighborhood of $\td{p}$ in  the punctured sphere $\rs \minus \{y_1, \dots,  y_{n(\del)}\}$.
Since $\{y_1,  \dots, y_{n(\del)}\}$ converges $\pt_\If \UU_\If$ as $\del \to 0$, for every $\ep > 0$, if $i$ is sufficiently large,  $U_i$ is contained in the $\ep$-neighborhood of $\UU_\In$.
\end{proof}
Corollary \ref{011213} immediately implies that $U_i$ is contained in the $\ep$-neighborhood of $\UU_\In$. 
Then $\pt U_i$ is contained in the $\ep$-neighborhood of $\pt \UU_i$, since $\UU_i \st U_i$ and $\UU_i \to \UU_\In$.
\Qed{4-23no2}

%
%
%
%
%
%
%
\section{Convergence of domains in $\rs$ and Thurston coordinates}\Label{6-30no1}

Let $R$ be a subset of $\C$ homeomorphic to an open disk.
By Theorem \ref{6-3no1}, the projective structure on $R$ has Thurston coordinates  $(\h^2, L)$, where $L$ is a measured lamination on $\h^2$ (note that $L$ contains no heavy leaf since $R$ is embedded in $\rs$).   
Let $\beta\cn \h^2 \to \h^3$ denote the corresponding pleated surface. 
Let $\mL = (\nu, \omega)$ be the circular measured lamination on $R$ that descends to $L = (\ld, \mu)$ via the collapsing map $\kp\cn R \to \h^2$, where $\ld = |L|$ and $\mu \in \mathcal{TM}(\ld)$.

Fix a conformal identification of  $\rs$ with $\s^2$ in oder to fix a spherical Riemannian metric on $\rs$. 
Then let $\{R_i\}$ be a sequence of regions  in $\rs$ homeomorphic to an open disk, such that $\bdry R_i \to \bdry R$ and $\rs \minus R_i \to \rs \minus  R$ in the Hausdorff topology.
For each $i$, we similarly let  $(\h^2, L_i )$ denote Thurston coordinates of the projective structure on $R_i$\,; let $\beta_i\cn \h^2 \to \h^3$ be the corresponding pleated surface; let $\mL_i = (\nu_i, \omega_i)$ be the circular measured lamination on $R_i$ that descends to $L_i = (\ld_i, \mu_i)$ via the collapsing map $\kp_i\cn R_i \to \h^2$.

Every compact subset of $R$ is also a compact subset of $R_i$ for sufficiently large $i$.
In particular, for every compact subset $K$ of the target $\h^2$ of $\beta$, take $i$ is large enough,  so that $R_i$ contains the compact set $\kap^{-1}(K)$ of $R$.
For each $x \in K$, pick a point in $y_x \in \kap^{-1}(x)$. 
(Note that if $x$ in on a leaf of $L$ with atomic measure, then $\kap^{-1}(x)$ is a circular arc.) 
Then define  $\psi_i\col K \to \H^2$ by $\psi_i(x) = \kap_i(y_x)$.
Note that this  map $\psi_i$ is not necessarily unique or continuous. 

\begin{theorem}\Label{4-23no4}

\begin{itemize}

\item[(i)] $\mL_i$ converges to $\mL$, uniformly on compacts, via the convergence of $R_i$ to $R$. 

\item[(ii)] $L_i$ converges to $L$ pointwise.
 
\item[(iii)] $\psi_i$ converges to an isometry  uniformly on compacts; $\beta_i \cc \psi_i\col \h^2 \to \h^3$ converges to $\beta\cn \h^2 \to \h^3$  uniformly on compacts. 
\end{itemize}
\end{theorem}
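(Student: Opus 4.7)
The plan is to reduce everything to Hausdorff convergence of maximal balls as round disks in $\rs$, and then read off the canonical lamination, transversal measure, and pleated surface from this family.

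First I would prove, for each $p \in R$ and for $i$ large enough that $p \in R_i$, that the maximal ball $B_i(p)$ in $R_i$ containing $p$ converges to the maximal ball $B(p)$ in $R$ in the Hausdorff metric on $\rs$. For the lower bound, any concentric round ball $B'$ strictly inside $B(p)$ with $\ol{B'} \st R$ satisfies $\ol{B'} \st R_i$ for large $i$ (since $\rs \sm R_i \to \rs \sm R$), hence $B' \st B_i(p)$. For the upper bound, any Hausdorff subsequential limit of $B_i(p)$ is a closed round ball whose interior lies in $R$, hence contained in $\ol{B(p)}$ by maximality of $B(p)$. Consequently $\pt_\If B_i(p) = \pt B_i(p) \cap \pt R_i$ converges to $\pt_\If B(p)$, and taking convex hulls gives $\Core(B_i(p)) \to \Core(B(p))$ as closed subsets of $\rs$.

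From this I would deduce (i) and (ii). The circular lamination $\mL$ on $R$ is precisely the stratification of $R$ by cores of maximal balls, so convergence of cores yields $|\mL_i| \to |\mL|$ uniformly on compacts of $R$. The weight on a leaf of $\mL$ equals the exterior dihedral angle between the two maximal balls adjacent along that leaf, viewed as round disks in $\rs$, and depends continuously on their Hausdorff positions. Transversal measures across an arc are then continuous in the family of intersected maximal balls, giving $L_i \to L$ pointwise in $\h^2$ after collapsing via $\kap_i$ and $\kap$.

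For (iii), the pleated surface $\beta$ sends $\Core(B)$ isometrically to the totally geodesic hyperplane in $\h^3$ bounded by the round circle $\pt B \st \rs = \pt \h^3$, and similarly for $\beta_i$. Convergence $\pt B_i(p) \to \pt B(p)$ gives convergence of these hyperplanes in $\h^3$ uniformly on compacts. The map $\psi_i$ is built by choosing, for each $k$ in a compact $K \st \h^2$, a lift $z \in R$ with $\kap(z) = k$ and setting $\psi_i(k) = \kap_i(z)$; the Hausdorff convergence of cores forces $\psi_i$ to converge to an isometry uniformly on $K$, and then $\beta_i \cc \psi_i \to \beta$ uniformly on compacts follows. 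The main obstacle is tracking transversal measures across leaves carrying positive atomic weight: such a leaf $\ell$ of $L$ is the $\kap$-image of a Euclidean strip in $R$ foliated by circular arcs, and under $R_i \to R$ this strip is typically approximated by a narrow family of leaves of $\mL_i$ rather than by a single leaf. Showing that the total transversal measure of this family tends to the weight of $\ell$ requires uniform Hausdorff convergence of all maximal balls whose boundary circles accumulate on the two bounding circles of the strip, rather than just pointwise convergence of individual balls; a secondary point is that $\psi_i$ is defined only up to choices on fibers over atomic leaves, but all such choices produce the same uniform limit on compacts.
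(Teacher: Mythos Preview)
Your overall strategy---reduce everything to Hausdorff convergence of maximal balls and read off lamination, bending measure, and pleated surface---is exactly the paper's approach. But two of the steps you describe in a sentence are in fact the substance of the proof, and as written they contain genuine gaps.

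\textbf{Transversal measure.} You write that ``the weight on a leaf of $\mL$ equals the exterior dihedral angle between the two maximal balls adjacent along that leaf'' and that this ``depends continuously on their Hausdorff positions''. This is only the story for an \emph{isolated} leaf carrying atomic measure. For a general arc $[p_1,p_2]$ transverse to $\mL$, the bending measure $\omega(p_1,p_2)$ is the limit, over fine subdivisions, of sums of dihedral angles between the hyperplanes $H_t$ bounded by $\pt B(\alpha(t))$; one cannot simply pass Hausdorff convergence through this limit. The paper handles this in two stages: first it shows (Proposition~\ref{5-24no1}) that on short arcs nearly orthogonal to $\mL$ the ratio $\omega/\omega_i$ is close to $1$, by choosing a fixed hyperbolic plane $N$ nearly orthogonal to all the $H_t$ and $H_{i,t}$, replacing $\angle_{\h^3}(H_s,H_t)$ by the angle in $N$ between $H_s\cap N$ and $H_t\cap N$ (with multiplicative error controlled since $N$ is nearly orthogonal), and then applying Gauss--Bonnet in $N$ to the convex piecewise-geodesic curves $\pt X_\theta\cap N$ and $\pt X_{i,\theta}\cap N$ to compare the two angle sums. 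Second (Proposition~\ref{7-25-12no1}) it shows that if $p_1,p_2$ lie in a single stratum of $(R,\mL)$ then $\omega_i(p_1,p_2)\to 0$, again via the angle between $H_{i,t}$ and a fixed orthogonal plane. Your sketch does not supply any of this, and in particular the ``main obstacle'' you identify at the end (atomic leaves approximated by narrow families) is actually the easier case; the continuous part of the measure is where the work lies.

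\textbf{The rough isometry $\psi_i$.} You assert that Hausdorff convergence of cores ``forces $\psi_i$ to converge to an isometry uniformly''. What you get directly from maximal-ball convergence is that $\beta_i(\kap_i(z))$ and $\beta(\kap(z))$ are close in $\h^3$ (this is Corollary~\ref{6-26no1}). But $\beta$ and $\beta_i$ are $1$-Lipschitz, not isometric embeddings, so this does not give closeness of $\kap_i(z)$ and $\psi_i(\kap(z))$ in the intrinsic metric of $\h^2$. The paper compares the pull-back pseudometrics ${\sf d}$ and ${\sf d}_i$ on $R$ by approximating $R$ (and $R_i$) by nearby disks $Q$ with smooth boundary of piecewise-definite curvature, so that the Thurston metric on $Q$ is genuinely negatively curved and the collapsing maps are $C^1$-diffeomorphisms; then both ${\sf d}$ and ${\sf d}_i$ are compared to the intrinsic metric on the $\delta_2$-neighbourhood boundary of $\Conv(\rs\setminus P)$ (Proposition~\ref{5-17no2} and Claim~\ref{8-5no2}). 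This reduction is not visible in your outline.

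A minor point: in your maximal-ball argument you speak of ``the maximal ball in $R_i$ containing $p$'', but there are many such; $B_i(p)$ is the one with $p$ in its \emph{core}. Your lower-bound step (``any concentric $B'\subset B(p)$ satisfies $B'\subset B_i(p)$'') does not follow, since a sub-ball of $B(p)$ need not sit inside the maximal ball of $R_i$ whose core contains $p$. The paper sidesteps this by normalising $p=\infty$ and identifying $B(p)$ with the complement of the disk of least radius containing $\rs\setminus R$, whose continuous dependence on $\rs\setminus R$ is elementary (Proposition~\ref{4-26no2}).
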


\begin{remark}\Label{rem:convergence}
In (i), by the uniform convergence, we mean that,  for every $\ep > 0$ and every compact subset $K$ of $R$, if $i$ is sufficiently large, then given any $p, q \in K$,  $\omega(p,q)$  is $\ep$-close to $\omega_i(p,q)$,  where  $\omega_i(p,q)$ and $\omega(p,q)$ denote the transversal measures of the geodesic segments connecting $p$ to $q$ on $R_i$ and $R$, respectively,  in the Thurston metric.
In (ii), by the pointwise convergence, for any $p, q \in R$ not on leaves with positive weight,  $\mu_i(p,q) \to \mu(p,q)$ as $i \to \infi$.
In (iii), for every compact subset $K$ of $\h^2$,  $\psi_i$ is $\ep_i$-rough isometry with the sequence $\ep_i > 0$ converging to $0$. 
The convergence, $\beta_i \circ \psi_i \to \beta$ is with respect to the sup norm.
\end{remark}

Note that (i) implies (ii) by the definition of $\psi_i$.
The rest of \S\ref{6-30no1} is the proof of Theorem \ref{4-23no4}.
For each point $x \in R$, let $B(x)$ be the maximal ball in $R$ centered at $x$.
For sufficiently large $i$, we have $x \in R_i$.
Thus let  $B_i(x)$ be the maximal ball  in $R_i$ centered at $x$.

\begin{proposition}\Label{4-26no2} 
(i) For every  $\ep > 0$ and every compact subset $K$ of $R$, if $i \in \n$ is sufficiently large, then $B_i(x)$ is $\ep$-close to $B(x)$ for every $x \in K$.
(ii) For every $\ep > 0$ and $x \in R$, there is a neighborhood $U_x$ of $x$ in $R$, such that, if $i$ is sufficiently large, then the ideal boundaries $\pt_\If B_i(y)$ and $\pt_\In B(y)$ are contained in the $\ep$-neighborhood of $\pt_\If B(x)$ in $\rs$ for all $y \in U_x$.
 \end{proposition}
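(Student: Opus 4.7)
The plan is to argue both parts by a Hausdorff compactness argument on closed round disks in $\rs$, combined with the uniqueness of $B(x)$ as the maximal round ball in $R$ with $x \in \Core(B(x))$. For (i) I would argue by contradiction: suppose there exist $\ep_0 > 0$, indices $i_k \to \In$, and points $x_k \in K$ such that $B_{i_k}(x_k)$ is not $\ep_0$-close to $B(x_k)$. By compactness of $K$ and of closed round disks in $\rs$, pass to a subsequence so that $x_k \to x_\In \in K$, and $\ol{B_{i_k}(x_k)}$, $\ol{B(x_k)}$ converge in the Hausdorff topology to closed subsets $D$, $D' \st \rs$. The aim is to show $D = D' = \ol{B(x_\In)}$, contradicting $\ep_0$-separation.

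First, $D, D' \st \ol R$: any open ball $V$ with $\ol V \st \rs \sm R$ sits at positive distance from $\bd R$, hence from $\bd R_i$ for large $i$ by $\bd R_i \to \bd R$; since $V \cap (\rs \sm R_i) \neq \emptyset$ by Hausdorff convergence, connectedness of $V$ gives $V \st \rs \sm R_i$, so $V \cap B_{i_k}(x_k) = \emptyset$. The symmetric argument shows that any open ball with closure in $R$ lies in $R_i$ for large $i$; applying this to a small ball around $x_\In$ inside $B(x_\In)$, one obtains a uniform lower bound on the spherical diameter of $B_{i_k}(x_k)$ on $K$, so $D$ (and similarly $D'$) is a non-degenerate closed round disk. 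The crucial step is then to verify $x_\In \in \Core(D^\circ)$: since $x_k \in \Core(B_{i_k}(x_k)) = \Conv(\bd B_{i_k}(x_k) \cap \bd R_{i_k})$, where the convex hull is taken inside the hyperbolic disk $B_{i_k}(x_k)$, and since $\bd B_{i_k}(x_k) \to \bd D^\circ$ and $\bd R_{i_k} \to \bd R$ in Hausdorff, subsequential limits of points of $\bd B_{i_k}(x_k) \cap \bd R_{i_k}$ lie in $\bd D^\circ \cap \bd R = \bd_\If D^\circ$; a continuity argument for convex hulls under the hyperbolic identifications then gives $x_\In \in \Conv(\bd_\If D^\circ) = \Core(D^\circ)$. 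By uniqueness, $D^\circ = B(x_\In)$, and the same argument yields $D'^\circ = B(x_\In)$, so $D = D'$.

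For (ii), the content is upper semi-continuity of the ideal boundary. Fix $x \in R$ and $\ep > 0$. For $y$ in a sufficiently small neighborhood $U_x$ of $x$, (i) applied with $R_i = R$ gives $\ol{B(y)}$ close to $\ol{B(x)}$ in Hausdorff distance, so $\bd B(y)$ lies in the $\ep/2$-neighborhood of $\bd B(x)$; since any limit of points in $\bd_\If B(y) = \bd B(y) \cap \bd R$ lies on $\bd B(x) \cap \bd R = \bd_\If B(x)$, one obtains $\bd_\If B(y) \st \ep$-nbhd of $\bd_\If B(x)$ after shrinking $U_x$. The analogous statement for $\bd_\If B_i(y)$ follows by combining this argument with the convergence in (i) and $\bd R_i \to \bd R$: any limit point $p$ of a sequence $p_i \in \bd B_i(y_i) \cap \bd R_i$ with $y_i \in U_x$ lies on $\bd R$ (from $p_i \in \bd R_i$) and on $\bd B(y_\In)$ (from (i)), hence in $\bd_\If B(y_\In) \st \ep$-nbhd of $\bd_\If B(x)$. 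The main obstacle is the core-membership passage $x_\In \in \Core(D^\circ)$ in (i): the Hausdorff convergence of both the ideal-boundary point sets and the enclosing hyperbolic disks must be combined to show that the convex-hull operation defining $\Core$ behaves continuously in the limit, which is delicate because the underlying hyperbolic geometry on $B_{i_k}(x_k)$ itself is varying along with the disks.
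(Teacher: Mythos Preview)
Your approach is correct in outline and genuinely different from the paper's. The paper does not argue by sequential compactness on round disks; instead it normalizes by a M\"obius transformation sending $x$ to $\infty$, so that the maximal ball $B(x)$ becomes the complement in $\rs$ of the \emph{smallest} Euclidean closed disk $D(\rs\sm R)$ containing the compact set $\rs\sm R\subset\R^2$, and $\partial_\infty B(x)$ becomes the contact set $\partial D\cap(\rs\sm R)$. The Euclidean smallest enclosing disk is unique and varies continuously in the Hausdorff metric on compacta, and its contact set is upper semicontinuous; applying this with $\rs\sm R$ replaced by $\rs\sm R_i$ and by $\rs\sm\gamma R$ for $\gamma$ near the identity in $\PSL$ (which moves $y$ to $\infty$ for $y$ near $x$) immediately gives both (i) and (ii). This bypasses exactly the step you flag as delicate: the paper never needs to track convergence of hyperbolic convex hulls inside varying disks, because the Euclidean characterization (the center of the minimal enclosing disk lies in the Euclidean convex hull of the contact set) does that work for free.

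Your route is more intrinsic but leaves more to verify. Two points worth tightening: first, from $D\subset\overline R$ you actually need $D^\circ\subset R$; this follows since any $p\in D^\circ\cap(\rs\sm R)$ would be approximated by $p_k\in\rs\sm R_{i_k}$ which eventually lie in $B_{i_k}(x_k)\subset R_{i_k}$, a contradiction. Second, the core-membership passage does go through via Carath\'eodory: choose for each $k$ three points of $\partial_\infty B_{i_k}(x_k)$ spanning an ideal triangle containing $x_k$, pass to limits on $\partial D^\circ\cap\partial R$, and observe that a collapse of all three to a single ideal point would force $x_k$ to the boundary, contradicting $x_k\to x_\infty\in R$. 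Once $x_\infty\in\Core(D^\circ)$ with $\Core(D^\circ)$ nonempty, $D^\circ$ is automatically maximal (a strictly larger round ball meets $\partial D^\circ$ in at most one point, forcing $|\partial_\infty D^\circ|\le1$), and uniqueness of the maximal ball with given core point finishes (i). So your argument is sound; the paper's normalization trick simply packages all of this into the classical continuity of the smallest enclosing circle.
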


\proof[Proof of \ref{4-26no2}]
 (See also the proof of Theorem 4.4 in \cite{Kullkani-Pinkall-94}.)
For every compact subset $X$ of the Euclidean plane $\R^2$, there is a unique closed round ball $D = D(X)$ of least radius containing $X$.
Let $\pt_X (D)$ be the intersection of $X$ with the boundary circle of $D$.
Then the minimality implies that the convex hull of $\pt_X(D)$ (for the Euclidean metric) contains the center of $D$.
In addition, the uniqueness of $D$ implies that $D$ changes continuously when $X$ changes continuously in the Hausdorff metric. 
Therefore, for every $\ep > 0$,  there exists $\del > 0$ such that if $Y$ is a compact subset of $\R^2$ that is $\del$-close to $X$, then, letting $D_Y$ be the round ball of least radius containing $Y$,
 the $\ep$-neighborhood of $\pt_X(D)$ contains $\pt_Y(D_Y)$.

If $x$ be a point in $R$, regarding $\rs = \R^2 \cup \{\In\}$,  we can assume $x = \{\In \}$ by the $\PSL$-action on $\rs$.
Note that the round $B(x)$ is the complement of $D(\rs \minus R)$ in $\rs$.
In addition $\pt_\In B(x) = \pt_X D( \rs \minus R)$.
If $U$ is a neighborhood of the identity element in $\psl$, then $U x$  and $U^{-1} x$ are  neighborhoods of $x = \{\In \}$ in $\rs$. 
Thus for every $\del > 0$, if the neighborhood $U$ is sufficiently small, for every $\gm \in U$, $R$ and $\gm R$ are $\del$-close. 
Therefore, it follows from the preceding paragraph that, for $\ep > 0$, if $U$ is sufficiently small, then, letting $y = \gm^{-1} x$, $B(x)$ and $B(y)$ are $\ep$-close and the $\ep$-neighborhood of  $\pt_\In B(x)$ contains $\pt_\In B(y)$. 

Since $\rs \minus R_i \to \rs \minus R$ and $\gm R_i$  changes continuously in $\gm \in U$, for every $\del > 0$, if $i$ is sufficiently large and $U$ is sufficiently small, then $\R^2 \minus \gm R_i$ is $\del$-close to $\R^2 \minus R$.
Therefore,  for every $\ep > 0$, we can assume that the maximal balls $B(y)$ and $B_i(y)$ are $\ep$-close to $B(x)$ and the $\ep$-neighborhood of $\pt_\In B(x)$ contains $\pt_\In B(y)$, which proves (ii). 

 Thus  $B(y)$ and $B_i(y)$ are $2\ep$-close for all $y$ in the small neighborhood $U$ of $x$.
 Since $K$ is compact, this implies (i). 
\Qed{4-26no2}

Recalling $\rs \minus R_i$ converges to $\rs \minus R$ in $\rs$ as $i \to \In$,
let $e\cn R \cap R_i \to R$ and $e_i\cn R \cap R_i \to R_i$ be the trivial embeddings. 
Let $\phi = \beta \cc \kp \cc e\cn R \cap R_i \to \h^3$ and  $\phi_i =  \beta_i \cc \kp_i \cc e_i \cn R_i \cap R \to \h^3$.

\centerline{
\xymatrix{R \cap R_i \ar[r]^{e}  \ar[rd]^{e_i} & R \ar[r]^{\kp}
 &\h^2 \ar[d]^{\psi_i} \ar[dr]^{\beta}&  \\
& R_i \ar[r]^{\kp_i} & \h^2 \ar[r]^{\beta_i}& \h^3
}}

Note that $\nu$ and $\nu_i$ are circular and we can measure their intersection angle with respect to the spherical Riemannian metric on $\rs$.
Then 
\begin{corollary}\Label{6-26no1}
$\phi_i$ converges to $\phi$ uniformly on compacts in $R$ as continuous maps;  
therefore,  $\beta \cc \psi_i$ converges to $\beta_i$ uniformly on compacts as $i \to \In$ in the sup norm. 
\end{corollary}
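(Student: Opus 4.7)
My plan is to derive both assertions from the convergence of maximal balls in Proposition~\ref{4-26no2}. For the first assertion, I unpack $\phi(z) = \beta(\kp(z))$ geometrically: given $z \in R$, the maximal ball $B(z)$ is a round disk in $\rs = \pt\h^3$, its boundary circle bounds a totally geodesic plane $P(z) \subset \h^3$, and $\phi(z)$ is a canonical point on $P(z)$ that depends continuously on the pair $(z,B(z))$; explicitly, the collapsing map $\kp$ restricts to an isometry from $\Core B(z)$ onto a stratum of $(\h^2,L)$, and $\beta$ embeds this stratum isometrically into $P(z)$, so $\phi(z)$ is the image of $z$ under the composition. The same description yields $\phi_i$ via $B_i(z)$ and $P_i(z)$. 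Proposition~\ref{4-26no2}(i) gives $B_i(z) \to B(z)$ uniformly on each compact $K \subset R$; since the map sending a round disk in $\rs$ to the plane it bounds is continuous, $P_i(z) \to P(z)$ uniformly, and continuity of the construction above forces $\phi_i(z) \to \phi(z)$ uniformly on $K$.

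For the ``therefore'' clause I first upgrade the maximal-ball convergence to convergence of the collapsing maps themselves. The point $\kp(z)\in\h^2$ is determined by the stratum through $z$, which in turn is encoded by $B(z)$ together with the position of $z$ in $\Core B(z)$; the analogous description holds for $\kp_i(z)$. Hence Proposition~\ref{4-26no2} gives $\kp_i(z) \to \kp(z)$ uniformly on compact $K \subset R$ (points on leaves of positive weight are treated by a density argument from nearby generic $z$, using that weights themselves converge by the same maximal-ball analysis). Since $\kp_i(z) = \psi_i(\kp(z))$ by definition, this is equivalent to $d_{\h^2}(\psi_i(w),w) \to 0$ uniformly for $w$ in any compact subset of $\h^2$.

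Combining the two facts yields the desired convergence. Fix a compact $K' \subset \h^2$ and write $w = \kp(z)$ with $z$ in a compact preimage. Then
\[
(\beta\cc\psi_i)(w) \;=\; \beta\bigl(\psi_i(\kp(z))\bigr) \;=\; \beta(\kp_i(z)) \;\longrightarrow\; \beta(\kp(z)) \;=\; \phi(z),
\]
since $\beta$ is $1$-Lipschitz and $\kp_i(z) \to \kp(z)$. On the other hand, $\beta_i$ is also $1$-Lipschitz, so
\[
d_{\h^3}\bigl(\beta_i(w),\,\beta_i(\kp_i(z))\bigr)\;\le\; d_{\h^2}\bigl(\kp(z),\kp_i(z)\bigr)\;\longrightarrow\;0,
\]
and $\beta_i(\kp_i(z)) = \phi_i(z) \to \phi(z)$ by the first assertion, so $\beta_i(w) \to \phi(z)$ as well. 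Both $(\beta\cc\psi_i)(w)$ and $\beta_i(w)$ therefore share the uniform limit $\phi(z)$ on $K'$, which is exactly $\beta\cc\psi_i \to \beta_i$ uniformly in the sup norm. The main technical obstacle I foresee is the step $\kp_i \to \kp$ at points lying on positive-weight leaves of $L$, where $\kp$ is not locally injective (it collapses a Euclidean strip of $C$ transversally); here one must compare the strips in $C$ and $C_i$ directly and exploit that their widths are the atomic weights, whose convergence is extracted from Proposition~\ref{4-26no2} applied to nearby generic points.
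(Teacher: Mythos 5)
Your proof of the first assertion is essentially the paper's: Proposition \ref{4-26no2}(i) plus the observation that $\phi(z)=\beta(\kp(z))$ is determined by the pair $(z,B(z))$ alone and depends continuously on it. The paper makes the continuity transparent by identifying $\phi(p)$ explicitly as the orthogonal projection of $p\in\rs$ onto the totally geodesic plane in $\h^3$ bounded by $\pt B(p)$; your ``canonical point on $P(z)$'' is exactly this projection, and you should say so rather than appeal to continuity of the composite $\beta\cc\kp$ on cores, since the cores $\Core B_i(z)$ themselves need not converge nicely (that is what part (ii) of Proposition \ref{4-26no2} is for).

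The second half has a genuine gap. Your intermediate claim that $\kp_i(z)\to\kp(z)$, equivalently $d_{\h^2}(\psi_i(w),w)\to 0$, is not well posed: the targets of $\kp$ and $\kp_i$ are a priori different copies of $\h^2$ (the domains of $\beta$ and of $\beta_i$, each defined only up to isometry), so the distance between $\kp(z)$ and $\kp_i(z)$, and expressions such as $\beta(\kp_i(z))$ or $\beta_i(\kp(z))$ appearing in your displayed computation, do not make sense without choosing identifications; and under any reasonable normalization what is actually true is that $\psi_i$ converges to \emph{an isometry} (Theorem \ref{4-23no4}(iii), Remark \ref{rem:convergence}), not to the identity, so the claim $\psi_i\to\mathrm{id}$ overstates the result you are trying to prove. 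The auxiliary assertion that the atomic weights converge ``by the same maximal-ball analysis'' is also not available here: convergence of transversal measures is exactly what Propositions \ref{7-25-12no1}--\ref{7-25-12no2} establish later, by a different argument, so invoking it is circular. None of this detour is needed. By the defining property of $\psi_i$ (for $w=\kp(z)$ in a compact set one has $\kp_i(z)=\psi_i(\kp(z))$), the composition statement follows in one line from the first assertion: $\beta_i(\psi_i(w))=\beta_i(\kp_i(z))=\phi_i(z)\to\phi(z)=\beta(\kp(z))=\beta(w)$, which is the convergence $\beta_i\cc\psi_i\to\beta$ of Theorem \ref{4-23no4}(iii) (the corollary's phrasing ``$\beta\cc\psi_i\to\beta_i$'' is to be read in this sense). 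This is precisely the paper's justification that the second assertion ``follows from the definition of $\psi_i$.''
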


\begin{proof}
By Proposition \ref{4-26no2} (i),  for every $\ep > 0$ and every compact subset $K$ of $R$, if $i \in \n$ is sufficiently large, then for every $p \in K$,  the maximal balls $B_i(p)$ and $B(p)$ of $R_i$ and $R$, respectively, at $p$ are $\ep$-close. 
Thus, for sufficiently large $i$, 
the orthogonal projection of $p$ into the totally geodesic hyperplane in $\h^3$ bounded by $\pt B_i$ is $\ep$-close to that into the hyperplane bounded by $\pt B_i$  for all $p \in K (\st \rs)$. 
Since these projections of $p$ are $\phi_i(p)$ and $\phi(p)$, the first assertion holds. 
Then the second assertion immediately follows from the definition of $\psi_i$.
\end{proof}

\begin{proposition}\Label{10-11no1} 
Let $K$ be an arbitrary compact subsurface in $R$.
Then $\angle_K (\nu_i , \nu) \to 0$ as $i \to \If$.
\end{proposition}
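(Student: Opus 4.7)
The plan is to argue by contradiction. Assume $\angle_K(\nu_i,\nu) \not\to 0$; then, after passing to a subsequence, I obtain $\del > 0$, points $p_i \in |\nu|\cap|\nu_i|\cap K$, and leaves $\ell_i$ of $\nu$, $m_i$ of $\nu_i$ through $p_i$ with $\angle_{p_i}(\ell_i, m_i) \geq \del$. Using compactness of $K$, I further pass to a subsequence so that $p_i \to p \in K$. Since $|\nu|$ is closed, $p \in |\nu|$ and lies on a unique leaf $\ell$ of $\nu$; let $\{a,b\} = \pt_\In B(p)$ be its two ideal endpoints on the round circle $\pt B(p) \st \rs$. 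The goal is to show that the tangent directions of both $\ell_i$ and $m_i$ at $p_i$ converge to the tangent direction of $\ell$ at $p$, yielding the desired contradiction.

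First I would handle $\ell_i \to \ell$. Each $\ell_i$ is $\Core B(p_i)$, hence the unique circular arc through the two points of $\pt_\In B(p_i) \st \pt B(p_i)$. Applying Proposition \ref{4-26no2}(ii) with $x = p$ and $y = p_i$, for $i$ sufficiently large the pair $\pt_\In B(p_i)$ is contained in any prescribed neighborhood of $\{a,b\}$. Since $p_i$ lies on the core $\ell_i$ and converges to an interior point of $\ell$—hence stays uniformly away from $\{a,b\}$—and since the core is the convex hull in $B(p_i)$ of its ideal endpoints, the two ideal endpoints cannot both collapse onto a single point of $\{a,b\}$. It follows that $\pt_\In B(p_i) \to \{a,b\}$ as unordered pairs in $\rs$, whence the arcs $\ell_i$ converge to $\ell$ and their tangent directions at $p_i$ converge to the tangent direction of $\ell$ at $p$.

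Next I would handle $m_i \to \ell$ by the same method, now with $m_i = \Core B_i(p_i)$ taken in the varying domain $R_i$. Proposition \ref{4-26no2}(i) gives that $B_i(p_i)$ is $\ep$-close to $B(p_i)$ for $i$ large (uniformly on $K$), and combined with $B(p_i) \to B(p)$ from the previous paragraph one concludes $B_i(p_i) \to B(p)$ as round disks in $\rs$. Proposition \ref{4-26no2}(ii) then localises $\pt_\In B_i(p_i)$ near $\{a,b\}$, and the same interior-of-the-core argument pins the pair $\pt_\In B_i(p_i)$ to converge to $\{a,b\}$. Hence $m_i \to \ell$ as circular arcs, and their tangent directions at $p_i$ converge to that of $\ell$ at $p$. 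Combining the two steps gives $\angle_{p_i}(\ell_i, m_i) \to 0$, contradicting $\angle_{p_i}(\ell_i, m_i) \geq \del$.

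The hard part will be the pair-wise convergence of ideal endpoints of the maximal balls: Proposition \ref{4-26no2}(ii) only controls where $\pt_\In B$ sits set-theoretically near $\{a,b\}$ and, by itself, does not prevent both ideal endpoints from collapsing onto a single point. The key observation that rules this out is that $p_i$ lies on the core of $B(p_i)$ (respectively $B_i(p_i)$) and converges to an interior point of $\ell$, so the convex-hull characterisation of the core forbids such degeneration. The remaining ingredients—closedness of $|\nu|$, uniqueness of the leaf through a support point, and the fact that circular arcs are determined by a pair of endpoints together with an interior point they pass through—are standard.
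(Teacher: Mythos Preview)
Your argument is correct and follows essentially the same route as the paper: both proofs rest on Proposition~\ref{4-26no2}(ii), which confines the ideal boundaries $\pt_\infty B(y)$ and $\pt_\infty B_i(y)$ to a small neighborhood of $\pt_\infty B(x)=\{a,b\}$, and then conclude that the corresponding circular leaves make a small angle. The paper packages this as a direct covering argument (for each $x\in K$ find a neighborhood $U_x$ on which the angle is small), while you package it as a contradiction via subsequences; these are equivalent. Your version is slightly more explicit about the one nontrivial step the paper leaves implicit, namely ruling out that both ideal endpoints of $B(p_i)$ or $B_i(p_i)$ collapse onto a single point of $\{a,b\}$: since $p_i$ lies on the core and converges to $p$, which is bounded away from $\{a,b\}$ in $\rs$, such collapse is impossible.
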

\begin{proof}

Since $K$ is compact, it suffices to show that, for every $\ep > 0$ and  $x \in R$,  if an open neighborhood $U_x$ of $x$ in $R$ is sufficiently small, then $\angle_{U_x}(\nu_i, \nu)  < \ep$ for sufficiently large $i$.

Suppose that $x$ is contained in a leaf  $\ell_x$ of $\nu$.
Let $\ell$ and $\ell_i$ be  leaves of $\nu$ and $\nu_i$, respectively, that intersect in $U_x$.
Let $B(\ell)$ be the maximal ball in $R$ whose core contains $\ell$, and 
let $B_i(\ell_i)$ be the maximal ball in  $R_i$ whose core contains $\ell_i$. 
Then, it follows from  Proposition \ref{4-26no2} (ii) that, for every $\ep > 0$, if $U_x$ is  small enough and $i$ is sufficiently large, 
then the endpoints of $\ell$ and $\ell_i$ are sufficiently close to the end points of $\ell_x$ so that  $\angle_{U_x}(\ell_i, \ell) < \ep$. 
Hence $\angle_{U_x}(\nu_i, \nu) < \ep$.

Suppose that $x \in R \minus |\nu|$.
Then take $U_x$ disjoint from $\nu$.
 Then $\angle_{U_x}(\nu_i, \nu) = 0$
\end{proof}

Let ${\sf d}\cn R \times R \to \R_{\geq 0}$  be  the continuous map obtained by  pulling back of the hyperbolic distance $\h^2 \times \h^2 \to \R_{\geq 0}$ via $\kp\cn R \to \h^2$. 
Then ${\sf d}$ is a pseudometric on $R$. 
In fact ${\sf d}$ coincides with the Thurston metric on each stratum of $(R, \LL)$. 
On the other hand, ${\sf d}$ does not measure the part of Thurston metric corresponding to the transversal measure of $\LL$.
In particular, the Euclidean region of $R$,  circular arcs orthogonal to $\nu$ have ``length'' zero since they map to single points on $\h^2$.
Similarly ${\sf d}_i\cn R_i \times R_i \to \R_{\geq 0}$ be the pseudometric on $R_i$ obtained via $\kp_i\cn R_i \to \h^2$.
\begin{proposition}\Label{5-17no1}
Let $K$ be a compact subset of $R$.
Then, for every $\ep > 0$, if $i \in \n$ is sufficiently large, then ${\sf d}$ and ${\sf d}_i$ are \textit{$\ep$-close} on $K \times K$, i.e.
 $| {\sf d}(x, y) - {\sf d}_i(x,y) | < \ep$ for all $x, y$ in $K$.
\end{proposition}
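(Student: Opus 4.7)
The plan is to combine the uniform convergence of the extrinsic maps $\phi_i \to \phi$ into $\h^3$ (Corollary \ref{6-26no1}) with the convergence of maximal balls (Proposition \ref{4-26no2}) and the angular convergence of the circular laminations (Proposition \ref{10-11no1}), by first establishing a local comparison of the pseudometrics near each point of $K$ and then globalizing via a chaining argument over a finite subcover.

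For the local step, I fix $p \in K$ and apply Proposition \ref{4-23no3} to identify $\sf d$ on a sufficiently small $V_p \Subset W(p)$ with the pseudometric induced by the Thurston coordinates of the canonical neighborhood $U(p)$, which is embedded in $\rs$. By Proposition \ref{4-26no2} the maximal balls $B_i(p)$ converge to $B(p)$, so for large $i$ the analogous identification holds for $\sf d_i$ on $V_p$ via $U_i(p)$. I normalize the pleated surfaces so that $\beta$ (resp.\ $\beta_i$) sends the stratum through $p$ into the totally geodesic hyperplane $\Pi(p)$ (resp.\ $\Pi_i(p)$) in $\h^3$ bounded by $\pt B(p)$ (resp.\ $\pt B_i(p)$); Proposition \ref{4-26no2} then furnishes a near-identity isometry $\iota_i \cn \Pi_i(p) \to \Pi(p)$. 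On core strata we have $\kap = \phi$ and $\kap_i = \phi_i$, so Corollary \ref{6-26no1} directly yields $|d_{\h^2}(\kap(x), \kap(y)) - d_{\h^2}(\iota_i \cc \kap_i(x), \iota_i \cc \kap_i(y))| < \ep/2$ for such $x,y \in V_p$ once $i$ is large. For points lying across a leaf of $\nu$ from $p$, the deviation of $\kap(x)$ from the extrinsic image $\phi(x)$ is controlled by the total transversal measure of $L_p$ separating $p$ from $x$; shrinking $V_p$ makes this deviation uniformly small, and Proposition \ref{10-11no1} ensures that the analogous deviation for $\kap_i$ in $R_i$ is close to that in $R$ for large $i$.

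For the globalization, I would cover $K$ by finitely many $V_{p_1}, \dots, V_{p_n}$ produced by the local step. Given arbitrary $x, y \in K$, I lift the $\h^2$-geodesic from $\kap(x)$ to $\kap(y)$ to a path $\gam$ in $R$ (choosing arbitrary preimages inside Euclidean cylinders) and subdivide $\gam$ into consecutive sub-arcs $[z_k, z_{k+1}]$ each contained in a single $V_{p_{j_k}}$. Since $d_{\h^2}$ is additive along the geodesic one has $\sf d(x,y) = \sum_k \sf d(z_k, z_{k+1})$; the triangle inequality for $\sf d_i$ together with the local estimates then gives $\sf d_i(x,y) \leq \sf d(x,y) + n\ep'$, and applying the same procedure to the $\kap_i$-geodesic yields the matching lower bound.

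The main obstacle is the non-core case in the local step: controlling the position of $\kap(x)$ in $\h^2$ for points $x \in V_p$ lying across a leaf of $\nu$ from $p$, and comparing it with the corresponding $\kap_i(x)$. This is essentially a quantitative version of the statement that a pleated surface with small total bending on a disk is close to an isometric embedding into a single hyperbolic plane, and its proof relies on shrinking $V_p$ so that the relevant total transversal measure of $L$ and $L_i$ is uniformly small, combined with Proposition \ref{10-11no1} to align the bending loci of $L$ and $L_i$ in angle and Proposition \ref{4-26no2}(ii) to control the ideal boundaries of the maximal balls near $p$.
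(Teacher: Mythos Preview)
Your approach is genuinely different from the paper's, and the local step has a real gap in the presence of leaves of $L$ with non-negligible atomic weight.

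The paper does not attempt a direct local comparison of $\kap$ and $\kap_i$ at all. It first treats the special case where $K$ lies in the negatively-curved region of both $R$ and $R_i$ (so $\kap|K$ and $\kap_i|K$ are $C^1$-diffeomorphisms onto their images); there the result follows immediately from Proposition \ref{4-26no2}(i) applied at the level of derivatives. The general case is then reduced to this one by an auxiliary approximation (Proposition \ref{5-17no2}): for any simply connected $P \subset \rs$ and compact $K \subset P$, there is a domain $Q$ arbitrarily Hausdorff-close to $P$ whose boundary is smooth with only finitely many sign changes of curvature, so that the Thurston metric on $Q$ is everywhere negatively curved, and ${\sf d}_Q$ is $\ep$-close to ${\sf d}_P$ on $K$. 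The last comparison is carried out via the intrinsic Riemannian metric on the $\del$-equidistant surface from $\Conv(\rs \setminus P)$, with a separate estimate on each Euclidean strip. Applying this simultaneously to $R$ and to $R_i$ replaces both by negatively-curved domains and one is back in the special case.

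Your local step breaks precisely where the paper invokes Proposition \ref{5-17no2}. You assert that ``shrinking $V_p$ makes this deviation uniformly small'', i.e.\ that the total transversal measure of $L$ between $p$ and nearby points can be made small. But if $p$ lies on or adjacent to a leaf of $L$ carrying atomic weight $w > 0$, then every neighborhood of $p$ meeting both sides of that leaf has transversal measure at least $w$, and $\beta$ has a genuine dihedral bend of angle $w$ there; consequently $d_{\h^3}(\phi(x),\phi(y))$ is strictly smaller than ${\sf d}(x,y)$ by an amount governed by $w$, not by the diameter of $V_p$. Proposition \ref{10-11no1} gives only angular convergence $\angle_K(\nu_i,\nu) \to 0$, not convergence of transversal measures (that is established only later in the paper), so at this stage you cannot assert that the bend of $\beta_i$ near $p$ matches that of $\beta$. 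Thus the chain ${\sf d}(x,y) \approx d_{\h^3}(\phi(x),\phi(y)) \approx d_{\h^3}(\phi_i(x),\phi_i(y)) \approx {\sf d}_i(x,y)$ fails at both outer approximations whenever atomic leaves are present, and this is exactly the difficulty the paper's smooth-boundary approximation is designed to bypass.
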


In the sense of Remark \ref{rem:convergence}, we have
\begin{corollary}
$\psi_i$ converges to an isometry uniformly on compacts; Theorem  \ref{4-23no4} (iii) holds.
\end{corollary}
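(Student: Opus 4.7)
The plan is to combine the uniform convergence $\phi_i = \beta_i \circ \kp_i \to \phi = \beta \circ \kp$ of Corollary \ref{6-26no1} with the convergence of maximal balls (Proposition \ref{4-26no2}) and the angle convergence of the bending laminations (Proposition \ref{10-11no1}) to establish the uniform convergence ${\sf d}_i \to {\sf d}$ on $K \times K$. The proof proceeds in two steps: a local statement for points close in $\rs$, and a chain subdivision argument for general pairs in $K$.

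For the local step I claim that, for each $p_0 \in K$ and each prescribed tolerance $\ep' > 0$, there exist a neighborhood $V \subset R$ of $p_0$ and an index $i_0$ such that $|{\sf d}_i(x, y) - {\sf d}(x, y)| < \ep'$ for all $x, y \in V$ and all $i \geq i_0$. By shrinking $V$ to have small spherical diameter, continuity of $\kp$ gives ${\sf d}(x, y) < \ep'/2$ on $V$. The corresponding bound for ${\sf d}_i$, uniform in $i$, is the substantive part: choose $V$ with compact closure inside the maximal ball $B(p_0)$ of $R$, so that by Proposition \ref{4-26no2}(i) the closure of $V$ lies also in $B_i(p_0)$ for $i \geq i_0$, with $B_i(p_0)$ arbitrarily close to $B(p_0)$ in $\rs$. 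Since the developing maps of $R$ and $R_i$ agree on $V$ (both are the inclusion into $\rs$), the point $\phi_i(x) = \beta_i(\kp_i(x)) \in \h^3$ is the orthogonal projection of $x$ onto the hyperplane bounded by $\partial B_i(x)$; by Proposition \ref{4-26no2}(ii) and the angle convergence of Proposition \ref{10-11no1}, this data varies continuously with $i$ and the induced hyperbolic diameter of $\kp_i(V)$ is bounded by that of $\kp(V)$ plus an error tending to zero. Hence ${\sf d}_i(x,y) < \ep'/2$ for large $i$, and the local step follows.

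For arbitrary $x, y \in K$, cover $K$ by finitely many such neighborhoods $V_{p_1}, \dots, V_{p_N}$, chosen at a spherical scale $\eta$ determined only by $K$ so that the subdivision count $n_0$ is also a priori bounded in terms of $K$ and $\eta$. Apply the local step with tolerance $\ep' := \ep/(2 n_0)$ and obtain a common threshold $i_0$. Parametrize the $\h^2$-geodesic from $\kp(x)$ to $\kp(y)$ and lift it through $\kp$ to a continuous path $\gamma \subset R$ of ${\sf d}$-length ${\sf d}(x,y)$; subdivide $\gamma$ at spherical scale $\eta$ into at most $n_0$ arcs with endpoints $x = q_0, q_1, \dots, q_n = y$, each consecutive pair in a common $V_{p_j}$. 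The triangle inequality and the local step yield
\[
{\sf d}_i(x, y) \;\leq\; \sum_{k=1}^n {\sf d}_i(q_{k-1}, q_k) \;\leq\; \sum_{k=1}^n {\sf d}(q_{k-1}, q_k) + n\ep' \;\leq\; {\sf d}(x, y) + \ep/2,
\]
using $\sum_k {\sf d}(q_{k-1}, q_k) = {\sf d}(x, y)$ because the $q_k$ lie on a ${\sf d}$-geodesic. The reverse inequality follows symmetrically from a ${\sf d}_i$-geodesic in $R_i$, which stays inside a fixed compact subset of $R \cap R_i$ because $K$ is compactly contained in $R$ and $R_i \to R$ in the Hausdorff topology.

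The main technical obstacle is the coordination between the local step and the global chain argument: one must choose the neighborhoods $V_{p_j}$ and the tolerance $\ep'$ consistently, avoiding circularity between the number of subdivisions $n_0$ (which depends on the Lebesgue number of the cover) and the size of $V_{p_j}$ (which must be small enough for $\ep'$-closeness). The resolution is to fix the spherical scale $\eta$ first using only the compact $K$, bounding $n_0$ a priori, and then shrink each $V_{p_j}$ to achieve the required closeness; Proposition \ref{4-26no2} provides the uniformity in $i$ at the level of maximal balls that this strategy requires.
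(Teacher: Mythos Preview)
Your approach differs substantially from the paper's, and there is a genuine gap in the local step.

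In the paper, the Corollary is immediate from Proposition~\ref{5-17no1}, whose proof proceeds in two stages: first, when $K$ lies in the negatively-curved region of both $R$ and $R_i$ (so $\kp,\kp_i$ are $C^1$-diffeomorphisms on $K$), one compares the Riemannian pullback metrics infinitesimally---Proposition~\ref{4-26no2}(i) makes $|\kp'(v)|$ and $|\kp_i'(v)|$ uniformly $\ep$-close for unit vectors $v$ at points of $K$, and integrating gives the bilipschitz bound. Second, the general case is reduced to this one by Proposition~\ref{5-17no2}, which approximates $R$ (and each $R_i$) in the Hausdorff metric by a smooth-boundary domain $Q$ whose Thurston metric is everywhere negatively curved, with ${\sf d}_Q$ uniformly close to ${\sf d}_R$ on $K$.

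Your local step asserts that the hyperbolic diameter of $\kp_i(V)$ is bounded by that of $\kp(V)$ plus a vanishing error, citing Proposition~\ref{4-26no2}(ii) and Proposition~\ref{10-11no1}. But those results control the maximal balls $B_i(x)$ and the \emph{directions} of $\nu_i$; through them one controls $\phi_i=\beta_i\circ\kp_i$ in $\h^3$ (this is Corollary~\ref{6-26no1}), not $\kp_i$ in $\h^2$. The pleated map $\beta_i$ is only $1$-Lipschitz, so a small $\h^3$-diameter of $\phi_i(V)$ does not force a small $\h^2$-diameter of $\kp_i(V)$: if the bending measure $\omega_i$ happened to concentrate on $V$, $\beta_i$ could fold $\kp_i(V)$ down to something small while $\kp_i(V)$ itself is large. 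To rule this out you would need the transversal-measure convergence $\omega_i\to\omega$ on $V$---but that is Theorem~\ref{4-23no4}(i), established only \emph{after} the present Corollary (via Propositions~\ref{7-25-12no1}--\ref{7-25-12no2}), so it is not available here. The paper's infinitesimal comparison in the smooth case sidesteps exactly this issue, and the approximation lemma~\ref{5-17no2} is what lets one pass to the smooth case.

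There is also a secondary issue in your chain argument: the lift $\gamma$ of the $\h^2$-geodesic need not remain in $K$, and even after enlarging $K$, its spherical length (hence the subdivision count $n_0$) is not a priori bounded in terms of $K$ and $\eta$ alone---$\gamma$ may traverse a wide Euclidean strip at zero ${\sf d}$-cost but large spherical cost. Your proposed resolution (``fix $\eta$ first, then shrink $V_{p_j}$'') does not actually break the circularity, since shrinking the $V_{p_j}$ destroys the Lebesgue-number condition needed for consecutive $q_k$ to share a chart.
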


\proof[Proof of Proposition \ref{5-17no1}]

First suppose that $\kap(K)$ and $\kap_i(K)$ do not intersect the Euclidean regions.
Then $\kap|K$ and $\kap_i|K$ are $C^1$-diffeomorphism onto their images, and ${\sf d}$ and ${\sf d}_i$ are both hyperbolic metrics on $K$.
Thus, by Proposition \ref{4-26no2} (i), for every $\ep > 0$, if $i$ is sufficiently large,  then for every unit tangent vector $v$ at a point $x$ in $K$,  the length of the derivative $\kap'(v)$ is $\ep$-close to that of $\kap_i'(v)$. 
Thus ${\sf d}$ and ${\sf d}_i$ are $\ep$-close in $K$.
This special case extends to  general Thurston metrics by:

\begin{proposition}\Label{5-17no2}
Let $P$ be a topological open disk in $\rs$ with $P \ncong \C$, so that $P$ has  Thurston coordinates (by Proposition \ref{6-3no1}).
Then, for every $\ep > 0$ and  every compact subset $K$ of $P$ homeomorphic to a closed disk, there is another topological open disk $Q$ in $\rs$ containing $K$ such that 
\begin{itemize}
\item[(i)] $P$ and $Q$ are $\ep$-close on $\rs$ in the Hausdorff metric, 
\item[(ii)]  in Thurston coordinates,  its lamination (on $\H^2$) has no leaf with atomic transversal  measure, and 
\item[(iii)] letting   ${\sf d}_P\cn P \times P \to \R_{\geq 0}$ be the pseudo metric and  ${\sf d}_Q\cn Q \times Q \to \R_{\geq 0}$ be the metric defined as above, then 
   ${\sf d}_P$ and ${\sf d}_Q$ are $\ep$-close in $K \times K$.
\end{itemize}
\end{proposition}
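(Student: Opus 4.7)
The plan is to construct $Q$ as a Hausdorff-close perturbation of $P$ in $\rs$ whose boundary is chosen generically enough that the induced measured lamination $L_Q$ carries no atomic weight. First, since $K$ is a compact subset of the open set $P$, fix $\del > 0$ with $\del \ll \ep$ so small that the closed $\del$-neighborhood $N$ of $\partial P$ in $\rs$ is disjoint from $K$. Within $N$, approximate $\partial P$ by a $C^\infty$ Jordan curve $\gm$ that is $\del$-close to $\partial P$ in the Hausdorff topology and encloses $K$ in its interior $Q$; such an approximation always exists and immediately gives~(i).

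The crux is to arrange $\gm$ so that it supports no atomic bending. By Theorem~\ref{6-3no1}, the projective structure on $Q$ has Thurston coordinates $(H_Q, L_Q)$. An atomic closed leaf $\ell$ of $L_Q$ of positive weight $w$ would correspond to a one-parameter family of maximal balls in $Q$ sharing $\ell$ as their common core; the two ideal endpoints of $\ell$ develop to a pair of points on $\gm$ through which every bounding circle of the family passes, and $\gm$ must stay outside an angular sector of extent $w$ about the axis in $\h^3$ determined by this family. Such a ``bitangent-style'' alignment at a pair of points of $\gm$ is a codimension-$\geq 1$ condition in the space of $C^\infty$ Jordan curves in $\rs$, and an infinite atomic weight would similarly force $\gm$ to contain an arc of a round circle, again codimension-$\geq 1$. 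A further small adjustment of $\gm$ inside $N$ therefore suffices to avoid all such exceptional alignments, giving~(ii).

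For (iii), I would invoke Proposition~\ref{4-26no2}: as $\del \to 0$, the maximal ball $B_Q(x)$ converges to $B_P(x)$ uniformly in $x \in K$, and hence so do the cores. Consequently $\kap_Q(x)$ and $\kap_P(x)$ can be compared through a marking-preserving rough isometry between the portions of $H_Q$ and $H_P$ reached by $\kap_Q, \kap_P$ from $K$, and the hyperbolic distances $d_{H_Q}(\kap_Q(x), \kap_Q(y))$ and $d_{H_P}(\kap_P(x), \kap_P(y))$ differ by at most $\ep$ whenever $\del$ is small enough. This yields $|{\sf d}_Q(x,y) - {\sf d}_P(x,y)| < \ep$ on $K \times K$, as required.

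The hard part is the global non-atomicity assertion underlying (ii): although the bitangent alignments and round sub-arcs of $\gm$ that produce atoms of $L_Q$ are intuitively exceptional among smooth Jordan curves, a fully rigorous treatment must either run a transversality argument in the infinite-dimensional space of $C^\infty$ Jordan curves or use the explicit characterization of the Kulkarni--Pinkall pleating atoms from \cite{Kullkani-Pinkall-94} and verify that local boundary perturbations can simultaneously destroy every such configuration while remaining Hausdorff-close to $\partial P$ and still enclosing $K$.
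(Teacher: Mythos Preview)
Your construction of $Q$ as a smooth perturbation of $\partial P$ is the right starting point, but your treatment of (ii) is far more complicated than necessary, and you flag the difficulty yourself. The paper's observation is simply this: if $\partial Q$ is $C^1$-smooth, then at every boundary point $p$ there is a unique tangent direction, and hence a unique maximal ball of $Q$ whose boundary circle is tangent to $\partial Q$ at $p$. An atomic leaf of $L_Q$ would require a one-parameter family of maximal balls sharing the same pair of ideal boundary points $p, q$; the bounding circles of such a family pass through $p$ at pairwise distinct angles, so at most one of them can be tangent to a smooth $\partial Q$ there. Thus smoothness alone kills all atoms --- no transversality or genericity argument in an infinite-dimensional space is needed. (The paper adds the mild extra condition that the sign of the curvature of $\partial Q$ changes only finitely often, which forces the two-dimensional strata of $(Q, \LL_Q)$ to be finitely many and isolated.)

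For (iii), your appeal to Proposition~\ref{4-26no2} is the right ingredient when both pseudometrics are genuinely Riemannian, but it does not by itself handle the degeneracy of ${\sf d}_P$ on the Euclidean strips of $P$, where $\kap_P$ collapses horizontal arcs to points and is not a local diffeomorphism. Convergence of maximal balls lets you compare the derivatives of $\kap_P$ and $\kap_Q$ only on the negatively-curved part of $P$. The paper bridges this gap by introducing an intermediate smooth metric $d_{\del_2}$ on $P$, obtained by pulling back the intrinsic metric on the boundary of the $\del_2$-neighborhood of $\Conv(\rs \setminus P)$ via the nearest-point projection. It then shows separately that $d_{\del_2}$ is $(1+\ep)$-bilipschitz to ${\sf d}_Q$ on $K$ (via maximal-ball convergence, exactly as you suggest) and $(1+\ep, \ep)$-quasi-isometric to ${\sf d}_P$ on $K$ (via a direct computation on each Euclidean strip $E_\ell$, using that horizontal arcs there project to arcs of length $O(w(\ell)\,\del_2)$ on the tube boundary). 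Your sketch omits this second comparison, which is where the substance of (iii) lies.
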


Indeed, by Proposition \ref{5-17no2},  we can take $Q$ and $Q_i$ for each $i$ that are close to $R$ and  $R_i$ on $\rs$, respectively, so that ${\sf d}_Q$ and ${\sf d}_{Q_i}$ are sufficiently close to ${\sf d}_R$ and  ${\sf d}_{R_i}$ on $K \times K$.
Then, by (ii),  the general case is reduced to the case that the Thurston laminations have no atomic measure.

\proof[Proof of Proposition \ref{5-17no2}]
For every $\del_1 > 0$, pick another open topological disk $Q$ contained in $P$ such that 
\begin{itemize}
\item[(1)] when projected to $\C$ by stereographic projection, $\pt Q$ is  a smooth loop embedded in $\rs$ and the sign of its curvature  changes at most finitely many times in the Euclidean metric, and \note{probably does not depend on }
\item[(2)] the Hausdorff distance of $\rs \minus P$ and  $\rs \minus Q$ is less than $\del_1$ ((i)). 
\end{itemize}

We can in addition assume that $Q$ also contains $K$ by taking sufficiently small $\del_1$.
Let $(\h^2, L_Q)$ be the Thurston coordinates of the projective surface $Q$. 
Set $\mL_Q = (\nu_Q, \omega_Q)$ to be the circular measured  lamination on $Q$ descending to $L_Q$.
By  the smoothness in (1), for each point of $\pt Q$, there is a unique maximal ball in $Q$ tangent, at the point, to $\bdry Q$.  
Thus every leaf of  $L_Q$  has no atomic measure ((ii)).
Moreover 
\begin{lemma}
The two-dimensional strata of $(Q, \LL_Q)$ are isolated. 
Therefore two-dimensional strata of $(\H^2, L_Q)$ are isolated.
\end{lemma}
\begin{proof}
Let $R$ be a two-dimensional stratum of $\LL_Q$, and let $B_R$ be the maximal ball in $Q$ whose core is $R$. 
Then, by the curvature condition, $B_R$ has only finitely many ideal points $p_1, \dots, p_n$.
Let $I_1, \dots, I_n$ be sufficiently small neighborhoods of $p_1, \dots, p_n$ in $\bdr Q$.
Since $\bdr Q$ is smooth, at every point  $x$ of $\partial Q$,  there is a unique maximal $B_x$ ball in $Q$ such that $x$ is an ideal point of $B_x$. 
Suppose that $x$ is in $I_i \minus p_i$ for some $i \in \{1, \dots, n\}$. 
Then as  $I_i$ is sufficiently small, $x$ is the unique ideal point of $B_x$ in $I_i$. 
Let $P_x$ be the connected component of $Q \minus \Core(R)$ whose boundary contains $x$. 
 Let $\ell$ be the circular boundary segment of $R$ bounding $P_x$, so $x$ is close to one of the end point of $\ell$ (Figure \ref{fIsolatedStratum}).
Then the ideal points of $B_x$ must be in $\bdr P_x \minus \ell$. 
In addition they must be contained in a small neighborhood of $\ell$ by the continuity. 
Thus, there is exactly one more ideal point of $P_x$ near the other endpoint of $\ell$. 
\begin{figure}[H]
\begin{overpic}[scale=.6
] {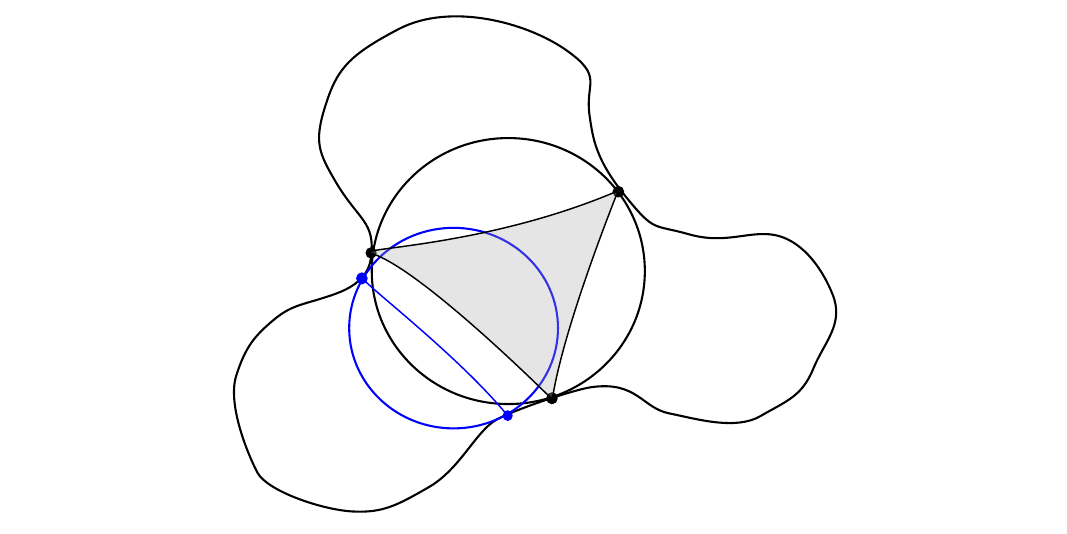} 
\put( 48 , 9 ){\textcolor{blue}{\small $x$}}  
\put( 45 , 20 ){\textcolor{Black}{\small $\ell$}}  
      \end{overpic}
\caption{}\label{fIsolatedStratum}
\end{figure}
\end{proof}

Let $\on{pr}_P\cn \h^3 \to \Conv(\rs \sm P)$ denote the nearest point projection onto the convex hull of $\rs \sm P$ in $\h^3$.
Let $(P, \LL_P) \to (\h^2, L_P)$ denote the collapsing map of $P$.
Then $\bdry \Conv (\rs \minus P)$ is the pleated surface induced by $(\h^2, L_P)$.

For ${\del_2} > 0$, consider the ${\del_2}$-neighborhood of $\Conv(\rs \sm P)$. 
Then its boundary surface  $S_{\del_2}$ is $C^1$-smooth and it carries an intrinsic Riemannian metric induced from  $\h^3$  (see \cite[II.1.3.6, II.1.5]{Epstein-Marden-87}).
Similarly let $\on{pr}_{\del_2}\cn P \to S_{1\del_2}$ denote the orthogonal projection along geodesics in $\h^3$; then $\on{pr}_{\del_2}$ is a $C^1$-diffeomorphism. 
Consider the Riemannian metric on $P$ obtained by pulling back the Riemannian metric on $S_{\del_2}$ via $\on{pr}_{\del_2}$, let $d_{\delta_2}\col P \times P \to \R_{\geq 0}$ be the associated distance function of $P$.
Then (iii) follows from:
\begin{claim}\Label{8-5no2}
For every  $\ep > 0$, if $\del_1 > 0$ and ${\del_2} > 0$ are sufficiently small  then 
 \begin{itemize}
 \item[(1)] $d_{\del_2}$ and ${\sf d}_Q$ are $(1 + \ep)$-bilipschitz on $K \times K$, i.e. $$1 - \ep < d_{\del_2}(x, y)/ {\sf d}_Q(x, y) < 1 +\ep$$ for all distinct $x, y$ in $K$. 
\item[(2)]$d_{\del_2}$ and ${\sf d}_P$ are $\ep$-close on $K \times K $.
\end{itemize}
 \end{claim}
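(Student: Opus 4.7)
The plan is to establish (2) first, using only the Epstein--Marden theory of equidistant surfaces applied to $P$, and then to deduce (1) by performing a parallel construction for $Q$ and comparing the two.

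For (2), the key input is the $C^1$ convergence, as $\del_2 \to 0^+$, of the equidistant surface $S_{\del_2}$ to the pleated surface $\bdry \Conv(\rs \sm P) = \beta_P(\h^2)$, together with the convergence of the intrinsic Riemannian metric on $S_{\del_2}$ to the Thurston path metric on $\bdry \Conv(\rs \sm P)$ (cf.\ Epstein--Marden). Via $\beta_P \cc \kp_P$, this limiting metric pulls back to ${\sf d}_P$ on $P$. Since $K \st P$ is compact, it lies at positive $\h^3$-distance from $\rs \sm P$, so the projection $\on{pr}_{\del_2}$ and its pullback metric depend continuously on $\del_2$ uniformly on $K$. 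Thus, after first fixing $\del_1$ small enough that $K \st Q$ and is also at positive distance from $\rs \sm Q$, the pullback metric $d_{\del_2}$ satisfies $|d_{\del_2}(x,y) - {\sf d}_P(x,y)| < \ep$ on $K \times K$ for all sufficiently small $\del_2$, which is (ii).

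For (1), I perform the analogous equidistant construction for $Q$: let $S^Q_{\del_2} = \bdry(N_{\del_2}\Conv(\rs\sm Q))$, let $\on{pr}^Q_{\del_2}\cn Q \to S^Q_{\del_2}$ be the nearest-point projection, and let $d^Q_{\del_2}$ be the pullback to $Q$ of the induced Riemannian metric. Since $L_Q$ carries no atomic measure and $(\h^2, L_Q)$ has only finitely many two-dimensional strata sharing no ideal points, the pleated surface $\beta_Q$ is smooth except along isolated bending lines, so the equidistant-surface estimates in the Epstein--Marden framework apply in their strongest form: for every $\ep > 0$, if $\del_2$ is sufficiently small (depending on $K$ and $Q$), then $d^Q_{\del_2}$ is $(1+\ep/2)$-bilipschitz to ${\sf d}_Q$ on $K \times K$.

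It then suffices to compare $d_{\del_2}$ with $d^Q_{\del_2}$ on $K$. Since $\rs \sm Q$ is within $\del_1$ of $\rs \sm P$ in the Hausdorff metric, the convex hulls $\Conv(\rs \sm P)$ and $\Conv(\rs \sm Q)$ are Hausdorff-close on any fixed bounded region of $\h^3$, with closeness tending to zero as $\del_1 \to 0$; hence so are the equidistant surfaces $S_{\del_2}$ and $S^Q_{\del_2}$ together with their induced Riemannian metrics, uniformly on the region near the $\on{pr}_P$-image of $K$. Choosing $\del_2$ first (small enough to control both $d_{\del_2} \approx {\sf d}_P$ and $d^Q_{\del_2} \approx_{(1+\ep/2)} {\sf d}_Q$), then taking $\del_1$ much smaller, forces $d_{\del_2}$ and $d^Q_{\del_2}$ to be $(1+\ep/2)$-bilipschitz on $K \times K$, which combined with the previous step yields (i). The main obstacle is making the convex-hull stability quantitative near $K$: one must verify that the nearest-point projections from $K$ to the two convex hulls land in uniformly Lipschitz regions, so that the $C^1$-perturbation of the equidistant surface translates into a multiplicative $(1+\ep)$-control of its pulled-back Riemannian metric. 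This is ensured by the fact that $K$ sits at positive $\h^3$-distance from both convex hulls once $\del_1$ is small.
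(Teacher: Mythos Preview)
Your argument for (2) has a genuine gap. You invoke ``$C^1$ convergence, as $\del_2 \to 0^+$, of the equidistant surface $S_{\del_2}$ to the pleated surface,'' but this fails exactly in the case that matters: when $L_P$ has leaves of positive weight, $\bdry\Conv(\rs\sm P)$ has genuine dihedral corners along those bending lines, and $S_{\del_2}$ rounds them off. The convergence is only $C^0$ there, and a $C^0$ statement about surfaces does not by itself yield convergence of the pulled-back path metrics. The paper confronts this directly: on the negatively-curved region $H\subset P$ the derivative comparison you have in mind works and gives a $(1+\ep)$-bilipschitz estimate, but on each Euclidean strip $E_\ell$ (corresponding to a leaf $\ell$ of weight $w(\ell)$) one must argue separately. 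There ${\sf d}_P$ vanishes in the horizontal direction while $d_{\del_2}$ does not; the paper shows the discrepancy is of size $w(\ell)\del_2$, and then sums over all such strips using that the total transversal measure on $K$ is finite. Your appeal to Epstein--Marden as a black box skips this.

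Your argument for (1) has a quantifier-ordering problem. You write ``choosing $\del_2$ first (small enough to control both $d_{\del_2}\approx{\sf d}_P$ and $d^Q_{\del_2}\approx_{(1+\ep/2)}{\sf d}_Q$), then taking $\del_1$ much smaller.'' But $Q$ is only specified once $\del_1$ is chosen, so the threshold for $\del_2$ in the estimate $d^Q_{\del_2}\approx{\sf d}_Q$ a priori depends on $\del_1$; you cannot fix $\del_2$ first without proving this threshold is uniform over all $Q$ that are $\del_1$-close to $P$. The paper avoids the detour through $d^Q_{\del_2}$ entirely and compares $d_{\del_2}$ with ${\sf d}_Q$ directly: for $x\in K$ the tangent hyperplane $H_{\del_2}(x)$ to $S_{\del_2}$ bounds a ball $B_{\del_2}(x)$ which is $\ep$-close to $B_P(x)$ for small $\del_2$, while $B_P(x)$ is $\ep$-close to the maximal ball $B_Q(x)$ for small $\del_1$ (Proposition~\ref{4-26no2}). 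Since both $d_{\del_2}$ and ${\sf d}_Q$ are, infinitesimally, given by orthogonal projection to the hyperplanes bounded by $\pt B_{\del_2}(x)$ and $\pt B_Q(x)$ respectively, closeness of these balls gives the $(1+\ep)$-bilipschitz estimate with no circularity.
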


\begin{proof}
(1) 
For each point $x \in P$, let $H_{\del_2}(x)$ be the unique hyperbolic plane in $\h^3$  tangent to $S_{\del_2}$ at $\on{pr}_{\del_2}(x)$. 
Then the boundary circle $\pt H_{\del_2} (x)$ is contained in $P$.
The boundary of the maximal ball $B_P(x)$ bounds another hyperbolic plane supporting, at $\on{pr}_P(x)$,  the pleated surface bounding $\Conv(\rs \sm P)$.
Those two hyperbolic planes are perpendicular to the geodesic through at $\on{pr}_{\del_2}(x)$ and $\on{pr}_P(x)$.  
Then the distance between the planes is exactly $\del_2$.

Let $B_{\del_2} (x)$ be the round open ball in $P$ bounded by $\pt H_{\del_2}(x)$.
Then $B_{\del_2}(x)$ contains $x$. 
Let $B_P(x)$ and $B_Q(x)$ be the maximal balls in $P$ and $Q$, respectively, centered at $x$.
Then, for every $\ep > 0$, if ${\del_2} > 0$ is sufficiently small, then $B_P(x)$ is $\ep$-close to $B_{\del_2}(x)$ on $\rs$ for every $x \in P$.
In addition, by Proposition \ref{4-26no2} (i), if $\del_1 > 0$ is sufficiently small then,  $B_P(x)$ and $B_Q(x)$ are $\ep$-close for all $x \in K$.
Then $B_Q(x)$ and $B_{\del_2}(x)$ are $2\ep$-close.
Therefore if $\del_1 > 0$ and ${\del_2} > 0$ are sufficiently small then,  for every unite tangent vector $v$ at a point in $K$, the derivatives $d \on{pr}_{\del_2}(v)$ and $d\on{pr}_P(v)$ are tangent vectors in $\h^3$, that are $\ep$-close.  
Thus $d_{\del_2}$ and ${\sf d}_Q$ are $\ep$-bilipschitz on $K$.

(2)
Let $H$ be the subsurface of $P$ where the Thurston metric is negatively curved. 
Then $\on{pr}_P$ takes $H$ isometrically onto its image in $\pt\Conv(\rs \minus P)$ with the intrinsic metric induced by $\h^3$. 
By identifying $L_P$ and its image on $\pt\Conv(\rs \minus P)$, then,  $x \in H$ if and only if $\on{pr}_P(x)$ is not on a leaf of $L_P$ with positive weight.
Then $\on{pr}_P$ is $C^1$-smooth on $H$. 
Thus similarly to (1),  for every $\ep > 0$, if $\del_2 > 0$ is sufficiently small, then  $d_{\del_2}$ and ${\sf d}_P$ are $\ep$-bilipschitz on each connected component of $H$.


Each connected component $E_\ell$ of the Euclidean subsurface of $P$ corresponds to a leaf $\ell$ of $L_P$ with positive weight, so that $E_\ell = \on{pr}_P\iv(\ell)$.
we show that, for every $\ep > 0$, if $\del_2 > 0$ is sufficiently small, then, for every leaf $\ell$ of $L_P$ with weight $w(\ell) > 0$,  ${\sf d}_P$ and $d_{\del_2}$ are $(1 + \ep, w(\ell) \del_2)$-quasi isometric on $E_\ell$.  

Then $E_\ell$ is, in the Thurston metric,  an infinite Euclidean strip with width $w(\ell)$. 
Thus we may regard $E_\ell$ as a subset of $\R^2$ so that it is infinite in the vertical direction. 
On $\rs$, the strip $E_\ell$ is regarded as a {\it wedge}, i.e. a region bounded by two circular arcs sharing both endpoints.
Consider the $\del_2$-neighborhood $M$, in $\h^3$, of the geodesic $m$ connecting the vertices of $E_\ell$\,--- it is an infinite solid cylinder invariant under any hyperbolic translation along $m$.
Then $d_{\del_2}$ on $E_\ell$ is given by pulling back the intrinsic metric on $\bdry M$ by $\pr_{\del_2}$. 
The boundary of $M$ is foliated by round loops bounding (geometric) disks of radius $\del_2$ orthogonal to $m$ in $\h^3$.  
Then, by the nearest point projection $\h^3 \to m$,  each loop map to a single point on $m$. 
In addition there is another foliation of $\bdry M$ by straight lines (with its intrinsic metric) that are orthogonal to the round loops. 
Then each straight line diffeomorphically projects onto $m$ by the projection $\h^3 \to m$.  

For different points $p,q$ in $E_\ell$, a geodesic connecting $p$ to $q$ with  ${\sf d}_P$ can be realized as  a union of a vertical geodesic segment and horizontal geodesic  segment. 
On vertical lines in $E_\ell$, for every $\ep > 0$, if $\del_2 > 0$ is sufficiently small, then ${\sf d}_P$ and $d_{\del_2}$ are $(1 + \ep)$-bilipschitz. 
On the other hand, on the horizontal lines, ${\sf d}_P$ and $d_{\del_2}$ are $w(\ell) \del_2$ rough isometric. 
Therefore for even $\ep > 0$, if $\del_2 > 0$ is small, then for every leaf $\ell$ of $L_P$ of positive weight, the projection ${\sf d}_P$ and $d_{\del_2}$ are  $(1 + \ep, w(\ell) \del_2)$-quasiisometric on $E_\ell$.  

The total transversal measure on $K$ given by $\LL_P$ is finite. 
Therefore, for every $\ep > 0$, if $\del_2 > 0$ is sufficiently small, then ${\sf d}_P$ and $d_{\del_2}$ are $(1 + \ep, \ep)$-quasiisometric on $K$. 
Since $K$ is a compact subset, we can in addition assume that they are $\del_2$-rough isometric. 
\end{proof}
\Qed{5-17no2}

In the rest of this section, we show the convergence of  the transversal measures for Theorem \ref{4-23no4} (i).

\begin{proposition}\Label{7-25-12no1}
Let $p_0, p_1$ be (distinct) points  in a single stratum of $(R, \LL)$.
The $\omega_i(p_0, p_1) \to \omega(p_0, p_1) = 0$ as $i \to \In$, where $\omega(p_0, p_1)$ and $\omega_i(p_0, p_1)$ are the transversal measures  of, in Thurston metrics, the geodesic segment from $p_0$ to $p_1$  on $R$ and $R_i$, respectively.
\end{proposition}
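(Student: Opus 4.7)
The equality $\omega(p_1, p_2) = 0$ is immediate from the hypothesis. The common stratum of $(R, \LL)$ containing $p_1$ and $p_2$ equals $\Core(B)$ for some maximal ball $B$ in $R$; this stratum is geodesically convex for the Thurston metric, so the Thurston geodesic $\gamma$ from $p_1$ to $p_2$ remains inside $\Core(B)$ and transversally crosses no leaf of $\LL$, giving $\omega(\gamma) = 0$.

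My plan for the substantive claim $\omega_i(p_1, p_2) \to 0$ is to show that for sufficiently large $i$, both $p_1$ and $p_2$ lie in a single stratum of $(R_i, \LL_i)$, so that $\omega_i(p_1, p_2) = 0$ outright. Let $B = B(p_1)$ be the maximal ball in $R$ centered at $p_1$; then $p_2 \in \Core(B)$. By Proposition \ref{4-26no2}(i), the maximal ball $B_i = B_i(p_1)$ in $R_i$ converges to $B$ in the Hausdorff metric on $\rs$, so in particular $p_2 \in B_i$ for large $i$. By Proposition \ref{4-26no2}(ii), the ideal boundary $\bdry_\infi B_i$ lies in an arbitrarily small neighborhood of $\bdry_\infi B$ in $\rs$. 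Identifying $B_i$ with $\h^2$, the core $\Core(B_i)$ is the closed convex hull of $\bdry_\infi B_i$ in $\h^2$. When $\Core(B)$ is two-dimensional and $p_2$ lies in its interior, applying Proposition \ref{4-26no2}(ii) at points in a small neighborhood of $p_2$ shows that $\bdry_\infi B_i$ contains points approximating each ideal vertex of $\Core(B)$, so $\Core(B_i)$ is a two-dimensional region close to $\Core(B)$ that contains $p_2$. Hence $p_1, p_2$ both lie in the single stratum $\Core(B_i)$ of $(R_i, \LL_i)$, giving $\omega_i(p_1, p_2) = 0$.

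The main obstacle is the one-dimensional stratum case, where $\Core(B)$ is a single leaf $\ell$ and both $p_1, p_2$ lie on $\ell$. Here the stratification of $R_i$ near $\ell$ may be unstable under the perturbation $R \leadsto R_i$: the corresponding stratum of $R_i$ could be a leaf close to but not equal to $\ell$, or a thin two-dimensional region straddling $\ell$. To handle this case, I would construct, for each large $i$, a maximal ball $B_i^{\ast} \subset R_i$ whose core contains both $p_1$ and $p_2$, by sliding along the Hausdorff-convergent family of maximal balls $\{B_i(y)\}_y$ from Proposition \ref{4-26no2}(i) and using the angle convergence $\angle_K(\nu_i, \nu) \to 0$ on compacts from Proposition \ref{10-11no1} to ensure that nearby leaves of $\LL_i$ are nearly parallel to $\ell$ and do not separate $p_1$ from $p_2$. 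Once $p_1, p_2$ occupy a common stratum of $(R_i, \LL_i)$ for large $i$, we again obtain $\omega_i(p_1, p_2) = 0$.
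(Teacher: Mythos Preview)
Your overall strategy---showing that $p_1$ and $p_2$ eventually lie in a \emph{single} stratum of $(R_i,\LL_i)$, so that $\omega_i(p_1,p_2)=0$ outright for large $i$---is not merely incomplete in the one-dimensional case; it is false in general, already in the two-dimensional case. Take $R$ to be a round open ball, so that $\LL$ is empty and all of $R$ is one stratum. Let $R_i$ be the lens $R\cap D_i$, where $D_i$ is a second round ball with $D_i\to R$. Each $R_i$ satisfies the convergence hypotheses, yet every maximal ball in the lens $R_i$ is an inscribed disk tangent to both boundary arcs, with ideal boundary exactly two points; hence $\LL_i$ is a genuine foliation of $R_i$ by one-dimensional leaves and has strictly positive transversal measure. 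Any two points $p_1,p_2$ not on a common leaf (e.g.\ $p_1$ on the axis of symmetry and $p_2$ off it) then satisfy $\omega_i(p_1,p_2)>0$ for every $i$. The same phenomenon occurs in the one-dimensional case: if $R=\rs\setminus[-1,1]$ with $p_1=2$, $p_2=3$ on the real-axis leaf, and $R_i=\rs\setminus\gamma_i$ for $\gamma_i$ the segment from $-1$ to $1+\tfrac{1}{i}\,i$, then the leaf of $\LL_i$ through $p_1$ is an arc that does not pass through $p_2$. Your appeal to Proposition~\ref{4-26no2}(ii) does not prevent this: that proposition only says $\partial_\infty B_i(p_1)$ lies \emph{near} $\partial_\infty B(p_1)$, not that it approximates all of it, and when $\partial_\infty B$ is a continuum (as for a round ball) two nearby points suffice to capture $p_1$ without capturing $p_2$.

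The paper's proof avoids this obstruction entirely by never asserting $\omega_i=0$. Instead it works in $\h^3$: for each $t$ on the geodesic $\alpha$ from $p_1$ to $p_2$, the maximal ball $B_i(\alpha(t))$ bounds a hyperbolic plane $H_{i,t}$, and Proposition~\ref{4-26no2} forces all the $H_{i,t}$ to become nearly orthogonal to a single fixed plane $N$ (the one containing $\beta([q_1,q_2])$). Consequently the pairwise angles $\angle(H_{i,s},H_{i,t})$ are uniformly small, and this directly bounds the bending measure $\omega_i(p_1,p_2)<\ep$ via the Epstein--Marden description of transversal measure as a limit of angle sums. This argument is insensitive to whether the stratification of $R_i$ has one-dimensional or two-dimensional pieces near the segment.
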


\begin{proof}
Let $P$ be the strata of $(\RR, \LL)$ containing $p_0$ and $p_1$.
Then let  $\ap\cn [0, 1] \to P$ be the geodesic segment from $p_0$ to $p_1$, i.e. $\alpha(0) = p_0$ and $\alpha(1)= p_1$.
Let $Q = \kp(P)$, the corresponding strata of $(\h^2, L)$.
We can naturally identify $Q$ and $\beta(Q)$. 
For each $j = 1,2$, let $q_j = \kap(p_j)$ and $q_{i, j} = \kap_i(p_j)$ for all sufficiently large $i \in \N$. 
Then, for each $j$, the point $\beta_i (q_{i, j})$ converges to the point $\beta(q_j)$ as $i \to \If$ (Corollary \ref{6-26no1}).
Let $N$ be the totally geodesic hyperplane in $\h^3$ orthogonally intersecting $\beta(Q)$ in the geodesic segment from  $\beta(q_0)$ to $\beta(q_1)$.

For each $t \in [0,1]$,  considering  the maximal ball in $R_i$ centered at $\ap(t)$, let $H_{i, t}$ be the totally hyperbolic plane in $\h^3$  bounded by the boundary of the maximal ball.
By Proposition \ref{4-26no2}, $\sup_{t \in [0,1]} \angle(N, H_{i, t}) \to \pi/2$ as $i \to \In$.
Thus, for every $\ep > 0$, if $i$ is sufficiently large, then $H_{i, s}$ and $H_{i, t}$ intersect and $\angle(H_{i, s}, H_{i, t}) < \ep$  for all $s,  t \in [0,1]$.
This implies that $\mu_i(q_0, q_1) < \ep$ (see \cite{Epstein-Marden-87}).
Therefore $\omega_i(p_0, p_1) < \ep$.
\end{proof}

Let $d_R$ denote Thurston metric on $R$. 
Then
\begin{proposition}\Label{5-24no1}
For every $\ep > 0$, there exists $\dl > 0$ 
such that, if  $p_0,p_1 \in R$ are points contained in different strata of $(R, \LL)$ satisfying
$d_R(p_0, p_1) < \del$ and $\angle([p_0, p_1], \LL) < \del$, then 
 $$1 - \ep < \frac{\omega( \,p_0,p_1\, )}{\omega_i(p_0, p_1)} <  1 + \ep,$$
 for all sufficiently large $i \in \n$.
  \end{proposition}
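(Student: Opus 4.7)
\emph{Plan.} My strategy is to express both transversal measures $\omega(p_1,p_2)$ and $\omega_i(p_1,p_2)$ as exterior dihedral angles in $\h^3$ between the support hyperplanes of the maximal balls at $p_1$ and $p_2$, and then invoke Proposition~\ref{4-26no2} to see that these hyperplanes, and hence their angles, converge as $i\to\infty$.

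For each $p\in R$ let $H(p)\subset\h^3$ denote the totally geodesic hyperplane bounded by $\partial B(p)$, and define $H_i(p)$ analogously in $R_i$. The central step is a local geometric lemma of the following shape: for every $\ep_0>0$ there exists $\del_0>0$ such that whenever $d_R(p_1,p_2)<\del_0$, $\angle([p_1,p_2],\LL)<\del_0$, and $p_1,p_2$ lie in distinct strata, one has
\[
(1-\ep_0)\,\theta(p_1,p_2)\;\le\;\omega(p_1,p_2)\;\le\;(1+\ep_0)\,\theta(p_1,p_2),
\]
where $\theta(p_1,p_2)$ is the exterior dihedral angle between $H(p_1)$ and $H(p_2)$; the analogous statement holds for $\omega_i$ and $\theta_i(p_1,p_2)=\angle(H_i(p_1),H_i(p_2))$. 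To prove this lemma I would parametrize the Thurston geodesic $\gamma\colon[0,L]\to R$ from $p_1$ to $p_2$, note that each maximal ball $B(\gamma(t))$ supports a hyperplane $H_t\subset\h^3$, and compare $\omega(\gamma)$ (which by definition equals the total bending accumulated along $\gamma$ by the pleated surface $\beta\circ\kap$) to the angle $\theta=\angle(H_0,H_L)$, by analyzing how slowly and monotonically $H_t$ varies when $\gamma$ is short and nearly tangent to $\LL$.

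Given the lemma, the proof of the proposition is straightforward. For the structures $R_i$, one first needs to verify that the Thurston geodesic from $p_1$ to $p_2$ in $R_i$ is also short and nearly tangent to $\LL_i$; this follows from the closeness of Thurston pseudometrics (Proposition~\ref{5-17no1}) together with the angle convergence $\nu_i\to\nu$ on compacts (Proposition~\ref{10-11no1}), which forces both $d_{R_i}(p_1,p_2)$ and $\angle_{R_i}([p_1,p_2]_{R_i},\LL_i)$ to be close to the corresponding quantities in $R$. Next, Proposition~\ref{4-26no2}(i) gives $H_i(p_j)\to H(p_j)$ in $\h^3$ on compacts, so $\theta_i\to\theta$. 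Choosing $\ep_0=\ep/3$ and then $\del$ small enough for the lemma, and finally $i$ large enough that $|\theta_i-\theta|<\ep_0\,\theta$, yields the desired bound $1-\ep<\omega(p_1,p_2)/\omega_i(p_1,p_2)<1+\ep$.

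\emph{Main obstacle.} The heart of the proof is the local geometric lemma identifying $\omega(p_1,p_2)$ with the exterior dihedral angle $\theta(p_1,p_2)$ up to a multiplicative factor $1\pm\ep_0$. The delicate point is that for a short, nearly tangent segment the cumulative bending of the pleated surface is small, and one must show that it matches the angle between the two endpoint hyperplanes in $\h^3$; this requires controlling simultaneously the variation of the path direction and of the plate directions along $\gamma$, and uses in an essential way that $\gamma$ is nearly tangent rather than nearly perpendicular to $\LL$. The hypothesis that $p_1,p_2$ lie in distinct strata is what guarantees $\theta>0$, hence $\omega>0$, so that the ratio is well defined; the complementary same-stratum case is exactly Proposition~\ref{7-25-12no1}.
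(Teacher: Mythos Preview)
Your overall strategy---relate $\omega$ and $\omega_i$ to dihedral angles between maximal-ball hyperplanes and then invoke the convergence of those hyperplanes---is in the right spirit, but there are two real problems.

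First, you have the angle hypothesis backwards. The condition $\angle([p_1,p_2],\LL)<\del$ in the statement is a typo; comparing with the application in the proof of Proposition~\ref{7-25-12no2} (where the segments are chosen with $\angle(\LL,[p_k,p_{k+1}])$ close to $\pi/2$) and with the first line of the paper's own proof of the present proposition (choosing a plane $N$ through $\beta(q_1),\beta(q_2)$ that is $\ep$-nearly orthogonal to \emph{every} stratum $\beta(Q)$ crossed), the intended hypothesis is that $[p_1,p_2]$ is nearly \emph{perpendicular} to $\LL$. Such a reference plane $N$ exists precisely in the perpendicular case, since then the segment direction together with the (nearly constant) normals to the plates span a single $2$-plane; in the tangent case the normals rotate out of any fixed plane containing the segment. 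Your remark that the argument ``uses in an essential way that $\gamma$ is nearly tangent rather than nearly perpendicular'' is therefore built on the wrong geometric picture.

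Second, the ``local geometric lemma'' you isolate---$\omega(p_1,p_2)\approx\theta(p_1,p_2)$ up to a factor $1\pm\ep_0$, and likewise for $\omega_i,\theta_i$---is itself the substantive step, and you give no proof of it. One always has $\theta\le\omega$ by the triangle inequality for dihedral angles; the reverse bound $\omega\le(1+\ep_0)\theta$ asserts that the bending along the short arc is essentially monotone around a single axis, and that needs a genuine argument using the convexity of $\Conv(\rs\setminus R)$.

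The paper does \emph{not} reduce to a single endpoint dihedral angle. It fixes the reference plane $N$ above, observes that $\angle_{\h^3}(H_s,H_t)$ and $\angle_N(H_s\cap N,H_t\cap N)$ agree up to a factor $1\pm\ep$ (since every $H_t$ is nearly orthogonal to $N$), and hence that both $\omega$ and $\omega_i$ are, up to $1\pm\ep$, the sums of exterior angles of convex piecewise-geodesic arcs $\eta_\theta=\partial X_\theta\cap N$ and $\eta_{i,\theta}=\partial X_{i,\theta}\cap N$ in the hyperbolic plane $N$. It then applies Gauss--Bonnet to the loop obtained by joining $\eta_\theta$ and $\eta_{i,\theta}$: the difference of the two exterior-angle sums is bounded by the enclosed hyperbolic area, which tends to $0$ as $i\to\infty$ because $\eta_{i,\theta}\to\eta_\theta$. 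This Gauss--Bonnet step is the idea missing from your sketch; it is what upgrades the pointwise hyperplane convergence of Proposition~\ref{4-26no2} to a comparison of the \emph{total} bending measures, without ever having to identify either measure with a single dihedral angle.
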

\pf{5-24no1}
Let $p_0, p_1$ be points in $R$ satisfying the assumptions.
Let $q_0 = \kap(p_0)$ and $q_1 = \kap(p_1)$.
Then for every $\ep > 0$, $\del > 0$ is sufficiently small, then there is a  hyperbolic plane $N$ in $\h^3$ passing $\beta(q_0), \beta(q_1)$ so that $N$ is $\ep$-nearly orthogonal to $\beta(Q)$ for all strata $Q$ of $(\h^2, L)$ intersecting $[q_0, q_1]$.

Let $\ap\cn [0, 1] \to R$ be the geodesic connecting $p_0$ to $p_1$.
Then, for each $t \in [0,0]$, the maximal ball in $R$ centered at $\ap(t)$ shares its boundary circle with a unique hyperbolic plane  $H_t$  in $\h^3$. 
Note that, if $d_R(p_0, p_1) > 0$ is sufficiently small,  then $H_s$ an $H_t$ must intersect for all $s, t \in [0,1]$. 
Let $\theta$ be a subdivision of $[0,1]$ as  $0 =t_0 < t_0 < \dots < t_{n_\theta} = 1$.
Let $|\theta|$ be the maximal width  of the subintervals $[t_k, t_{k+1}]~(0 \leq k < n_{\theta} -1)$. 
Then the transversal measure $\omega([p_0, p_1])$ is the limit of   
\begin{displaymath}
 \Sigma_{k = 1}^{n_\theta} \angle_{\h^3} (H_{t_k}, H_{t_{k+1}})
\end{displaymath} 
as $|\theta| \to 0$, where $\angle_{\h^3} (H_{t_k}, H_{t_{k+1}})$ be the angle taking its value in $[0, \pi/2]$ between the hyperbolic planes $H_{t_k}$ and $H_{t_{k+1}}$ in $\h^3$.
Note that this summation decreases when the subdivision $\theta$ is refined (\cite[II.1.10]{Epstein-Marden-87} ).  

For $s, t  \in [0,1]$, the geodesic $H_s \cap N$ intersect the geodesic $H_t \cap N$\,; 
 let $\angle_N(H_s, H_t) \in [0, \pi/2]$ denote their intersection angle in $N$.
 Since $\angle_{\h^3} (H_t, N)$ is $\ep$-close to $\pi/2$ for all $t \in [0,1]$, 
 by taking sufficiently small $\del > 0$, we can assume that
$$\displaystyle 1 - \ep < \frac{\angle_{\h^3}(H_s, H_t)}{\angle_N(H_s, H_t)} < 1 + \ep,$$
for all $s ,t \in [0,1]$.
Thus  
$$\displaystyle 1 - \ep <\frac{\Sigma_{k = 1}^{n_\theta}  \angle_{\h^3}(H_{t_k}, H_{t_{k+1}})}{\Sigma_{k =1}^{n_\theta} \angle_N(H_{k}, H_{k+1})} < 1 + \ep.$$

Since $R_i$ contains the geodesic segment $\ap$ for sufficiently large $i$,  similarly let $H_{i, t}$ be a copy of $\h^2$ such that $\pt_\In H_{i, t}$ bounds the maximal ball in $R_i$ centered  at $\ap(t)$.
Then the transversal measure $\omega_i(p_0, p_1)$ is the limit of   
\begin{displaymath}
\Sigma_{k = 1}^{n_\theta} \angle_{\h^3} (H_{i, t_k}, H_{i, t_{k+1}})
\end{displaymath} 
as $|\theta| \to 0$.
For every $\ep > 0$, if $i$ is sufficiently large, the hyperbolic planes $H_t$ and $H_{i,t}$ are $\ep$-close for all $t \in [0,1]$.
Thus, if  $\del > 0$ is sufficiently small and $i$ is sufficiently large, then  $H_{i, t}$ intersects $N$ at an angle $\ep$-close to $\pi/2$.
Thus we can in addition assume that 
$$\displaystyle 1 - \ep <\frac{\Sigma_{k=1}^{n_\theta}  \angle_{\h^3}(H_{t_k}, H_{t_{k+1}})}{\Sigma_{k =1}^{n_\theta} \angle_N(H_{t_k}, H_{t_{k+1}})} < 1 + \ep,$$
for any subdivision $\theta$.

Therefore it remains to show that, if $|\theta|$ is sufficiently small, then 
\begin{eqnarray}
 - \ep < \Sigma_{k =1}^{n_\theta} \angle_N(H_{t_k}, H_{t_{k+1}}) - \Sigma_{k=1}^{n_\theta} \angle_N(H_{t_k}, H_{t_{k+1}})  < \ep. \label{eqn:approximation}
\end{eqnarray}
Consider the convex subset $X_\theta$ of $\h^3$ bounded by the hyperbolic planes $H_{t_1}, \dots, H_{t_n}$ so that $X_\theta$ contains $\Conv(\rs \minus R)$.
Then $N$ intersects $\pt X_\theta$ nearly orthogonally, and the intersection is a piecewise geodesic that is a convex bi-infinite curve through $\beta(q_0)$ and $\beta(q_1)$, and its non-smooth points are between $\beta(q_0)$ and $\beta(q_1)$.
Pick  a segment $\eta_\theta$ of this curve that is slightly larger than the segment from $\beta(q_0)$ to $\beta(q_1)$ so that the interior of $\eta_\theta$ contains $\beta(q_0)$ and $\beta(q_1)$. 
Then $\Sigma_{k =1}^{n_\theta} \angle_N (H_{t_k}, H_{t_{k+1}})$ is equal to the sum of the exterior angles of $\eta_\theta$.

Similarly let $X_{i, \theta}$ be the convex subset of $\h^3$ bounded by $H_{i, t_1}, \dots, H_{t_n}$, such that $X_{i, \theta}$ contains  $\Conv(\rs \minus R_i)$. 
Then $\pt X_{i, \theta} \cap N$ is a piecewise geodesic convex curve in $N$, which converges to the convex curve $\pt X_\theta \cap N$ above as $i \to \In$.
 For sufficiently large $i$, each endpoint $\eta_\theta$ has a unique closest point on $\pt X_{i, \theta} \cap N$.
 Then those closest points cut off a segment $\eta_{i, \theta}$  of  $\pt X_{i, \theta} \cap N$ that contains all non-smooth points.
 Then $\Sigma_{k=1} \angle_N (H_{i, t_k}, H_{i, t_{k+1}})$  is the sum of the exterior angles of $\eta_{i, \theta}$.

 Consider the loop $\ell_i$ that obtained by connecting the corresponding endpoints of $\eta_\theta$ and $\eta_{i, \theta}$ by geodesic segments. 
 Then, since $\eta_{i, \theta}$ converges to $\eta_\theta$ as $i \to \In$,  the area in $N$ bounded by $\ell_i$ converges to $0$ as $i \to \In$.
 By applying Gauss-Bonnet Theorem to  $\ell_i$ in the hyperbolic plane $N$, we obtain (\ref{eqn:approximation}).
\Qed{5-24no1}
\begin{proposition}\Label{7-25-12no2}
For all $p, q \in R$,
$\omega_i(p, q) \to\omega(p, q)$
as $i \to \In.$
\end{proposition}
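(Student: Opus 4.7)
My plan is to combine Propositions \ref{7-25-12no1} and \ref{5-24no1} via a subdivision-and-sum argument along the Thurston geodesic $\gm = [p,q]$ in $R$. Fix $\ep > 0$. Using the additivity of $\omega$ along $\gm$, and the additivity of $\omega_i$ along the corresponding Thurston geodesic in $R_i$ (whose existence and closeness to $\gm$ is guaranteed by Proposition \ref{5-17no1} for large $i$), it suffices to partition $\gm$ into finitely many pieces on each of which one of the two preceding propositions applies and controls $\omega_i - \omega$.

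Concretely, I would choose a partition $p = p_0, p_1, \dots, p_n = q$ so that each sub-segment $[p_{k-1}, p_k]$ is of one of two types: (a) both endpoints lie in a single stratum of $(R, \LL)$, giving $\omega(p_{k-1}, p_k) = 0$ and $\omega_i(p_{k-1}, p_k) \to 0$ by Proposition \ref{7-25-12no1}; or (b) the sub-segment is short in Thurston distance and nearly tangent to $\LL$, so that Proposition \ref{5-24no1} furnishes the multiplicative estimate $(1 - \ep)\omega(p_{k-1}, p_k) \leq \omega_i(p_{k-1}, p_k) \leq (1 + \ep)\omega(p_{k-1}, p_k)$ for all sufficiently large $i$. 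Summing over the partition then yields $|\omega_i(p,q) - \omega(p,q)| \leq \ep\, \omega(p,q) + o_i(1)$, and sending $\ep \to 0$ finishes the argument.

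The principal obstacle is constructing such a partition so that every sub-segment cleanly falls into case (a) or (b). Since $\omega(p,q)$ is finite, only finitely many leaves of $\nu$ with positive atomic weight contribute more than a prescribed fraction of $\ep$, so I would bracket each crossing of $\gm$ with such a heavy leaf by a short sub-segment and contain the remaining pieces either in open $2$-strata of $\LL$ or in short sub-segments crossing only the irrational part of $\LL$. The delicate step will be verifying the angle hypothesis of Proposition \ref{5-24no1} at each heavy crossing, where the crossing angle of $\gm$ with the leaf may be bounded away from $0$; I expect to resolve this by extracting from the proof of Proposition \ref{5-24no1} the underlying ingredient that the multiplicative comparison ultimately descends from the pointwise convergence of hyperplanes $H_{i,t} \to H_t$ supplied by Proposition \ref{4-26no2}, and applying that convergence directly to the two endpoints of each single-crossing bracket, where only two hyperplanes (and a single bending geodesic between them) are involved. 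For the remaining irrational pieces, the angle hypothesis will be arranged by further refinement exploiting the continuity of the pleated plane $\beta_i \circ \psi_i \to \beta$ from Corollary \ref{6-26no1} together with Proposition \ref{10-11no1}.
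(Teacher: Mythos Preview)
Your overall strategy --- combine Propositions \ref{7-25-12no1} and \ref{5-24no1} via a finite subdivision and sum --- is exactly the paper's approach, and the summation estimate you wrote down is the right one.

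The difference is that you manufacture an obstacle by insisting on subdividing the direct Thurston geodesic $\gm = [p,q]$.  The paper instead takes a \emph{piecewise} geodesic path $\eta = \bigcup_k [p_k,p_{k+1}]$ from $p$ to $q$, not contained in $\gm$, whose pieces are chosen short and with the angle to $\LL$ prescribed in advance.  Since $R$ and $R_i$ are simply connected, the transversal measures $\omega$ and $\omega_i$ depend only on endpoints, so $\omega(p,q) = \sum_k \omega(p_k,p_{k+1})$ and $\omega_i(p,q) = \sum_k \omega_i(p_k,p_{k+1})$ hold along \emph{any} such path.  With this freedom, arranging the angle hypothesis of Proposition \ref{5-24no1} on every piece that crosses a leaf --- including every atomic leaf --- is trivial, and your proposed workaround (extracting the hyperplane convergence from Proposition \ref{4-26no2} directly at heavy crossings) is unnecessary.

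A second simplification: you do not need to produce ``the corresponding Thurston geodesic in $R_i$'' or invoke Proposition \ref{5-17no1} for additivity.  The identity $\omega_i(p,q) = \sum_k \omega_i(p_k,p_{k+1})$ is immediate from $R_i$ being simply connected, regardless of where the points $p_k$ sit.
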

\begin{proof}
For every $\dl > 0$, pick a simple piecewise geodesic path $\eta = \cup_{k=1}^{n}[p_k, p_{k+1}]$ in $R$ connecting $p$ to $q$, where $p_k$ are points in $R$, such that   $d_R(p_k, p_{k+1}) < \del$ and 
$\pi/2 -\del < \angle(\LL, [p_k, p_{k+1}]) < \pi/2 - \dl$ for all $k = 0,1,\dt, n-1$. 
  By Proposition \ref{5-24no1}, if $\del > 0$ is sufficiently small, then if $p_k$ and $p_{k+1}$ are in different strata of $(R, \LL)$, then 
    $$1 - \ep < \frac{\omega(p_k, p_{k+1})}{\omega_i(p_k, p_{k+1})} <  1 + \ep$$
For sufficiently large $i$.
If $p_k$ and $p_{k + 1}$ are in a single stratum of $(R, \LL)$, then by Proposition \ref{7-25-12no1}, $\omega_i(p_k, p_{k + 1}) \to 0 = \omega(p_k, p_{k+1})$.
Clearly $\omega_i (p, q)= \Sigma_{k = 1}^n \omega_i(p_k, p_{k+1})$ and $\omega (p, q)= \Sigma_{k =1}^n \omega(p_k, p_{k+1})$. 
Thus for every $\ep > 0$, if $\del > 0$ is sufficiently small, then $|w(p,q) - w_i(p,q) | < \ep$ for sufficiently large $i$. 
\end{proof}

\begin{corollary}
Let $K$ be a compact subset  of $R$.
Then for every $\ep > 0$, if $i \in \N$ is sufficiently large, then $ -\ep < \omega(p, q) - \omega_i(p, q)< \ep$ for all $p, q \in K$.  
\end{corollary}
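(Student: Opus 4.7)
The plan is to upgrade the pointwise convergence established in Proposition \ref{7-25-12no2} to uniform convergence on the compact set $K \times K$ by a standard compactness argument, carefully reusing the piecewise-geodesic path construction from the proof of that proposition.

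First I would fix $\ep > 0$ and choose $\dl > 0$ small enough that the conclusion of Proposition \ref{5-24no1} holds with relative error $\ep$, i.e.\ for any two points $p', q'$ in $R$ lying in different strata of $(R, \LL)$ with $d_R(p', q') < \dl$ and $\angle([p', q'], \LL) < \dl$, one has $|\omega_i(p',q') - \omega(p',q')| < \ep\, \omega(p',q')$ for all sufficiently large $i$. Note that the total transversal measure $M = \sup_{p, q \in K} \omega(p,q)$ is finite because $K$ is compact in $R$.

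Next, using compactness of $K$, I would fix a finite $(\dl/3)$-net $\{x_1, \dots, x_N\} \subset K$ with respect to Thurston distance. For each pair $(p,q) \in K \times K$, I would construct a piecewise geodesic path $\eta_{p,q} = \bigcup_{k=0}^{n-1} [p_k, p_{k+1}]$ from $p$ to $q$ whose interior vertices are drawn from this net (together with possibly two extra waypoints near $p$ and $q$), so that each segment either lies inside a single stratum of $(R, \LL)$ or satisfies $d_R(p_k, p_{k+1}) < \dl$ and $\angle([p_k, p_{k+1}], \LL) < \dl$. Since the net is finite, the number of segments $n$ required is bounded above uniformly by some $n_0 = n_0(K, \dl)$, independently of $(p,q) \in K \times K$. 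Applying Proposition \ref{5-24no1} to each ``transverse'' segment and Proposition \ref{7-25-12no1} to each ``stratum'' segment (the latter giving additive error, and by taking $i$ large we can make the additive error on each segment less than $\ep / n_0$), and summing, I obtain
\[
|\omega_i(p,q) - \omega(p,q)| \;\leq\; \ep\, M + \ep,
\]
for all sufficiently large $i$ and all $(p,q) \in K \times K$. Since $\ep > 0$ was arbitrary, this gives the claimed uniform convergence.

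The main obstacle is the uniform construction of the paths $\eta_{p,q}$: one needs the angle-to-lamination hypothesis and the short-segment hypothesis to hold on every subsegment simultaneously, even as $p$ or $q$ approaches a leaf of $\LL$ or lies in the interior of a Euclidean strip. This is handled by taking the net points inside the complement of $|\LL|$ in the negatively-curved region of $R$ (in a Euclidean strip one joins points by horizontal-and-vertical paths as in the proof of Proposition \ref{5-17no2}), and by using ``short hops'' from $p$ and $q$ into this net that stay within a single stratum. Because the net is finite, all the uniform bounds (diameter of segments, angle with $\LL$, number of segments) are controlled by a single constant depending only on $K$ and $\dl$, which is exactly what the compactness argument requires.
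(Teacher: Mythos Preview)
Your approach is workable in spirit but carries a genuine gap, and it is more laborious than what the paper actually does.

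The gap is in your treatment of the two ``short hops'' from arbitrary $p,q\in K$ to the fixed net. You want these hops to lie in a single stratum of $(R,\LL)$ so that $\omega(p,x_j)=0$, but an arbitrary $p$ may sit on a one-dimensional leaf, or at the boundary of a two-dimensional stratum, in which case no nearby net point lies in the same stratum (you placed the net in the complement of $|\LL|$). Even when the hop does lie in a single stratum, you still need $\omega_i(p,x_j)\to 0$ \emph{uniformly} in $p$, and Proposition~\ref{7-25-12no1} is stated only for a fixed pair of points. This can be salvaged---the uniform version follows from Proposition~\ref{4-26no2}(ii), since on a small neighbourhood all the supporting hyperbolic planes, for $R$ and for $R_i$ with $i$ large, are $\ep$-close to a single plane, forcing both $\omega(y,z)$ and $\omega_i(y,z)$ to be small---but you have not argued this, and once you do, the piecewise-path machinery becomes superfluous.

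Indeed the paper's proof is exactly that shortcut. It invokes Proposition~\ref{4-26no2} directly to get, for each $x\in K$, a neighbourhood $U_x$ on which $\omega(y,z)$ and $\omega_i(y,z)$ are both $<\ep$ for all large $i$; covers $K$ by finitely many $U_{x_1},\dots,U_{x_n}$; applies the pointwise Proposition~\ref{7-25-12no2} only to the $\binom{n}{2}$ pairs of centres; and then uses that $\omega$ and $\omega_i$ each satisfy the triangle inequality to compare $\omega(p,q)$ and $\omega_i(p,q)$ to $\omega(x_j,x_k)$ and $\omega_i(x_j,x_k)$ respectively. No path construction, no angle conditions, no bookkeeping on the number of segments. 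Your argument re-derives Proposition~\ref{7-25-12no2} uniformly instead of simply quoting it on a finite set and handling the remainder locally.
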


\begin{proof}
For every point $x \in K$, there is a neighborhood $U_x$ such that it follows from Proposition \ref{4-26no2} that, for every $\ep > 0$, if $i$ is sufficiently large, then  $-\ep < \omega(y, z) - \omega_i(y, z) < \ep$ for all $y, z \in U_x$ .
Since $K$ is compact, there are finitely many points $x_1, \dots, x_n$ such that $U_{x_1}, \dots, U_{x_n}$ cover $K$.
Applying Proposition \ref{7-25-12no2} to all pairs of points in $x_1, \dots, x_n$, we have $-\ep < \omega(x_j, x_k) - \omega_i(x_j, x_k) < \ep$ for all $0 < j, k \leq n$.
Then the Triangle Inequality implies the corollary. 
\end{proof}

\section{Proof of Theorem \ref{3-10no2}}\Label{Proof}

Let $\KK$ be a compact connected surface $\pi_1$-injectively embedded in  $\CC_i$. 
Recalling the natural embedding $e_i \col\CC_i \to \CC_\infi$, we let $\KK_i = e_i\iv(\KK)$ for each $i \in \n$.  
Since $\CC_1 \sub \CC_2 \sub \dots$ exhausts $\CC_\infi$,  if $i$ is sufficiently large,  $\KK_i$ is isomorphic to $\KK$ by $e_i$, and thus  $\KK_i$ is a compact subsurface of $\CC_i$. 
Since $\CC_i \st C_i$, naturally $\KK_i \sub C_i$. 
Recall that  $\tau_i$ and $\tau_\infi$ are homeomorphic to $S$ and that $\tau_\infi$ is obtained by identifying the boundary geodesics of $\sigma_\In$.
Then, since $\kap_i\cn C_i \to \tau_i$ and $\iota_\In\cn \CC_\In \to \sigma_\In$ are collapsing maps,  when $\KK_i$ is isomorphic to $\KK$, then $\kap_i| \KK_i $ and $\iota_\In | \KK$ are homotopic as maps to $S$. 
Let  $\kpt_i(\td{\KK}_i)$ and $\td{\iota}_\If(\td{\KK})$ denote the universal covers of $\kp_i(\KK_i)$  and $\iota_\If(\KK)$, respectively. 
Then, recalling that $\NN_\infi$ is the canonical circular lamination on  $\CC_\infi$,  we have

\begin{proposition}\Label{4-22no1}
There exists a sequence of (not necessarily continuous) maps $\psi_i\cn \iota_\If(\KK) \to   \kp_i(\KK_i)$ for $i \in \N$,  such that, letting $\psit_i\cn\td{\iota}_\If(\td{\KK}) \to \kpt_i(\td{\KK}_i)$ be the lift of $\psi_i$, which commutes with deck transformations,  we have
\begin{itemize}
\item[(i)] $\LL_i$ on $\KK_i$ converges to $\NN_\In$ on $\KK$ uniformly,
\item[(ii)] $\psi_i$ converges to an isometry uniformly as $i \to \If$, 
\item[(iii)]  the sup distance between $\kp_i \cc e_i^{-1}$ and $\psi_i \cc \iota_\In$  converges to zero on $\KK$ as $i \to \In$ (Figure \ref{050813}),
\item[(iv)] the sup distance between $\beta_i \cc \td{\psi}_i$ and $\beta_\If$ converges to $0$ on $\iota_\In(\KK_\If)$ as $i \to \If$,

\end{itemize}
and therefore 
\begin{itemize}
\item[(v)]  for $x, y \in \iota_\In (\KK)$  not on leaves of positive atomic measure, let $[x, y]$ be a geodesic segment connecting $x$ to $y$ in $\sigma_\In$ and let $[\psi_i(x), \psi_i(y)]$ be the geodesic segment on $\tau_i$ that is homotopic to  $\psi_i([x, y])$ with its endpoints fixed;
then the transversal measure on $[\psi_i(x), \psi_i(y)]$ by $L_i$ converges to    the transversal measure on the geodesic segment $[x ,y]$ by $N_\In$.  

\end{itemize}
More precisely, in  (i), we mean that for every $\ep > 0$,  if $i$ is sufficiently large, then for all $x, y \in \KK$, then the transversal measure of $[x, y]$  given by $\LL_i$ is $
\ep$-close to that given by $\NN_\In$.
 By (ii), we mean that for every $\ep > 0$, if $i$ is sufficiently large, then $$- \ep < \dist_{\h^2}( \td{\psi}_i(x), \td{\psi}_i(y) ) -\dist_{\h^2}(x, y) < \ep$$ for every $x, y \in \td{\iota}_\In(\td{\KK})$.

\end{proposition}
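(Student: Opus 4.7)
The plan is to prove Proposition \ref{4-22no1} by reducing the global convergence on $\KK$ to the local convergence result Theorem \ref{4-23no4} applied inside canonical neighborhoods in universal covers, using Proposition \ref{4-23no3} to identify local canonical laminations with global ones, and Proposition \ref{4-23no2} to guarantee the requisite convergence of these canonical neighborhoods as subsets of $\rs$. Once this is done, the global map $\psi_i$ will be built by patching the local rough isometries produced by Theorem \ref{4-23no4}, accepting discontinuities on the overlaps since the statement only requires $\psi_i$ to be a (not necessarily continuous) map.

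First I would fix, for each $p \in \KK$, a lift $\td{p} \in \td{\CC}_\infty$ and normalize the developing maps of $\td{C}_i$ and $\td{\CC}_\infty$ compatibly (i.e.\ by a common element of $\psl$) so that the canonical neighborhoods $U_i(\td{p}) \subset \td{C}_i$ and $\UU_\infty(\td{p}) \subset \td{\CC}_\infty$ all develop into $\rs$ and can be compared there. Proposition \ref{4-23no2} then gives $U_i(\td{p}) \to \UU_\infty(\td{p})$ in $\rs$, with boundary convergence. Next I would apply Theorem \ref{4-23no4} to this sequence of domains in $\rs$: this produces circular measured laminations $\mL_{i,p}$ on $U_i(\td{p})$ converging to the circular measured lamination $\mL_{\infty,p}$ on $\UU_\infty(\td{p})$ uniformly on compacts, and a sequence of rough isometries $\psit_{i,p} : \kap_{\infty,p}(\UU_\infty) \to \kap_{i,p}(U_i)$ converging to an isometry uniformly on compacts, with the corresponding pleated surfaces converging in the sup norm. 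Proposition \ref{4-23no3} identifies $\mL_{\infty,p}$ with the restriction of the canonical lamination $\NN_\infty$ to the open subneighborhood $W(\td{p}) \subset \UU_\infty$, and similarly identifies $\mL_{i,p}$ with the restriction of $\LL_i$ to the corresponding $W_i(\td{p}) \subset U_i$; for large $i$ these subneighborhoods contain any prescribed compact piece of $\KK$ around $p$.

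The global construction then proceeds as follows. Since $\iota_\infty(\KK)$ is compact and, by the previous paragraph, each of its points admits an open neighborhood on which the desired local convergences hold, choose a finite open cover of $\iota_\infty(\KK)$ by such neighborhoods $V_1,\dots,V_N$ and a measurable partition subordinate to it. Define $\psi_i$ on each piece by the corresponding local rough isometry $\psit_{i,p_j}$ pushed down to the quotient; the lift $\psit_i$ on the universal cover is built analogously and made $\po(\KK)$-equivariant by choosing, for each orbit of lifts, a preferred fundamental domain. Claims (i)--(iv) follow directly: uniform convergence of $\LL_i \to \NN_\infty$ on $\KK$ comes from Theorem \ref{4-23no4}(i) combined with the compactness of $\KK$ and finiteness of the cover; the rough-isometry property and convergence of $\psi_i$ to an isometry come from Theorem \ref{4-23no4}(iii), noting that although $\psi_i$ may be discontinuous across the partition, on each piece it is an $\ep_i$-rough isometry and any two such pieces agree up to $\ep_i$ on their intersection (because both are, in $\h^3$, $\ep_i$-close to the same pleated image $\beta_\infty$); property (iii) follows because both $\kap_i \cc e_i^{-1}$ and $\psi_i \cc \iota_\infty$ are, on each piece of the partition, the local collapsing map comparison provided by the definition of $\psi_{i,p_j}$ in Theorem \ref{4-23no4}; property (iv) is a direct translation of Theorem \ref{4-23no4}(iii) across the finite cover.

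Finally, (v) follows from the measured-lamination convergence: for $x,y \in \iota_\infty(\KK)$ not on leaves of positive weight of $N_\infty$, the geodesic $[x,y]$ in $\sigma_\infty$ can be subdivided (using Proposition \ref{5-24no1} style arguments, or directly from Theorem \ref{4-23no4}(i) via Remark \ref{rem:convergence}) into short subsegments, each lying in a single piece of the cover; on each subsegment the transversal measure with respect to $N_\infty$ is the limit of the transversal measure on the nearly-parallel image subsegment with respect to $L_i$, and summing yields the full statement of (v) after noting that the geodesic $[\psi_i(x), \psi_i(y)]$ in $\tau_i$ is homotopic rel endpoints to the piecewise image under $\psi_i$ and so carries the same transversal measure. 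The principal obstacle I foresee is the patching step: ensuring that the discontinuities of $\psi_i$ across partition boundaries are tolerable, both for the rough-isometry estimate in (ii) and, more delicately, for the geodesic-segment replacement in (v). The key fact making this work is that by (iv) all local $\psi_i$'s are close to the single global isometry identifying the pleated surface $\beta_\infty$ with the limit of $\beta_i$, so any two choices differ by amounts tending to $0$ with $i$.
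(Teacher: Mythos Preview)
Your proposal is correct and follows essentially the same route as the paper's own proof: reduce to a local statement near each point of $\td{\CC}_\infty$, invoke Proposition \ref{4-23no2} for the convergence $U_i \to \UU_\infty$ of canonical neighborhoods, apply Theorem \ref{4-23no4} to these converging domains in $\rs$, and use Proposition \ref{4-23no3} (via the neighborhood $W(\td p)$) to identify the local canonical data with the global $\LL_i$, $\NN_\infty$, $\beta_i$, $\beta_\infty$. The paper's proof is terser---it simply asserts that the local statement suffices and proves that---whereas you spell out the finite-cover-and-patching argument for globalizing $\psi_i$; this extra care is reasonable and not at odds with the paper.
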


\begin{figure}
\begin{displaymath}
\xymatrix{\CC_\In \supset \KK \ar[d]^{\iota_\In} \ar@{->}[r]^{e_i^{-1}} & \KK_i \subset C_i \ar[d]^{\kap_i}\\
\sigma_\In \ar[r]^{\psi_i}	& \tau_i}
\end{displaymath}
\caption{}
\label{050813}
\end{figure}

\pf{4-22no1}
It suffices to show that, for every $p$ in $\td{\CC}_\In$, there is a compact neighborhood of $p$ with the desired properties.
Consider all maximal balls in $\til{\CC}_\infi$ containing $p$\,; then the union of their cores is a neighborhood of $p$ contained in the canonical neighborhood of $p$.
Thus we can assume  that $\KK$ is a simply connected region contained  this union.

For  sufficiently large $i \in \ \n$,
let $p_i \in \td{\CC}_i$ such that  $\td{e}_i(p_i) = p$.
Let $U_i$ be the canonical neighborhood of $p_i$ in $\til{C}_i$.
By Proposition \ref{4-23no2}, $U_i$ converges to $\UU_\If$ and  $\pt U_i$ converges to $\pt \UU_\If$ on $\rs$ as $i \to \If$ in the Hausdorff metric for $\rs$ (by fixing  a natural metric on $\rs$).
Hence, by Theorem \ref{4-23no4} (including the definition of $\phi_i$) and Proposition \ref{4-23no3}, we have (i) - (iv).
\Qed{4-22no1}

Let $l_\In$ and $l_i$ be the geodesic representatives  of $\ell$ in $\tau_\If$ and $\tau_i$, respectively. 
We first show that $\tau_i \to \tau_\In$ and $\beta_i \to \beta_\In$ as $i \to \In$.
Let $\sigma_i$ be $\tau_i \minus l_i$. 
Then, by Proposition \ref{4-22no1} (ii), $\sigma_i$  converges to $\sigma_\In (= \tau_\If \sm l_\If)$ as $i \to \If$.
In other words, $\tau_i$  converges to $\tau_\If$ possibly up to a ``twist'' along $\ell_\If$.
By Proposition \ref{4-22no1} (iii), the restriction of $\beta_i$ to a lift of  $\sigma_i (\sub \tau_i)$ to $\h^2$ converges to the restriction of $\beta_\In$ to the corresponding lift of $\sigma_\infi (\sub \tau_\In)$ to $\h^2$.
Since $\beta_i$ and $\beta$ are both $\rho$-equivariant, $\beta_i$ must converge to $\beta_\In$ (c.f. \S \ref{identification}) as $i \to \infi$, which proves (ii).
Therefore  $\tau_i$ must converge to $\tau_\If$.

Last we show the convergence of $L_i$.
By Proposition \ref{4-22no1} (v),  the restriction of $L_i$ to $\sigma_i$ converges to the restriction of $L_\infi$ to $\sigma_\In$ as $i \to \If$ uniformly on compacts.
Thus it is left to show that the transversal measure of $L_i$ near $\ell_i$ must diverges to $\infi$.
Each connected component of $\CC_\In \minus \CC_0$ is a half-infinite grafting cylinder.
Then this cylinder has infinite total transversal measure given by  $\NN_\In$. 
Thus, by Proposition \ref{4-22no1} (iv),  for any fixed $j \in \N$, the total transversal measure on $C_i \minus \CC_j$ given by $\LL_i$ diverges to $\In$ as $i \to \infi$. 
Let $\ap_\In$ be a smooth arc on $\tau_\In$ transversal to $L_\In$ such that $\ap_\In$ intersects $\ell_\In$ in a single point.
Then the  transversal measure of $\ap_\In$ by $L_\infi$ is infinite.
By the convergence  $\tau_i \to \tau$, we have $|L_i| \to |L_\In|$ (in $\GL$) as $i \to \In$.  
Thus let $(\alpha_i)$ be a sequence of arcs $\alpha_i$ on $\tau_i$ smoothly converges to an arc $\ap_\In$, so that $\alpha_i$ is transversal to $L_i$ for sufficiently large $i$.
Since the total transversal measure of $C_i \minus \CC_j$ diverges as $i \to \infi$ as above, the divergence,  accordingly the transversal measure of $\alpha_i$ given by $L_i$ must diverge to $\In$.
Therefore $L_i$ converges to $L_\In$ as $i \to \In$.

\part{Appendix: density of holonomy map fibers $\PP_\rho$ in $\pml$.}
Recall Thurston coordinates $\PP \cong \mathscr{T} \times \ML$ (\S \ref{thurston}) on  the space $\PP$ of all (marked) projective structures on $S$.
This gives an obvious projection from $\PP$ to $\ML$. 
Then the obvious projection $\ML \minus \{\emptyset\} \to \pml$ extends to $\Phi\cn\ML \to \PML \sqcup \{\emptyset\}$ so that  the empty lamination $\emptyset$ maps to  $\emptyset$.
 
Recall  from \S \ref{S:intro} that $\PP_\rho$ is the set of all projective structures with fixed holonomy $\rho\cn\po(S) \to \psl$ and that  $\PP_\rho$ is a discrete subset of $\PP$.
 If $\rho$ is fuchsian, letting $\tau \in \mathscr{T}$ be the corresponding hyperbolic structure, we have  $$\PP_\rho \cong \{\,(\tau, M)\, \vert~ {\rm multiloops~ M ~ with ~2 \pi\on{-}multiple ~weights} \},$$
in Thurston coordinates (\cite{Goldman-87}, c.f. \cite{Baba-15gt}).
Thus $\Phi(\PP_\rho)$ is the union of  $\emptyset$ and  a dense subset of $\PML$. 
Note that a projective structure $C \in \PP$ maps to $\emptyset$ via $\Phi$ if and only if $C$ is a hyperbolic structure (\cite{Goldman-87}). 
Thus, for almost all $\rho\colon \pi_1(S) \to \PSL$, we have  $\Phi(\PP_\rho) \not\ni  \emptyset $. 
Then
\begin{theorem}\Label{8-15-12no1}
Given arbitrary $\rho\cn \pi_1(S) \to \psl$, if  $\PP_\rho$ is non-empty, then  $\Phi(\PP_\rho) \minus \{\emptyset\}$ is a dense subset of  $\pml$.
\end{theorem}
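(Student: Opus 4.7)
The plan is to combine the Gallo--Kapovich--Marden construction of projective structures with prescribed holonomy and Theorem \ref{ThmB} on the asymptotics of iterated graftings. The goal is to show that for every simple closed curve $\ell$ on $S$ with $\rho(\ell)$ loxodromic, the projective class $[\ell] \in \pml$ lies in the closure of $\Phi(\PP_\rho) \minus \{\emptyset\}$. Since such curves form a dense subset of $\pml$ (up to the countably many excluded classes), density of $\Phi(\PP_\rho) \minus \{\emptyset\}$ will follow.

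For the first step, fix a simple closed curve $\ell$ with $\rho(\ell)$ loxodromic and extend it to a pants decomposition $M = \{\ell, m_1, \dots, m_{3g-4}\}$ of $S$ such that $\rho(m)$ is loxodromic for every $m \in M$. Such a pants decomposition exists because $\po(S)$ is countable, so only countably many isotopy classes of simple closed curves have non-loxodromic $\rho$-image, while the completions of $\ell$ to a pants decomposition form an infinite family. The construction of \cite{Gallo-Kapovich-Marden} then assembles a projective structure $C \in \PP_\rho$ by gluing projective pair-of-pants structures along the loops of $M$ with appropriate twist parameters, and a direct inspection of the construction shows that every loop of $M$, and in particular $\ell$, is admissible on $C$.

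Next, I would apply Theorem \ref{ThmB} to $(C, \ell)$. Setting $C_i = \Gr_\ell^i(C) \cong (\tau_i, L_i) \in \PP_\rho$ for $i \in \Z_{> 0}$, the theorem guarantees that $\tau_i$ converges in $\TT$ and that $L_i$ converges to a heavy measured lamination $L_\In$ whose unique leaf of weight infinity is $\ell$. In particular, the atomic weight on $\ell$ in $L_i$ grows without bound while the total transversal measure of $L_i$ away from $\ell$ remains bounded, so after projectivising one has $[L_i] \to [\ell]$ in $\pml$. Since each $L_i$ is nonempty and each $C_i$ lies in $\PP_\rho$, this shows $[\ell] \in \overline{\Phi(\PP_\rho) \minus \{\emptyset\}}$.

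The theorem now follows: the set of projective classes of simple closed curves $\ell$ with $\rho(\ell)$ loxodromic is dense in $\pml$ (by the classical density of simple closed curves in $\pml$, together with the countability of the excluded classes), so $\overline{\Phi(\PP_\rho) \minus \{\emptyset\}} = \pml$, whence $\Phi(\PP_\rho) \minus \{\emptyset\}$ is dense in $\pml$. I expect the main obstacle to lie in the first step, namely a careful reading of \cite{Gallo-Kapovich-Marden} to confirm that their construction can be carried out so that every loop of a prescribed pants decomposition (subject only to the loxodromicity condition on boundary holonomies) becomes admissible in the resulting projective surface. Once this is in hand, the application of Theorem \ref{ThmB} and the density argument are essentially formal.
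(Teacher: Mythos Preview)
Your overall strategy---find, for each curve in a dense subset of $\pml$, a projective structure in $\PP_\rho$ on which that curve is admissible, then iterate grafting and invoke Theorem \ref{ThmB}---is the same as the paper's. The genuine gap is the first step, and your expectation about it is incorrect.

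The Gallo--Kapovich--Marden construction does \emph{not} take as input an arbitrary pants decomposition with loxodromic boundary holonomies. What it requires is a \emph{Schottky decomposition}: the restriction of $\rho$ to each pair of pants must be an isomorphism onto a rank-two Schottky group. Loxodromicity of the boundary curves alone does not guarantee this---two boundary holonomies could share a fixed point, generate an elementary group, or fail any of the Schottky conditions. Accordingly, GKM does not build a projective structure on a prescribed pants decomposition; instead it starts from an arbitrary simple loop $a$ and \emph{modifies} it through several steps (forming handles, replacing $a$ by $a b^q$, passing to $d^n x$, etc.) to arrive at a loop $d'$ that does extend to a Schottky decomposition. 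Your fixed curve $\ell$ will in general not survive these modifications.

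The paper's remedy is Proposition \ref{8-17-12no1}: one fixes a uniquely ergodic lamination $L$, takes a sequence of simple loops $a_i$ with $[a_i]\to[L]$, and then follows each GKM modification step to check that the resulting Schottky-decomposition loops $\ell_i$ still satisfy $[\ell_i]\to[L]$. The point is that every modification either changes the loop by a bounded intersection number or twists along one of the current loops, and since all current loops already converge to $[L]$, the modified loops do as well (unique ergodicity is used to control the limit). One then applies Theorem \ref{3-10no2} to the admissible $\ell_i$ on the varying $C_i$, exactly as you propose, and uses density of uniquely ergodic laminations in $\pml$ to finish. So the missing ingredient in your argument is precisely this tracking of the GKM modifications; without it, the claim that $\ell$ itself can be made admissible on some $C\in\PP_\rho$ is unsupported.
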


A \textit{Schottky decomposition} of a representation $\rho\cn \pi_1(S) \to \psl$ is a decomposition of $S$ into pairs of pants $P_k$ along a (maximal) multiloop $M$ such that
the restriction of $\rho$ to $\pi_1(P_k)$ is an isomorphism onto a Schottky group for each pants $P_k$.  
A \textit{Schottky decomposition} of a projective structure $C = (f, \rho)$ is a decomposition of  $C$ into pairs of pants along a multiloop $M$ on consisting of admissible loops such that  $M$ realizes a Schottky decomposition of $\rho$. 
\begin{proposition}\Label{8-17-12no1}
Let $\rho\cn \pi_1(S) \to \psl$ be the holonomy representation of some projective structure on $S$.
Then for every uniquely ergodic measured lamination $L$, there is a sequence of projective structures $C_i$ with holonomy $\rho$ such that there is, for each $i$,  a Schottky decomposition of $C_i$ along some admissible multiloop containing a loop $\ell_i$ and $[\ell_i] \to [L]$ in $\PML$ as $i \to \In$.
\end{proposition}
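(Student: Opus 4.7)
\medskip

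\noindent\textit{Proof proposal.}
The plan is to approximate $[L]$ in $\pml$ by weighted simple closed curves and to fit each approximating curve into a Schottky pants decomposition of $\rho$; the Gallo--Kapovich--Marden (GKM) gluing then realizes this decomposition as a projective structure with holonomy $\rho$.

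Since $L$ is uniquely ergodic, weighted simple closed curves are dense in $\pml$ at $[L]$: there is a sequence of essential simple closed curves $\ell_i$ on $S$ whose (normalized) counting measures converge to $L$, so that $[\ell_i]\to[L]$ in $\pml$. Because $\rho$ has non-elementary image (it is the holonomy of some projective structure), the set of simple closed curves $\gamma$ on $S$ with $\rho(\gamma)$ non-loxodromic is thin in $\pml$; by replacing each $\ell_i$ with a small Dehn-twist perturbation along a transverse curve (which shifts $[\ell_i]$ by an arbitrarily small amount in $\pml$), I may assume $\rho(\ell_i)\in\psl$ is loxodromic for every $i$.

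Next I extend each $\ell_i$ to a maximal multiloop $M_i$ on $S$ and modify the remaining (auxiliary) loops of $M_i$ by Dehn twists so that $\rho$ restricts to a Schottky representation on $\pi_1(P)$ for every pair of pants $P$ complementary to $M_i$. This is the core GKM construction: for each pants $P$, enough Dehn twisting of the auxiliary cuffs and a careful choice of the generators of $\pi_1(P)$ send the traces of the generators out of $[-2,2]$ and place their axes in a position satisfying the classical ping-pong lemma, forcing $\rho|_{\pi_1(P)}$ to be Schottky. The new feature is that one distinguished cuff, namely $\ell_i$, is now prescribed and may not be Dehn-twisted; however, the loxodromicity of $\rho(\ell_i)$ is all that is needed for the ping-pong arrangement of the neighboring pants. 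I would carry this out inductively across the pants of $M_i$, propagating control of axes and translation lengths outward from $\ell_i$ and using the genericity of the Schottky condition in the character variety at each step.

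Once the Schottky pants decomposition $(M_i,\rho)$ is produced, the GKM gluing builds a projective structure $C_i$ with holonomy $\rho$ along which $M_i$ is an admissible multiloop realizing a Schottky decomposition of $C_i$ in the sense defined in the paper; in particular $\ell_i\in M_i$ and $[\ell_i]\to[L]$ in $\pml$, as required. The main obstacle is precisely the frozen-cuff Schottky adjustment in the middle paragraph: unlike the original GKM argument where every cuff may be twisted, here the loop $\ell_i$ is fixed, so one must verify that the ping-pong conditions for the two pants adjacent to $\ell_i$ can still be met by adjusting only the other cuffs and the cross curves. I would handle this by first choosing the cuffs adjacent to $\ell_i$ so that their $\rho$-axes are disjoint from the axis of $\rho(\ell_i)$ (which is automatic for a generic choice, and can be arranged by further twisting), and then proceed outward through the pants graph by the usual GKM induction.
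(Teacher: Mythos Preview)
Your strategy differs from the paper's in a way that creates a genuine gap. You try to \emph{freeze} the approximating loop $\ell_i$ and build a Schottky pants decomposition around it by twisting only the auxiliary cuffs. The paper does the opposite: it feeds an initial loop $a_i$ with $[a_i]\to[L]$ into the Gallo--Kapovich--Marden machine, lets GKM modify $a_i$ freely through its several steps (producing a handle, adjusting fixed-point behavior, forming $d_i^{n_i}x_i$, etc.), and then checks that each modification step preserves the convergence $[\,\cdot\,]\to[L]$ in $\PML$. The output loop is not $a_i$ but a heavily altered curve; what survives is only its projective class in the limit. This is where unique ergodicity of $L$ is actually used: the modifications replace a loop by one disjoint from it, or by one intersecting it a bounded number of times, or by a high twist along it, and for uniquely ergodic $L$ each of these operations preserves convergence to $[L]$.

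Your frozen-cuff variant is not covered by GKM and the sketch you give does not close the gap. Having the axes of the neighboring cuffs disjoint from the axis of $\rho(\ell_i)$ is far from sufficient for the pair to generate a Schottky group: two loxodromics with disjoint axes can easily generate a non-discrete or non-free subgroup. The GKM ping-pong argument for a pair of pants requires control over translation lengths and axis positions of \emph{both} generators simultaneously, and in their construction this is achieved precisely by allowing twists that alter the curve you want to keep fixed. You would need a genuinely new argument to show that, given an arbitrary loxodromic $\rho(\ell_i)$, one can always complete to a Schottky pants system without touching $\ell_i$; this is plausible for generic $\rho$ but is neither proved in GKM nor obvious in general, and you do not supply it. The paper's approach sidesteps this entirely by giving up control of the loop itself and keeping only control of its limit in $\PML$.
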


\begin{proof}
Given a non-elementary  representation $\rho\cn\po(S) \to \PSL$ that lifts to $\pi_1(S) \to \operatorname{SL}(2,\C)$, Gallo Kapovich and Marden gave a Schottky decomposition of $\rho$ along a multiloop $M$ and then constructed a projective structure $C$ with holonomy $\rho$ that admits a Schottky decomposition along $M$ (\cite[\S 4, 5]{Gallo-Kapovich-Marden}).  
We sketch their construction and explains how it implies the Proposition, following the notations in \cite{Gallo-Kapovich-Marden}.  

Let  $a$ be an arbitrary element of  $\pi_1(S)$ representing an essential loop on $S$; then
modify $a$ in several steps to another loop (namely $d^n x$ in  \cite[p.650]{Gallo-Kapovich-Marden}) that represents to  a loop $d'$. 
Then $d'$ extends to a multiloop realizing a Schottky decomposition of $\rho$. 
Thus,  for a sequence of $a_i \in \pi_1(S)$ representing simple loops $\ap_i$ with $[\ap_i] \to [L]$ as $i \to \In$,
letting $d'_i$ be the loop given by appropriately applying the above contraction to $d_i$,
we claim that  $d'_i$  also converges to $[L]$ because each modification step in the construction preserves the convergence property.

We can assume that $\ap_i$ are non-separating loops, replacing $\ap_i$ by a non-separating loop disjoint from $\ap_i$.
Then the convergence $[\ap_i] \to [L]$ still holds.
By abusing notation, we let elements of $\pi_1(S)$ also denote their corresponding loops on $S$. 
Then 
given $\rho\cn\pi_1(S) \to \psl$, a \textit{handle} is a pair of elements $a, b \in \po(S)$ such that 
\begin{itemize}
\item  $a$ and $b$ are simple loops on $S$ intersecting in a single point, and
\item $\rho(a), \rho(b)$ are loxodromic, and they generate a non-elementary subgroup of $\psl$.  
\end{itemize}

By modifying $a_i \in \pi_1(S)$, possibly in a few steps, we obtain a handle $H_i$ by Proposition 3.1.1 in \cite{Gallo-Kapovich-Marden} for each $i$. 
 if necessary after changing $a_i$,  let $H_i = \langle a_i, b_i \rangle$ with some $b_i \in \pi_1(S)$.
By  the proof of the proposition, we can assume that  the projective classes $[a_i]$ and $[b_i]$ also converge to $[L]$ as $i \to \In$ in $\pml$ :  Basically each  modification is given by chaining a loop $\ell$ to another loop with a bounded intersection number with $\ell$ or to a loop ``twisted'' along $\ell$ many times. 

For each handle $H_i$, we let $\ap_i = \rho(a_i)$ and $\beta_i = \rho(b_i) \in \psl$, which are loxodromic elements.
Then we may in addition assume that $\beta_i$ does not take a fixed point of $\ap_i$ to the other (\cite{Gallo-Kapovich-Marden},\S 4.2). 
Indeed this modification is done by, if necessary, replacing $\langle a_i, b_i \rangle$ by a new handle of the form either  $\langle a_i b_i^q, b_i \rangle$ or $\langle b_i, a_i b_i^q \rangle$.
This modification also preserves the convergence to $[L]$ since $a_i$ and $b_i$ intersects in a single point. 

Pick  another pair of non-separating loops $x_i, y_i$ in $\pi_1(S)$  such that $x_i, y_i$ intersect in a single point and they are disjoint from $a_i$ and $b_i$( \cite{Gallo-Kapovich-Marden},\S 4.3).
Clearly $[x_i]$ and $[y_i]$ converge to $[L]$ as $i \to \In$. 
Then the induced multiloop for the Schottky decomposition of $\rho$ contains a loop of the form $d_i^{n_i} x_i$, where $d_i = y_i b_i a_i^{k_i}$, for some $k_i, n_i \in \Z$. 
Then $d_i^{n_i} x_i$ is a non-separating loop disjoint from $b_i a_i^{k_i}$ (\cite{Gallo-Kapovich-Marden},\S 4.5).
Since $\langle a_i, b_i\rangle$ is a handle, the loop $b_i a_i^{k_i}$ intersects the loop $a_i$ in a single point, and thus the projective class $[b_i a_i^{k_i}]$ also converges to $[L]$ as $i \to \In$. 
Hence $[d_i]$ and thus $[d_i^{n_i} x_i]$ converge to $[L]$ as $i \to \In$.

\end{proof}

\proofof{Theorem \ref{8-15-12no1}}
Let $L$ be a uniquely ergodic measured lamination on $S$. 
By Proposition \ref{8-17-12no1},  there are sequences of projective structures $C_i$ with holonomy $\rho$ and  admissible loops $\ell_i$ on $C_i$  converging to $[L]$ in $\pml$.

For  $n_i \in \N$, consider the projective structure $\Gr_{\ell_i}^{n_i}(C_i)$ obtained by $n_i$ times grafting $C_i$ along $\ell_i$.
Then its measured lamination $L_{i, n_i}$, in Thurston coordinates, converges to $[\ell_i]$ as $n_i \to \In$ in $\pml$ by Theorem \ref{3-10no2}.
We can pick sufficiently large $n_i$ for each $i$ so that  $[L_{i, n_i}]$ converges to $[L]$ as $i \to \In$.
Therefore $[L]$ is an accumulation point of $\Phi(\PP_\rho)$.
Almost all measured laminations are uniquely ergodic lamination and in particular they are dense in $\pml$.
Thus $\Phi(\PP_\rho)$ is dense in $\pml$.
\Qed{8-17-12no1}
\bibliographystyle{amsalpha}

\bibliographystyle{plain}
\bibliography{../../../Reference}

\providecommand{\bysame}{\leavevmode\hbox to3em{\hrulefill}\thinspace}
\providecommand{\MR}{\relax\ifhmode\unskip\space\fi MR }
\providecommand{\MRhref}[2]{%
  \href{http://www.ams.org/mathscinet-getitem?mr=#1}{#2}
}
\providecommand{\href}[2]{#2}
\begin{thebibliography}{GKM00}

\bibitem[Bab15]{Baba-15gt}
Shinpei Baba, \emph{{$2\pi$}--grafting and complex projective structures, {I}},
  Geom. Topol. \textbf{19} (2015), no.~6, 3233--3287. \MR{3447103}

\bibitem[Bon97]{Bonahon97}
Francis Bonahon, \emph{Geodesic laminations with transverse {H}\"older
  distributions}, Ann. Sci. \'Ecole Norm. Sup. (4) \textbf{30} (1997), no.~2,
  205--240. \MR{1432054}

\bibitem[Bon09]{Bonahon09}
\bysame, \emph{Low-dimensional geometry}, Student Mathematical Library,
  vol.~49, American Mathematical Society, Providence, RI, 2009, From Euclidean
  surfaces to hyperbolic knots, IAS/Park City Mathematical Subseries.
  \MR{2522946 (2011f:57024)}

\bibitem[Bro03]{Brock03}
Jeffrey~F. Brock, \emph{The {W}eil-{P}etersson metric and volumes of
  3-dimensional hyperbolic convex cores}, J. Amer. Math. Soc. \textbf{16}
  (2003), no.~3, 495--535 (electronic). \MR{1969203 (2004c:32027)}

\bibitem[BZ98]{BoyerZhang98}
S.~Boyer and X.~Zhang, \emph{On {C}uller-{S}halen seminorms and {D}ehn
  filling}, Ann. of Math. (2) \textbf{148} (1998), no.~3, 737--801. \MR{1670053
  (2000d:57028)}

\bibitem[CDF14]{Calsamiglia4DeroinFrancaviglia14}
Gabriel Calsamiglia, Bertrand Deroin, and Stefano Francaviglia, \emph{The
  oriented graph of multi-graftings in the {F}uchsian case}, Publ. Mat.
  \textbf{58} (2014), no.~1, 31--46. \MR{3161507}

\bibitem[CDR12]{ChoiDumasRafi12}
Young-Eun Choi, David Dumas, and Kasra Rafi, \emph{Grafting rays fellow travel
  {T}eichm\"uller geodesics}, Int. Math. Res. Not. IMRN (2012), no.~11,
  2445--2492. \MR{2926987}

\bibitem[CEG87]{CanaryEpsteinGreen84}
R.~D. Canary, D.~B.~A. Epstein, and P.~Green, \emph{Notes on notes of
  {T}hurston}, Analytical and geometric aspects of hyperbolic space
  ({C}oventry/{D}urham, 1984), London Math. Soc. Lecture Note Ser., vol. 111,
  Cambridge Univ. Press, Cambridge, 1987, pp.~3--92. \MR{903850 (89e:57008)}

\bibitem[DK12]{Diaz-Kim-12}
Raquel D{\'{\i}}az and Inkang Kim, \emph{Asymptotic behavior of grafting rays},
  Geom. Dedicata \textbf{158} (2012), 267--281. \MR{2922715}

\bibitem[Dum09]{Dumas-08}
David Dumas, \emph{Complex projective structures}, Handbook of {T}eichm\"uller
  theory. {V}ol. {II}, IRMA Lect. Math. Theor. Phys., vol.~13, Eur. Math. Soc.,
  Z\"urich, 2009, pp.~455--508. \MR{MR2497780}

\bibitem[Ear81]{Earle-81}
Clifford~J. Earle, \emph{On variation of projective structures}, Riemann
  surfaces and related topics: {P}roceedings of the 1978 {S}tony {B}rook
  {C}onference ({S}tate {U}niv. {N}ew {Y}ork, {S}tony {B}rook, {N}.{Y}., 1978),
  Ann. of Math. Stud., vol.~97, Princeton Univ. Press, Princeton, N.J., 1981,
  pp.~87--99. \MR{624807 (83a:32017)}

\bibitem[EM87]{Epstein-Marden-87}
D.~B.~A. Epstein and A.~Marden, \emph{Convex hulls in hyperbolic space, a
  theorem of {S}ullivan, and measured pleated surfaces}, Analytical and
  geometric aspects of hyperbolic space ({C}oventry/{D}urham, 1984), London
  Math. Soc. Lecture Note Ser., vol. 111, Cambridge Univ. Press, Cambridge,
  1987, pp.~113--253. \MR{903852 (89c:52014)}

\bibitem[FLP79]{FLP79}
\emph{Travaux de {T}hurston sur les surfaces}, Ast\'erisque, vol.~66,
  Soci\'et\'e Math\'ematique de France, Paris, 1979, S\'eminaire Orsay, With an
  English summary. \MR{568308}

\bibitem[GKM00]{Gallo-Kapovich-Marden}
Daniel Gallo, Michael Kapovich, and Albert Marden, \emph{The monodromy groups
  of {S}chwarzian equations on closed {R}iemann surfaces}, Ann. of Math. (2)
  \textbf{151} (2000), no.~2, 625--704. \MR{MR1765706 (2002j:57029)}

\bibitem[Gol80]{Goldman-thesis}
William~M. Goldman, \emph{Discontinuous groups and the {E}uler class}, Ph.D.
  thesis, 1980.

\bibitem[Gol87]{Goldman-87}
\bysame, \emph{Projective structures with {F}uchsian holonomy}, J. Differential
  Geom. \textbf{25} (1987), no.~3, 297--326. \MR{MR882826 (88i:57006)}

\bibitem[Gol88]{Goldman-88t}
\bysame, \emph{Topological components of spaces of representations}, Invent.
  Math. \textbf{93} (1988), no.~3, 557--607. \MR{952283 (89m:57001)}

\bibitem[Gup14]{Gupta14}
Subhojoy Gupta, \emph{Asymptoticity of grafting and {T}eichm\"uller rays},
  Geom. Topol. \textbf{18} (2014), no.~4, 2127--2188. \MR{3268775}

\bibitem[Hej75]{Hejhal-75}
Dennis~A. Hejhal, \emph{Monodromy groups and linearly polymorphic functions},
  Acta Math. \textbf{135} (1975), no.~1, 1--55. \MR{MR0463429 (57 \#3380)}

\bibitem[Hen11]{Hensel11}
Sebastian~W. Hensel, \emph{Iterated grafting and holonomy lifts of
  {T}eichm\"uller space}, Geom. Dedicata \textbf{155} (2011), 31--67.
  \MR{2863893}

\bibitem[HP04]{HeusenerPorti04}
Michael Heusener and Joan Porti, \emph{The variety of characters in {${\rm
  PSL}_2(\Bbb C)$}}, Bol. Soc. Mat. Mexicana (3) \textbf{10} (2004),
  no.~Special Issue, 221--237. \MR{2199350}

\bibitem[HT80]{HatcherThurston80}
A.~Hatcher and W.~Thurston, \emph{A presentation for the mapping class group of
  a closed orientable surface}, Topology \textbf{19} (1980), no.~3, 221--237.
  \MR{579573 (81k:57008)}

\bibitem[Hub81]{Hubbard-81}
John~H. Hubbard, \emph{The monodromy of projective structures}, Riemann
  surfaces and related topics: {P}roceedings of the 1978 {S}tony {B}rook
  {C}onference ({S}tate {U}niv. {N}ew {Y}ork, {S}tony {B}rook, {N}.{Y}., 1978),
  Ann. of Math. Stud., vol.~97, Princeton Univ. Press, Princeton, N.J., 1981,
  pp.~257--275. \MR{MR624819 (82k:14009)}

\bibitem[Ito07]{Ito07}
Kentaro Ito, \emph{Exotic projective structures and quasi-{F}uchsian space.
  {II}}, Duke Math. J. \textbf{140} (2007), no.~1, 85--109. \MR{2355068
  (2009b:30088)}

\bibitem[Kap95]{Kapovich-95}
Michael Kapovich, \emph{On monodromy of complex projective structures}, Invent.
  Math. \textbf{119} (1995), no.~2, 243--265. \MR{MR1312500 (95k:57017)}

\bibitem[Kap01]{Kapovich-01}
\bysame, \emph{Hyperbolic manifolds and discrete groups}, Progress in
  Mathematics, vol. 183, Birkh\"auser Boston Inc., Boston, MA, 2001.
  \MR{MR1792613 (2002m:57018)}

\bibitem[KP94]{Kullkani-Pinkall-94}
Ravi~S. Kulkarni and Ulrich Pinkall, \emph{A canonical metric for {M}\"obius
  structures and its applications}, Math. Z. \textbf{216} (1994), no.~1,
  89--129. \MR{MR1273468 (95b:53017)}

\bibitem[KT92]{Kamishima-Tan-92}
Yoshinobu Kamishima and Ser~P. Tan, \emph{Deformation spaces on geometric
  structures}, Aspects of low-dimensional manifolds, Adv. Stud. Pure Math.,
  vol.~20, Kinokuniya, Tokyo, 1992, pp.~263--299. \MR{MR1208313 (94k:57023)}

\bibitem[Min99]{Minsky99}
Yair~N. Minsky, \emph{The classification of punctured-torus groups}, Ann. of
  Math. (2) \textbf{149} (1999), no.~2, 559--626. \MR{1689341}

\bibitem[PH92]{Penner-Harer-92}
R.~C. Penner and J.~L. Harer, \emph{Combinatorics of train tracks}, Annals of
  Mathematics Studies, vol. 125, Princeton University Press, Princeton, NJ,
  1992. \MR{MR1144770 (94b:57018)}

\bibitem[Sau]{Schleimer-06}
Schleimer Saul, \emph{Notes on the complex of curves}.

\bibitem[Tan97]{Tanigawa-97}
Harumi Tanigawa, \emph{Grafting, harmonic maps and projective structures on
  surfaces}, J. Differential Geom. \textbf{47} (1997), no.~3, 399--419.
  \MR{MR1617652 (99c:32029)}

\bibitem[Thu81]{Thurston-78}
William~P Thurston, \emph{The geometry and topology of three-manifolds},
  Princeton University Lecture Notes, 1978-1981.

\bibitem[Thu97]{Thurston-97}
William~P. Thurston, \emph{Three-dimensional geometry and topology. {V}ol. 1},
  Princeton Mathematical Series, vol.~35, Princeton University Press,
  Princeton, NJ, 1997, Edited by Silvio Levy. \MR{MR1435975 (97m:57016)}

\end{thebibliography}

\end{document}